\newcommand{\Hb}{\mathbf{H}}
\newcommand{\Gb}{\mathbf{G}}
\newcommand{\Mb}{\mathbf{M}}
\newcommand{\Mbt}{\widetilde{\mathbf{M}}}
\newcommand{\Pb}{\mathbf{P}}
\newcommand{\Pbt}{\widetilde{\mathbf{P}}}
\newcommand{\tbP}{\widetilde{\mathbf{P}}}
\renewcommand{\epsilon}{\varepsilon}
\newcommand{\tbG}{\widetilde{\bG}}
\newcommand{\tbM}{\widetilde{\bM}}
\newcommand{\WF}{\mathrm{W}_F}
\newcommand{\tpi}{\widetilde{\pi}}
\newcommand{\tpsi}{\widetilde{\psi}}
\newcommand{\tPi}{\widetilde{\Pi}}
\newcommand{\Gammab}{\mathbf{\Gamma}}
\newcommand{\tGamma}{\widetilde{\mathbf{\Gamma}}}
\newcommand{\tPsi}{\widetilde{\Psi}}
\newcommand{\sign}{\mathrm{sign}}
\newcommand{\nor}{\mathrm{norm}}
\newcommand{\Jac}{\mathrm{Jac}}
\newcommand{\Ghat}{\widehat{\mathbf{G}}}
\newcommand{\Lhat}{\widehat{\mathbf{L}}}
\newcommand{\Hhat}{\widehat{\mathbf{H}}}
\newcommand{\That}{\widehat{\mathbf{T}}}
\newcommand{\muhat}{\widehat{\mu}}
\newcommand{\deltahat}{\widehat{\delta}}
\newcommand{\Lie}{{\operatorname{Lie}\,}}
\newcommand{\Cent}{{\operatorname{Cent}\,}}
\newcommand{\Out}{{\operatorname{Out}\,}}
\newcommand{\semis}{{\operatorname{ss}}}
\newcommand{\fg}{{\mathfrak{g}}}
\newcommand{\fe}{{\mathfrak{e}}}
\newcommand{\fw}{{\mathfrak{w}}}
\newcommand{\rec}{{\operatorname{rec}}}
\newcommand{\CA}{{\mathcal{A}}}
\newcommand{\CE}{{\mathcal{E}}}
\newcommand{\CH}{{\mathcal{H}}}
\newcommand{\CO}{{\mathcal{O}}}
\newcommand{\CS}{{\mathcal{S}}}
\newcommand{\CZ}{{\mathcal{Z}}}
\newcommand{\ad}{\operatorname{ad}}
\newcommand{\diag}{\operatorname{diag}}
\newcommand{\tr}{\operatorname{tr}}
\newcommand{\pr}{\operatorname{pr}}
\newcommand{\Isom}{\operatorname{Isom}}
\newcommand{\Ad}{{\mathrm{Ad}}}
\newtheorem{theorem}[subsubsection]{Theorem}
\newtheorem{thm}[subsubsection]{Theorem}
\newtheorem{lem}[subsubsection]{Lemma}
\newtheorem{conj}[subsubsection]{Conjecture}
\newtheorem{prop}[subsubsection]{Proposition}
\theoremstyle{definition}
\newtheorem{defn}[subsubsection]{Definition}
\newtheorem{rem}[subsubsection]{Remark}
\def\numequation{\addtocounter{subsubsection}{1}\begin{equation}}
\def\nummultline{\addtocounter{subsubsection}{1}\begin{multline}}
\def\anumequation{\addtocounter{subsection}{1}\begin{equation}}
\def\anummultline{\addtocounter{subsection}{1}\begin{multline}}
\newif\iffinalrun
  \newcommand{\need}[1]{}
  \newcommand{\mar}[1]{}
  \newcommand{\need}[1]{{\tiny *** #1}}
  \newcommand{\mar}[1]{\marginpar{\raggedright\tiny TODO  #1}}\fi
\newcommand{\A}{\AA}
\newcommand{\C}{\CC}
\newcommand{\Q}{\QQ}
\newcommand{\R}{\RR}
\newcommand{\Z}{\ZZ}
\renewcommand{\AA}{{\mathbb A}}
\newcommand{\CC}{{\mathbb C}}
\newcommand{\QQ}{{\mathbb Q}}
\newcommand{\RR}{{\mathbb R}}
\newcommand{\ZZ}{{\mathbb Z}}
\newcommand{\bA}{\ensuremath{\mathbf{A}}}
\newcommand{\bB}{\ensuremath{\mathbf{B}}}
\newcommand{\bD}{\ensuremath{\mathbf{D}}}
\newcommand{\bG}{\ensuremath{\mathbf{G}}}
\newcommand{\bH}{\ensuremath{\mathbf{H}}}
\newcommand{\bL}{\ensuremath{\mathbf{L}}}
\newcommand{\bM}{\ensuremath{\mathbf{M}}}
\newcommand{\bN}{\ensuremath{\mathbf{N}}}
\newcommand{\bP}{\ensuremath{\mathbf{P}}}
\newcommand{\bR}{\ensuremath{\mathbf{R}}}
\newcommand{\bT}{\ensuremath{\mathbf{T}}}
\newcommand{\bU}{\ensuremath{\mathbf{U}}}
\newcommand{\bZ}{\ensuremath{\mathbf{Z}}}
\renewcommand{\bf}{\ensuremath{\mathbf{f}}}
\newcommand{\cA}{{\mathcal A}}
\newcommand{\cH}{{\mathcal H}}
\newcommand{\cL}{{\mathcal L}}
\newcommand{\CL}{{\mathcal{L}}}
\newcommand{\cS}{{\mathcal S}}
\newcommand{\frake}{\mathfrak{e}}
\DeclareMathOperator{\Aut}{Aut}
\DeclareMathOperator{\disc}{disc}
\DeclareMathOperator{\der}{der}
\DeclareMathOperator{\End}{End}
\DeclareMathOperator{\Gal}{Gal}
\DeclareMathOperator{\GL}{GL}
\DeclareMathOperator{\GO}{GO}
\DeclareMathOperator{\GSO}{GSO}
\newcommand{\GLb}{\mathbf{GL}}
\newcommand{\GLnb}{\mathbf{GL}_n}
\newcommand{\SLb}{\mathbf{SL}}
\newcommand{\GSOb}{\mathbf{GSO}}
\newcommand{\GOb}{\mathbf{GO}}
\newcommand{\SOb}{\mathbf{SO}}
\newcommand{\Spb}{\mathbf{Sp}}
\newcommand{\GSpb}{\mathbf{GSp}}
\newcommand{\GSpinb}{\mathbf{GSpin}}
\newcommand{\Spinb}{\mathbf{Spin}}
\DeclareMathOperator{\GSp}{GSp}
\DeclareMathOperator{\GSpin}{GSpin}
\DeclareMathOperator{\Hom}{Hom}
\DeclareMathOperator{\Ind}{Ind}
\DeclareMathOperator{\Norm}{N}
\DeclareMathOperator{\SL}{SL}
\DeclareMathOperator{\SO}{SO}
\DeclareMathOperator{\Sp}{Sp}
\DeclareMathOperator{\PGSp}{PGSp}
\DeclareMathOperator{\SU}{SU}
\DeclareMathOperator{\Sym}{Sym}
\DeclareMathOperator{\WD}{WD}
\newcommand{\Frob}{\mathrm{Frob}}
\newcommand{\reg}{\mathrm{reg}}
\newcommand{\Id}{\mathrm{Id}}
\newcommand{\into}{\hookrightarrow}
\newcommand{\Res}{\operatorname{Res}}
\newcommand{\res}{\operatorname{res}}
\DeclareMathOperator{\Std}{Std}
\begin{document}
\title{Arthur's multiplicity formula for $\GSpb_4$ and restriction to $\Sp_4$}

\author{Toby Gee} \email{toby.gee@imperial.ac.uk} \address{Department of
  Mathematics, Imperial College London,
  London SW7 2AZ, UK}

\author{Olivier Ta\"ibi} \email{olivier.taibi@ens-lyon.fr} \address{CNRS et
Unit\'e de Math\'ematiques Pures et Appliqu\'ees, ENS de Lyon}  
  
\thanks{T.G.\ was supported in part by a Leverhulme Prize, EPSRC grant
  EP/L025485/1, ERC Starting Grant 306326, and a Royal Society Wolfson Research
  Merit Award. O.T.\ was supported in part by ERC Starting Grant 306326.}

\begin{abstract}
  We prove the classification of discrete automorphic representations
  of~$\GSpb_4$ explained in~\cite{MR2058604}, as well as a compatibility between
  the local Langlands correspondences for~$\GSpb_4$ and~$\Spb_4$.
\end{abstract}

\maketitle

\setcounter{tocdepth}{1}
\tableofcontents

\section{Introduction} \subsection{}

In the paper~\cite{MR2058604}, Arthur explained his classification of the
discrete automorphic spectrum for classical groups in the particular case
of~$\GSpb_4 \cong \GSpinb_5$. Later, in~\cite{MR3135650} he proved this
classification for quasi-split special orthogonal and symplectic groups of
arbitrary rank, but now with trivial similitude factor. The classification
stated in~\cite{MR2058604} is important for applications of the Langlands
program to arithmetic. In particular, it is used in~\cite{MR3200667} to
associate Galois representations to Hilbert--Siegel modular forms, and these
Galois representations have been used to prove modularity lifting theorems
relating to abelian surfaces, for example in
\cite{BCGPGSp4}. It is therefore desirable to have an unconditional proof of
this classification. While it is expected that the methods of~\cite{MR3135650}
could be used to handle~$\GSpinb$ groups, the proofs involve a very complicated
induction, which even in the case of~$\GSpinb_5$ would involve the use of groups
of much higher rank, so there does not seem to be any way to give a (short)
direct proof of the classification of~\cite{MR2058604} by following the
arguments of~\cite{MR3135650}.

In this paper, we fill this gap in the literature by giving a proof of the
classification announced in~\cite{MR2058604}. We also prove some new results
concerning the compatibility of the local Langlands correspondences for~$\Spb_4$
and~$\GSpb_4$. While, like Arthur, our main technique is the stable (twisted)
trace formula, and we make substantial use of the results of~\cite{MR2058604}
for the group~$\Spb_4$, we also rely on a number of additional ingredients that
are only available in the particular case of~$\GSpb_4$; in particular, we
crucially use:
\begin{itemize}
	\item the exterior square functoriality for~$\GLb_4$ proved
		in~\cite{MR1937203} (and completed in~\cite{MR2567395});
	\item the results of~\cite{MR2800725}: the local Langlands correspondence
		for~$\GSpb_4$ (established using theta correspondences), and the generic
		transfer to~$\GSpb_4$ (with local-global compatibility at all places) for
		essentially self dual cuspidal automorphic representations of~$\GLb_4$ of
		symplectic type;
	\item  the results of~\cite{MR3267112}, which check the compatibility of the
		local Langlands correspondence of~\cite{MR2846304} with the predicted
		twisted endoscopic character relations of~\cite{MR2058604} in the tempered
		case.
\end{itemize}

We now briefly explain the strategy of our proof, and the structure of the
paper. We begin in Section~\ref{sec: Arthur} with a precise statement of the
results of~\cite{MR3135650} and of their conjectural extension to~$\GSpinb$
groups. Roughly speaking, these statements consist of:
\begin{enumerate}
\item An assignment of global parameters (formal sums of essentially
  self-dual discrete automorphic representations of~$\GLb_n$) to
  discrete automorphic representations of classical groups.
\item A description of packets of local representations in terms of
  local versions of the global parameters (which in particular gives
  the local Langlands correspondence for classical groups).
\item A multiplicity formula, precisely describing which elements of
  global packets are automorphic, and the multiplicities with which
  they appear in the discrete spectrum.
\end{enumerate}
In Arthur's work these statements are all proved together as part of a
complicated induction, but in this paper (which of course uses Arthur's results
for~$\Spb_4$) we are able to prove the first two statements independently, and
then use them as inputs to the proof of the third statement.

In section~\ref{sec: missing local packets} we study the local packets. In the
tempered case, the work has already been done in~\cite{MR3267112}, and by again
using that~\cite{MR3135650} has taken care of the cases where the similitude
character is a square, we are reduced to constructing the local packets in two
special non-tempered cases. We do this ``by hand'', following the much more
general results proved in~\cite{MWtransfertGLtordu} and \cite{AMR}.

As a consequence of the stabilisation of the twisted trace
formula~\cite{SFTT1,SFTT2}, we can apply the twisted trace formula for~$\GLb_4
\times \GLb_1$ to associate a global parameter to any discrete automorphic
representation of~$\GSpinb_5$ (which is a twisted endoscopic group for~$\GLb_4
\times \GLb_1$ endowed with the automorphism $g \mapsto {}^t g^{-1}$). We recall
the details of this twisted trace formula in section~\ref{sec: STTF}, which we
hope can serve as an introduction to the results of~\cite{SFTT1,SFTT2}  for the
reader not already familiar with them. In section~\ref{sec:restriction} we
briefly recall results about the restriction of representations to subgroups,
which we apply to the case of restriction from~$\GSpb_4$ to~$\Spb_4$.

In section~\ref{sec: global parameters} we show that the global parameter
associated to a discrete automorphic representation of~$\GSpb_4$ by the stable
twisted trace formula is of the form predicted by Arthur, by making use of the
symplectic/orthogonal alternative for~$\GLb_2$ and~$\GLb_4$, the (known)
description of automorphic representations of quasi-split inner forms
of~$\GSpinb_4$ in terms of Asai representations, and the tensor product
functoriality~$\GLb_2 \times \GLb_2 \to \GLb_4$ of~\cite{MR1792292}. We also
make use of~\cite{MR3135650} in two ways: if the similitude character is a
square, then by twisting we can immediately reduce to the results
of~\cite{MR3135650}. If the similitude character is not a square, then the
possibilities for the parameter are somewhat constrained, and  we are able to
further constrain them by using the fact that by restricting to~$\Spb_4$ and
applying the results of~\cite{MR3135650}, we know the possible forms of the
exterior square of the parameter.

In section~\ref{sec: multiplicity formula}, we prove the global multiplicity
formula in much the same way as~\cite{MR3135650}, as a consequence of the stable
(twisted) trace formulas for~$\GLb_4 \times \GLb_1$ and~$\GSpinb_5$, together
with the twisted endoscopic character relations already established.

Finally, in section~\ref{sec: local Langlands for Sp4} we show that the local
Langlands correspondences for~$\Spb_4$ established in~\cite{MR2673717}
and~\cite{MR3135650} coincide. The correspondence of~\cite{MR2673717} was
constructed by restricting the correspondence for~$\GSpb_4$ of~\cite{MR2800725}
to~$\Spb_4$, which by the results of~\cite{MR3267112} is characterised using
twisted endoscopy for $\GLb_4 \times \GLb_1$. The correspondence for $\Spb_4$
obtained in \cite{MR3135650} is characterised using twisted endoscopy for
$\GLb_5$. 

In the discrete case we prove this by a global argument, by realising the
parameter as a local factor of a cuspidal automorphic representation, and using
the exterior square functoriality for~$\GLb_4$ of~\cite{MR1937203}
and~\cite{MR2567395}. In the remaining cases the parameter arises via parabolic
induction, and we are able to treat it by hand. We are also able to use these
arguments to give a precise description in terms of Arthur parameters of the
restrictions to~$\Spb_4$ of irreducible admissible tempered representations
of~$\GSpb_4$ over a $p$-adic field.

We end this introduction with a small disclosure, and a comparison to other
work. While we have said that the results of this paper are unconditional, they
are only as unconditional as the results of~\cite{MR3135650}
and~\cite{SFTT1,SFTT2}. In particular, they depend on cases of the twisted
weighted fundamental lemma that were announced in~\cite{MR2735371}, but whose
proofs have not yet appeared in print, as well as on the references [A24],
[A25], [A26] and~[A27] in~\cite{MR3135650}, which at the time of writing have
not appeared publicly. 

The strategy of using restriction to compare the representation theory
of reductive groups related by a central isogeny is not a new one;
indeed it goes back at least as far to the comparison of~$\GLb_2$
and~$\SLb_2$ in~\cite{LabLan}. In the case of symplectic groups, there
is the paper~\cite{MR2673717} mentioned above; while this does not
make any use of trace formula techniques, we use some of its ideas in
Section~\ref{sec: local Langlands for Sp4}, when we compare the
different constructions of the local Langlands correspondence. 

More recently, there is the work of Xu, in
particular~\cite{BinXu2017Annalen,MR3568940}, which also builds
on~\cite{MR3135650}, using the groups~$\GSpb_n$ and~$\GOb_n$ where we use the
groups~$\GSpinb_n$ (of course, these cases overlap for~$\GSpb_4$). However, the
emphasis of Xu's work is rather different, and is aimed at constructing ``coarse
$L$-packets'' (which in the case of~$\GSpb_4$ are unions of $L$-packets lying
over a common $L$-packet for~$\Spb_4$), and proving a multiplicity formula for
automorphic representations grouped together in a similar way. Xu's results are
more general than ours in that they apply to groups of arbitrary rank, but are
less precise in the special case of~$\GSpb_4$, and our proofs are independent.

\subsection{Acknowledgements}We would like to thank George Boxer,
Frank Calegari, Ga\"etan Chenevier, Matthew Emerton and Wee Teck Gan for helpful
conversations.

\subsection{Notation and conventions}

\subsubsection{Algebraic groups} \label{subsubsec: notation alg groups}

We will use the boldface notation~$\bG$ for an algebraic group over a local
field or a number field, and we use the Roman version~$G$ for reductive groups
over~$\C$, or their complex points. Thus for example if~$F$ is a number field,
we will write~$\GLnb$ for the general linear group over~$F$, with Langlands dual
group $\widehat{\GLb}_n=\GL_n$, which we will also sometimes write as
$\widehat{\GLb}_n=\GL_n(\C)$.

For a real connected reductive group $\bG$, write $\mathfrak{g} = \C
\otimes_{\R} \Lie(\bG(\R))$, and let  $K$ be a maximal compact subgroup of
$\bG(\R)$. When working adelically we will sometimes abusively call
$(\mathfrak{g}, K)$-modules ``representations of $\bG(\R)$''. This should cause
no confusion as we will mostly be considering unitary representations in this
global setting (see \cite[Theorem 3.4.11]{Wallach_book1}, \cite[Theorem
4.4.6.6]{Warner_book1}), and distinguish between $(\mathfrak{g}, K)$-modules and
representations of $\bG(\R)$ when considering non-unitary representations.

\subsubsection{The local Langlands correspondence}
	\label{subsubsec: notation LL}

If~$K$ is a field of characteristic zero then we write~$\Gal_K$ for its absolute
Galois group~$\Gal(\overline{K}/K)$. If~$K$ is a local or global field of
characteristic zero, then we write~$W_K$ for its Weil group. If~$K$ is a local
field of characteristic zero, then we write~$\WD_K$ for its Weil--Deligne group,
which is~$W_K$ if~$K$ is Archimedean, and $W_K\times\SU(2)$ otherwise. 

If $\pi$ is an irreducible admissible representation of $\GLb_N(F)$ ($F$ local)
or $\GLb_N(\A_F)$ ($F$ global), then $\omega_{\pi}$ will denote its central
character. We write~$\rec$ for the local Langlands correspondence normalised as
in~\cite{ht}, so that if~$F$ is a local field of characteristic zero,
then~$\rec(\pi)$ is an $N$-dimensional representation of~$\WD_F$. If $F$ is
$p$-adic then for this normalisation a uniformiser of $F$ corresponds to the
\emph{geometric} Frobenius automorphism.

\subsubsection{The discrete spectrum}
  \label{subsubsec: discrete spectrum intro}
 
Let~$\Gb$ be a connected  reductive group over a number field~$F$. Write
\[ \bG(\A_F)^1 = \left\{ g \in \bG(\A_F)\,\middle|\, \forall \beta \in
X^*(\bG)^{\Gal_F},\,|\beta(g)|=1 \right\}, \]
so that $\bG(F) \backslash \bG(\A_F)^1$ has finite measure. Let $\mathbf{A}_\bG$
be the biggest central split torus in $\Res_{F/\Q}(\bG)$, and let
$\mathfrak{A}_\bG$ be the vector group $\mathbf{A}_\bG(\R)^0$. Then $\bG(\A_F) =
\bG(\A_F)^1 \times \mathfrak{A}_\bG$. We write
\[ \cA^2(\Gb) = \CA^2(\bG(F) \mathfrak{A}_\bG \backslash \bG(\A_F) ) =
  \CA^2(\bG(F) \backslash \bG(\A_F)^1) \]
for the space of square integrable automorphic forms. This decomposes
discretely, i.e.\ it is canonically the direct sum, over the countable set
$\Pi_{\disc}(\bG)$ of discrete automorphic representations $\pi$ for $\bG$, of
isotypical components
\[ \CA^2(\Gb)_{\pi} \]
which have finite length. 

If~$\chi_{\Gb}$ is a character of~$\mathfrak{A}_\bG$, we could more generally
consider the space of $\chi_\Gb$-equivariant square integrable automorphic forms 
\[ \cA^2(\Gb)=\CA^2(\bG(F) \backslash \bG(\A_F),\chi_{\Gb}). \]
Since we can reduce to the case~$\chi_{\Gb}=1$ considered above by twisting, we
will almost never use this more general definition.

\section{Arthur's classification}
\label{sec: Arthur}

\subsection{\texorpdfstring{$\GSpinb$}{GSpin} groups}
\label{subsec: Gspin groups}

We now recall the results announced in~\cite{MR2058604} for $\GSpb_4$, as well
as those for~$\Spb_4$ proved in~\cite{MR3135650}. In fact, for convenience we
begin by recalling the conjectural extension of Arthur's results to~$\GSpinb$
groups of arbitrary rank, and then explain what is proved in~\cite{MR3135650}.

We work with the following quasi-split groups over a local or global field~$F$
of characteristic zero:

\begin{itemize}
  \item The split groups $\GSpinb_{2n+1}$.
  \item The split groups $\Spb_{2n}\times\GLb_1$.
  \item The quasi-split groups $\GSpinb_{2n}^\alpha$.
\end{itemize}
Here we can define the groups~$\GSpinb_{2n+1}$ and~$\GSpinb_{2n}^\alpha$ as
follows. If~$\alpha\in F^\times/(F^\times)^2$, we have the quasi-split special
orthogonal group~$\SOb_{2n}^\alpha$, which is defined as the special orthogonal
group of the quadratic space given by the direct sum of~$(n-1)$ hyperbolic
planes and the plane~$F[X]/(X^2-\alpha)$ equipped with the quadratic form equal
to the norm. We have the spin double cover
\[0 \to \mu_2 \to \Spinb_{2n}^\alpha \to \SOb_{2n}^\alpha \to 0,\]
and we set
\[\GSpinb_{2n}^\alpha:=(\Spinb_{2n}^\alpha \times \GLb_1)/\mu_2\]
where~$\mu_2$ is embedded diagonally. Note that~$\GSpinb_{2n}^\alpha$ is split
if and only if~$\alpha=1$. We define the split group~$\GSpinb_{2n+1}$ in the
same way. This expedient definition is of course equivalent to the usual, more
geometric one (see \cite[Ch.\ IV, \S 6]{Knus}). The spinor norm is induced by
$(g, \lambda) \mapsto \lambda^2$. It is convenient to let $\GSpinb_0^1 =
\GSpinb_1 = \GLb_1$. 

The corresponding dual groups are as follows.
\begin{center}
  \begin{tabular}{ c | c }
    $\bG$ & $\Ghat$ \\ 
    \hline
    $\GSpinb_{2n+1}$ & $\GSp_{2n}(\C)$\\
    $\Spb_{2n}\times\GLb_1$ & $\GSO_{2n+1}(\C)=\SO_{2n+1}(\C)\times\GL_1(\C)$ \\
    $\GSpinb_{2n}^\alpha$ & $\GSO_{2n}(\C)$
  \end{tabular}
\end{center}

Let $\mu : \GLb_1 \rightarrow \bZ(\bG)$ be dual to the surjective ``similitude
factor'' morphism $\muhat : \Ghat \rightarrow \GL_1(\C)$. Note that in the
case~$\bG=\Spb_{2n} \times\GLb_1$, $\mu: \GLb_1 \to \bZ(\bG)$ is the map
$x\mapsto(1,x^2)$, and it is the only case where it is not injective. Moreover
the image of $\mu$ is $\bZ(\bG)^0$ except in the case $\bG =
\GSpinb_2^{\alpha}$.

We set $ {}^L \bG = \Ghat \rtimes \WF$, where the action of~$\WF$ on~$\Ghat$ is
trivial except in the case that $\bG=\GSpinb_{2n}^\alpha$ with~$\alpha \ne 1$,
in which case the action of~$\WF$ factors through
$\Gal(F(\sqrt{\alpha}/F)=\{1,\sigma\}$, and~$\sigma$ acts by outer conjugation
on~$\GSO_{2n}$. More precisely, in this case we identify~$\Ghat \rtimes
\Gal(F(\sqrt{\alpha}/F)$ with~$\GO_{2n}(\C)$ as follows: if $\SO_{2n}$ is
obtained from the symmetric bilinear form~$B$ on~$\C e_1 \oplus \dots \oplus \C
e_{2n}$ given by~$B(e_i,e_j)=\delta_{i,2n+1-j}$, then~$1\rtimes\sigma$ is the
element of~$\mathrm{O}_{2n}(\C)$ which interchanges~$e_{n}$ and~$e_{n+1}$ and
fixes the other~$e_i$.

We have the standard representation
\[ \Std_{\bG}: {}^L \bG \to \GL_N(\C) \times \GL_1(\C), \]
where~$N=N(\Ghat)=2n$ if $\bG=\GSpinb^\alpha_{2n}$ or~$\bG=\GSpinb_{2n+1}$,
and~$N={2n}+1$ if~$\bG=\Spb_{2n} \times \GLb_1$. In the first two cases the
representation is trivial on~$\WF$, and is given by the product of the standard
$N$-dimensional representation of~$\Ghat$ and the similitude character. In the
final case it is given by the product of the natural
inclusion~$\mathrm{O}_{2n+1}(\C)\subset\GL_{2n+1}(\C)$ and the identity
on~$\GL_1(\C)$. The standard representation realises~$\bG$ as an elliptic
twisted endoscopic subgroup of~$\GLb_N \times \GLb_1$, as we will explain below.

We set $\sign(\bG)=1$ if $\bG=\GSpinb^\alpha_{2n}$ or $\GLb_1 \times \Spb_{2n}$,
and~$\sign(\bG)=-1$ if $\bG=\GSpinb_{2n+1}$ (equivalently, we
set~$\sign(\bG)=-1$ if and only if~$\Ghat$ is symplectic).

\subsection{Levi subgroups and dual embeddings}
\label{subsec:Levi_parametrisation}

As in our description of the dual group ~$\SO_{2n}$ above, we may realise the
groups $\SOb_{2n}^{\alpha}$ and $\SOb_{2n+1}$ as matrix groups using an
antidiagonal symmetric bilinear form (block antidiagonal with a $2 \times 2$
block in the middle for $\SOb_{2n}^{\alpha}$ with $\alpha \neq 1$). Let $\bB$ be
the Borel subgroup consisting of upper diagonal elements (block upper diagonal
in the case of $\SOb_{2n}^{\alpha}$). Let $\bT$ be the subgroup of diagonal
(resp.\ block diagonal) elements. This Borel pair being given, we can now
consider standard parabolic subgroups and standard Levi subgroups. (We recall
that we only need to consider Levi subgroups up to conjugacy; indeed, given a
Levi subgroup~$\bL$ of a parabolic~$\bP$, we obtain an $L$-embedding ${}^L \bL
\into {}^L \bG$, which up to $\Ghat$-conjugacy is independent of the choice
of~$\bP$.)

It is well-known that the standard Levi subgroups are parametrised as follows.
Consider ordered partitions $n = \sum_{i=1}^r n_i + m$, where $m > 0$ if $\bG =
\SOb_{2n}^{\alpha}$ with $\alpha \neq 1$, and $m \neq 1$ if $\bG = \SOb_{2n}^1$.
Such a partition yields a standard Levi subgroup $\bL$ of $\bG$ isomorphic to
$\GLb_{n_1} \times \dots \times \GLb_{n_r} \times \bG_m$ where $\bG_m$ is a
group of the same type as $\bG$ of absolute rank $m$. Explicitly, an isomorphism
is given by
\numequation \label{eq:coord_Levi_SO}
	(g_1, \dots, g_r, h) \longmapsto \diag \left( g_1, \dots, g_r, h, S_{n_r}^{-1}
	\,{}^t g_r^{-1} S_{n_r}, \dots, S_{n_1}^{-1} \,{}^t g_1^{-1} S_{n_1} \right),
\end{equation}
where~$S_n$ denotes the antidiagonal $n\times n$ matrix with $1$'s
along the antidiagonal. For $\bG = \SOb_{2n}^1$ and $m=0$ and $n_r >1$, there
are two standard Levi subgroups of~$\bG$ corresponding to the partition $n =
\sum_{i=1}^r n_i$: the one described above and its image under the outer
automorphism of $\bG$. This completes the parameterisation of all standard Levi
subgroups of special orthogonal groups. Standard Levi subgroups of $\Spb$ and
$\GSpb$ admit a similar description.

Denote $\bG' = \GSpinb_{2n}^{\alpha}$ if $\bG = \SOb_{2n}^{\alpha}$ and $\bG' =
\GSpinb_{2n+1}$ if $\bG = \SOb_{2n+1}$. Parabolic subgroups of $\bG'$ correspond
bijectively to parabolic subgroups of ~$\bG$, and the same goes for their Levi
subgroups. Consider $\bL$ as above, and let $\bL'$ be its preimage in $\bG'$. An
easy root-theoretic exercise shows that there exists a unique isomorphism
\[ \GLb_{n_1} \times \dots \times \GLb_{n_r} \times \bG'_m \simeq \bL'  \]
lifting \eqref{eq:coord_Levi_SO} such that for any $1 \leq i \leq r$, the
composition of the induced embedding of $\GLb_{n_i}$ in $\bG'$ with the spinor
norm $\bG' \rightarrow \GLb_1$ is $\det$. Alternatively, the embeddings
$\GLb_{n_i} \rightarrow \GSpinb_{2 n_i}^1$ can be constructed geometrically
using the definition of $\GSpinb$ groups via Clifford algebras (see \cite[Ch.\
IV, \S 6.6]{Knus}), and the above parameterisation of $\bL'$ easily follows. The
conjugacy class of $\bL'$ under $\bG'(F)$ is determined by the multi-set $\{
n_1, \dots, n_r\}$.

Dually, this corresponds to identifying the dual Levi subgroup $\Lhat$ of $\Ghat
= \GSO_{2n}$ or $\GSp_{2n}$ with $\GL_{n_1} \times \dots \times \GL_{n_r} \times
\widehat{\bG_m'}$ via the block diagonal embedding:
\[ (g_1, \dots, g_r, h) \mapsto \diag \left( g_1, \dots, g_r, h, \muhat(h)
S_{n_r} \,{}^t g_r^{-1} S_{n_r}^{-1}, \dots, \muhat(h) S_{n_1} \,{}^t g_1^{-1}
S_{n_1}^{-1} \right) \]

\subsection{Endoscopic groups and transfer} \label{subsec: endoscopic
  groups and transfer}

Before stating the conjectural parameterisation, we need to recall some
definitions and results about endoscopy. We begin by recalling that an
endoscopic datum for a connected reductive group~$\bG$ over a local field $F$ is
a tuple $(\bH,\cH,s,\xi)$ (almost) as in~\cite[\S 2.1]{MR1687096}:
\begin{itemize}
	\item $\bH$ is a quasi-split connected reductive group over~$F$,
	\item $\xi : \Hhat \rightarrow \Ghat$ is a continuous embedding,
	\item $\cH$ is a closed subgroup of ${}^L \bG$ which surjects onto $W_F$ with
		kernel $\xi(\Hhat)$, such that the induced outer action of $W_F$ on
		$\xi(\Hhat)$ coincides with the usual one on $\Hhat$ transported by $\xi$,
		and such that there exists a continuous splitting $W_F \rightarrow \cH$,
	\item and $s \in \Ghat$ is a semisimple element whose connected centraliser
    in $\Ghat$ is~$\xi(\Hhat)$ and such that the map $W_F \rightarrow \Ghat$
    induced by $h \in \cH \mapsto s h s^{-1} h^{-1}$ takes values in $Z(\Ghat)$
    and is trivial in $H^1(W_F, Z(\Ghat))$.
\end{itemize}
Note that we modified the notation slightly: in \cite{MR1687096} $\cH$ is not
contained in ${}^L \bG$ and instead $\xi$ is an embedding of $\cH$ in ${}^L
\bG$. We choose this convention because in contrast to the general case where
$z$-extensions are a necessary complication, in all cases that we will consider
the embedding $\xi : \Hhat \rightarrow \Ghat$ will admit a (non-unique)
extension as ${}^L \xi : {}^L \bH \rightarrow {}^L \bG$. Of particular
importance are the elliptic endoscopic data, which are those for which the
identity component of~$\xi(Z(\Hhat)^{\Gal_F})$ is contained in~$Z(\Ghat)$.

For $\bG$ belonging to the three families introduced in Section~\ref{subsec:
Gspin groups} the groups~$\bH$ will be products whose factors are either general
linear groups, or quotients by~$\GLb_1$ of products of groups of the form
considered in Section~\ref{subsec: Gspin groups}. At this level of generality we
content ourselves with specifying the group~$\bH$, for each equivalence class of
non-trivial ($s \not\in Z(\Ghat)$) elliptic endoscopic datum of $\bG$. They are
as follows.

\begin{itemize}
	\item If $\bG=\GSpinb_{2n+1}$, then $\bH=(\GSpinb_{2a+1} \times
		\GSpinb_{2b+1})/\GLb_1$ with $a+b=n$, $ab \neq 0$, and the quotient is
		by~$\GLb_1$ embedded as $z \mapsto (\mu(z), \mu(z)^{-1})$. 
	\item If $\bG=\Spb_{2n} \times \GLb_1$, then $\bH=(\Spb_{2a} \times \GLb_1
		\times \GSpinb_{2b}^\alpha)/\GLb_1 \cong \Spb_{2a} \times \SOb_{2b}^{\alpha}
		\times \GLb_1$, where $a+b=n$, $ab \neq 0$, and $\alpha\ne 1$ if~$b=1$.
		\item If $\bG = \GSpinb_{2n}^\alpha$, then $\bH = (\GSpinb_{2a}^\beta \times
		\GSpinb_{2b}^\gamma)/\GLb_1$, where $a+b=n$, $\beta \gamma = \alpha$, $\beta
		\ne 1$ if~$a=1$, and $\gamma \ne 1$ if~$b=1$.
\end{itemize}

In this paper we will also need one case of twisted endoscopy. Recall~\cite[\S
I.1.1]{SFTT1} that if~$F$ is a local field of characteristic zero (in the paper
we will also take $F$ to be a number field), and~$\bG$ is a connected reductive
group defined over~$F$, then a twisted space~$\widetilde{\bG}$ for~$\bG$ is an
algebraic variety over~$F$ which is simultaneously a left and right torsor
for~$\bG$. Consider the split group~$\GLb_n\times\GLb_1$ over a local or global
field of characteristic zero~$F$, and let $\theta$ be the automorphism
of~$\GLb_n\times\GLb_1$ given by $\theta(g,x)=(J{}^tg^{-1}J^{-1},x\det g)$,
where $J$ is the antidiagonal matrix with alternating entries $-1,1,-1,\dots$
(that is, $J_{ij}=(-1)^i\delta_{i,n+1-j}$). The reason for defining $\theta$ in
this way is that it fixes the usual pinning $\CE$ of $\bG$ consisting of the
upper-triangular Borel subgroup, the diagonal maximal torus and $((\delta_{i,a}
\delta_{j,a+1})_{i,j})_{1 \leq a \leq n-1}$. Then $\widetilde{\bG} = \GLb_n
\times \GLb_1 \rtimes \{\theta\}$ is a twisted space which happens to be a
connected component of the non-connected reductive group $\GLb_n \times \GLb_1
\rtimes \{1,\theta\}$.

There is a notion of a twisted endoscopic datum~$(\bH,\cH,s,\xi)$ for the
pair~$(\GLb_n\times\GLb_1,\theta)$, for which we again refer to~\cite[\S
2.1]{MR1687096} (taking~$\omega$ there to be equal to~$1$, as we will throughout
this paper, and using the same convention as above for $\xi$) and \cite[\S
VI.3.1]{SFTT2}. We will explicitly describe all of the elliptic twisted
endoscopic data (up to isomorphism) in the case~$n=4$ in Section~\ref{subsec:
elliptic endoscopic for GL4 times GL1} below. In the present section we shall
only need the fact that if~$\bH$ is one of the groups considered in Section
\ref{subsec: Gspin groups} (denoted $\bG$ there), then~$\bH$ is part of an
elliptic twisted endoscopic subgroup of~$(\GLb_{N(\Hhat)}\times\GLb_1,\theta)$.

\begin{rem}
  The definitions in \cite{SFTT1} and \cite{SFTT2}, using twisted
  spaces rather than a fixed automorphism of $\bG$ (not fixing a base
  point), are more general than those used in most of \cite{MR1687096}, due to
  an assumption in \cite{MR1687096} that is only removed in (5.4) there. Note in
  particular the notion of twisted endoscopic space \cite[\S I.1.7]{SFTT1}. In
  the cases considered in this paper, where $\widetilde{\bG}$ is either $\bG$
  (standard endoscopy) or $\bG \rtimes \theta$ where $\theta \in \Aut(\bG)$
  fixes a pinning $\CE$ of $\bG$ (defined over $F$, i.e.\ stable under
  $\Gal_F$), this notion simplifies and we are under the assumption of
  \cite[(3.1)]{MR1687096}. Namely, the torsor $\CZ(\widetilde{\bG},
  \CE)$ under $\CZ(\bG) := Z(\bG)/(1-\theta)Z(\bG)$ defined in
  \cite[I.1.2]{SFTT1} is trivial with a natural base point $1 \rtimes \theta$,
  and so for any endoscopic datum $(\bH, \CH, \tilde{s}, \xi)$ for
  $\widetilde{\bG}$, the twisted endoscopic space $\widetilde{\bH} := \bH
  \times_{\CZ(\bG)} \CZ(\widetilde{\bG})$ is trivial with natural base point $1
  \rtimes \theta$, where $\theta$ now acts trivially on $\bH$. For this reason
  we can ignore twisted endoscopic spaces in the rest of the paper, and simply
  consider endoscopic groups as in most of \cite{MR1687096}.
\end{rem}

We now very briefly recall the notion of (geometric) transfer in the setting of
endoscopy. Suppose that~$F$ is a local field of characteristic zero, and that
$(\bG, \widetilde{\bG})$ belongs to one of the four families of twisted spaces
considered above, that is $\bG = \GSpinb_{2n+1}$, $\Spb_{2n} \times \GLb_1$ or
$\bG = \GSpinb_{2n}^{\alpha}$ with $\widetilde{\bG} = \bG$, or $\bG = \GLb_n
\times \GLb_1$ with $\widetilde{\bG} = \bG \rtimes \theta$. Given an endoscopic
datum~$\fe = (\bH,\cH,s,\xi)$ for~$\widetilde{\bG}$, and a choice of an
extension ${}^L \xi : {}^L \bH \rightarrow {}^L \bG$ of the embedding $\xi$,
Kottwitz and Shelstad defined \emph{transfer factors} in \cite{MR1687096}, that
is a function on the set of matching pairs of strongly regular semisimple
$\bG(F)$-conjugacy classes in $\widetilde{\bG}(F)$ and regular semisimple stable
conjugacy classes in $\bH(F)$. In general such a function is only canonical up
to $\C^{\times}$, but in all cases considered in this paper there is a Whittaker
datum $\fw = (\bU, \lambda)$ of $\bG$ fixed by an element of
$\widetilde{\bG}(F)$ and this provides \cite[\S 5.3]{MR1687096} a normalisation
of transfer factors, which we denote by $\Delta[\fe, {}^L \xi, \fw]$. To be more
precise we use the transfer factors called $\Delta_D$ in \cite{KS12},
corresponding to the normalisation of the local Langlands correspondence
identifying uniformizers to \emph{geometric} Frobenii. In all cases of ordinary
endoscopy one can choose an arbitrary Whittaker datum of $\bG$.

In the case that $\bG=\GSpinb_{2n}^\alpha$, there is an outer
automorphism~$\delta$ of~$\bG$ which preserves the Whittaker datum. This
$\delta$ can be chosen to have order $2$ and be induced by an element of the
orthogonal group having determinant $-1$; if $F$ is Archimedean, for simplicity
we can and do choose the maximal compact subgroup~$K$ of $\bG(F)$ to be
$\delta$-stable.

In this paper we are particularly interested in the case $\bG = \GSpinb_5$. By
Hilbert's theorem $90$ the morphism $\GSpinb_{2n+1}(F) \rightarrow
\SOb_{2n+1}(F)$ is surjective, so $\GSpinb_{2n+1}$ is of adjoint type and there
is up to conjugation by $\GSpinb_{2n+1}(F)$ only one Whittaker datum in this
case.

For $\widetilde{\bG} = (\GLb_n \times \GLb_1) \rtimes \theta$ we choose for
$\bU$ the subgroup of unipotent upper triangular matrices in $\GLb_n$ and
$\lambda((g_{i,j})_{i,j}) = \kappa(\sum_{i=1}^{n-1} g_{i,i+1})$ where $\kappa :
F \rightarrow S^1$ is a non-trivial continuous character. This is the Whittaker
datum associated to $\mathcal{E}$ and $\kappa$. This Whittaker datum is fixed by
$\theta$ (this is the reason for the choice of this particular $\theta$ in its
$\bG(F)$-orbit).

\begin{defn} \label{defn: Hecke alg}
  If~$F$ is $p$-adic, then we let $\cH(\widetilde{\bG})$ denote the space of
  smooth compactly supported distributions on~$\widetilde{\bG}(F)$ with
  $\C$-coefficients. Then $\cH(\widetilde{\bG}) = \varinjlim_K
  \cH(\widetilde{\bG}(F) // K)$ where the limit is over compact open subgroups
  of $\bG(F)$ and $\cH(\widetilde{\bG}(F) // K)$ is the subspace of
  bi-$K$-invariant distributions. If $F$ is Archimedean, then we fix a maximal
  compact subgroup~$K$ of~$\bG(F)$, and write~$\cH(\widetilde{\bG})$ for the
  algebra of bi-$K$-finite smooth compactly supported distributions
  on~$\widetilde{\bG}(F)$ with $\C$-coefficients.
\end{defn}

Under convolution, the space $\cH(\tbG)$ is a bi-$\cH(\bG)$-module, where
$\cH(\bG)$ is the usual (non-twisted) Hecke algebra for $\bG$.

In the case that~$\bG=\GSpinb_{2n}^\alpha$, we let $\widetilde{\cH}(\bG)$ denote
the subalgebra of~$\cH(\bG)$ consisting of $\delta$-stable distributions, and
otherwise we set~$\widetilde{\cH}(\bG)=\cH(\bG)$ and $\delta=1$. 

An admissible twisted representation of~$\widetilde{\bG}$ is by definition a
pair~$(\pi,\tpi)$ consisting of an admissible representation~$\pi$
of~$\bG(F)$ and a map~$\tpi$ from~$\widetilde{\bG}$ to the automorphism group of
the underlying vector space of~$\pi$, which satisfies
\[ \tpi(g \gamma g') = \pi(g) \tpi(\gamma) \pi(g')\]
for all $g,g'\in \bG(F)$, $\gamma \in \widetilde{\bG}$. (This is the special
case~$\omega=1$ of the notion of an $\omega$-representation of a twisted space,
which is defined in~\cite{SFTT1}.) If $F = \R$ or $\C$ there is an obvious
notion of $(\mathfrak{g}, \widetilde{K})$-module where $\widetilde{K} \subset
\bG(F)$ is a torsor under $K$ normalising $K$.

We will consider (invariant) linear forms on~$\widetilde{\cH}(\widetilde{\bG})$.
In particular, for each admissible representation~$\pi$ of~$\bG(F)$, there is
the linear form
\[ \tr(\pi(f(g)dg)) = \tr \left( \int_{G(F)} f(g) \pi(g) dg \right). \]
If $F$ is Archimedean and $\pi$ is an admissible $(\fg, \widetilde{K})$-module
the action of $\widetilde{\cH}(\widetilde{\bG})$ is not obviously well-defined
but it is so when $\pi$ arises as the space of $K$-finite vectors of an
admissible Banach representation of $\widetilde{\bG}(F)$, independently of the
choice of this realisation (see \cite[p.\ 326, Theorem 4.5.5.2]{Warner_book1}).
In this paper all $(\fg, \widetilde{K})$-modules will naturally arise in this
way, even with ``Hilbert'' instead of ``Banach'', although not all of them will
be unitary.

We write~$I(\widetilde{\bG})$ for the quotient
of~$\widetilde{\cH}(\widetilde{\bG})$ by the subspace of those
distributions~$f(g)dg$ with the property that for any semisimple strongly
regular $\gamma\in \widetilde{\bG}(F)$, the orbital integral~$O_\gamma(f(g)dg)$
vanishes. There is a natural topology on $I(\widetilde{\bG})$: see
\cite[I.5.2]{SFTT1}. Similarly, we write~$SI(\widetilde{\bG})$ for the quotient
by the subspace for which the stable orbital integrals~$SO_\gamma(f(g)dg)$
vanish. We say that a continuous linear form on~$\widetilde{\cH}(\bG)$ is stable
if it descends to a linear form on~$SI(\widetilde{\bG})$.

Given an endoscopic datum~$(\bH,\cH,s,\xi)$ for~$\widetilde{\bG}$, and our
choice of Whittaker datum, there is a notion of transfer
from~$I(\widetilde{\bG})$ to~$SI(\bH)$ (see \cite[\S 5.5]{MR1687096}, \cite[\S
I.2.4 and IV.3.4]{SFTT1}); this transfer is defined by the property that it
relates the values of orbital integrals on~$\widetilde{\bG}$ to stable orbital
integrals on~$\bH$, using the transfer factors recalled above. Most importantly,
this transfer \emph{exists} (\cite{MR1440722}, \cite{MR2653248},
\cite{Shelstad_twistedtransfer}). Dually, we may transfer \emph{stable}
continuous linear forms on~$\widetilde{\cH}(\bH)$ to continuous linear forms
on~$\widetilde{\cH}(\bG)$.

In the twisted case where $\widetilde{\bG} = \left( \GLb_N \times \GLb_1 \right)
\rtimes \theta$ over a $p$-adic field $F$, the chosen Whittaker datum yields a
hyperspecial maximal compact subgroup $K$ of $\bG(F)$ (see \cite{MR581582}),
which is stable under $\theta$, so it is natural to consider the hyperspecial
subspace (see \cite[\S I.6]{SFTT1}) $\widetilde{K} = K \rtimes \theta$ of
$\widetilde{\bG}(F)$. For any unramified endoscopic datum $(\bH, \CH, \tilde{s},
\xi)$ for $\widetilde{\bG}$ (also defined in \cite[\S I.6]{SFTT1}), with the
above trivialisation of $\widetilde{\bH}$, the associated $\bH_{\ad}(F)$-orbit
of hyperspecial subspaces of $\widetilde{\bH}$ is simply the obvious one, that
is the set of $K' \rtimes \theta$ where $K'$ is a hyperspecial maximal compact
subgroup of $\bH(F)$.

By the existence of transfer and \cite{TFLsmallchar1}, \cite{TFLsmallchar2}
(\cite{Hales} in the case of standard endoscopy), the twisted fundamental lemma
is now known for all elements of the unramified Hecke algebra, with no
assumption on the residual characteristic. We formulate it in our situation,
which is slightly simpler than the general case by the above remarks.

\begin{thm} \label{thm: fundamental lemma}
  Let $\widetilde{\bG}$ be a twisted group over a $p$-adic field $F$ belonging
  to one of the four families introduced at the beginning of this section.
  Assume that $\bG$ is unramified. Let $(\bH, \CH, \tilde{s}, \xi)$ be an
  unramified endoscopic datum for $\widetilde{\bG}$. Choose an unramified
  L-embedding ${}^L \xi : {}^L \bH \rightarrow {}^L \bG$ extending $\xi$. Let
  $\widetilde{K}$ be the hyperspecial subspace of $\widetilde{\bG}(F)$
  associated to the chosen Whittaker datum for $\bG$. Let $1_{\widetilde{K}}$ be
  the characteristic function of $\widetilde{K}$ multiplied by the
  $\bG(F)$-invariant measure on $\widetilde{\bG}(F)$ such that $\widetilde{K}$
  has volume $1$. Let $b : \CH(\bG(F_v)//K_v) \rightarrow \CH(\bH(F_v) // K'_v)$
  be the morphism dual to
	\[ \left( \Hhat \rtimes \Frob \right)^{\semis} / \Hhat-\mathrm{conj}
	\rightarrow \left( \Ghat \rtimes \Frob \right)^{\semis} /
	\Ghat-\mathrm{conj} \]
  via the Satake isomorphisms \emph{(}see \cite[\S 7]{MR546608}\emph{)}. Then
  for any $f \in \CH(\bG(F_v) // K)$, $b(f)$ is a transfer of $f \ast
  1_{\widetilde{K}}$.
\end{thm}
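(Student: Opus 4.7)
The theorem is not genuinely new; it is an assemblage of deep results from the literature, and the task is to organise how they combine and to identify what small bookkeeping remains.

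\emph{Step 1: Reduction to the unit element.} I would first reduce to the case $f = 1_K$, the characteristic function of the hyperspecial $K$. This reduction is a formal consequence of the Satake isomorphism combined with the compatibility of transfer with convolution: once the identity is known for the unit element, it propagates to each element of a generating set of $\CH(\bG(F)//K)$ by using that $b$ is the algebra homomorphism dual to the map on semisimple conjugacy classes in $\Ghat \rtimes \mathrm{Frob}$. In the standard endoscopy cases (the first three families) this reduction is due to \cite{Hales}; in the twisted case $\widetilde{\bG} = (\GLb_N \times \GLb_1) \rtimes \theta$ it is part of the content of \cite{TFLsmallchar1, TFLsmallchar2}. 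Both rely crucially on the existence of transfer, which is available by \cite{MR1440722, MR2653248, Shelstad_twistedtransfer}.

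\emph{Step 2: The unit element case.} The fundamental lemma for $f = 1_K$ in the standard endoscopy cases is Ng\^o's theorem. For the genuinely twisted case of $(\GLb_N \times \GLb_1) \rtimes \theta$ it is the twisted fundamental lemma of Chaudouard--Laumon. Both initially carry a restriction on the residue characteristic $p$ of $F$; this restriction is removed in \cite{TFLsmallchar1, TFLsmallchar2} via Kazhdan's ``close local fields'' principle, which transports the required identity between $p$-adic local fields and equal-characteristic local fields sharing a residue field, where geometric methods are available without characteristic restriction.

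\emph{Main obstacle.} Since the deep analytic and geometric work is done in the cited references, the genuine remaining task is bookkeeping: one must verify that the four families of $\widetilde{\bG}$ introduced at the start of the section, together with their unramified endoscopic data $(\bH, \CH, \tilde{s}, \xi)$, the hyperspecial subspaces $\widetilde{K} = K \rtimes \theta$ associated to the chosen Whittaker datum, and the normalisations of the transfer factors $\Delta_D$ and of the $L$-embedding ${}^L \xi$, fit the precise setup of the cited references. I expect this bookkeeping -- in particular checking the compatibility of Whittaker-normalisations of transfer factors across the various sources, and handling the $\delta$-stability issue in the $\GSpinb^{\alpha}_{2n}$ case -- to be the only subtle point; modulo this verification the theorem follows by direct citation.
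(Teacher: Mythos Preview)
Your proposal is correct and matches the paper's treatment: the paper does not give a separate proof but simply states, in the paragraph preceding the theorem, that the result follows from the existence of transfer together with \cite{TFLsmallchar1}, \cite{TFLsmallchar2} (and \cite{Hales} in the standard endoscopy case), with no residual characteristic restriction. Your write-up is in fact more detailed than the paper's, spelling out the reduction-to-unit-element step and the role of Ng\^o and Chaudouard--Laumon that the paper leaves implicit in its citations.
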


\begin{rem} \label{rem:twisted_unr_rep}
  In the above setting, there is a natural notion of unramified twisted
  representation: extend an unramified representation $(\pi, V)$ of $\bG(F)$
  which is isomorphic to its twist by $\tbG(F)$ to a twisted representation by
  imposing that $\tilde{K}$ acts trivially on $V^K$.
\end{rem}

\subsection{Local parameters}
\label{subsec: local parameters}

Let~$F$ be a local field of characteristic zero. Let $\Psi^+(G)$
denote the set of $\Ghat$-conjugacy classes of continuous morphisms
\[ \psi: \WD_F \times \SL_2(\C) \to {}^L \bG \]
such that
\begin{itemize}
	\item the composite with the projection ${}^L \bG \to \WF$ is the natural
		projection $\WD_F \times \SL_2(\C) \to \WF$,
	\item for any~$w\in \WD_F$, $\psi(w)$ is semisimple, and
	\item the restriction $\psi|_{\SL_2(\C)}$ is algebraic.
\end{itemize}
We let~$\Psi(\bG)\subset\Psi^+(\bG)$ be the subset of bounded parameters.
By a standard argument (see for example the proof of~\cite[Lem.\
6.1]{MR2800725}), the $\{1,\deltahat\}$-orbit of a parameter~$\psi$ is
determined by the data of the conjugacy class of~$\Std_{\bG} \circ \psi$. Let
$\tPsi(G)$ and $\tPsi^+(\bG)$ be the set of $\{1,\deltahat\}$-orbits of
parameters as above.

For $\psi \in \Psi^+(\bG)$ let $\varphi_{\psi}$ be the Langlands parameter
associated to $\psi$, that is $\psi$ composed with the embedding
\[ w \in \WD_F \longmapsto \left( w, \mathrm{diag}(|w|^{1/2}, |w|^{-1/2})
\right) \in \WD_F \times \SL_2(\C). \]

We write $C_\psi$ for the centraliser of~$\psi$ in~$\Ghat$,
$S_\psi=Z(\Ghat)C_\psi$, and
\[\cS_\psi=\pi_0(S_\psi/Z(\Ghat)), \]
an abelian 2-group. We let~$\cS_\psi^\vee=\Hom(\cS_\psi,\C^\times)$ be the
character group of~$\cS_\psi$. Write~$s_\psi$ for the image in~$C_\psi$ of
$-1 \in \SL_2(\C)$.

We can now formulate the conjectures on local Arthur packets in terms
of endoscopic transfer relations.
\begin{conj} \label{conj:local_Arthur_packets GSpin}
  Let $\bG = \GSpinb_{2n+1}$, $\Spb_{2n} \times \GLb_1$ or
  $\GSpinb_{2n}^{\alpha}$. Then there is a unique way to associate to
	each~$(\psi) \in \tPsi(\bG)$ a multi-set $\Pi_{\psi}$ of $\{1,\delta\}$-orbits
	of irreducible smooth unitary representations of $\bG(F)$, together with a map
	$\Pi_{\psi} \rightarrow \CS_{\psi}^{\vee}$, which we will denote by $\pi
	\mapsto \langle \cdot, \pi \rangle$, such that the following properties hold.
	\begin{enumerate}
		\item Let $\pi_{\psi}^{\GLb}$ be the representation of
			$\GLb_{N(\Ghat)}(F) \times \GLb_1(F)$ associated to $(\Std_{\bG} \circ
      \varphi_{\psi})$ by the local Langlands correspondence for
      $\GLb_{N(\Ghat)}\times\GLb_1$, and let $\widetilde{\pi}_{\psi}^{\GLb}$ be
      its extension to $\left( \GLb_{N(\Ghat)}(F) \times \GLb_1(F)
        \right)\rtimes\theta$ recalled in Section \ref{subsec: Whittaker
      normalisation}. Then $\sum_{\pi \in \Pi_{\psi}} \langle s_{\psi}, \pi
      \rangle \tr \pi$ is stable and its transfer to $\GLb_{N(\Ghat)}(F) \times
      \GLb_1(F) \rtimes \theta$ is $\tr \widetilde{\pi}_{\psi}^{\GLb}$, i.e.\
      for any $f \in I(\left( \GLb_{N(\Ghat)}(F) \times \GLb_1(F)
      \right)\rtimes\theta)$ having transfer $f' \in SI(\bG)$ we have
      \[ \tr \widetilde{\pi}_{\psi}^{\GLb}(f) = \sum_{\pi \in \Pi_{\psi}}
      \langle s_{\psi}, \pi \rangle \tr \pi(f'). \]
    \item Consider a semisimple $s \in C_\psi$ with image $\bar{s}$ in
      $\CS_{\psi}$. The pair $(\psi, s)$ determines an endoscopic datum $(\bH,
      \CH, s, \xi)$ for $\bG$ \emph{(}with $\CH = \Cent(s, \Ghat)
      \psi(\WD_F)$\emph{)}, and if we fix an L-embedding ${}^L \xi : {}^L \bH
      \rightarrow {}^L \bG$ extending $\xi$ we obtain $\psi' : \WD_{F} \times
      \SL_2(\C) \rightarrow {}^L \bH$ such that $\psi = {}^L \xi \circ \psi'$.
      Then for any $f \in I(\bG)$ with transfer $f'\in SI(\bH)$, we have:
			\[ \sum_{\pi \in \Pi_{\psi}} \langle \bar{s} s_{\psi}, \pi \rangle \tr
				\pi(f) = \sum_{\pi' \in \Pi_{\psi'}} \langle s_{\psi'}, \pi' \rangle \tr
			\pi'(f'). \]
		\item If $\psi|_{\SL_2(\C)}=1$, then the elements of~$\Pi_\psi$ are tempered
      and $\Pi_{\psi}$ is multiplicity free, and the map~$\Pi_{\psi}
      \to\cS_\psi^\vee$ is injective; if~$F$ is non-Archimedean, then it is
      bijective. Every tempered irreducible representation of~$\bG(F)$ belongs
      to exactly one such~$\Pi_\psi$.
	\end{enumerate}
\end{conj}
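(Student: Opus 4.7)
The plan is to handle the three families in order of increasing difficulty, treating uniqueness first. \emph{Uniqueness}: given any assignment satisfying (1)--(2), the stable distribution $\Theta_\psi := \sum_\pi \langle s_\psi,\pi\rangle \tr\pi$ is pinned down by (1) via the twisted transfer identity with $\widetilde{\pi}^{\GLb}_\psi$; applying (2) for varying semisimple $s \in C_\psi$ and performing Fourier inversion on the finite abelian $2$-group $\CS_\psi$, together with linear independence of characters of irreducible admissible representations, isolates each $\tr\pi$ and its pairing $\langle\cdot,\pi\rangle$. The inductive input is the same assignment on the (smaller rank) elliptic endoscopic groups listed in Section~\ref{subsec: endoscopic groups and transfer}.

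\emph{Existence for $\Spb_{2n}\times\GLb_1$} amounts to a relabeling of \cite{MR3135650}: a parameter splits as a pair $(\psi_{\Sp},\chi)$ of an $\Spb_{2n}$-parameter and a character of $\WF$, and one sets $\Pi_\psi = \{\pi\otimes\chi : \pi \in \Pi_{\psi_{\Sp}}\}$ with pairings inherited from the $\Spb_{2n}$-packet. \emph{Existence for $\GSpinb$ groups} proceeds by lifting Arthur's packets for $\SOb$ to $\GSpinb$ across the central isogeny
\[ 1 \to \GLb_1 \stackrel{\mu}{\longrightarrow} \GSpinb \longrightarrow \SOb \to 1, \]
using the similitude character $\chi_\psi := \muhat \circ \psi$ as the additional datum and the restriction theory recalled in Section~\ref{sec:restriction} to pass between representations of $\GSpinb(F)$ and orbits of $\SOb(F)$-representations with prescribed central character on $\mu(\GLb_1(F))$. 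This is the same general strategy carried out in the present paper for $\GSpinb_5$, and (in a different packaging via $\GSpb_n$ / $\GOb_n$) in \cite{BinXu2017Annalen, MR3568940}. The component group $\CS_\psi$ is naturally an extension of a subgroup of $\CS_{\psi_{\SO}}$ by a character group of $Z(\Ghat)$, and the pairings are determined by matching the $\SOb$-data through this extension.

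The main obstacle is verifying (1)--(3) for the lifted packets. Property (2), ordinary endoscopy, is compatible with the above lifting because the elliptic endoscopic data of Section~\ref{subsec: endoscopic groups and transfer} are themselves central extensions by $\GLb_1$ of those for $\SOb$, so after matching transfer factors the identity reduces to \cite{MR3135650}. Property (3) is bootstrapped from the corresponding $\SOb$ statement through the compatibility of local Langlands for $\GSpinb$ and $\SOb$; in the $\GSpinb_5$ case this compatibility is the content of \cite{MR3267112}, and a parallel strategy should apply in general. The truly hard step is the twisted transfer identity (1), which cannot be deduced from the $\SOb$-analogue because the $\GLb_1$-factor on the $\GLb_{N(\Ghat)}\times\GLb_1$ side carries the similitude character and must be matched explicitly. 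For $\psi|_{\SL_2(\C)}=1$ this is approachable via the tempered $\GSpinb$ Langlands correspondence combined with Whittaker-normalised intertwining computations; for $\psi|_{\SL_2(\C)}\ne 1$ one must construct non-tempered $A$-packets essentially by hand, following the Aubert-involution and $\SL_2$-deformation methods of \cite{MWtransfertGLtordu} and \cite{AMR}, and verify the stability of $\Theta_\psi$ directly. This last verification is where the bulk of the technical work lies, and explains why the statement is given only as a conjecture at this level of generality.
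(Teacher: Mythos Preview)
Your uniqueness argument and your treatment of $\Spb_{2n}\times\GLb_1$ match the paper: uniqueness is the content of the remark following the conjecture (using Proposition~\ref{prop: surj transfer} and Fourier inversion on $\CS_\psi$), and the $\Spb_{2n}\times\GLb_1$ case is exactly the reduction to \cite{MR3135650} recorded in Theorem~\ref{thm: Arthur for chi a square}.

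The genuine gap is in your existence strategy for $\GSpinb$. You propose to \emph{define} $\Pi_\psi$ by lifting Arthur's $\SOb$-packets across the central isogeny, then verify (1)--(3); but this lifting is not well-defined without additional input. A single $\SOb$-packet sits under infinitely many $\GSpinb$-packets related by character twists, and more seriously, the restriction of a single $\GSpinb$-representation to $\SOb$ typically decomposes into several constituents, so the $\SOb$-packet alone does not tell you how to regroup those constituents into $\GSpinb$-representations. The paper's introduction makes this point explicitly when discussing Xu's work \cite{BinXu2017Annalen,MR3568940}: that approach yields only ``coarse $L$-packets'', i.e.\ unions of $\GSpb$-packets lying over a common $\Spb$-packet, precisely because the regrouping problem is unresolved in general. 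So your claim that the lifting construction ``is the same general strategy carried out in the present paper for $\GSpinb_5$'' is not accurate.

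What the paper actually does for $\GSpinb_5$ is quite different and relies on inputs specific to rank two: for tempered $\psi$ it imports the Gan--Takeda local Langlands correspondence for $\GSpb_4$ \cite{MR2800725} (built via theta correspondences, not via lifting from $\SOb_5$) together with Chan--Gan's verification \cite{MR3267112} of the twisted endoscopic character relations (Theorem~\ref{thm: chan gan}); for the handful of remaining non-tempered $\psi$ with non-square similitude character it constructs $\Pi_\psi$ explicitly by hand in Section~\ref{sec: missing local packets}, following \cite{MWtransfertGLtordu} and \cite{AMR}; and for real discrete tempered parameters it uses a global argument (Proposition~\ref{prop: twisted endoscopic character real discrete tempered}). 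None of these steps is a lift from $\SOb_5$, and none has an obvious analogue for general $\GSpinb_{2n+1}$ or $\GSpinb_{2n}^\alpha$. Your closing sentence correctly identifies that the statement is only a conjecture in general, but the obstacle is not merely ``verifying (1)'' for a packet already constructed---it is that no construction of the packet is available in the first place.
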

\begin{rem}
  Note that the uniqueness of the classification is clear from properties~(1)
  and~(2) and Proposition \ref{prop: surj transfer} below, as irreducible
  representations are determined by their traces.
\end{rem}

\begin{prop}[Arthur] \label{prop: surj transfer}
	In the situation of Conjecture \ref{conj:local_Arthur_packets GSpin}, the
	transfer map $I(\widetilde{\GLb_{N(\Ghat)} \times \GLb_1}) \rightarrow
	SI(\bG)^{\delta}$ is surjective.
\end{prop}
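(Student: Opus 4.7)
The plan is to prove surjectivity by explicit construction. Given $f' \in SI(\bG)^{\delta}$, I would produce $f \in I(\widetilde{\GLb_{N(\Ghat)} \times \GLb_1})$ whose transfer equals $f'$, working locally and then gluing.

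First I would verify that the twisted norm map from strongly $\theta$-regular semisimple $\theta$-conjugacy classes in $(\GLb_{N(\Ghat)} \times \GLb_1)(F)$ to strongly regular semisimple stable conjugacy classes in $\bG(F)$ is surjective onto the set of $\delta$-orbits. For the three families of endoscopic groups in Section \ref{subsec: Gspin groups}, this is a direct computation from the description of $\Std_{\bG}$ and the matching of characteristic polynomials: every $\delta$-invariant stable class in $\bG(F)$ admits a lift to a $\theta$-conjugacy class on the $\GLb$-side, because the obstruction to being a norm is captured precisely by the outer twist $\delta$.

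Combining this surjectivity with the non-vanishing of the transfer factor $\Delta[\fe, {}^L \xi, \fw]$ at matching pairs, one can invert the transfer on the strongly regular locus: choose cutoff functions supported near a lift $\widetilde{\gamma}$ of each strongly regular semisimple $\gamma$, and rescale the twisted orbital integral at $\widetilde{\gamma}$ by $\Delta(\widetilde{\gamma},\gamma)^{-1}$ times the stable orbital integral $SO_{\gamma}(f')$. A partition of unity on the strongly regular locus of $(\GLb_{N(\Ghat)} \times \GLb_1)(F)$ then yields a candidate $f$ whose transfer agrees with $f'$ away from singular strata.

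The hard part is verifying that this candidate lies in $I(\widetilde{\GLb_{N(\Ghat)} \times \GLb_1})$ and that its transfer equals $f'$ as an element of $SI(\bG)^{\delta}$, not merely modulo germs supported on non-regular orbits. The standard approach is semisimple descent: for each semisimple $\gamma_0 \in \bG(F)$, the centralisers $\bG_{\gamma_0}$ and $(\GLb_{N(\Ghat)} \times \GLb_1)_{\widetilde{\gamma_0}}$ again fit into a (twisted) endoscopic relation, now of strictly smaller rank, so one concludes by induction on rank using the compatibility of transfer with descent together with the Harish-Chandra germ expansion on each side. In the Archimedean case the relevant regularity and growth conditions are exactly Shelstad's characterisation of the image of twisted transfer; in the $p$-adic case the extension across singular strata follows from Waldspurger's analysis of local twisted transfer (both invoked in Section \ref{subsec: endoscopic groups and transfer}). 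Controlling these descent relations uniformly across the three families, and in particular keeping track of the $\delta$-action through descent for the $\GSpinb^\alpha_{2n}$ case, is the main technical obstacle.
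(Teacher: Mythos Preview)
The paper does not give a proof here at all: it simply cites \cite[Cor.\ 2.1.2]{MR3135650} (Arthur's book) together with its later generalisation in \cite[\S I.4.11]{SFTT1} (and \S IV.3.4 loc.\ cit.\ for the Archimedean case with $K$-finiteness), observing that passing from $\widetilde{\GLb_N}$ to $\widetilde{\GLb_N \times \GLb_1}$ is a cosmetic modification. So your proposal is not being compared to an argument in the paper but to a substantial body of external work.

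Your outline is in the right spirit --- surjectivity of the norm map plus a characterisation of the image of transfer via descent is indeed how such results are established --- but there is a genuine gap in the middle step. You cannot simply take a partition of unity on the strongly regular locus, prescribe twisted orbital integrals pointwise, and expect the resulting object to lie in $I(\widetilde{\GLb_{N(\Ghat)} \times \GLb_1})$. Elements of this space are (images of) compactly supported smooth distributions, and their orbital integrals satisfy nontrivial regularity, smoothness, and boundary conditions near the singular strata that a naive patching will not respect. The content of the theorems you invoke (Arthur's Proposition 2.1.1, Shelstad's characterisation, Waldspurger's and then M{\oe}glin--Waldspurger's analysis) is precisely to identify which functions on the regular set arise this way; quoting them to justify that your candidate lies in the right space is circular, since those results already deliver the surjectivity statement directly.

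In short, the ``hard part'' you flag at the end is not a technical obstacle to be controlled after the main construction --- it \emph{is} the proposition. Your first two paragraphs do not reduce the problem; the honest route is to cite the references above, as the paper does.
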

\begin{proof}
	This is \cite[Cor.\ 2.1.2]{MR3135650} slightly generalised from
	$\widetilde{\GLb_N}$ to $\widetilde{\GLb_N \times \GLb_1}$. Note that the
	general version of \cite[Prop.\ 2.1.1]{MR3135650} was later proved in
	\cite[\S I.4.11]{SFTT1} (see \S IV.3.4 loc.\ cit.\ to extend to the
	Archimedean case with $K$-finiteness).
\end{proof}

\begin{rem} \label{rem: extension of packets}
	Part~(3) of this conjecture gives the local Langlands correspondence for
  tempered representations of~$\bG(F)$ (up to outer conjugacy in case $\bG =
  \GSpinb_{2n}^{\alpha}$). It can be extended to give the local Langlands
  correspondence for all local parameters ~$\psi\in\Psi^+(\bG)$
  with~$\psi|_{\SL_2(\C)}=1$; indeed if
  Conjecture~\ref{conj:local_Arthur_packets GSpin} is known for all~$\bG$, then
  a version can be deduced for~$\Psi^+(\bG)$ using the Langlands classification
  (see \cite{langlandsrg}, \cite{Silberger_Langlandsquot} and
  \cite{SilbergerZink_Langlandsparam}).
\end{rem}

\begin{rem} \label{rem:conj_local_Apack_real}
  In the case where $F$ is Archimedean and for an arbitrary reductive group the
  local Langlands correspondence was established by Langlands and Shelstad (see
  \cite{Shelstad_temp_end2}, \cite{Shelstad_temp_end3}). Compatibility with
  twisted endoscopy was proved by Mezo \cite{Mezo} (under a minor assumption,
  see (3.10) loc.\ cit., which is satisfied in all cases considered in the
  present article) up to a constant which a priori might depend on the parameter
  (see \cite[Annexe C]{AMR}).
\end{rem}

\begin{rem} \label{rem: unramified packets}
  If~$F$ is $p$-adic and~$\bG$ is unramified over~$F$, then there is a unique
  $\bG(F)$-conjugacy class of hyperspecial maximal compact subgroups of~$\bG(F)$
  which is compatible with the Whittaker datum fixed above (in the sense
  of~\cite{MR581582}), and we will say that a representation of~$\bG(F)$ is
  unramified if it is unramified with respect to a subgroup in this conjugacy
  class.

  If $\psi \in \tPsi^+(G)$ and $\psi|_{\WD_{F}}$ is unramified, then assuming
  the conjecture the packet~$\Psi_\psi$ contains a unique unramified
  (orbit of) representation. It has Satake parameter $\varphi_{\psi}$ (up to
  outer conjugation if $\bG = \GSpinb_{2n}^{\alpha}$) and corresponds to the
  trivial character on~$\cS_{\psi}$. This follows from the fundamental lemma
  (Theorem \ref{thm: fundamental lemma}).
\end{rem}

\begin{rem}
 By \cite{MoeglinClayMathProc} if $F$ is $p$-adic and the conjecture holds then
 the packets $\Pi_{\psi}$ are sets rather than multi-sets.
\end{rem}

\subsection{Global parameters and the conjectural multiplicity formula}
	\label{subsec: global parameters}

Now let~$F$ be a number field, and fix a continuous unitary character
$\chi:\A_F^\times/F^\times\to\C^\times$. If~$\pi$ is a cuspidal automorphic
representation of~$\GLb_N/F$ such that $\pi^\vee\otimes\chi\cong\pi$, then we
say that~$\pi$ is~$\chi$-self dual. Note that this implies
that~$\omega_\pi^2=\chi^N$ (so in particular if~$N$ is odd,
then~$\chi=(\omega_\pi\chi^{(1-N)/2})^2$ is a square).

If~$\pi$ is~$\chi$-self dual and $S$ is a big enough set of places of $F$ then
precisely one of the~$L$-functions $L^S(s, \chi^{-1} \otimes \bigwedge^2(\pi))$
and $L^S(s, \chi^{-1} \otimes \Sym^2(\pi))$ has a pole at $s=1$, and this pole
is simple (see~\cite{MR1610812}). In the former case we say that~$(\pi, \chi)$
is of symplectic type, and set~$\sign(\pi, \chi)=-1$, and in the latter we say
that it is of orthogonal type, and we set~$\sign(\pi, \chi)=1$.

We write~$\Psi(\widetilde{\GLb_N\times\GLb_1},\chi)$ for the set of  formal
unordered sums~$\psi=\boxplus_i\pi_i[d_i]$, where the~$\pi_i$ are $\chi$-self
dual automorphic representations for~$\GLb_{N_i}/F$ and the ~$d_i\ge 1$ are
integers (which are to be thought of as the dimensions of irreducible algebraic
representations of~$\SL_2(\C)$), with the property that~$\sum_iN_id_i=N$. We
refer to such a sum as a \emph{parameter}, and say that it is \emph{discrete} if
the (isomorphism classes of) pairs~$(\pi_i,d_i)$ are pairwise distinct.

\begin{rem} \leavevmode
  \begin{enumerate}
    \item
      By the main result of~\cite{MR1026752}, a discrete automorphic
      representation~$\pi$ of $\GLb_N/F$ with $\pi^{\vee} \otimes \chi \cong
      \pi$ gives rise to an element
      of~$\Psi(\widetilde{\GLb_N\times\GLb_1},\chi)$. Indeed, there is a natural
      bijection between such representations~$\pi$ and the elements
      of~$\Psi(\widetilde{\GLb_N\times\GLb_1},\chi)$ of the form~$\pi[d]$ (that
      is, the elements where the formal sum consists of a single term). We will
      use this bijection without further comment below.
    \item The set of formal parameters $\Psi(\widetilde{\GLb_N\times\GLb_1},
      \chi)$ that we consider does not contain all non-discrete $\chi$-self-dual
      parameters, for example those containing a summand of the form $\pi
      \boxplus (\chi \otimes \pi^{\vee})$ for a non-$\chi$-self-dual cuspidal
      automorphic representation $\pi$ for $\GLb_m$. Our ad hoc definition will
      turn out to be convenient when we will consider the discrete part of (the
      stabilisation of) trace formulas.
  \end{enumerate}
\end{rem}

\begin{defn}
  Let $\bG = \GSpinb_{2n+1}$, $\Spb_{2n} \times \GLb_1$ or
  $\GSpinb_{2n}^{\alpha}$ over $F$. We let $\tPsi_{\disc}(\bG,\chi)$ be the
  subset of~$\tPsi(\GLb_{N(\bG)},\chi)$ given by
  those~$\psi=\boxplus_i\pi_i[d_i]$ with the properties that
  \begin{itemize}
    \item $\psi$ is discrete,
    \item for each~$i$, we have $\sign(\pi_i, \chi) = (-1)^{d_i-1} \sign(\bG)$,
    \item if $\bG=\GSpinb_{2n}^\alpha$, then $\chi^{-n} \prod_i
      \omega_{\pi_i}^{d_i}$ is the quadratic character corresponding to the
      extension~$F_\alpha/F$.
  \end{itemize}
  (Conditions analogous to this last bullet point could be formulated
  for the other groups~$\bG$, but in fact they are conjecturally
  automatically satisfied.)
\end{defn}

If $\bG \neq \GSpinb_{2n}^{\alpha}$ we also let $\Psi_{\disc}(\bG,\chi) =
\tPsi_{\disc}(\bG, \chi)$. The reason for writing $\tPsi$ in the case of even
$\GSpinb$ groups is that this set only sees orbits of (substitutes for)
Arthur-Langlands parameters under outer conjugation.

As a particular case of the above definition, for $\pi$ a cuspidal automorphic
representation for $\GLb_N / F$ such that $\chi \otimes \pi^{\vee} \simeq \pi$
there is a unique group $\bG$ as above such that $N(\Ghat) = N$ and $\pi[1] \in
\tPsi_{\disc}(\bG)$.

\begin{conj} \label{conj: global parameters give local parameters GSpin}
  For $\pi$ and $\bG$ as above and for each place~$v$ of $F$, the
  representation~$(\rec(\pi_v), \rec(\chi_v))$ factors through $\Std_{\bG}: {}^L
  \bG \to \GL_{N(\Ghat)}(\C) \times \GL_1(\C)$, so that we can regard~$(\pi_v,
  \chi_v)$ as an element of $\tPsi^+(\bG(F_v))$.
\end{conj}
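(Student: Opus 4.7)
The statement is local at each place, so fix a place $v$ and set $r_v := \rec(\pi_v)$ and $\omega_v := \rec(\chi_v)$. The plan has three parts: (i) convert the global self-duality $\pi^\vee \otimes \chi \cong \pi$ into a local invariant bilinear pairing via local-global compatibility of local Langlands for $\GLb_N$; (ii) identify the symmetry type of this pairing at every place with the global sign $\sign(\pi,\chi)$; and (iii) match the similitude character, and in the even orthogonal case the discriminant, with what is required for the pair $(r_v, \omega_v)$ to lie in the image of $\Std_{\bG}$. Step (i) is immediate: since the local Langlands correspondence for $\GLb_N$ is compatible with contragredient and with twisting by characters, the global hypothesis gives $r_v^{\vee} \otimes \omega_v \cong r_v$ at every $v$, yielding a non-degenerate pairing $b_v \colon r_v \otimes r_v \to \omega_v$. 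Decompose $b_v = b_v^{+} + b_v^{-}$ into symmetric and alternating parts; the image of $(r_v, \omega_v)$ lies in $\GSp_N(\C) \times \GL_1(\C)$ with similitude $\omega_v$ precisely when the restriction of $b_v$ to each self-dual irreducible constituent of $r_v$ is alternating, and analogously for $\GO_N(\C)$.

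Step (ii) is the heart of the argument. The key tool is Shahidi's theory of local $L$- and $\varepsilon$-factors for the symmetric and exterior square representations, which factorises the completed global $L$-functions $L(s, \chi^{-1} \otimes \Sym^2 \pi)$ and $L(s, \chi^{-1} \otimes \bigwedge^2 \pi)$. By \cite{MR1610812}, exactly one of these has a simple pole at $s=1$, and this determines $\sign(\pi,\chi)$. Local-global compatibility then forces each self-dual local constituent of $r_v$ to be of the corresponding type at every place, since a type failure at a single place would produce a sign mismatch in the order of pole at $s=1$ of the global $L$-function. For step (iii), the similitude character is $\omega_v$ by construction of $b_v$. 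In the case $\bG = \GSpinb_{2n}^{\alpha}$, the discriminant of the resulting even orthogonal pairing must additionally agree at $v$ with the local component of the quadratic character cutting out $F_\alpha/F$; this follows from the bullet in the definition of $\tPsi_{\disc}(\bG,\chi)$ requiring $\chi^{-n}\prod_i \omega_{\pi_i}^{d_i}$ to equal that character, combined with local-global compatibility of central characters for $\GLb_N$.

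The main obstacle is the control of local types at ramified finite places and at Archimedean places, where $r_v$ may be reducible as a Weil--Deligne representation; the pairing $b_v$ may then a priori mix different irreducible constituents, and Shahidi's factor analysis must be applied to each self-dual summand individually. For the principal case of interest in this paper, namely $N=4$ and $\bG = \GSpinb_5$, this difficulty can be bypassed cleanly by invoking the exterior square functoriality $\bigwedge^2 \colon \GLb_4 \to \GLb_6$ of \cite{MR1937203, MR2567395}: the local parameters of the isobaric lift $\bigwedge^2 \pi$ are computable at every place, and the presence of $\chi$ as a one-dimensional isobaric summand of $\bigwedge^2 \pi$ exhibits the $\omega_v$-valued alternating pairing on $r_v$ at every $v$ simultaneously, yielding the desired factorisation through $\Std_{\GSpinb_5}$.
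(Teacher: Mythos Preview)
The statement is labelled a \emph{Conjecture} in the paper and is not proved there in general; the paper only establishes the case $\bG = \GSpinb_5$ (Proposition~\ref{prop: L parameter of symplectic is symplectic}) and notes that the case $\bG = \Spb_{2n} \times \GLb_1$ follows from \cite{MR3135650}. So your proposal should be read as an attempt at the $\GSpinb_5$ case, with the general discussion being heuristic at best.

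Your general argument has a genuine gap in step~(ii). The claim that ``a type failure at a single place would produce a sign mismatch in the order of pole at $s=1$ of the global $L$-function'' is not correct as stated: the order of the pole of $L^S(s,\chi^{-1}\otimes\Sym^2\pi)$ or $L^S(s,\chi^{-1}\otimes\bigwedge^2\pi)$ at $s=1$ is a global phenomenon and is not additive over places in any way that would pin down the type of a single local constituent. The actual arguments that push the global sign down to each place (as in \cite[\S 8.1]{MR3135650}) are considerably more delicate and go through a different mechanism. As written, your step~(ii) does not prove anything.

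For the case $\bG = \GSpinb_5$, your final paragraph is essentially correct and coincides with one of the two alternative proofs the paper explicitly records in the Remark following Proposition~\ref{prop: L parameter of symplectic is symplectic}: Kim's exterior square lift \cite{MR1937203} together with Henniart's local-global compatibility \cite{MR2567395} shows that $\bigwedge^2\rec(\pi_v)\otimes\rec(\chi_v)^{-1}$ contains the trivial representation at every $v$, which exhibits a nonzero alternating $\omega_v$-valued pairing on $r_v$. You should be aware, however, that when $r_v$ is reducible this pairing is not automatically non-degenerate, so a short case-by-case analysis (which the paper also flags) is still required to conclude that $r_v$ lands in $\GSp_4$. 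The paper's own proof of the $\GSpinb_5$ case takes a different and shorter route: it simply invokes \cite[Thm.~12.1]{MR2800725}, which produces a globally generic transfer $\Pi$ of $\pi$ to $\GSpb_4(\A_F)$ with the property that $(\rec(\pi_v),\rec(\chi_v))$ is the image under $\Std_{\GSpinb_5}$ of the Gan--Takeda parameter of $\Pi_v$ at every place. This avoids both the exterior square lift and the local case analysis entirely.
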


\begin{rem} \leavevmode
  \begin{enumerate}
    \item This conjecture is the analogue of \cite[Theorem 1.4.1]{MR3135650}
      (reformulated using Theorem 1.5.3 loc.\ cit.). In particular it holds for
      $\bG = \Spb_{2n} \times \GLb_1$.
    \item Since we do not know the generalised Ramanujan conjecture
      for~$\GLb_n$, and do not wish to assume it, we can at present only hope to
      establish that the local parameters $\psi_v$ are elements
      of~$\tPsi^+(\bG_{F_v})$; they are, however, expected to be elements
      of~$\tPsi(\bG_{F_v})$. 
  \end{enumerate}
\end{rem}

Given a global parameter~$\psi \in \tPsi_{\disc}(\bG, \chi)$, we define groups
$C_{\psi}$, $S_\psi$, $\cS_{\psi}$ as follows. For each~$i$, there is a unique
group~$\bG_i$ of the kind we are considering for
which~$\pi_i\in\tPsi_{\disc}(\bG_i,\chi)$. We let~$\cL_\psi$ denote the fibre
product of the ${}^L \bG_i$ over~$\WF$. Then there is a map $\dot{\psi}:
\cL_{\psi} \times \SL_2(\C) \to {}^L \bG$ such that $\Std_{\bG} \circ
\dot{\psi}$ is conjugate to~$\oplus_i \Std_{\bG_i} \otimes \nu_{d_i}$, where
~$\nu_{d_i}$ is the irreducible representation of~$\SL_2(\C)$ of
dimension~$d_i$. The map $\dot{\psi}$ is well-defined up to the action of
$\Aut({}^L \bG)$. We let $C_\psi$ be the centraliser of~$\dot{\psi}$, and
similarly define~$S_{\psi}$ and~ $\cS_\psi$.

For each finite place~$v$, under Conjecture \ref{conj: global parameters give
local parameters GSpin} (applied to the $\pi_i$'s) we may form a local
Arthur-Langlands parameter~$\psi_v^0: \WD_{F_v} \times \SL_2(\C) \to
\CL_{\psi}$. Composing with $\dot{\psi}$, we obtain $\psi_v \in
\tPsi^+(\bG_{F_v})$. The composition of $\psi_v$ with $\Std_{\bG}$ is given by
\begin{itemize}
  \item $\chi_v$ on the~$\GL_1$ factor,
  \item the direct sum of the representations $\varphi_{\pi_i, v} \otimes
    \nu_{d_i}$ on the~$\GL_{N(\Ghat)}$ factor, where
    $\varphi_{\pi_i, v}=\rec(\pi_{i,v})$.
\end{itemize}

Conjecture~\ref{conj: global multiplicity formula GSpin} below makes precise the
expectation that the elements of the corresponding multi-sets ~$\Pi_{\psi_v}$
of~Conjecture~\ref{conj:local_Arthur_packets GSpin} are the local factors of the
discrete automorphic representations of~$\bG$ with multiplier~$\chi$. Before
stating it, we need to introduce some more notation and terminology.

For each place~$v$ of~$F$, write~$\widetilde{\cH}(\bG_v)$ for the Hecke
algebra defined after Definition \ref{defn: Hecke alg}, and
write~$\widetilde{\cH}(\bG)$ for the restricted tensor product of
the~$\widetilde{\cH}(\bG_v)$. Assuming Conjecture~\ref{conj: global parameters
give local parameters GSpin}, we have an obvious map~$\cS_\psi\to\cS_{\psi_v}$
for each~$v$, and we can associate to~$\psi$ a global packet (a multi-set) of
representations of $\widetilde{\cH}(\bG)$:
\[ \widetilde{\Pi}_\psi:=\{\otimes'_v \pi_v: \pi_v\in\Pi_{\psi_v} \text{ with
	}\pi_v\text{ unramified for all but finitely many }v\}. \]
For each~$\pi \in \widetilde{\Pi}_\psi$, we have the associated character
on~$\cS_\psi$,
\[\langle x,\pi\rangle:=\prod_v\langle x_v,\pi_v\rangle\]
(note that by Remark~\ref{rem: unramified packets}, we have
$\langle\cdot,\pi_v\rangle=1$ for all but finitely many~$v$, so this product
makes sense).

Associated to each~$\psi$ is a character
$\epsilon_\psi:\cS_\psi\to\{\pm 1\}$ which can be defined explicitly
in terms of symplectic $\epsilon$-factors. In the case~$\chi=1$ this
is defined in~\cite[Theorem 1.5.2]{MR3135650}, and this definition can
be extended to the case of general~$\chi$ without difficulty. Since we
will only need the case~$\Gb=\GSpin_5$ in this paper, and in this case
the characters~$\epsilon_\psi$ are given explicitly
in~\cite{MR2058604} and are recalled below in Remark~\ref{rem:
  explicit list of parameters following Arthur}, we do not give the
general definition here.

\begin{defn} \label{defn: Pi psi epsilon}
  $\widetilde{\Pi}_\psi(\epsilon_\psi)$ is the subset of $\widetilde{\Pi}_\psi$
  consisting of those elements for
  which~$\langle\cdot,\pi\rangle=\epsilon_\psi$.
\end{defn}

This is the correct definition only because the groups $\CS_{\psi_v}$ are all
abelian.

Recall that we have fixed a maximal compact subgroup $K_{\infty}$ of $\bG(F
\otimes_{\Q} \R)$ in Section \ref{subsec: endoscopic groups and transfer}. Let
$\fg = \C \otimes_{\R} \Lie(\bG(F \otimes_{\Q} \R))$. We write
$\CA^2(\bG(F)\backslash \bG(\A_F),\chi)$ for the space of~$\chi$-equivariant
(where the action of $\A_F^\times/F^\times$ is via~$\mu$) square integrable
automorphic forms on $\bG(F)\backslash \bG(\A_F)$. It decomposes discretely
under the action of~$\bG(\A_{F,f}) \times (\fg, K_{\infty})$.
\begin{conj}
  \label{conj: global multiplicity formula GSpin}
	Assume that Conjectures~\ref{conj:local_Arthur_packets GSpin} and~\ref{conj:
	global parameters give local parameters GSpin} hold.
	Then there is an isomorphism of $\widetilde{\cH}(\bG)$-modules
	\begin{equation*}
		\CA^2(\bG(F) \backslash \bG(\A_F),\chi) \cong
		\bigoplus_{\psi\in\widetilde{\Psi}_{\disc}(\bG,\chi)} m_\psi \bigoplus_{\pi
		\in \widetilde{\Pi}_\psi(\epsilon_\psi)} \pi,
	\end{equation*}
  where $m_\psi=1$ unless~$\bG=\GSpinb_{2n}^\alpha$, in which case $m_\psi=2$ if
  and only if each~$N_i$ is even.
\end{conj}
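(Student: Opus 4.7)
The plan is to adapt the strategy of Arthur's proof of the analogous theorem in \cite{MR3135650} to our setting with similitude characters, by comparing the stabilization of the discrete part of the trace formula for $\bG$ with the stabilization of the discrete part of the twisted trace formula for $\widetilde{\GLb_N \times \GLb_1}$ (where $N = N(\Ghat)$), and then applying the local endoscopic character relations from Conjecture~\ref{conj:local_Arthur_packets GSpin}. Under the assumption of Conjecture~\ref{conj: global parameters give local parameters GSpin}, every $\pi \in \Pi_{\disc}(\bG, \chi)$ has a well-defined global parameter $\psi \in \tPsi_{\disc}(\bG, \chi)$, so the discrete spectrum decomposes as a direct sum $\cA^2(\bG(F) \backslash \bG(\A_F), \chi) = \bigoplus_\psi \cA^2(\bG(F) \backslash \bG(\A_F), \chi)_\psi$ indexed by $\psi$. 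We prove the multiplicity formula one parameter at a time.

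Fix $\psi \in \tPsi_{\disc}(\bG, \chi)$ and let $T^{\bG}_{\disc, \psi}(f)$ denote the corresponding piece of the discrete trace formula. The stabilization of \cite{SFTT1} gives
\[ T^{\bG}_{\disc, \psi}(f) = \sum_{\fe} \iota(\bG, \bH_\fe)\, \hat{S}^{\bH_\fe}_{\psi_\fe}(f^{\bH_\fe}), \]
with the sum over isomorphism classes of elliptic endoscopic data for $\bG$ and $\psi_\fe$ the induced parameter. The analogous stabilization of the twisted trace formula for $\widetilde{\GLb_N \times \GLb_1}$ \cite{SFTT2}, combined with the classification of the discrete spectrum of $\GLb_N$ \cite{MR1026752}, produces an identity whose left-hand side equals (up to an explicit normalization) $\tr \widetilde{\pi}^{\GLb}_\psi(\tilde{f})$. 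Proceeding by induction on $N$, we may assume the multiplicity formula---hence the stable multiplicity formula---for all proper endoscopic groups in the three families of Section~\ref{subsec: Gspin groups}. Solving for $\hat{S}^{\bG}_{\psi}$ and substituting back into $T^{\bG}_{\disc, \psi}$ yields an expression for the latter purely in terms of $\tr \widetilde{\pi}^{\GLb}_\psi$, which by means of Conjecture~\ref{conj:local_Arthur_packets GSpin}(1)--(2), applied place-by-place, is rewritten as a linear combination of traces of representations in $\widetilde{\Pi}_\psi$.

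Finite Fourier inversion over the finite abelian group $\cS_\psi$ then extracts the multiplicity of each $\pi \in \widetilde{\Pi}_\psi$: the product of local characters $\langle \cdot, \pi_v \rangle$, combined with the product of local signs arising from the stable multiplicity formula and from the chosen normalization of transfer factors, produces the global sign character $\epsilon_\psi$. The multiplicity vanishes off $\widetilde{\Pi}_\psi(\epsilon_\psi)$ and equals $m_\psi$ on it. For $\bG = \GSpinb_{2n+1}$ or $\Spb_{2n} \times \GLb_1$ the combinatorics of signs gives $m_\psi = 1$ directly. For $\bG = \GSpinb_{2n}^\alpha$ the parameters and packets are defined up to the outer action of $\{1, \delta\}$ (equivalently $\{1, \deltahat\}$ on the dual side); the condition that each $N_i$ is even is precisely what forces $\dot{\psi}$ to be $\deltahat$-stable in the required sense, which in the spectral decomposition forces each $\delta$-orbit in $\widetilde{\Pi}_\psi(\epsilon_\psi)$ to consist of a single $\delta$-fixed representation, which then appears in the discrete spectrum with multiplicity $m_\psi = 2$. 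This is the exact analogue, in the setting of similitude groups, of the $m_\psi \in \{1,2\}$ dichotomy for $\SOb_{2n}^\alpha$ in \cite[Theorem~1.5.2]{MR3135650}.

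The main obstacle is running the induction: the hypothesis for proper endoscopic groups must be available for arbitrary similitude characters and all smaller ranks in the three families, and the bookkeeping of transfer factor normalizations (tied to the Whittaker data of Section~\ref{subsec: endoscopic groups and transfer}), together with the convergence and cancellation of spectral distributions for non-cuspidal parameters $\psi$, is the most technically demanding step, exactly as in \cite{MR3135650}. An additional subtlety not present in Arthur's classical treatment is that, since we do not assume the generalized Ramanujan conjecture for $\GLb_N$, the local components $\psi_v$ only lie a priori in $\tPsi^+(\bG_{F_v})$ rather than $\tPsi(\bG_{F_v})$, so throughout we must rely on the extension of Conjecture~\ref{conj:local_Arthur_packets GSpin} to $\tPsi^+$ via Langlands classification (Remark~\ref{rem: extension of packets}) to interpret the trace-formula identities uniformly.
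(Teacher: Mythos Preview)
The paper does not prove this conjecture in general; it is stated as a conjecture for arbitrary $\bG$ in the three families, and the paper establishes only the special case $\bG=\GSpinb_5$ (Theorem~\ref{thm:glob_mult_form_GSpin5}). Your proposal sketches the Arthur-style simultaneous induction on $N$, which is indeed the expected route to the general statement, but this is precisely the approach the paper explicitly declines to pursue: as the introduction explains, even restricted to $\GSpinb_5$ that induction would drag in groups of much higher rank, and no short direct proof along those lines is available.

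More concretely, your induction has a genuine gap. You write ``we may assume the multiplicity formula---hence the stable multiplicity formula---for all proper endoscopic groups,'' but the proper endoscopic groups of a $\GSpinb$ group are (quotients of products of) smaller $\GSpinb$ groups, and for those you need not only the multiplicity formula but also Conjectures~\ref{conj:local_Arthur_packets GSpin} and~\ref{conj: global parameters give local parameters GSpin} with arbitrary similitude character, which you have merely \emph{assumed} at the top level, not for the endoscopic groups. In Arthur's original argument all of these statements are proved together in a single intertwined induction; assuming only the two local/global conjectures for $\bG$ itself and then inducting on the multiplicity formula alone is circular.

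The paper's actual proof of the $\GSpinb_5$ case is structurally very different from your sketch. It first disposes of the case where $\chi$ is a square by twisting down to $\SOb_5$ and citing \cite{MR3135650} (Theorem~\ref{thm: Arthur for chi a square}). For $\chi$ not a square the possible discrete parameters are severely constrained (Remark~\ref{rem: explicit list of parameters following Arthur}), and the Yoshida-type case is imported directly from \cite{MR3267112}. The remaining cases (general type and Soudry type) have $\cS_\psi=1$ and $\epsilon_\psi=1$, so there is no Fourier inversion over $\cS_\psi$ at all: one shows by hand, using Propositions~\ref{prop: STTF to associate parameters} and~\ref{prop: parameter is discrete orthogonal}, that the contributions of proper Levi subgroups and of all endoscopic groups other than $\GSpinb_5$ itself vanish at $c(\psi)$, whence $I_{\disc,\nu,c(\psi)}^{\GSpinb_5}=S_{\disc,\nu,c(\psi)}^{\GSpinb_5}$ is read off directly from the twisted trace formula for $\tGamma$ via surjectivity of transfer (Proposition~\ref{prop: surj transfer}) and the local character relations (Theorem~\ref{thm:local_Arthur_packets}). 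The special ingredients enabling this shortcut---exterior square functoriality for $\GLb_4$, restriction to $\Spb_4$, the Gan--Takeda local correspondence, and Chan--Gan's character relations---have no analogue in higher rank, which is why the paper makes no claim beyond $\GSpinb_5$.
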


\subsection{The results of~\texorpdfstring{\cite{MR3135650}}{Arthur}}

As we have already remarked, the conjectures above are all proved
in~\cite{MR3135650} in the case that~$\chi=1$. As we now explain, the case
that~$\chi$ is a square follows immediately by a twisting argument. The main
results of this paper are a proof of Conjectures~\ref{conj:local_Arthur_packets
GSpin} (Theorem \ref{thm:local_Arthur_packets}) and~\ref{conj: global
multiplicity formula GSpin} (Theorem \ref{thm:glob_mult_form_GSpin5}) in the
case that~$\bG=\GSpinb_5 \cong \GSpb_4$ for general~$\chi$. Conjecture
\ref{conj: global parameters give local parameters GSpin} for $\bG = \GSpinb_5$
is a consequence of \cite{MR2800725}, see Proposition \ref{prop: L parameter of
symplectic is symplectic}. The case that~$\chi$ is a square will be a key
ingredient in our arguments, as if~$\chi$ is not a square, then it is easy to
see that there are considerably fewer possibilities for the parameters~$\psi$,
and this will reduce the number of ad hoc arguments that we need to make.
Moreover in the remaining cases, the statements pertaining to local tempered
representations are covered by \cite{MR3267112}.

\begin{thm}[Arthur]
  \label{thm: Arthur for chi a square}
	If~$\chi=\eta^2$ is a square, then Conjectures~\ref{conj:local_Arthur_packets
	GSpin}, \ref{conj: global parameters give local parameters GSpin}
	and~\ref{conj: global multiplicity formula GSpin} hold.
\end{thm}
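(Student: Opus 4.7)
The plan is to reduce to the case $\chi = 1$ treated in \cite{MR3135650} by a character twist on both the automorphic and the parameter side. Given $\chi = \eta^2$, one fixes a Hecke character $\eta$ and seeks a bijection of the $\chi$-equivariant discrete spectrum of $\bG$ with the $1$-equivariant one that intertwines local and global packet structure.

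For $\bG = \GSpinb_{2n+1}$ or $\GSpinb_{2n}^{\alpha}$, I would use the spinor norm $\nu\colon\bG\to\GLb_1$. From the description in Section~\ref{subsec: Gspin groups}, $\nu\circ\mu\colon\GLb_1\to\GLb_1$ is the squaring map, so for any automorphic representation $\pi$ of $\bG(\A_F)$ that is $\chi$-equivariant under $\mu$, the twist $\pi\otimes(\eta^{-1}\circ\nu)$ has trivial restriction to $\mu(\A_F^\times)$ and therefore descends to an automorphic representation of $\bG/\mu(\Gm) = \SOb_{2n+1}$ or $\SOb_{2n}^\alpha$, the groups handled by Arthur. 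Dually, on the parameter side one replaces a parameter $\psi\colon\cL_\psi\times\SL_2(\C)\to{}^L\bG$ by $\psi\otimes(\widehat{\nu}\circ\widehat{\eta^{-1}})$; since $\widehat\nu$ kills the similitude, the twisted parameter factors through ${}^L\SOb$ and the summands of the associated $\chi$-self-dual $\GLb_N$-parameter become genuinely self-dual. For $\bG = \Spb_{2n}\times\GLb_1$, a $\chi$-equivariant rep is automatically of the form $\pi_1\boxtimes\omega$ with $\omega^2 = \chi$, and the $\Spb_{2n}$ factor lies directly in Arthur's setting. In each case the twist is a local-global bijection compatible with restricted tensor products, so it is enough to carry it out locally and check that the extra $\eta$-factor is harmless.

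The next step is to verify that every ingredient of Conjectures~\ref{conj:local_Arthur_packets GSpin}, \ref{conj: global parameters give local parameters GSpin} and~\ref{conj: global multiplicity formula GSpin} is preserved by this twist. One checks that:
(i) the local Arthur packets $\Pi_\psi$ and the characters $\langle\cdot,\pi\rangle$ of $\cS_\psi$ go through unchanged, because the groups $C_\psi$, $S_\psi$, $\cS_\psi$ and the element $s_\psi$ are defined purely in terms of the image of $\psi$ in the adjoint group and are unaffected by a central twist;
(ii) the standard and twisted endoscopic characters relations of Conjecture~\ref{conj:local_Arthur_packets GSpin} are preserved since character twisting commutes with transfer (the transfer factors for $\bG$ and for $\SOb$/$\Spb$ differ only by pullback of the auxiliary character, and the Whittaker datum chosen in Section~\ref{subsec: endoscopic groups and transfer} can be taken compatibly);
(iii) the sign invariants $\sign(\pi_i,\chi)$ match $\sign(\pi_i\otimes\eta^{-1},1)$, the discreteness condition in $\tPsi_{\disc}(\bG,\chi)$ is equivalent to that in $\tPsi_{\disc}(\bG^{\mathrm{der}},1)$, and the determinant condition on $\GSpinb_{2n}^\alpha$ transforms correctly under the twist by $\eta$;
(iv) the character $\epsilon_\psi\colon\cS_\psi\to\{\pm1\}$, being given by symplectic $\epsilon$-factors of the summands $\pi_i\otimes\pi_j[d_i][d_j]$ (a quantity insensitive to tensoring each $\pi_i$ by $\eta$ when the pairing is symplectic), matches between the two sides, and likewise the multiplicities $m_\psi$ coincide.

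The hardest point is likely to be item (ii) in the $\GSpinb_{2n}^\alpha$ case, where one has to track the outer automorphism $\delta$ and the auxiliary $L$-embedding ${}^L\xi$ simultaneously with the twist by $\eta^{-1}\circ\nu$; but this is straightforward once the Whittaker datum is chosen $\delta$-stably as in Section~\ref{subsec: endoscopic groups and transfer}. Conjecture~\ref{conj: global parameters give local parameters GSpin} itself follows because after twisting, the summands are ordinary self-dual cuspidal representations of $\GLb_{N_i}$, and the cited Theorem 1.4.1 of \cite{MR3135650} gives the factorization through the classical dual group, which one then twists back.
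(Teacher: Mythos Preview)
Your proposal is correct and follows essentially the same approach as the paper's proof: twist by $\eta^{-1}$ (via the spinor norm in the $\GSpinb$ cases) to reduce to $\chi=1$, then identify representations with trivial similitude factor with representations of the underlying classical group treated in \cite{MR3135650}. The paper's proof is considerably more terse, simply asserting that ``all of the conjectures are easily seen to be compatible with these twists,'' whereas you spell out the compatibilities (i)--(iv) explicitly; but the substance is the same.
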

\begin{proof}
  Given a~$\chi$-self dual cuspidal automorphic representation~$\pi$, the twist
  $\pi\otimes\eta^{-1}$ is self dual. Similarly, we may twist the local
  parameters by the restriction to~$W_{F_v}$ of the character corresponding
  to~$\eta^{-1}$, and we can also twist representations of~$\bG(F)$
  and~$\bG(F_v)$ by~$\eta^{-1}$. All of the conjectures are easily seen to be
  compatible with these twists, so we reduce to the case~$\chi=1$. In this case,
  representations of~$\GSpinb_{2n+1}$, (resp.\ $\GSpinb_{2n}^\alpha$, resp.\
  $\Spb_{2n} \times \GLb_1$) with trivial similitude factor (recall that this
  was defined in Section \ref{subsec: Gspin groups} as the composition of the
  central character with $\mu$) are equivalent to representations
  of~$\SOb_{2n+1}$, (resp.\ representations of $\SOb_{2n}^\alpha$, resp.\
  pairs given by a representation of $\Spb_{2n}$ and a character of $\GLb_1$
  of order $1$ or $2$), so the conjectures are equivalent to the main results
  of~\cite{MR3135650}.
\end{proof}

In particular, since in the case~$\bG = \Spb_{2n} \times \GLb_1$ the
character~$\chi$ is always a square, Theorem~\ref{thm: Arthur for chi a square}
always holds in this case.

\subsection{Low rank groups}
\label{subsec: low rank groups}

If $N(\Ghat) \le 3$ then Conjectures~\ref{conj:local_Arthur_packets GSpin},
\ref{conj: global parameters give local parameters GSpin} and~\ref{conj: global
multiplicity formula GSpin} also hold unconditionally.
\begin{enumerate}
  \item If $N=1$ the results are tautological.
  \item if $N=2$ then $\bG=\GSpinb_3$ or $\bG=\GSpinb_2^\alpha$. In the first
    case $\bG \simeq \GLb_2$ and the results are also tautological. In the
    second case where $\bG = \GSpinb_2^{\alpha} \simeq \Res_{F(\sqrt{\alpha}) /
    F}(\GLb_1)$ we are easily reduced to the well-known Theorem \ref{thm:
    symplectic orthogonal GL2} below, the symplectic/orthogonal alternative
    for~$\GLb_2$.
  \item If $N=3$ then $\bG=\Spb_2 \times \GLb_1$ and we are reduced to a
    special case of Theorem \ref{thm: Arthur for chi a square}. Note that the
    local Langlands correspondence and the multiplicity formula in this case go
    back to Labesse--Langlands \cite{LabLan} and \cite{MR1792292}.
\end{enumerate}

\begin{thm} \label{thm: symplectic orthogonal GL2}
  Let~$\pi$ be a $\chi$-self dual cuspidal automorphic representation of
  $\GLb_2$. Then either
	\begin{enumerate}
		\item $\chi=\omega_\pi$, and $L^S(s,\bigwedge^2(\pi)\otimes\chi^{-1})$ has a
			pole at~$s=1$; or
		\item $\omega_\pi\chi^{-1}$ is the quadratic character given by some
			quadratic extension~$E/F$, $\pi$ is the automorphic induction of a
			character of~$\A_E^\times/E^\times$ which is not fixed by the non-trivial
			element of $\Gal(E/F)$, and $L^S(s,\Sym^2(\pi)\otimes\chi^{-1})$ has a
			pole at~$s=1$.
	\end{enumerate}
\end{thm}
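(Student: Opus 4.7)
The plan is a short direct argument using only central characters and the dihedral theory of $\GLb_2$, avoiding any appeal to Gelbart--Jacquet or the full strength of the symmetric square L-function machinery.

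First I would extract the central character constraint. Taking central characters of the isomorphism $\pi^\vee\otimes\chi\cong\pi$ gives $\omega_\pi^{-1}\chi^2=\omega_\pi$, so $(\omega_\pi\chi^{-1})^2=1$. Thus $\eta:=\omega_\pi\chi^{-1}$ is a (possibly trivial) quadratic idele class character. The two cases of the theorem correspond exactly to whether $\eta=1$ or not.

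In case (1), where $\chi=\omega_\pi$, the identification $\bigwedge^2\pi=\omega_\pi$ gives
\[ L^S(s,\textstyle\bigwedge^2(\pi)\otimes\chi^{-1})=L^S(s,\omega_\pi\chi^{-1})=\zeta_F^S(s), \]
which has a simple pole at $s=1$. This handles (1) immediately.

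In case (2), where $\eta$ is the nontrivial quadratic character cut out by a quadratic extension $E/F$, I would first rewrite the self-duality as $\pi\cong\pi^\vee\otimes\chi=\pi\otimes\omega_\pi^{-1}\chi=\pi\otimes\eta^{-1}=\pi\otimes\eta$, so $\pi\otimes\eta\cong\pi$. By the classical theory of dihedral representations for $\GLb_2$ (going back to Labesse--Langlands, and also a standard consequence of cyclic base change), a cuspidal $\pi$ satisfying $\pi\otimes\eta\cong\pi$ is necessarily of the form $\pi=\mathrm{AI}_E^F(\theta)$ for some character $\theta$ of $\A_E^\times/E^\times$, and $\pi$ is cuspidal if and only if $\theta\neq\theta^\sigma$ where $\sigma$ generates $\Gal(E/F)$. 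This proves the structural assertion in (2).

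Finally, for the pole of the symmetric square in case (2), I would use $\Sym^2\pi=\Ad(\pi)\otimes\omega_\pi$ to write
\[ L^S(s,\Sym^2(\pi)\otimes\chi^{-1})=L^S(s,\Ad(\pi)\otimes\eta). \]
With $\pi=\mathrm{AI}_E^F(\theta)$ one has the standard isobaric decomposition $\Ad(\pi)=\eta\boxplus\mathrm{AI}_E^F(\theta/\theta^\sigma)$, whence
\[ \Ad(\pi)\otimes\eta=1\boxplus\bigl(\mathrm{AI}_E^F(\theta/\theta^\sigma)\otimes\eta\bigr), \]
and the trivial summand produces the required simple pole at $s=1$. (Alternatively, once case (1) is excluded, one may simply invoke the result of \cite{MR1610812} already cited in the paragraph preceding the theorem, which guarantees that exactly one of the two L-functions has a pole.) The only mildly nontrivial input is the dihedral characterisation in case (2), but this is entirely classical for $\GLb_2$, so there is no real obstacle.
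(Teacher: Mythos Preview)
Your proof is correct and follows essentially the same route as the paper: both use $\bigwedge^2\pi=\omega_\pi$ to handle case~(1), extract the quadratic constraint on $\omega_\pi\chi^{-1}$ from central characters, and then invoke the classical dihedral characterisation $\pi\otimes\eta\cong\pi\Rightarrow\pi=\mathrm{AI}_E^F(\theta)$ (the paper cites \cite{MR574808} for this). The only minor addition is your explicit isobaric decomposition of $\Ad(\pi)\otimes\eta$ to exhibit the $\Sym^2$ pole directly, whereas the paper simply leans on the Shahidi dichotomy \cite{MR1610812} already stated in the paragraph preceding the theorem---but you also note that alternative, so there is no real divergence.
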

\begin{proof}
	Certainly $L^S(s,\bigwedge^2(\pi)\otimes\chi^{-1})=L^S(s,\omega_\pi\chi^{-1})$
	has a pole at~$s=1$ if and only if $\chi=\omega_\pi$. So if
	$L^S(s,\Sym^2(\pi)\otimes\chi^{-1})$ has a pole at~$s=1$, we see
	that~$\omega_\pi\chi^{-1}$ is a non-trivial quadratic character corresponding
	to an extension~$E/F$. Since we always have~$\pi^\vee \otimes \omega_{\pi}
	\cong \pi$, this implies that $\pi \cong \pi \otimes (\omega_\pi\chi^{-1})$,
	and it follows (see~\cite[end of \S 2]{MR574808}) that~$\pi$ is the
	automorphic induction of a character of~$\A_E^\times/E^\times$ which is not
	fixed by the non-trivial element of $\Gal(E/F)$.
\end{proof}

\subsection{The local Langlands correspondence for
~\texorpdfstring{$\GSpb_4$}{GSp(4)}}

Let~$F$ be a $p$-adic field. The local Langlands correspondence for~$\GSpb_4(F)$
was established in~\cite{MR2800725}, but was characterised by relations with
$\gamma$-factors, rather than endoscopic character relations. The necessary
endoscopic character relations were then proved in~\cite{MR3267112}. In
particular, we have:

\begin{thm}[Chan--Gan]
	\label{thm: chan gan}
  If $F$ is a $p$-adic field then Conjecture~\ref{conj:local_Arthur_packets
  GSpin} holds for~$\GSpinb_5$ and parameters $\psi$ which are trivial on
  $\SL_2(\C)$, i.e.\ tempered Langlands parameters.
\end{thm}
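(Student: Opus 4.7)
The plan is to deduce the theorem by translating two existing results into the precise language of Conjecture \ref{conj:local_Arthur_packets GSpin}: the construction of the local Langlands correspondence for $\GSpb_4(F) \cong \GSpinb_5(F)$ by Gan--Takeda in \cite{MR2800725}, and the verification of endoscopic character identities for this correspondence by Chan--Gan in \cite{MR3267112}.

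First, I would define the packets by declaring $\Pi_\psi$ to be the fibre of the Gan--Takeda correspondence over $\varphi_\psi : \WD_F \to \GSp_4(\C)$, for each tempered Langlands parameter. Gan--Takeda equip this fibre with a canonical bijection to $\cS_{\varphi_\psi}^\vee$ (which is what we denote $\pi \mapsto \langle \cdot, \pi \rangle$) and show that every tempered irreducible admissible representation of $\GSpb_4(F)$ lies in exactly one such packet. This directly yields property (3) of Conjecture \ref{conj:local_Arthur_packets GSpin}.

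Next, for properties (1) and (2), I would invoke the main results of \cite{MR3267112}, which prove precisely that these Gan--Takeda packets satisfy the twisted endoscopic character identity for $(\GLb_4 \times \GLb_1) \rtimes \theta$ via $\Std_{\GSpinb_5}$, together with the ordinary endoscopic character identities for each elliptic endoscopic datum of $\GSpinb_5$ enumerated in Section \ref{subsec: endoscopic groups and transfer}. Since $\psi|_{\SL_2(\C)} = 1$ we have $s_\psi = 1$, so property (1) becomes a pure stable character identity and matches their formulation cleanly. Uniqueness of the classification will then follow from property (1), Proposition \ref{prop: surj transfer}, and the linear independence of characters of irreducible admissible representations, as already observed in Remark 2.4.3.

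The main obstacle is not new mathematical input but bookkeeping of normalisations; this is where I would expect to spend most of the effort. Concretely, I would need to verify that (a) the Whittaker datum fixed in Section \ref{subsec: endoscopic groups and transfer} agrees, up to $\bG(F)$-conjugacy, with the one used in \cite{MR3267112}; (b) the transfer factor convention $\Delta_D$ of \cite{KS12} — tied to the geometric Frobenius normalisation of $\rec$ from \cite{ht} — is consistent with the conventions implicit in \cite{MR2800725} and \cite{MR3267112}; and (c) the chosen $L$-embedding ${}^L \xi : {}^L \bH \to {}^L \GSpinb_5$ is the one for which Chan--Gan establish the identities. Since the pairing $\langle \cdot, \pi \rangle$ on $\cS_\psi$, and therefore the very content of properties (1) and (2), depends on all of these choices, this reconciliation must be carried out carefully even though each step is routine.
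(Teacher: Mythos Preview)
Your proposal is correct and follows essentially the same approach as the paper: parts (1) and (2) come from the main theorem of \cite{MR3267112}, and part (3) from the main theorem of \cite{MR2800725}. The only nuance the paper adds that you omit is the parenthetical remark that bounded parameters are automatically generic (their adjoint $L$-functions are holomorphic at $s=1$), which is needed because the hypotheses in \cite{MR3267112} are phrased for generic parameters; conversely, the paper does not dwell on the normalisation bookkeeping you raise, treating the match as immediate.
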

\begin{proof}
	Parts~(1) and~(2) of Conjecture~\ref{conj:local_Arthur_packets GSpin} are an
  immediate consequence of the main theorem of~\cite{MR3267112} (note that
  bounded parameters are automatically generic, in the sense that their adjoint
  $L$-functions are holomorphic at~$s=1$).  Part~(3) then follows from the main
  theorem of~\cite{MR2800725}.
\end{proof}

\begin{rem} \label{rem: LL GSp4 Archi}
  Recall from Remark \ref{rem:conj_local_Apack_real} that over an Archimedean
  field the local Langlands correspondence and (ordinary) endoscopic character
  relations are known in complete generality, and the twisted endoscopic
  character relations are known up to a constant (which might depend on the
  parameter).

  If $F$ is Archimedean and $\psi$ is a tempered and non discrete Langlands
  parameter for $\GSpinb_5$, then the twisted endoscopic character relation was
  verified in \cite[\S 6]{MR3267112}, which amounts to saying that the above
  constant (the only ambiguity in Mezo's theorem) is $1$. In
  Proposition~\ref{prop: twisted endoscopic character real discrete tempered}
  below we will show using a global argument as in \cite[Annexe
  C]{AMR} that this also holds for the
  discrete tempered $\psi$.
\end{rem}

\section{Construction of missing local Arthur packets for $\GSpinb_5$}
\label{sec: missing local packets}
\subsection{Local packets}\label{subsec: local packets}

Let $F$ be a local field of characteristic zero. In this section we
complete the proof of the following theorem, which completes the proof of
Conjecture~\ref{conj:local_Arthur_packets GSpin} for~$\GSpinb_5$.

\begin{theorem} \label{thm:local_Arthur_packets}
  Let $\psi : \WD_F \times \SL_2 \rightarrow \GSp_4$ be an element of
  $\Psi(\GSpinb_5)$. Then there is a unique multi-set $\Pi_{\psi}$ of
  irreducible smooth unitary representations of $\GSpinb_5(F)$, together with a
  map $\Pi_{\psi} \rightarrow \CS_{\psi}^{\vee}$, which we will simply denote by
  $\pi \mapsto \langle \cdot, \pi \rangle$, such that the following holds:
	\begin{enumerate}
    \item \label{item:thm_local_Apackets1}
      Let $\pi_{\psi}^{\Gammab}$ be the representation of $\Gammab(F)$
      associated to $\Std_{\GSpinb_5} \circ \varphi_{\psi}$ by the local
      Langlands correspondence, and let $\pi_{\psi}^{\tGamma}$ be its extension
      to $\tGamma(F)$ \emph{(}Whittaker-normalised as explained in Section
      \ref{subsec: Whittaker normalisation}\emph{)}. Then the linear form
      $\sum_{\pi \in \Pi_{\psi}} \langle s_{\psi}, \pi \rangle \tr \pi$ on
      $I(\GSpinb_5(F))$ is stable and its transfer to $\tGamma$ is $\tr
      \pi_{\psi}^{\tGamma}$. 
    \item \label{item:thm_local_Apackets2}
      Consider a semisimple $s \in \Cent(\psi, \GSp_4)$, and denote by $\bar{s}$ 
      its image in $\CS_{\psi}$. The pair $(\psi, s)$ determines an endoscopic
      datum $(\bH, \CH, s, \xi)$ for $\GSpinb_5$, as well as $\psi' : \WD_F
      \times \SL_2 \rightarrow \widehat{\bH}$ such that $\psi = \xi \circ
      \psi'$. Then for any $f \in I(\GSpinb_5(F))$ we have
			\[ \sum_{\pi \in \Pi_{\psi}} \langle \bar{s} s_{\psi}, \pi \rangle \tr
				\pi(f) = \sum_{\pi' \in \Pi_{\psi'}} \langle s_{\psi'}, \pi' \rangle \tr
			\pi'(f'). \]
	\end{enumerate}
\end{theorem}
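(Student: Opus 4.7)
The plan is to partition the parameters $\psi \in \Psi(\GSpinb_5)$ into three classes and dispose of each using different tools. Uniqueness follows from properties~(1) and~(2) together with Proposition~\ref{prop: surj transfer}: surjectivity of the transfer $I(\tGamma) \to SI(\GSpinb_5)$ pins down the stable distribution $\sum_\pi \langle s_\psi,\pi\rangle \tr\pi$ via property~(1), and then varying the semisimple element $s \in \Cent(\psi, \GSp_4)$ in property~(2), together with knowledge of the Arthur packets for the strictly smaller endoscopic groups $\bH$, determines the coefficients $\langle\bar s s_\psi,\pi\rangle$ for all $\bar s \in \CS_\psi$; Fourier inversion on the finite abelian group $\CS_\psi$ then recovers each individual trace and the attached character $\langle \cdot, \pi\rangle$.

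For existence, tempered parameters (those with $\psi|_{\SL_2(\C)}=1$) are settled by Theorem~\ref{thm: chan gan} in the $p$-adic case and, in the Archimedean case, by the Langlands--Shelstad correspondence together with Mezo's twisted endoscopic character relation: the constant ambiguity of Remark~\ref{rem: LL GSp4 Archi} is trivial for non-discrete tempered parameters by~\cite{MR3267112}, and will be shown to be trivial for discrete tempered parameters by Proposition~\ref{prop: twisted endoscopic character real discrete tempered}.

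Among non-tempered parameters, those whose similitude character $\muhat\circ\psi : W_F \to \C^\times$ is a square, say $\eta^2$, can be handled by twisting: the parameter $\psi \otimes \eta^{-1}$ has trivial similitude character and thus factors through $\Spinb_5 = \Spb_4$, so its packet is furnished by Theorem~\ref{thm: Arthur for chi a square}, and untwisting by $\eta$ yields the packet for $\psi$. Properties~(1) and~(2) are preserved under such twists because the local Langlands correspondence for $\GLb_4 \times \GLb_1$, the transfer factors, and the packets on the endoscopic side all behave equivariantly under twisting by a character of the similitude factor.

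This reduces us to a finite, explicit list of non-tempered parameters with non-square similitude character; inspection of the possible algebraic representations $\SL_2(\C) \to \GSp_4(\C)$ compatible with a non-square similitude leaves essentially two configurations to handle by hand. In each case we construct $\Pi_\psi$ as an explicit collection of Langlands quotients (or irreducible summands of unitary parabolic inductions), following the general recipe of~\cite{MWtransfertGLtordu} and~\cite{AMR}, and verify properties~(1) and~(2) by direct calculation: the stable twisted distribution on $\GLb_4 \times \GLb_1$ is computed from the Langlands classification, its transfer is described via compatibility of transfer with parabolic induction and the tempered identities of the first case, and the endoscopic identities on the smaller groups $\bH$ are matched against the already-established packets. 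The main obstacle is precisely this last case: the twisting reduction being unavailable, one must both exhibit the constituents of $\Pi_\psi$ by hand and independently verify the compatibility of the predicted characters $\langle \cdot, \pi\rangle$ with both the stability/transfer identity~(1) and the full family of endoscopic identities~(2).
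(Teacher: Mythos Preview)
Your proposal is correct and follows essentially the same approach as the paper: the same case division (tempered via Chan--Gan and Langlands--Shelstad/Mezo, with the Archimedean discrete tempered case deferred to Proposition~\ref{prop: twisted endoscopic character real discrete tempered}; non-tempered with square similitude via twisting down to Arthur's $\SOb_5$ result; the two residual non-tempered configurations handled by hand following \cite{MWtransfertGLtordu} and \cite{AMR}), and the same uniqueness argument via Proposition~\ref{prop: surj transfer}. One small wording slip: after twisting by $\eta^{-1}$ the parameter factors through $\Sp_4 \subset \GSp_4$ on the dual side, which corresponds to representations of $\SOb_5(F)$ (not $\Spb_4(F)$) on the group side.
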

Note that in the second point $\bH$ is either $\GSpinb_5$ or a quotient of a
product of general linear groups by a split torus, and so $\Pi_{\psi'}$ is
well-defined. In the latter case it is a singleton and $\cS_{\psi'}$ is trivial.

As we recalled above (Theorems \ref{thm: Arthur for chi a square}, \ref{thm:
chan gan} and Remark \ref{rem: LL GSp4 Archi}) this theorem is already known in
the following cases:
\begin{itemize}
  \item if $\muhat \circ \psi$ is a square,
  \item if $F$ is $p$-adic and $\psi|_{\SL_2} = 1$,
  \item if $F$ is Archimedean, $\psi|_{\SL_2}$ and $\psi$ is not discrete.
\end{itemize}

We will prove the case where $F$ is Archimedean, $\psi$ tempered discrete and
$\chi$ not a square later in Proposition \ref{prop: twisted endoscopic character
real discrete tempered}, since we will use a global argument using the
stabilisation of the trace formula.

This section is devoted to the proof of Theorem \ref{thm:local_Arthur_packets}
in the remaining cases, where $\psi|_{\SL_2}$ is not trivial and $\muhat \circ
\psi$ is not a square. It is easy to see that $\Std_{\GSpinb_5} \circ \psi
\simeq (\varphi[2], \chi)$, where $\varphi : \WD_F \rightarrow \GL_2$ is
$\chi$-self-dual of orthogonal type. Then $\varphi$ factors through $W_F$ and
$\det \varphi / (\muhat \circ \psi)$ has order $1$ or $2$. There are two cases
to consider.
\begin{enumerate}
	\item If $\varphi$ is irreducible then $\det \varphi / (\muhat \circ \psi)$
    has order $2$. Let $E/F$ be the corresponding quadratic extension and denote
    $c$ the non-trivial element of $\Gal(E/F)$. We have $\varphi \simeq
    \Ind_{E/F} \mu$ for a character $\mu : E^{\times} \rightarrow \C^{\times}$
    such that $\mu^c \neq \mu$ and $\mu|_{F^{\times}} = \chi$. Then $\Cent(\psi,
    \GSp_4) = Z(\GSp_4)$ and so we simply have to produce $\Pi_{\psi} = \{ \pi
    \}$ such that $\tr \pi$ transfers to the trace of $\pi_{\psi}^{\tGamma}$.
	\item If $\varphi$ is reducible then $\varphi = \eta_1 \oplus \eta_2$ with
    $\eta_1 \eta_2 = \chi$ and $\eta_1 \neq \eta_2$. Then $\Cent(\psi, \GSp_4) =
    \{ \diag( u_1 I_2, u_2 I_2) \}$ and so we are led to define $\Pi_{\psi} = \{
    \Ind_{\bL}^{\GSpinb_5} ((\rec(\eta_1) \circ \det) \otimes \rec(\chi)) \}$
    where $\bL \simeq \GLb_2 \times \GSpinb_1$. Then the second point in Theorem
    \ref{thm:local_Arthur_packets} is automatically satisfied (see
    \cite[\S 6.6]{MR3267112}), and again we have to check that the twisted
    endoscopic character relation holds.
\end{enumerate}

We will prove these two cases separately, distinguishing between the cases where
$F$ is $p$-adic, real, or complex (in which case only the second case
occurs). Before doing so, we recall some material on Whittaker normalisations.

\subsection{Whittaker normalisation for general linear groups}
\label{subsec: Whittaker normalisation}

In this section $F$ denotes a local field of characteristic zero, $\bG = \GLb_n
\times \GLb_1$ over $F$ and $\widetilde{\bG} = \bG \rtimes \theta$. Following
\cite[\S 5]{MWtransfertGLtordu}, \cite{MR2683009}, \cite[\S 8]{AMR} we briefly
recall the Whittaker normalisation of extensions to $\widetilde{\bG}(F)$ of
irreducible representations of $\bG(F)$ fixed by $\theta$. Recall that we have
fixed a $\theta$-stable Whittaker datum $(\bU, \lambda)$ for $\bG$. If $F$ is
Archimedean for simplicity we choose the maximal compact subgroup $K$ to be
$\operatorname{O}_n(F) \times \{ \pm 1 \}$ (resp.\ $\operatorname{U}(n) \times
\operatorname{U}(1)$) if $F$ is real (resp.\ complex), so that $\theta(K) = K$.

First consider the case of essentially tempered representations. Let $\pi$ be an
essentially tempered (in particular, essentially unitary) irreducible
representation of $\bG(F)$. By \cite{MR0348047} there exists a continuous
Whittaker functional $\Omega$ for $\pi$. If $F$ is $p$-adic this is just an
element of the algebraic dual of the space $\pi_K$ of smooth vectors. If $F$ is
Archimedean this is a continuous functional on the space $\pi_{\infty}$ of
smooth vectors for the topology defined by seminorms as in \cite[p.\
183]{MR0348047}. Now if $\pi$ is fixed by $\theta$, define $\tilde{\pi}(\theta)$
as the unique element $A \in \Isom(\pi, \pi^{\theta})$ such that $\Omega \circ A
= \Omega$. This does not depend on the choice of $\Omega$. So we have an
extension $\tilde{\pi}$ of $\pi$ to a representation of $\widetilde{\bG}(F)$,
well-defined using the Whittaker datum $(\bU, \lambda)$.

Next consider representations parabolically induced from a $\theta$-stable
parabolic subgroup. Fix the usual (diagonal) split maximal torus $\bT$ of $\bG$,
as well as the usual (upper triangular) Borel subgroup $\bB = \bT \bU$ of $\bG$.
Both are $\theta$-stable. Let $w_{\bG}$ be the longest element of the Weyl group
$W(\bT, \bG)$. Let $\bP = \bM \bN$ be a standard parabolic subgroup of $\bG$,
with standard Levi subgroup $\bM \supset \bT$. Assume that $\bP$ is
$\theta$-stable, which means that $\bM = (\GLb_{n_1} \times \dots \times
\GLb_{n_r}) \times \GLb_1$ (block diagonal) with $n_i = n_{r+1-i}$ for all $i$.
Let $\sigma$ be an irreducible admissible representation of $\bM(F)$ fixed by
$\theta$, that is $\sigma \simeq ( \sigma_1 \otimes \dots \otimes \sigma_r)
\otimes \chi$ with $(\det \circ \chi) \otimes \sigma_i^{\vee} \simeq
\sigma_{r+1-i}$ for all $i$. Let $\bD_{\bM}$ be the largest split torus which is
a quotient of $\bM$, so that we have a canonical isogeny $\bA_{\bM} \rightarrow
\bD_{\bM}$. In the present case we have a natural identification $\bD_{\bM}
\simeq \GLb_1^r \times \GLb_1$ via the determinants $\GLb_{n_i} \rightarrow
\GLb_1$. For $\nu \in X^*(\bD_{\bM}) \otimes \C$ inducing a character of
$\bM(F)$, consider the parabolically induced (normalised) representation
$\pi_{\nu} := \Ind_{\bP(F)}^{\bG(F)} \sigma \otimes \nu$. We also assume that
$\nu = (\nu_1, \dots, \nu_r, \nu_0)$ is fixed by $\theta$, i.e.\ $\nu_i +
\nu_{r+1-i} = \nu_0$ for all $i$. Let $w_{\bM}$ be the longest element of
$W(\bT, \bM)$ (for $\bB \cap \bM$) and $w = w_{\bG} w_{\bM}$. Let $\bP^- = \bM
\bN^-$ be the parabolic subgroup of $\bG$ opposite to $\bP$ with respect to
$\bM$, and let $\bP' = \bM' \bN' = w \bP^- w^{-1} = w_{\bG} \bP^- w_{\bG}^{-1}$
be the standard parabolic subgroup conjugated to $\bP^-$. Choose a lift
$\tilde{w}$ of $w$ in $N_{\bG(F)}(\bT)$. Let $\lambda^{\tilde{w}}_{\bM} : (\bM
\cap \bU)(F) \rightarrow S^1$ be the generic character defined by
$\lambda^{\tilde{w}}_{\bM}(u) = \lambda(\tilde{w} u \tilde{w}^{-1})$. Assume
that the space $\Hom_{(\bM \cap \bU)(F)}(\sigma, \lambda^{\tilde{w}}_{\bM})$ of
Whittaker functionals for $\sigma$ with respect to $\lambda^{\tilde{w}}_{\bM}$
is non-zero and thus one-dimensional, and fix a basis $\Omega_{\sigma}$ of this
line. In the $p$-adic case, according to a theorem of Rodier (\cite{MR0354942},
\cite{MR581582}, explained in \cite[\S 3.4]{MR2683009}) we then have that
$\Hom_{\bU(F)}(\Ind_{\bP(F)}^{\bG(F)}(\sigma \otimes \nu), \lambda)$ also has
dimension one. A basis $\Omega_{\pi_{\nu}}$ can be made explicit: for $f$ in the
space of $\Ind_{\bP}^{\bG} \sigma \otimes \nu$ whose support is contained in the
big cell $\bP(F) w^{-1} \bU(F)$,
\numequation \label{equ:int_formula_Whittaker_induced}
  \Omega_{\pi_{\nu}}(f) := \int_{\bN'(F)} \Omega_{\sigma}(f(\tilde{w}^{-1} n))
	\lambda(n)^{-1} dn
\end{equation}
is well-defined (the integrand is smooth and compactly supported). For arbitrary
$f$ the same formula holds with $\bN'(F)$ replaced by large enough open compact
subgroup which depends on $f$ but not on $\nu$ (as usual realising the vector
space underlying $\Ind_{\bP(F)}^{\bG(F)} \sigma \otimes \nu$ independently of
$\nu$ by restriction to $K$), so that $\nu \mapsto \Omega_{\pi_{\nu}}(f)$ is
holomorphic.

The Archimedean case is more subtle, since the notion of Whittaker functional
requires a topology on the underlying space of the representation to be
well-behaved (it is not defined directly on $(\fg, K)$-modules). So in this case
one considers the smooth parabolically induced representation $\pi_{\nu} :=
\Ind_{\bP}^{\bG} (\sigma_{\infty} \otimes \nu)$, whose subspace $\pi_{\nu, K}$
of $K$-finite vectors is naturally isomorphic to the $(\fg, K)$-module
algebraically induced from $\sigma_{\bM(F) \cap K}$ (see \cite[\S
III.7]{MR1721403}). Assume that the central character of $\sigma$ is unitary.
Then the integral \eqref{equ:int_formula_Whittaker_induced} is absolutely
convergent for $\nu \in X^*(\bD_{\bM}) \otimes \C$ satisfying
\numequation \label{equ:pos_cond_ind}
	\forall \alpha \in \Phi(\bT, \bN), \ \langle \alpha^{\vee}, \Re \nu \rangle >
	0,
\end{equation}
and extends analytically to $X^*(\bD_{\bM}) \otimes \C$ (\cite[Theorem
3.6.4]{MR2683009}). The proof of Theorem 3.6.7 in \cite{MR2683009} also shows
uniqueness (up to a scalar) of a Whittaker functional for $\Ind_{\bP}^{\bG}
(\sigma_{\infty} \otimes \nu)$ (note that the argument for uniqueness only
involves the Jordan--H\"older factors of a principal series representation, and
so one may replace $\bP$ by another parabolic subgroup of $\bG$ admitting $\bM$
as a Levi factor and such that the opposite of \eqref{equ:pos_cond_ind} is
satisfied, so that any generic subquotient of $\Ind_{\bP}^{\bG} (\sigma \otimes
\nu)$ appears as a quotient).

We can now treat the $p$-adic and Archimedean cases together. Assume that $\nu$
is chosen so that $\End_{\bG(F)}(\pi_{\nu}) = \C$. This is the case if the
central character of $\sigma$ is unitary and $\nu$ satisfies
\eqref{equ:pos_cond_ind} (this follows from the fact that $\pi_{\nu}$ then has a
unique irreducible quotient which occurs with multiplicity one in its
composition series), or if $- \nu$ satisfies \eqref{equ:pos_cond_ind}
($\pi_{\nu}$ then has a unique irreducible subrepresentation). Then one can
define the action of $\theta$ on $\pi_{\nu}$ to be the unique  $A_{\theta} \in
\End(\pi_{\nu})$ such that $A_{\theta} \circ \pi_{\nu}(g) = \pi_{\nu}(\theta(g))
\circ A_{\theta}$ for all $g \in \bG(F)$ and $\Omega_{\pi_{\nu}} \circ A =
\Omega_{\pi_{\nu}}$. This can be made more explicit in the case at hand, see
\cite[\S 5.2]{MWtransfertGLtordu}. The operator $A_{\theta}$ does not depend on
the choice of $\tilde{w}$ made above.

For this definition we followed \cite[\S 8]{AMR}. As explained there, the
resulting canonical extension of $\pi_{\nu}$ coincides with the extension
defined by Arthur in \cite[\S 2.2]{MR3135650}, by \cite[\S
5.2]{MWtransfertGLtordu} and analytic continuation (see \cite[Remarque
8.3]{AMR}).

Finally, consider an arbitrary irreducible smooth representation $\pi$ of
$\bG(F)$ (admissible $(\fg, K)$-module in the Archimedean case).
By the Langlands classification (\cite[Lemmas 3.14 and 4.2]{langlandsrg},
\cite{Silberger_Langlandsquot}, \cite[Chapter IV]{MR1721403}), $\pi$ is the
unique irreducible quotient of $\Ind_{\bP}^{\bG} (\sigma \otimes \nu)$ (resp.\
unique irreducible subrepresentation of $\Ind_{\bP^-}^{\bG} (\sigma \otimes
\nu)$) for $\nu \in X^*(\bD_{\bM}) \otimes \C$ satisfying
\eqref{equ:pos_cond_ind}, with $\sigma$ tempered (in particular, with unitary
central character) and the pair $(\bP, \sigma \otimes \nu)$ is well-defined up
to conjugation. These two realisations of $\pi$ as quotient (resp.\
subrepresentation) of a parabolically induced representation give two canonical
extensions of $\pi$ to $\widetilde{\bG}$, by the above. In fact these two
canonical extensions coincide: consider the composition
\[ \Ind_{\bP}^{\bG} (\sigma \otimes \nu) \rightarrow \pi \rightarrow
	\Ind_{\bP^-}^{\bG} (\sigma \otimes \nu) \]
which is clearly non-zero. From the properties of these induced representations
mentioned above it follows that $\dim \Hom_{\bG(F)}( \Ind_{\bP}^{\bG} (\sigma
\otimes \nu), \Ind_{\bP^-}^{\bG} (\sigma \otimes \nu)) \leq 1$. Therefore the
above composition coincides with the usual intertwining operator
\cite[Th\'eor\`eme IV.1.1]{WaldspurgerPlancherel}, \cite{VoganWallach} (up to a
scalar and a normalising factor to make this intertwining operator holomorphic
at $\nu$). But this operator varies analytically if we vary $\nu$, and
generically it is an isomorphism between irreducible parabolically induced
representations, thus generically it intertwines the two $A_{\theta}$'s, and by
continuity this also holds for the original $\nu$.

\subsection{Proof of Theorem~\ref{thm:local_Arthur_packets}}

We now prove Theorem~\ref{thm:local_Arthur_packets} in the cases described at
the end of Section \ref{subsec: local packets}.

\begin{proof}[Proof in the first case for $F$ $p$-adic]
  The proof is a very special case of the generalisation of \cite[Th\'eor\`eme
  4.7.1]{MWtransfertGLtordu} to \emph{essentially} self-dual representations.
  See also \cite{Moeglin_sur_certains}.
	
  Let $\rho$ be the supercuspidal representation of $\GLb_2(F)$ such that
  $\rec(\rho) = \varphi$. Then $\chi \otimes \rho^{\vee} \simeq \rho$. We will
  give an ad hoc definition of $\Pi_{\psi}$, using special cases of results of
  \cite{MWtransfertGLtordu} to check compatibility with twisted endoscopy for
  $\GLb_4 \times \GLb_1$. In \cite{MWtransfertGLtordu} M\oe{}glin and
  Waldspurger consider self-dual parameters, and we will argue that their
  arguments extend to the case at hand without substantial modification, the
  essential input being compatibility of local Langlands for $\GSpinb_5$ for
  twisted endoscopy (and the same for $\GSpinb_3$ and $\GSpinb_1$, which is
  trivial).

	Let $\Delta$ be the diagonal embedding $\SU(2) \hookrightarrow \SU(2) \times
	\SL_2(\C)$, so that $\psi \circ \Delta$ is the essentially tempered Langlands
	parameter obtained by tensoring $\varphi$ with the $2$-dimensional irreducible
  representation of the factor $\SU(2)$ of $\WD_F$. Then $\Cent( \psi \circ
  \Delta, \GSp_4) = Z(\GSp_4)$, and so $\Pi_{\psi \circ \Delta}$ (as defined by
  Gan--Takeda in \cite{MR2800725}) consists of a single irreducible discrete
  series representation $\pi_{\psi \circ \Delta}$ of $\GSpinb_5(F)$. Let $\bP$
  be the standard parabolic subgroup of $\GSpinb_5$ with Levi subgroup $\bL
  \simeq \GLb_2 \times \GSpinb_1$ (conventions as in Section
  \ref{subsec:Levi_parametrisation}). Then $\Jac_{\bP}( \pi_{\psi \circ \Delta})
  = \rho |\det|^{1/2} \otimes \chi$ where $\Jac$ denotes the normalised Jacquet
  module. We briefly recall the proof. Let $\pi_{\psi \circ \Delta}^{\GLb}$ be
  the (discrete series) representation of $\GLb_4(F)$ corresponding to $\pr_1
  \Std \circ \psi \circ \Delta : \WD_F \rightarrow \GL_4(\C)$. Denoting by
  $\bP^{\GLb}$ the upper block triangular parabolic subgroup of $\GLb_4$ with
  Levi subgroup $\GLb_2 \times \GLb_2$, it is well-known that $\Jac_{\bP^{\GLb}}
  \left( \pi_{\psi \circ \Delta}^{\GLb} \right) = \rho |\det|^{1/2} \otimes \rho
  |\det|^{-1/2}$. Let $\pi_{\psi \circ \Delta}^{\tGamma}$ be the
  Whittaker-normalised (see Section \ref{subsec: Whittaker normalisation} or
  \cite[\S 5.1]{MWtransfertGLtordu}) extension of $\pi_{\psi \circ
  \Delta}^{\GLb} \otimes \chi$ to $\tGamma(F)$. By (iii) in the main theorem of
  \cite{MR3267112} we have that $\tr \pi_{\psi \circ \Delta}^{\tGamma}$ is a
  transfer of $\tr \pi_{\psi \circ \Delta}$. The parabolic subgroup $\bP^{\GLb}
  \times \GLb_1$ of $\Gammab$ is stable under $\theta$, write $\tilde{\bP} =
  (\bP^{\GLb} \times \GLb_1) \rtimes \theta$. By (an obvious generalisation of)
  \cite[Lemme 4.2.1]{MWtransfertGLtordu}, $\tr \Jac_{\tilde{\bP}} ( \pi_{\psi
  \circ \Delta}^{\tGamma} )$ is a transfer of $\tr \Jac_\bP ( \pi_{\psi \circ
  \Delta} )$, and thus $\Jac_\bP ( \pi_{\psi \circ \Delta} ) = \rho |\det|^{1/2}
  \otimes \chi$. By Frobenius reciprocity, $\pi_{\psi \circ \Delta}$ is
  naturally a subrepresentation of $\Ind_{\bP}^{\GSpinb_5} \left( \rho
  |\det|^{1/2} \otimes \chi \right)$. By \cite[Theorem 2.8]{BZ77ENS} this
  parabolic induction has length $\leq 2$ and so the cokernel of
  \[ \pi_{\psi \circ \Delta} \hookrightarrow \Ind_{\bP}^{\GSpinb_5} \left( \rho
	|\det|^{-1/2} \otimes \chi \right) \]
  is an irreducible Langlands quotient which we denote $\pi_{\psi}$. We
  let $\Pi_{\psi} = \{ \pi_{\psi} \}$. Since $\Cent( \psi, \GSp_4(\C) ) =
  \C^{\times}$, we only have to check the twisted endoscopic character relation
  (Theorem \ref{thm:local_Arthur_packets}~\eqref{item:thm_local_Apackets1}).
  Following \cite{MWtransfertGLtordu}, this will be a consequence of comparing
  the short exact sequence
	\numequation \label{eq:resolution_GSpin}
  0 \rightarrow \pi_{\psi \circ \Delta} \rightarrow \Ind_{\bP}^{\GSpinb_5}
    \left( \rho |\det|^{1/2} \otimes \chi \right) \rightarrow \pi_{\psi}
    \rightarrow 0
	\end{equation}
	with a similar one for $\tGamma$.

  We have a short exact sequence of representations of $\Gammab(F) = \GLb_4(F)
  \times \GLb_1(F)$:
	\numequation \label{eq:resolution_GL}
		0 \rightarrow \pi_{\psi \circ \Delta}^{\GLb} \otimes \chi \rightarrow
		\mathcal{E}_1(\pi_{\psi \circ \Delta}^{\GLb}) \otimes \chi \rightarrow
		\pi_{\psi}^{\GLb} \otimes \chi \rightarrow 0
	\end{equation}
	obtained as in \cite[Prop.\ 3.1.2]{MWtransfertGLtordu}, by applying functorial
	constructions to $\pi_{\psi \circ \Delta}^{\GLb}$ to get a resolution of
	$\pi_{\psi \circ \Delta}^{\GLb}$ by sums of standard modules except possibly
	for the last term, which is defined as a cokernel and shown to be irreducible
	with Langlands parameter $(\psi \circ \Delta)^{\sharp} = \psi$ (the general
	definition of $\psi^{\sharp}$ is given in \cite[\S
	3.1.2]{MWtransfertGLtordu}). The definition of the middle term is
	\[ \mathcal{E}_1(\pi_{\psi \circ \Delta}^{\GLb}) := \Ind_{\bP^{\GLb}}^{\GLb_4}
		\left( \Jac_{\bP^{\GLb}}(\pi_{\psi \circ \Delta}) \right) \simeq
		\Ind_{\bP^{\GLb}}^{\GLb_4} \left( \rho|\det|^{1/2} \otimes \rho|\det|^{-1/2}
		\right) \]
  and in the present case M\oe{}glin and Waldspurger's resolution does not
  involve any non-trivial ``$\mathrm{proj}$'', so that the resolution actually
  goes back to \cite{Aubert}, \cite{SchneiderStuhler}. Following M\oe{}glin and
  Waldspurger one can extend $\pi_{\psi \circ \Delta}^{\GLb} \otimes \chi$ from
  $\Gammab(F)$ to $\Gammab^+(F)$ by choosing an action of $\theta$ (see
  \cite[\S\S 1.7-1.9]{MWtransfertGLtordu}), that we denote by $\theta_{MW}$. The
  resolution \eqref{eq:resolution_GL} inherits an action of $\theta$ by
  functoriality (see \cite[\S 3.2]{MWtransfertGLtordu}), and fortunately the
  resulting action on $\pi_{\psi}^{\GLb} \otimes \chi$ happens to coincide with
  $\theta_{MW}$ (see \cite[Lemma 3.2.2]{MWtransfertGLtordu}, in which we have
  $j(\psi) = 1$ and so $\beta(\psi \circ \Delta, \rho, \leq d) = +1$). Another
  way to choose an extension of $\pi_{\psi \circ \Delta}^{\GLb} \otimes \chi$
  (resp.\ $\pi_{\psi}^{\GLb} \otimes \chi$) to $\theta$ is to use Whittaker
  functionals and the Langlands classification as we recalled in Section
  \ref{subsec: Whittaker normalisation}. Denote the resulting actions of
  $\theta$ by $\theta_W$. In general $\theta_W$ and $\theta_{MW}$ differ by a
  sign, but here fortunately $\theta_W = \theta_{MW}$ on both $\pi_{\psi \circ
  \Delta}^{\GLb} \otimes \chi$ and $\pi_{\psi}^{\GLb} \otimes \chi$ (a special
  case of \cite[Prop.\ 5.4.1]{MWtransfertGLtordu}). Thus we have a well-defined
  extension
	\numequation \label{eq:resolution_GL_twisted}
		0 \rightarrow \left( \pi_{\psi \circ \Delta}^{\GLb} \otimes \chi \right)^+
		\rightarrow \left( \mathcal{E}_1(\pi_{\psi \circ \Delta}^{\GLb}) \otimes \chi
		\right)^+ \rightarrow \left( \pi_{\psi}^{\GLb} \otimes \chi \right)^+
		\rightarrow 0
	\end{equation}
  of \eqref{eq:resolution_GL} to $\Gammab^+(F)$. The trace of the left term is
  known to be the transfer of $\tr \pi_{\psi \circ \Delta}$. By compatibility of
  stable transfer with Jacquet modules \cite[Lemme 4.2.1]{MWtransfertGLtordu}
  and parabolic induction (a consequence of the explicit formula for parabolic
  induction (\cite{vanDijk}, \cite{MR789080}, \cite[\S 7.3, Corollaire
	3]{Lemaire})), the trace of the middle term is the transfer of the middle term
	of \eqref{eq:resolution_GSpin}. So we can conclude that $\tr
	\left(\pi_{\psi}^{\GLb} \otimes \chi \right)^+$ is the transfer of $\tr
	\pi_{\psi}$.
\end{proof}

\begin{proof}[Proof in the second case for $p$-adic $F$]
	This is similar to the previous case but now $\varphi : W_F \rightarrow
  \GL_2(\C)$ is reducible and so it defines a principal series representation of
  $\GLb_2(F_v)$. Write $\varphi \simeq \rec(\eta_1) \oplus \rec(\eta_2)$, so
  that $\chi = \eta_1 \eta_2$. As explained above we can assume that $\eta_1
  \neq \eta_2$. Define $\pi_{\psi} = \Ind_{\bP}^{\GSpinb_5} \left( (\eta_1 \circ
  \det) \otimes \chi \right)$ where the standard parabolic subgroup $\bP$ has
  Levi $\bL \simeq \GLb_2 \times \GSpinb_2$ and $\Pi_{\psi} = \{ \pi_{\psi} \}$.
  The representation $\pi_{\psi}$ is certainly irreducible (see \cite[\S
  4.2]{MoeglinClayMathProc}), but since this is not necessary to prove the
  Theorem we simply take the definition $\Pi_{\psi} = \{ \pi_{\psi} \}$ to mean
  that $\Pi_{\psi}$ is the multi-set of constituents of $\pi_{\psi}$.

  Consider the parabolic induction for $\GLb_4 \times \GLb_1$
  \numequation \label{eqn: non_std_ind}
    \pi_{\psi}^{\Gammab} := \Ind_{\bP^{\GLb}}^{\GLb_4} \left( ( \eta_1 \circ
    \det ) \otimes (\eta_2 \circ \det) \right) \otimes \chi
  \end{equation}
  where $\bP^{\GLb}$ is the standard parabolic subgroup of $\GLb_4$ with Levi
  $\GLb_2 \times \GLb_2$. The twisted representation $\pi_{\psi}^{\tGamma}$
  of $\tGamma(F)$ obtained from \eqref{eqn: non_std_ind} using the canonical
  action of $\theta$ (defined as in \cite[\S 1.3]{MWtransfertGLtordu}) is such
  that its trace is the transfer of the trace of $\pi_{\psi}$, by compatibility
  of parabolic induction with transfer. This is almost the twisted endoscopic
  character relation, but again we need to be careful with the definition of
  Whittaker normalisation. The Whittaker-normalised action of $\theta$ on
  $\pi_{\psi}^{\Gammab}$ is obtained by realising it as the Langlands quotient
  of
  \numequation \label{eqn: Borel_ind}
    \Ind_{\bB^{\GLb}}^{\GLb_4} \left( \eta_1|\cdot|^{1/2} \otimes
      \eta_2|\cdot|^{1/2} \otimes \eta_1|\cdot|^{-1/2}\otimes
      \eta_2|\cdot|^{-1/2} \right) \otimes \chi
  \end{equation}
  where $\bB^{\GLb}$ is the standard Borel subgroup of $\GLb_4$, which coincides
  with the canonical action of $\theta$ on this parabolic induction by (the
  obvious generalisation of) \cite[Lemme 5.2.1]{MWtransfertGLtordu}.

  Let us sketch the proof of the fact that these two actions of $\theta$ on
  $\pi_{\psi}^{\Gammab}$ coincide. It will be convenient to denote $\sigma_1
  \times \dots \times \sigma_r$ for the parabolic induction (using the standard
  parabolic) of an admissible representation $\sigma_1 \otimes \dots \otimes
  \sigma_r$ of $\GLb_{n_1}(F) \times \dots \times \GLb_{n_r}(F)$ to $\GLb_{n_1 +
  \dots + n_r}(F)$. Recall that for any $s \in C$ the parabolic induction
  $\eta_2|\cdot|^{1/2+s} \times \eta_1 |\cdot|^{-1/2-s}$ is irreducible by
  \cite[Theorem 3]{BZIndGLpadic}, since the assumption that $\chi = \eta_1
  \eta_2$ is not a square implies that $\eta_1|_{\CO_F^{\times}} \neq
  \eta_2|_{\CO_F^{\times}}$. The intertwining operator
	\[ I_s : \eta_2 |\cdot|^{1/2+s} \times \eta_1 |\cdot|^{-1/2-s} \longrightarrow
	\eta_1 |\cdot|^{-1/2-s} \times \eta_2 |\cdot|^{1/2+s} \]
	defined by the usual integral formula for $\Re (s) \gg 0$, is rational in
  $q^{-s}$ (where $q$ is the cardinality of the residue field of $F$)
  by \cite[Th\'eor\`eme IV.1.1]{WaldspurgerPlancherel}, and so there is a
  polynomial $r(s)$ in $q^{-s}$ such that $r(s) I_s$ is well-defined and
  non-zero for any $s$, and therefore an isomorphism. It induces an isomorphism
  $I_{s,\nor}$:
	\[ \eta_1 |\cdot|^{1/2} \times \eta_2 |\cdot|^{1/2+s} \times \eta_1
		|\cdot|^{-1/2-s} \times \eta_2 |\cdot|^{-1/2} \rightarrow \eta_1
		|\cdot|^{1/2} \times \eta_1 |\cdot|^{-1/2-s} \times \eta_2 |\cdot|^{1/2+s}
	\times \eta_2 |\cdot|^{-1/2}. \]
	Denote $\pi_{1,s}$ (resp.\ $\pi_{2,s}$) the LHS (resp.\ RHS). Since $\eta_2
	|\cdot|^{-1/2} = \chi / \left( \eta_1 |\cdot|^{1/2} \right)$ and $\eta_1
	|\cdot|^{-1/2-s} = \chi / \left( \eta_2 |\cdot|^{1/2+s} \right)$, there is a
	canonical extension of $\pi_{1,s} \otimes \chi$ to $\Gammab^+(F)$ (see
	\cite[\S 1.3]{MWtransfertGLtordu}). Denote by $\theta_1$ this canonical
	action of $\theta$ on the space of $\pi_{1,s} \otimes \chi$ (one can easily
	check that it does not depend on $s$), so that for $s=0$ we recover the
  Whittaker normalisation on \eqref{eqn: Borel_ind}. The irreducible
  representation
	\[ \left( \left( \eta_1 |\cdot|^{1/2} \times \eta_1 |\cdot|^{-1/2-s} \right)
	\otimes \left( \eta_2 |\cdot|^{1/2+s} \times \eta_2 |\cdot|^{-1/2} \right)
	\right) \otimes \chi \]
  of the $\theta$-stable parabolic subgroup $\bP \times \GLb_1$ of $\Gammab$ is
  also fixed by $\theta$, and so $\pi_{2,s} \otimes \chi$ also admits a
  canonical extension to $\tGamma(F)$. Denote $\theta_2$ this canonical action
  of $\theta$ on the space of $\pi_{2,s} \otimes \chi$, which for $s=0$ recovers
  the canonical action on the quotient \eqref{eqn: non_std_ind}. An easy
  computation that we skip shows that for $\Re(s) \gg 0$ we have $I_{s, \nor}
  \circ \theta_1 = \theta_2 \circ I_{s, \nor}$, and the case of an arbitrary $s
  \in \C$ follows by analytic continuation.
\end{proof}

\begin{proof}[Proof in the first case for $F=\R$]
  This is similar to the first case for $F$ a $p$-adic field except we now
  follow arguments of \cite{AMR}. For $a \in \frac{1}{2} \Z_{\geq 0}$ let $I_a$
  be the tempered Langlands parameter $W_{\R} \rightarrow \GL_2(\C)$ obtained by
  inducing the character  $z \mapsto (z/\bar{z})^a := (z/|z|)^{2a}$ of
  $\C^{\times}$. Up to twisting we can assume that $\varphi = I_a$ with $a > 0$
  integral, with $\chi$ equal to the sign character $\sign$ of $W_{\R}$.
  Let $\pi_{\psi}^{\GLb_4}$ be the irreducible unitary representation of
  $\GLb_4(\R)$ associated to $\varphi_{\psi}$. Let $\chi : \GLb_1(\R)
  \rightarrow \{ \pm 1 \}$ be the sign character, so that $\chi \otimes
  (\pi_{\psi}^{\GLb_4})^{\vee} \simeq \pi_{\psi}^{\GLb_4}$. As in the $p$-adic
  case we have the Whittaker-normalised extension $\pi_{\psi}^{\tGamma}$ of
  $\pi_{\psi}^{\Gammab} := \pi_{\psi}^{\GLb_4} \otimes \chi$.

  We have a (short) resolution from \cite{Johnson} (see \cite[\S 6.2]{AMR} where
  this resolution is made completely explicit for $\GLb_{2n}$ and parameters
  $I_w[n]$ for $w \in \frac{1}{2} \Z_{>0}$)
  \[ 0 \rightarrow \pi_{\psi}^{\GLb_4} \rightarrow \pi_{I_a | \cdot
    |^{-1/2}}^{\GLb_2} \times \pi_{I_a | \cdot |^{1/2}}^{\GLb_2} \rightarrow
    \pi_{I_{a+1/2}}^{\GLb_2} \times \pi_{I_{a-1/2}}^{\GLb_2} \rightarrow 0 \]
  where $|\cdot|$ is the norm character of $W_{\R}$ (i.e.\ the square of the
  usual absolute value on $\C^{\times}$ and $|j| = 1$) and we denoted parabolic
  induction for standard parabolic subgroups of $\GLb$ as in the $p$-adic case.
  In \cite[Lemme 9.9]{AMR} only the first case occurs, so comparing
  normalisations (Whittaker and imposed by induction in Johnson's construction
  of the resolution) is particularly simple: we obtain the analogue of
  \cite[Th\'eor\`eme 9.7]{AMR} with $A_s = A_s^+$.
\end{proof}

\begin{proof}[Proof in the second case for $F=\R$ or $\C$]
  Up to twisting we can assume that $\varphi \simeq 1 \oplus \chi$ with $\chi =
  \sign$ in the real case and $\chi(z) = (z / \bar{z})^a |z|^{it}$ with $a \in
  \frac{1}{2}\Z \smallsetminus \Z$ and $t \in \R$ in the complex case. The proof
  is identical to the $p$-adic case and we do not repeat the argument. Note that
  the complex case is the analogue of \cite[Prop.\ 6.5]{MoeglinRenard}.
\end{proof}

\section{Stabilisation of the twisted trace formula}\label{sec: STTF}

We now state the stabilisation of the twisted trace formula proved by M\oe{}glin
and Waldspurger in \cite{SFTT1}, \cite{SFTT2} following the case of ordinary
(i.e.\ non-twisted) endoscopy proved by Arthur in \cite{ArthurSTF1},
\cite{ArthurSTF2}, \cite{ArthurSTF3} (also following \cite{Langlands_debuts},
\cite{Kottwitz_ell_sing}, \cite{Labesse_asterisque}, and of course
\cite{TTF}). We recall some of the definitions needed to state the
stabilisation, and mention some simplifications occurring in the cases at hand.

\subsection{The discrete part of the spectral side}
\label{subsec: discrete part of spectral side of STTF}

Consider a connected reductive group $\bG$ over a number field~ $F$ and an
automorphism $\theta$ of $\bG$ of finite order. Let $\tbG = \bG \rtimes \theta$.
Let $\bA_0$ be a maximal split torus in $\bG$. We will only consider Levi
subgroups of $\bG$ which contain $\bA_0$. Let $K = \prod_v K_v$ be a good
maximal compact subgroup of $\bG(\A_F)$ with respect to $\bA_0$ as in \cite[\S
3.1]{TTF}. Choose a minimal parabolic subgroup $\bP_0$ of $\bG$ containing
$\bA_0$.

Following \cite[\S X.5]{SFTT2}, let us recall the terms occurring in the
discrete part of the spectral side of the twisted trace formula. To work with
\emph{discrete} automorphic spectra it is necessary to fix central characters
(at least on a certain subgroup of the centre), and we follow \cite[\S
X.5.1]{SFTT2}. We now elaborate on the notation for the discrete automorphic
spectrum introduced in Section~\ref{subsubsec: discrete spectrum intro}. Recall
that $\mathfrak{A}_\bG$ denotes the vector group $\mathbf{A}_\bG(\R)^0$ where
$\mathbf{A}_\bG$ is the biggest central split torus in $\Res_{F/\Q}(\bG)$. Then
$\bG(\A_F) = \bG(\A_F)^1 \times \mathfrak{A}_\bG$, where
\[ \bG(\A_F)^1 = \left\{ g \in \bG(\A_F)\,\middle|\, \forall \beta \in
X^*(\bG)^{\Gal_F},\,|\beta(g)|=1 \right\}, \]
so that $\bG(F) \backslash \bG(\A_F)^1$ has finite measure. Let
$\mathfrak{A}_{\widetilde{\bG}} = \mathfrak{A}_\bG^{\theta}$. Then
$\mathfrak{A}_\bG = (1 - \theta)(\mathfrak{A}_\bG) \times
\mathfrak{A}_{\widetilde{\bG}}$. 

In the general definition of twisted endoscopy one considers a
character~$\omega$ of~$\bG(\A_F)$; in all cases considered in this paper we have
$\omega = 1$. M\oe{}glin and Waldspurger consider a character $\chi_\bG$ of
$\mathfrak{A}_\bG$ which is trivial on $\mathfrak{A}_{\widetilde{\bG}}$ and
satisfies $\theta(\chi_\bG) = \chi_\bG \omega|_{\mathfrak{A}_\bG}$; since we
will always have~ $\omega = 1$ in this paper, we will have $\chi_\bG
= 1$.

Let $\bL$ be a Levi subgroup of $\bG$. Up to conjugating by $\bG(F)$ we can
assume that $\bL$ is the standard Levi subgroup of a standard parabolic subgroup
$\bP$ of $\bG$. There is a canonical splitting $\mathfrak{A}_\bL =
\mathfrak{A}_\bG \times \mathfrak{A}_\bL^\bG$ (with $\mathfrak{A}_\bL^\bG$
included in the derived subgroup of $\bG(F \otimes_{\Q} \R)$), and we write
$\chi_{\bG,\bL}$ for the extension of $\chi_\bG$ to $\mathfrak{A}_\bL$ such that
$\chi_{\bG,\bL}|_{\mathfrak{A}_\bL^\bG} = 1$. As remarked above in all cases
considered in this paper we simply have $\chi_{\bG,\bL} = 1$. The space of
square integrable automorphic forms $\CA^2(\bL(F) \backslash \bL(\A_F),
\chi_{\bG,\bL})$ decomposes discretely, i.e.\ it is canonically the direct sum,
over the countable set $\Pi_{\disc}(\bL, \chi_{\bG, \bL})$ of discrete
automorphic representations $\pi_\bL$ for $\bL$ such that
$\pi_\bL|_{\mathfrak{A}_\bL} = \chi_{\bG,\bL}$, of isotypical
components
\[ \CA^2( \bL(F) \backslash \bL(\A_F) , \chi_{\bG,\bL})_{\pi_\bL} \]
which have finite length. Denote by $\bU_\bP$ the unipotent radical of $\bP$.
Recall \cite[\S I.2.17]{MWDecompSpec} the space $\CA^2 \left( \bU_\bP(\A_F)
\bL(F) \backslash \bG(\A_F), \chi_{\bG,\bL} \right)$ of smooth $K$-finite
functions $\phi$ on $\bU_\bP(\A_F) \bL(F) \backslash \bG(\A_F)$ such that for
any $k \in K$,
\[ x \mapsto \delta_\bP(x)^{-1/2}(x) \phi(x k) \]
is an element of $\CA^2(\bL(F) \backslash \bL(\A_F), \chi_{\bG,\bL})$. In other
words,
\[ \CA^2 \left( \bU_\bP(\A_F) \bL(F) \backslash \bG(\A_F), \chi_{\bG,\bL}
    \right) = \Ind_{\bP(\A_F)}^{\bG(\A_F)} \left( \CA^2(\bL(F) \backslash
  \bL(\A_F), \chi_{\bG,\bL}) \right)^{K\mathrm{-fin}}. \]
This space is endowed with the usual left action of $\CH(\bG)$, which we will
denote by~$\rho_\bP^\bG$. If $\pi_\bL$ is an irreducible admissible representation
of $\bL(\A_F)$ such that $\omega_{\pi_\bL} |_{\mathfrak{A}_\bL} =
\chi_{\bG,\bL}$, denote by
\[ \CA^2(\bU_\bP(\A_F) \bL(F) \backslash \bG(\A_F), \chi_{\bG,\bL})_{\pi_\bL} \]
the sub-$\CH(\bG)$-module of $\CA^2(\bU_\bP(\A_F) \bL(F) \backslash \bG(\A_F),
\chi_{\bG,\bL})$ consisting of functions $\phi$ such that for any $k \in K$,
\[ \left( x \mapsto \delta_\bP(x)^{-1/2}(x) \phi(x k) \right) \in \CA^2(\bL(F)
\backslash \bL(\A_F), \chi_{\bG,\bL})_{\pi_\bL}. \]
Let $W(\bL, \widetilde{\bG}) = \Norm_{\widetilde{\bG}(F)}(\bL) /
\bL(F)$, where the action of~$\widetilde{\bG}(F)$ on~$\bG$ is the
adjoint action coming from the definition of a twisted space~\cite[\S I.1.1]{SFTT1}.
For $\tilde{w} \in W(\bL, \widetilde{\bG})$ and $f(\tilde{x}) d \tilde{x} \in
\CH(\widetilde{\bG})$, we have a map \cite[bottom of p.\ 1204]{SFTT2}
\addtocounter{subsubsection}{1}
\begin{align} \begin{split} \label{equ: automorphic twisted action}
  \rho^{\widetilde{\bG}}_{\bP, \tilde{w}}(f) : \CA^2(\bU_\bP(\A_F) \bL(F)
    \backslash \bG(\A_F), \chi_{\bG,\bL})  & \longrightarrow
    \CA^2(\bU_{\tilde{w}(\bP)}(\A_F) \bL(F) \backslash \bG(\A_F),
    \chi_{\bG,\bL}), \\
  \phi & \longmapsto \left( g \mapsto \int_{\widetilde{\bG}(\A_F)}
    \phi(\tilde{w}^{-1} g \tilde{x} ) f(\tilde{x}) d \tilde{x} \right)
\end{split} \end{align}
and for $f_1, f_3 \in \CH(\bG)$ and $f_2 \in \CH(\widetilde{\bG})$ we have
\[ \rho^{\widetilde{\bG}}_{ \bP,\tilde{w}}(f_1 \ast f_2 \ast f_3) =
  \rho^\bG_{\tilde{w}(\bP)}(f_1) \circ \rho^{\widetilde{\bG}}_{\bP,
  \tilde{w}}(f_2) \circ \rho^\bG_\bP(f_3).\]

If $\pi_\bL$ is an irreducible admissible representation of $\bL(\A_F)$ such
that $\omega_{\pi_\bL} |_{\mathfrak{A}_\bL} = \chi_{\bG,\bL}$, then for any $f
\in \CH(\widetilde{\bG})$, $\rho^{\widetilde{\bG}}_{\bP, \tilde{w}}(f)$
restricts to
\[ \CA^2(\bU_\bP(\A_F) \bL(F) \backslash \bG(\A_F), \chi_{\bG,\bL})_{\pi_\bL}
  \longrightarrow \CA^2(\bU_{\tilde{w}(\bP)}(\A_F) \bL(F) \backslash \bG(\A_F),
\chi_{\bG,\bL})_{\tilde{w}(\pi_\bL)} \]
where $\tilde{w}(\pi_\bL) = \pi_\bL \circ \Ad(\tilde{w}^{-1})$.

By meromorphic continuation of the usual integral formula, there is an
intertwining operator
\[ M_{\bP | \tilde{w}(\bP)}(0) : \CA^2 \left( \bU_{\tilde{w}(\bP)}(\A_F) \bL(F)
    \backslash \bG(\A_F), \chi_{\bG,\bL} \right) \rightarrow \CA^2 \left(
    \bU_\bP(\A_F) \bL(F) \backslash \bG(\A_F), \chi_{\bG,\bL} \right). \]
Since $\chi_{\bG,\bL}$ is unitary, $M_{\bP | \tilde{w}(\bP)}$ is well-defined
(i.e.\ holomorphic) at~ $0$, and is in fact unitary. Moreover for any
irreducible admissible representation $\pi_\bL$ of $\bL(\A_F)$, $M_{\bP |
\tilde{w}(\bP)}(0)$ restricts to
\[ \CA^2(\bU_{\tilde{w}(\bP)}(\A_F) \bL(F) \backslash \bG(\A_F),
\chi_{\bG,\bL})_{\pi_\bL} \longrightarrow \CA^2(\bU_\bP(\A_F) \bL(F) \backslash
\bG(\A_F), \chi_{\bG,\bL})_{\pi_\bL}. \]
Therefore for $f \in \CH(\widetilde{\bG})$ the composition $M_{\bP |
\tilde{w}(\bP)}(0) \circ \rho^{\widetilde{\bG}}_{\bP, \tilde{w}}(f)$ maps
\[ \CA^2(\bU_\bP(\A_F) \bL(F) \backslash \bG(\A_F), \chi_{\bG,\bL}) \]
to itself and restricts to
\[ \CA^2(\bU_\bP(\A_F) \bL(F) \backslash \bG(\A_F), \chi_{\bG,\bL})_{\pi_\bL}
\longrightarrow \CA^2(\bU_\bP(\A_F) \bL(F) \backslash \bG(\A_F),
\chi_{\bG,\bL})_{\tilde{w}(\pi_\bL)}. \]

We can finally recall the contribution of $\bL$ to the discrete part of the
spectral side of the twisted trace formula for $\widetilde{\bG}$. For $f \in
\CH(\widetilde{\bG})$, let
\[I_{\disc}^{\widetilde{\bG}, \bL}(f) = |W(\bL,\bG)|^{-1} \sum_{\tilde{w} \in
  W(\bL, \widetilde{\bG})_{\reg}} |\det \left( \tilde{w}-1 \,|\,
\mathfrak{A}_\bL^\bG \right)|^{-1} \tr \left( M_{\bP| \tilde{w}(\bP)}(0) \circ
\rho^{\widetilde{\bG}}_{\bP, \tilde{w}}(f) \right) \]
where $W(\bL, \widetilde{\bG})_{\reg}$ is the set of $\tilde{w} \in W(\bL,
\widetilde{\bG})$ such that $(\mathfrak{a}_\bL^\bG)^{\tilde{w}} = 0$. As the
notation suggests, $I_{\disc}^{\widetilde{\bG}, \bL}(f)$ only depends on $f$ and
the $\bG(F)$-conjugacy class of $\bL$. In fact it depends on $f$ only via its
image in $I(\widetilde{\bG})$. The fact that the trace of $M_{\bP|
\tilde{w}(\bP)}(0) \circ \rho^{\widetilde{\bG}}_{\bP, \tilde{w}}(f)$ on
$\CA^2(\bU_\bP(\A_F) \bL(F) \backslash \bG(\A_F), \chi_{\bG,\bL})$ is
well-defined and equals the absolutely convergent sum
\[ \sum_{\substack{\pi_\bL \in \Pi_{\disc}(\bL, \chi_{\bG,\bL}) \\
    \tilde{w}(\pi_\bL) \simeq \pi_\bL}} \tr \left( M_{\bP | \tilde{w}(\bP)}(0)
    \circ \rho^{\widetilde{\bG}}_{\bP, \tilde{w}}(f) \,\middle|\,
    \CA^2(\bU_\bP(\A_F) \bL(F) \backslash \bG(\A_F), \chi_{\bG,\bL})_{\pi_\bL}
\right) \]
is a consequence of work of Finis, Lapid and M\"{u}ller, as explained in
\cite[\S 14.3]{TTF} and \cite[\S X.5.2 and X.5.3]{SFTT2}. 

The most interesting case is of course for $\bL=\bG$, since
$I_{\disc}^{\widetilde{\bG}, \bG}(f)$ is simply the trace of $f$ on the discrete
automorphic spectrum for $\bG$ and $\chi_\bG$. We will recall below the
refinement of discrete terms by infinitesimal character and Hecke eigenvalues
following Arthur and M\oe{}glin--Waldspurger, that allows one to forget about
convergence issues and work with finite sums. But first we make explicit the
condition $\tilde{w}(\pi_\bL) \simeq \pi_\bL$ in the cases at hand.

\begin{enumerate}
  \item For $\bG = \GLb_N \times \GLb_1$ and a standard (i.e.\ block diagonal)
    Levi $\bL \simeq \left( \prod_{k \geq 1} (\GLb_k)^{n_k} \right) \times
    \GLb_1$ (where $n_k = 0$ for almost all $k$ and $\sum_{k \geq 1} k n_k =
    N$), there always exists an element of $\widetilde{\bG}(F)$ normalising
    $\bL$ (for example $\theta_0 : g \mapsto {}^t g^{-1}$), and $W(\bL, \bG)
    \simeq \prod_{k \geq 1} S_{n_k}$. For $\tilde{w} = (\sigma_k)_{k \geq 1}
    \theta_0 \in W(\bL, \widetilde{\bG})$, $\tilde{w}$ is regular if and only if
    for every $k \geq 1$, the decomposition of $\sigma_k$ in cycles only
    involves cycles of odd length. For such a regular $\tilde{w}$ and if $\pi =
    \left( \bigotimes_{k \geq 1} \left( \pi_{k, 1} \otimes \dots \otimes \pi_{k,
    n_k} \right) \right) \otimes \chi$ is an irreducible admissible
    representation of $\bL(\A_F)$, then $\tilde{w}(\pi) \simeq \pi$ if and only
    if each $\pi_{k,i}$ satisfies $\pi_{k,i}^{\vee} \otimes \chi \simeq
    \pi_{k,i}$ \emph{and} for every $k \geq 1$, the isomorphism class of
    $(\pi_{k,i})_{1 \leq i \leq n_k}$ is fixed by $\sigma_k$.

  \item In the non-twisted cases $\bG = \GSpinb_{2n+1}$ or
    $\GSpinb_{2n}^{\alpha}$, recall that in Section
    \ref{subsec:Levi_parametrisation} we chose $\bL \simeq \prod_{k \geq 1}
    (\GLb_k)^{n_k} \times \bG_m$ where $m + \sum_{k \geq 1} kn_k = n$ and
    $\bG_m$ is a $\GSpinb$ group of the same type as $\bG$ of absolute rank $m$.
    There is a natural embedding $W(\bL,\bG) \into \prod_{k \geq 1} \left( \{
    \pm 1 \}^{n_k} \rtimes S_{n_k} \right)$ which is surjective unless $\bG =
    \GSpinb_{2n}^{\alpha}$, $m=0$, and there exists an odd $k \geq 1$ such that
    $n_k > 0$, in which case it is of index two.

    An element $w = \left( (\epsilon_{k,i})_{1 \leq i \leq n_k} \rtimes \sigma_k
    \right)_{k \geq 1}$ is regular if and only if for every $k \geq 1$ and every
    cycle $( i_1 \dots i_r )$ appearing in the decomposition of $\sigma_k$,
    $\prod_{j=1}^r \epsilon_{k,i_j} = -1$. For such $w \in W(\bL,\bG)_{\reg}$
    and $\pi_\bL \simeq \bigotimes_{k \geq 1} \left( \pi_{k, 1} \otimes \dots
    \otimes \pi_{k, n_k} \right) \otimes \pi_{\bG_m}$ an irreducible admissible
    representation of $\bL(\A_F)$, we have $w(\pi_\bL) \simeq \pi_\bL$ if and
    only
		\begin{enumerate}
			\item for every $k \geq 1$ and $1 \leq i \leq n_k$, $\pi_{k,i}^{\vee}
				\otimes \chi \simeq \pi_{k,i}$ where $\chi : \A_F^{\times} \rightarrow
				\C^{\times}$ is $\pi_{\bG_m} \circ \mu$, and
			\item for every $k \geq 1$ the isomorphism class of $(\pi_{k,i})_{1 \leq i
				\leq n_k}$ is fixed by $\sigma_k$.
		\end{enumerate}
\end{enumerate}

We now recall from~\cite[p.\ 1212]{SFTT2}  the refinement of the discrete part
of the spectral side of the twisted trace formula by infinitesimal characters
(using Arthur's theory of multipliers) and families of Satake parameters.

\begin{defn} \label{defn: IC FS}
  \leavevmode
	\begin{enumerate}
		\item Let~$IC(\bG)$ be the set of semisimple conjugacy classes in the Lie
			algebra of the dual group (over~$\C$) of~ $\Res_{F/\Q}(\bG)$. This is the
			set where infinitesimal characters for irreducible representations of
      $\bG(F \otimes_{\Q} \R)$ live. In the twisted case let
      $IC(\widetilde{\bG}) = IC(\bG)^{\hat{\theta}}$. For $\pi_{\infty}$ an
      irreducible admissible representation of $\bG(F \otimes_{\Q} \R)$, denote
      by $\nu(\pi_{\infty}) \in IC(\bG)$ its infinitesimal character.

    \item Let $S$ be a large enough (i.e.\ containing $V_{\mathrm{ram}}$ as in
      \cite[\S VI.1.1]{SFTT2}) finite set of places of~$F$. Let $FS^S(\bG) =
      \prod_{v \not\in S} \left( \widehat{\bG} \rtimes \Frob_v \right)^{\semis}
      / \widehat{\bG}$, and in the twisted case let $FS^S(\widetilde{\bG}) =
      \left( FS^S(\bG) \right)^{\hat{\theta}}$. Write also $FS(\bG) =
      \varinjlim_S FS^S(\bG)$ and in the twisted case $FS(\widetilde{\bG}) =
      \varinjlim_S FS^S(\widetilde{\bG})$. If $\pi = \otimes'_v \pi_v$ is an
      irreducible admissible representation of $\bG(\A_F)$, we will write
      $c(\pi)$ for the associated element of $FS(\bG)$ via the Satake
      isomorphisms.

    \item For $\nu \in IC(\widetilde{\bG})$, $S$ as above, $c^S = (c_v)_{v
      \not\in S} \in FS^S(\widetilde{\bG})$, and $\bL$ a Levi subgroup of $\bG$,
      let $\Pi_{\disc}(\bL, \chi_{\bG,\bL})_{\nu, c^S}$ be the set of $\pi_{\bL}
      \in \Pi_{\disc}(\bL, \chi_{\bG,\bL})$ such that the infinitesimal
      character of $\pi_{\bL, \infty}$ maps to $\nu$ via $\Lie \left(
        \widehat{\Res_{F/\Q}(\bL)} \right) \rightarrow \Lie \left(
      \widehat{\Res_{F/\Q}(\bG)} \right)$, and for every $v \not\in S$,
      $\pi_{\bL,v}$ is unramified for $K_v$ and its Satake parameter maps to~
      $c_v$ via ${}^L \bL \rightarrow {}^L \bG$. For $f \in \bigotimes_{v \in S}
      \CH(\widetilde{\bG}(F_v))$, let

      \[	I_{\disc, \nu, c^S}^{\widetilde{\bG}, \bL}(f) = |W(\bL, \bG)|^{-1}
        \sum_{\tilde{w} \in W(\bL, \widetilde{\bG})_{\reg}} \det \left(
          \tilde{w}-1 \,|\, \mathfrak{A}_\bL^\bG \right)|^{-1}
        \sum_{\substack{\pi_{\bL} \in \Pi_{\disc}(\bL, \chi_{\bG,\bL})_{\nu,
            c^S}\\
        \tilde{w}(\pi_{\bL}) \simeq \pi_{\bL}}} \tr_{\pi_{\bL}}(f),\]
      where we write
      \[\tr_{\pi_\bL}(f)=\left( M_{\bP | \tilde{w}(\bP)}(0) \circ
        \rho^{\widetilde{\bG}}_{\bP, \tilde{w}}(f) \,\middle|\,
        \CA^2(\bU_\bP(\A_F) \bL(F) \backslash \bG(\A_F),
        \chi_{\bG,\bL})_{\pi_\bL} \right).\]

			Finally let
			\numequation \label{eq: defn Idisc refined}
				I_{\disc, \nu, c^S}^{\widetilde{\bG}}(f) = \sum_\bL I_{\disc, \nu,
				c^S}^{\widetilde{\bG}, \bL}(f)
			\end{equation}
      where the sum is over $\bG(F)$-conjugacy classes of Levi subgroups of
      $\bG$.
	\end{enumerate}
\end{defn}
Seeing this as a sum over triples $(\bL, \tilde{w}, \pi_\bL)$, all but finitely
many terms vanish. Indeed, if we fix $\nu$, $S$, $c^S$ and an idempotent $e$ of
$\bigotimes_{\substack{v \in S \\ v \nmid \infty}} \CH(\bG(F_v))$, then there is
a finite set $\Upsilon(\nu, S, c^S, e)$ of triples $(\bL, \tilde{w}, \pi_\bL)$
such that for any $f \in \bigotimes_{v \in S} \CH(\widetilde{\bG}(F_v))$ for
which $e \ast f = f \ast e = f$, the terms corresponding to $(\bL, \tilde{w},
\pi_\bL) \not\in \Upsilon(\nu, S, c^S, e)$ in the double sum defining $I_{\disc,
\nu, c^S}^{\widetilde{\bG}}(f)$ all vanish. 

\begin{rem} \label{rema: FS}
  \begin{enumerate}
    \item By \cite{MR623137} and \cite{MR1026752}, taking the image in
      $FS(\GLb_N)$ is injective on formal sums of elements of
      $\Pi_{\disc}(\GLb_{n_i}, \chi)$ (note that it is essential that all of the
      summands are $\chi$ self-dual for the same character~$\chi$). For this
      reason we will often identify such formal sums and their image.
    \item In \cite{SFTT2} M\oe{}glin--Waldspurger multiply \eqref{eq: defn Idisc
      refined} by $j(\widetilde{\bG})^{-1} := |\det( 1 - \theta | \CA_{\bG} /
      \CA_{\widetilde{\bG}})|^{-1}$, but this factor is also present in
      $\iota(\widetilde{\bG}, \bH)$ with their definition.
  \end{enumerate}
\end{rem}

\begin{defn}
  \begin{enumerate}
    \item We will say that $c^S \in FS(\widetilde{\bG})$ \emph{occurs} in
      $I_{\disc}^{\widetilde{\bG}, \bL}$ if there exists $\nu \in
      IC(\widetilde{\bG})$ and $f \in \CH(\widetilde{\bG})$ such that up to
      enlarging $S$ we have $I_{\disc, \nu, c^S}^{\widetilde{\bG}, \bL}(f) \neq
      0$.

    \item Let $\bD$ be an induced central torus in $\bG$, so that there is a
      dual morphism ${}^L \bG \rightarrow {}^L \bD$. For $c^S \in
      FS(\widetilde{\bG})$ occurring in $I_{\disc}^{\widetilde{\bG}, \bL}$ we
      define the \emph{central character} of $c^S$ to be the (unique by weak
      approximation for $\bD$ \cite[Proposition 7.3]{PlatonovRapinchuk})
      character $\omega_c : \bA_{\bG}(\A_F) / \bA_{\bG}(F) \rightarrow
      \C^{\times}$ such that for almost all places $v$ of $F$, the Langlands
      parameter of $(\omega_c)_v$ equals the image of $c_v$ in ${}^L \bD$.
  \end{enumerate}
\end{defn}
Note that in all cases considered in this paper the connected centre of $\bG$
is split and so one can take $\bD$ to be the full connected centre.

\begin{lem} \label{lem: twisted GL gives parameters}
  Let $\bG = \GLb_N \times \GLb_1$ and $\widetilde{\bG} = \bG \rtimes \theta$.
  If $c \in FS(\widetilde{\bG})$ occurs in $I_{\disc}^{\widetilde{\bG}}$ and
  $\chi$ is the central character of $c$, then there is a unique $\psi \in
  \Psi(\widetilde{\bG}, \chi)$ such that $c$ is associated to $\psi$.
\end{lem}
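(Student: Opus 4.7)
The plan is to unpack the definition of ``$c$ occurs in $I_{\disc}^{\widetilde{\bG}}$'' and to apply M\oe{}glin--Waldspurger's classification of the discrete automorphic spectrum of $\GLb_n$ (\cite{MR1026752}). Uniqueness is an immediate consequence of Remark~\ref{rema: FS}(1): any $\psi = \boxplus_i \pi_i[d_i] \in \Psi(\widetilde{\bG}, \chi)$ corresponds to a formal sum of $\chi$-self-dual discrete automorphic representations $\pi_i[d_i]$ of $\GLb_{N_i d_i}$, and on such formal sums the map to $FS(\widetilde{\bG})$ is injective by strong multiplicity one (together with M\oe{}glin--Waldspurger, which lets us recover the pairs $(\pi_i, d_i)$ from the isobaric classes $\pi_i[d_i]$).

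For existence, by Definition~\ref{defn: IC FS} the hypothesis that $c$ occurs in $I_{\disc}^{\widetilde{\bG}}$ means that there exist a standard Levi subgroup $\bL$ of $\bG$, an element $\tilde{w} \in W(\bL, \widetilde{\bG})_{\reg}$, and a discrete automorphic representation $\pi_\bL \in \Pi_{\disc}(\bL, \chi_{\bG, \bL})$ with $\tilde{w}(\pi_\bL) \simeq \pi_\bL$, whose Satake parameters push forward to $c$ under the block-diagonal embedding ${}^L \bL \to {}^L \bG$. Writing $\bL \simeq \prod_{k \geq 1} (\GLb_k)^{n_k} \times \GLb_1$ and $\pi_\bL = \bigotimes_{k, i} \pi_{k,i} \otimes \chi_0$, case~(1) of the explicit description of regular Weyl elements following Definition~\ref{defn: IC FS} shows that $\chi_0 = \chi$ and that each $\pi_{k,i}$ is a $\chi$-self-dual discrete automorphic representation of $\GLb_k(\A_F)$. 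By M\oe{}glin--Waldspurger we may write $\pi_{k,i} = \tau_{k,i}[d_{k,i}]$ for a cuspidal automorphic representation $\tau_{k,i}$ of $\GLb_{k/d_{k,i}}(\A_F)$; by strong multiplicity one the $\chi$-self-duality of $\pi_{k,i}$ transfers to that of $\tau_{k,i}$.

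We then set $\psi := \boxplus_{k,i} \tau_{k,i}[d_{k,i}]$, which lies in $\Psi(\widetilde{\bG}, \chi)$ since $\sum_{k,i} (k/d_{k,i}) d_{k,i} = \sum_k k n_k = N$. To check that $c$ is associated to $\psi$, it suffices to recall that at any unramified place $v$ the Satake parameter of $\pi_{k,i,v} = \tau_{k,i}[d_{k,i}]_v$ equals $\varphi_{\tau_{k,i}, v} \otimes \nu_{d_{k,i}}$, where $\nu_d$ denotes the $d$-dimensional irreducible representation of $\SL_2(\C)$; the block-diagonal embedding ${}^L \bL \to {}^L \bG$ then assembles these into exactly the Satake/Langlands datum attached to $\psi$ in Section~\ref{subsec: global parameters}. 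The argument is essentially bookkeeping once M\oe{}glin--Waldspurger is in hand; the only subtlety worth highlighting is that the $\chi$-self-duality condition on the cuspidal components $\tau_{k,i}$ must be extracted from the combined data of $\tilde{w}$-invariance of $\pi_\bL$ and the form of the regular elements of $W(\bL, \widetilde{\bG})$, but this is immediate from the description given after Definition~\ref{defn: IC FS}.
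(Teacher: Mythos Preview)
Your proof is correct and follows essentially the same approach as the paper's, which simply references Remark~\ref{rema: FS}(1) and the explicit description (case~(1) after Definition~\ref{defn: IC FS}) of the pairs $(\tilde{w}, \pi_\bL)$ with $\tilde{w} \in W(\bL, \widetilde{\bG})_{\reg}$ and $\tilde{w}(\pi_\bL) \simeq \pi_\bL$. You have simply unpacked that reference, including the application of M\oe{}glin--Waldspurger to pass from the $\chi$-self-dual discrete $\pi_{k,i}$ to their cuspidal supports $\tau_{k,i}$.
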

\begin{proof}
  This simply follows from Remark \ref{rema: FS} (1) and the above
  description in the case at hand
  of the pairs $(\tilde{w}, \pi_{\bL})$ with $\tilde{w} \in W(\bL,
  \widetilde{\bG})_{\reg}$, $\pi_{\bL} \in \Pi_{\disc}(\bL)$ and
  $\pi_{\bL}^{\tilde{w}} \simeq \pi_{\bL}$.
\end{proof}

\begin{rem} \label{rem: Idisc for tGL}
  Let $\bG = \GLb_N \times \GLb_1$ and $\widetilde{\bG} = \bG \rtimes
  \theta$.
  \begin{enumerate}
    \item For $\bP$ a parabolic subgroup of $\bG$ with Levi $\bL$ and
      $\pi_{\bL} \in \Pi_{\disc}(\bL, \chi_{\bG, \bL})$, the parabolically
      induced representation $\cA^2(\bU_{\bP}(\A_F) \bL(F) \backslash
      \bG(\A_F))_{\pi_{\bL}}$ is irreducible by \cite{MR1026752} (implying
      multiplicity one for the discrete automorphic spectrum for $\bL$) and
			\cite[\S 0.2]{Bernstein_unitaryGL}, \cite[Theorem 17.6]{Vogan_unitaryGL}
			(irreducibility of unitary parabolic induction for general linear groups).
    \item It follows from \cite{MR623137}, \cite{MR1026752} and Lemma \ref{lem:
      twisted GL gives parameters} that for $c \in FS(\tbG)$ occurring in
      $I_{\disc}^{\tbG, \bL}$, $\bL$ is determined by $c$.
    \item For $S \subset S'$, the linear form $I_{\disc, \nu, c^{S'}}^{\tbG}$ on
      $I(\tbG_{S'})$ is simply the tensor product of $I_{\disc, \nu,
      c^S}^{\tbG}$ with the unramified linear form on $I(\tbG_{S' \smallsetminus
      S})$ corresponding to the Satake parameters $(c_v)_{v \in S'
      \smallsetminus S}$ (see Remark \ref{rem:twisted_unr_rep}). This is
      particular to $\GLb_n$ and is a direct consequence of strong multiplicity
      one. Also by strong multiplicity one for a given $c^S \in FS^S(\tbG)$
      there is at most one $\nu \in IC(\tbG)$ such that $I_{\disc, \nu, c^S}
      \neq 0$. By these remarks, for $c \in FS(\tbG)$ we have a well-defined
      linear form $I_{\disc, c}^{\tbG}$ on $I(\tbG)$, whose restriction to
      $I(\tbG_S)$ (for large enough $S$) is $I_{\disc, \nu, c^S}^{\tbG}$ for the
      unique $\nu$ such that this is non-zero, or $0$ if no such $\nu$ exists.
  \end{enumerate}
\end{rem}

\subsection{Elliptic endoscopic groups}\label{subsec: elliptic
  endoscopic for GL4 times GL1}

Consider the split group $\Gammab = \GLb_4 \times \GLb_1$ over ~$F$ and its
automorphism $\theta : (g, x) \mapsto (J {}^t g^{-1} J^{-1}, x \det g)$, where
\[ J = \begin{pmatrix}
			0 &  0 & 0 & -1 \\
			0 &  0 & 1 &  0 \\
			0 & -1 & 0 &  0 \\
		  1 &  0 & 0 &  0
	\end{pmatrix} \]
was chosen so that the usual pinning of $\GLb_4 \times \GLb_1$ is stable under
$\theta$. Note that if $(\pi, \chi)$ is a representation of $\Gammab(F_v)$ for
some place $v$ of $F$, then $(\pi, \chi) \circ \theta \simeq (\tilde{\pi}
\otimes (\chi \circ \det), \chi)$. The dual group $\widehat{\Gammab}$ is
naturally identified with $\GL_4(\C) \times \GL_1(\C)$, and $\widehat{\theta}(g,
x) = (\widehat{J} {}^t g^{-1} \widehat{J}^{-1} x, x)$, where $\widehat{J} = J$
(but with coefficients in a different field). Denote $\tGamma = \Gammab \rtimes
\theta$ (that is, the non-identity connected component of $\Gammab \rtimes
\{1,\theta\}$). We consider twisted endoscopy with $\omega = 1$.

Then the elliptic endoscopic data $(\bH, \CH, s, \xi)$ for $\tGamma $ are easily
seen to be of the following form.
\begin{enumerate}
	\item $\bH = \GSpinb_5$, dual $\widehat{\bH} = \GSp_4$, for $s = 1$:
		The first projection identifies $\xi_1(\widehat{\GSpinb_5}) =
		\widehat{\Gammab}^{\widehat{\theta}}$ with the general symplectic group
		defined by $\widehat{J}$, and the ``similitude factor'' morphism
		$\widehat{\GSpinb_5} \rightarrow \GL_1$ equals $\pr_2 \circ
		\xi_1|_{\widehat{\GSpinb_5}}$. Both $\Gammab$ and $\GSpinb_5$ are split, so
		there is an obvious choice for ${}^L \xi : {}^L \GSpinb_5 \rightarrow {}^L
		\Gammab$.
	\item $\GSpinb_4^{\alpha}$, with $\alpha \in F^{\times} / F^{\times, 2}$, dual
		$\widehat{\GSpinb_4^\alpha} = \GSO_4$ with action of $\Gal(E/F)$ if $\alpha$
		is not a square, where $E = F(\sqrt{\alpha})$. Pick $s = \diag(-1,-1,1,1)$,
    then $\widehat{\Gammab}^{\Ad(s) \circ \widehat{\theta}} = \GO_4$ for the
    Gram matrix
		\[ \begin{pmatrix}
			0 &  0 &  0 &  1 \\
			0 &  0 & -1 &  0 \\
			0 & -1 &  0 &  0 \\
		  1 &  0 &  0 &  0
		\end{pmatrix}. \]
		If $\alpha = 1$ the group $\GSpinb_4$ is split and we choose the obvious
		${}^L \xi$. Otherwise let $c$ be the non-trivial element of $\Gal(E/F)$, and
		define ${}^L \xi$ by mapping $1 \rtimes c$ to
		\[ \begin{pmatrix}
			1 & 0 & 0 & 0 \\
			0 & 0 & 1 & 0 \\
			0 & 1 & 0 & 0 \\
			0 & 0 & 0 & 1
		\end{pmatrix}, 1. \]
  \item $\bR^\alpha := (\GSpinb_2^{\alpha} \times \GSpinb_3 ) / \{ (z,z^{-1}) |
    z \in \GLb_1 \}$, for non-trivial $\alpha$. The dual $\widehat{\bR^\alpha}$
    is the subgroup of $\GSO_2 \times \GSp_2$ of pairs of elements with equal
    similitude factors, and $\Gal(E/F)$ acts on the first factor. Let $s =
    \diag(-1,1,1,1)$, so that
		\[ \xi_{\bR}^\alpha(\widehat{\bR^\alpha}) = \left\{ \diag (x_1, A, x_2) \
		\middle|\  A \in \GLb_2,\, x_1x_2 = \det A \right\}. \]
		Define ${}^L \xi$ by mapping $1 \rtimes c$ to
		\[ \begin{pmatrix}
			0 & 0 & 0 & 1 \\
			0 & 1 & 0 & 0 \\
			0 & 0 & 1 & 0 \\
			1 & 0 & 0 & 0
		\end{pmatrix}, 1. \]
\end{enumerate}

We also need to consider the elliptic endoscopic groups for~$\GSpinb_5$
and~$\GSpinb_4$. Let $\bH_{1}$ be the unique non-trivial elliptic endoscopic
group for $\GSpinb_5$, so that $\bH_{1} \simeq \GLb_2 \times \GLb_2 / \{ (zI_2,
z^{-1}I_2) \}$. Then $\widehat{\bH}_1$ is the subgroup of $\GSp_2(\C) \times
\GSp_2(\C)$ of pairs of elements with equal similitude factors, so we have an
obvious embedding of dual groups $\widehat{\bH}_1 \to \widehat{\GSpinb_5} =
\GSp_4(\C)$, inducing an embedding of $L$-groups ${}^L \xi' : {}^L \bH_1
\rightarrow {}^L \GSpinb_5$.

Let $\bH_{2}^{\alpha}$ be the elliptic endoscopic group for $\GSpinb_4$
associated to $\alpha \in F^{\times} / F^{\times, 2}$, $\alpha \neq 1$, so that
$\bH_{2}^{\alpha} \simeq \GSpinb_2^{\alpha} \times \GSpinb_2^{\alpha} / \{
(z,z^{-1}) | z \in \GLb_1 \}$. Recall that $\GSpinb_2^{\alpha}$ is naturally
isomorphic to $\Res_{F(\sqrt{\alpha})/F}(\GLb_1)$. Then~$\widehat{\bH_2^\alpha}$
is the subgroup of $\GSO_2(\C) \times \GSO_2(\C)$ consisting of pairs of
elements with equal similitude factors, so we again have an obvious embedding of
dual groups $\widehat{\bH_2^\alpha} \to \widehat{\GSpinb_4} = \GSO_4(\C)$.
If~$\alpha=1$ then this trivially extends to an embedding of $L$-groups, while
if $\alpha\ne 1$, writing $\Gal(F(\sqrt{\alpha})/F) = \{1,c\}$, define  ${}^L
\xi' : {}^L \bH_2^\alpha \rightarrow {}^L \GSpinb_4$ by mapping $1\rtimes c$ to
\[ \begin{pmatrix}
			0 & 1 & 0 & 0 \\
			1 & 0 & 0 & 0 \\
			0 & 0 & 0 & 1 \\
			0 & 0 & 1 & 0
		\end{pmatrix}, 1. \]

\subsection{Stabilisation of the trace formula}\label{subsec:
  stabilisation of the trace formula}

We will need to use the stabilisation of the (twisted) trace formula for
$\tGamma$ and its elliptic endoscopic groups. Consider the latter first: let
$(\bH, \CH, s, \xi)$ be an elliptic endoscopic
datum for $(\Gammab, \tGamma)$. The stabilisation of the
trace formula for~$\bH$ is as follows. Fix $\nu \in IC(\bH)$, $S$ a big enough
set of places, and~ $c \in FS^S(\bH)$. Choose representatives $(\bH', \CH', s,
\xi)$ for the isomorphism classes of elliptic endoscopic data for $\bH$, and for
each representative choose ${}^L \xi' : {}^L \bH' \rightarrow {}^L \bH$
extending $\xi$ (for example as in the previous section).
It induces maps ${}^L \xi' : FS(\bH') \rightarrow FS(\bH)$ and ${}^L \xi' :
IC(\bH') \rightarrow IC(\bH)$. Inductively define a linear form on $I(\bH(F_S))$
by
\numequation \label{equ:def stable dist TF}
	S_{\disc, \nu, c}^{\bH}(f) := I_{\disc, \nu, c}^{\bH}(f) -
  \sum_{\substack{\frake' = (\bH', \CH', s', \xi') \\ \bH' \neq \bH}}
	\iota(\frake') \sum_{\substack{c' \mapsto c \\ \nu' \mapsto \nu}} S_{\disc,
	\nu', c'}^{\bH'}(f^{\bH'})
\end{equation}
where the sum is over equivalence classes of nontrivial elliptic endoscopic data
for~$\bH$, $f^{\bH'}$ is a transfer of $f$ (see Section \ref{subsec: endoscopic
groups and transfer}), and the constants $\iota(\frake')$ are recalled after the
following theorem.

\begin{thm}[{\cite[Global Theorems 2 and 2' and Lemma 7.3(b)]{ArthurSTF1}}]
	\label{thm: STF}
	The linear form $S_{\disc, \nu, c}^{\bH}$ is stable, i.e.\ factors through
	$SI(\bH(F_S))$. 
\end{thm}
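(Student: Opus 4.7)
The plan is to proceed by induction on $\dim \bH$, taking as inductive hypothesis that $S_{\disc, \nu', c'}^{\bH'}$ is stable for every elliptic endoscopic datum $\frake' = (\bH', \CH', s', \xi')$ of $\bH$ with $\bH' \neq \bH$ (the base case is a torus, where stability is automatic since all regular semisimple orbital integrals are stable). Granting this, the right-hand side of \eqref{equ:def stable dist TF} is, a priori, a linear form on $I(\bH(F_S))$ built from stable distributions on $SI(\bH'(F_S))$ pulled back via transfer $f \mapsto f^{\bH'}$. The content of the theorem is that subtracting these from $I_{\disc, \nu, c}^{\bH}$ yields a distribution on $I(\bH(F_S))$ which in fact factors through $SI(\bH(F_S))$. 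Equivalently, it vanishes on any $f \in I(\bH(F_S))$ whose stable regular orbital integrals all vanish.

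First, I would work not with the graded piece $I_{\disc, \nu, c}^{\bH}$ but with the full invariant trace formula $I^{\bH}(f) = I^{\bH}_{\mathrm{geom}}(f) = I^{\bH}_{\mathrm{spec}}(f)$, and attempt to prove the global identity
\[
I^{\bH}(f) \;=\; \sum_{\frake'} \iota(\frake') \, S^{\bH'}(f^{\bH'}),
\]
where the sum runs over equivalence classes of elliptic endoscopic data including the trivial one, and $S^{\bH}$ is defined by the same inductive subtraction. The natural approach is to stabilise the geometric side term by term, following Langlands--Kottwitz--Shelstad: the elliptic semisimple terms rearrange into a sum over $\frake'$ of stable elliptic terms on $\bH'$ (using the existence of transfer recalled in Section \ref{subsec: endoscopic groups and transfer}, together with Kottwitz's cohomological computation of $\iota(\frake')$), while the non-elliptic (weighted) contributions, indexed by proper Levi subgroups, are stabilised by induction on rank combined with the twisted/weighted fundamental lemma.

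Once the full identity for $I^{\bH}$ is established, the stability of $S^{\bH}$ as a distribution follows, and one must descend to the refinement by $(\nu, c^S)$. For the dependence on $\nu$, I would use Arthur's multiplier trick: acting by the convolution algebra of Archimedean multipliers $\alpha \in \mathcal{C}(\mathfrak{h}_{\C}^{W})$ separates the discrete spectral side by infinitesimal character, and this separation is compatible with transfer because ${}^L \xi'$ is an $L$-morphism and so matches infinitesimal characters via $\Lie(\widehat{\Res_{F/\Q}(\bH')}) \to \Lie(\widehat{\Res_{F/\Q}(\bH)})$. For the dependence on $c^S$, I would test against unramified Hecke operators outside $S$ and use the fact, recalled in Theorem \ref{thm: fundamental lemma}, that the Satake transfer $b$ is dual to the map of semisimple conjugacy classes compatible with ${}^L \xi'$, so the $c^S$-grading is likewise preserved by transfer. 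Finally, to isolate the discrete part from the continuous part of $I^{\bH}_{\mathrm{spec}}$, I would use that, after this double grading, the continuous terms (labelled by proper Levi subgroups of $\bH$) contribute only via lower-dimensional data and are handled by the induction hypothesis.

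The main obstacle is the geometric stabilisation: the weighted terms require the weighted fundamental lemma, and getting from identities of complicated combinations of weighted orbital integrals to a clean identity of invariant distributions involves the formidable comparison of weight factors between $\bH$ and its endoscopic groups worked out by Arthur in \cite{ArthurSTF1,ArthurSTF2,ArthurSTF3}. A secondary, but still delicate, point is the passage from the stability of $S^{\bH}$ as a whole to the stability of each $(\nu, c^S)$-component; this depends on showing that one has enough separation of variables via multipliers and unramified Hecke operators, together with absolute convergence of the discrete spectral expansion (from Finis--Lapid--M\"uller, already invoked in Section \ref{subsec: discrete part of spectral side of STTF}), to justify extracting individual graded pieces from the global identity.
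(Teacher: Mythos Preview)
The paper does not prove this theorem; it is stated as a citation to Arthur \cite{ArthurSTF1}, and the only comment the paper adds is the remark following the statement that, for the specific groups $\bH$ arising here, every non-trivial elliptic endoscopic group $\bH'$ has no further non-trivial elliptic endoscopy, so that $S_{\disc}^{\bH'} = I_{\disc}^{\bH'}$ and the inductive definition \eqref{equ:def stable dist TF} is unproblematic. There is therefore no ``paper's own proof'' to compare against.

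Your sketch is a fair high-level summary of Arthur's strategy in \cite{ArthurSTF1,ArthurSTF2,ArthurSTF3}: inductive definition, stabilisation of the geometric expansion (elliptic terms via Langlands--Kottwitz, weighted terms via the weighted fundamental lemma and Arthur's comparison of weight factors), and then separation by $(\nu, c^S)$ via multipliers and the unramified Hecke algebra. You are also right to flag the weighted geometric terms as the crux. What your outline understates is that Arthur's argument is not a clean induction on $\dim \bH$ alone: the stabilisation of the geometric and spectral sides are intertwined, and the proof proceeds via a long chain of intermediate ``local'' and ``global'' theorems proved simultaneously by a complicated induction (on the pair consisting of the group and the degree of the term in the trace formula). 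In particular, the passage from the full $S^{\bH}$ to the graded pieces $S^{\bH}_{\disc, \nu, c}$ (your ``secondary'' point, corresponding to Lemma 7.3(b) in the citation) is not a simple post-hoc separation but is woven into the inductive scheme. So while nothing you wrote is wrong as a roadmap, you should be aware that turning this into an actual proof is the content of three long papers, and the paper under review is simply invoking that work rather than reproving any part of it.
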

Note that in general \eqref{equ:def stable dist TF} is only well-defined thanks
to Theorem \ref{thm: STF} applied to $\bH'$. However, for the groups $\bH$
considered here, and for any non-trivial endoscopic group $\bH'$, the only
elliptic endoscopic group for $\bH'$ is $\bH'$, and so $S_{\disc}^{\bH'} =
I_{\disc}^{\bH'}$.

Let us recall the definition of $\iota(\frake')$, both for ordinary endoscopy
and for twisted endoscopy. Assume that $\tbG$ is a twisted space and $\frake =
(\bH, \cH, s, \xi)$ is an elliptic endoscopic datum. Let
\[ \iota(\frake) = \frac{\tau(\bG)}{\tau(\bH)} \frac{\left| \pi_0 \left(
Z(\Ghat)^{\Gal_F, 0} \cap \That^{\hat{\theta}, 0} \right) \right|}{|
\pi_0(\Aut(\frake)) |} \]
where $\tau$ is the Tamagawa number and the superscript $0$ denotes the identity
component. We have not included the factor $|\det(1-\theta | \dots)|^{-1}$ from
\cite[VI.5.1]{SFTT2} because of Remark \ref{rema: FS}~(2); compare with the
definition on p.\ 109 of \cite{MR1687096} using \cite[Lem.\ 6.4.B]{MR1687096}.
Recall \cite[p.\ 693]{SFTT2} that there is a short exact sequence
\[ 1 \rightarrow \left( Z(\Ghat) / Z(\Ghat) \cap \That^{\hat{\theta}, 0}
	\right)^{\Gal_F} \rightarrow \Aut(\frake) / \Hhat \rightarrow \Out(\frake)
	\rightarrow 1. \]
In the ordinary (non-twisted) case we have $\That^{\hat{\theta}, 0} = \That
\supset Z(\Ghat)$ and thus $\iota(\frake) = \tau(\Ghat) \tau(\Hhat)^{-1} |
\Out(\frake)|^{-1}$. The only twisted case that we need in this paper is the
case of $\tGamma$, when $\That^{\hat{\theta}, 0} = \{ ((t_1, \dots, t_4), x) |
\forall i,\, t_i = t_{5-i}^{-1} x \}$ and so $Z(\Ghat) \cap \That^{\hat{\theta},
0} \simeq \C^{\times}$. Similarly it is easy to see that $Z(\Ghat) / Z(\Ghat)
\cap \That^{\hat{\theta}, 0} \simeq \C^{\times}$ with trivial action of
$\Gal_F$, so we can conclude that $\iota(\frake) = \tau(\Gammab) \tau(\bH)^{-1}
|\Out(\frake)|^{-1}$ for any elliptic endoscopic datum $\frake = (\bH, \CH, s,
\xi)$ of $\tGamma$.

Let us make the constant $\iota(\frake)$ explicit in the only two cases where it
will be needed in this paper:
\begin{enumerate}
	\item For the elliptic endoscopic group $\bH_1$ of $\GSpinb_5$, $\iota(\frake)
		= 1/4$.
	\item For the elliptic endoscopic group $\GSpinb_5$ of $\tGamma$,
		$\iota(\frake) = 1$.
\end{enumerate}

We can finally state the stabilisation of the twisted trace formula for
$(\Gammab, \tGamma)$. As in the case of ordinary endoscopy we fix
representatives $\frake = (\bH, \CH, s, \xi)$ of isomorphism classes of elliptic
endoscopic data for $\tGamma$ and for each $\frake$ we also choose an
$L$-embedding ${}^L \xi : {}^L \bH \rightarrow {}^L \bG$ extending $\xi$ (for
example the ones defined in the previous section).

\begin{thm}[{\cite[X.8.1]{SFTT2}}] \label{thm:STTF}
	For any $\nu$ and $c$ we have
	\[ I_{\disc, \nu, c}^{\tGamma}(f) = \sum_{\frake = (\bH, \CH, s, \xi)}
		\iota(\frake) \sum_{\substack{\nu' \mapsto \nu \\ c' \mapsto c}}
		S_{\disc, \nu', c'}^{\bH}(f^{\bH}) \]
	where the first sum is over equivalence classes of elliptic endoscopic data
	for $\tGamma$.
\end{thm}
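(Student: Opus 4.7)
The plan is to invoke \cite[X.8.1]{SFTT2} as a black box; the proof will consist entirely of reconciling conventions and verifying that all necessary local inputs are available in the case $(\Gammab,\tGamma)$ with trivial $\omega$. A direct reproof is out of reach, since M\oe{}glin--Waldspurger's argument occupies two volumes and subsumes Kottwitz--Shelstad's stabilisation of the elliptic part together with the twisted analogues of Arthur's spectral reductions. Our task is therefore to confirm applicability rather than to reconstruct the proof.

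First I would list the inputs required by \cite[X.8.1]{SFTT2} and check them one by one in our setting: existence of geometric twisted transfer $I(\tGamma) \to SI(\bH)$ for each of the elliptic endoscopic data enumerated in Section \ref{subsec: elliptic endoscopic for GL4 times GL1} (Waldspurger and Shelstad); the twisted fundamental lemma for the full spherical Hecke algebra at almost all places, which is Theorem \ref{thm: fundamental lemma}; and the twisted weighted fundamental lemma, which is part of the assumptions carried from the introduction. The stable distributions $S_{\disc,\nu',c'}^{\bH}$ that appear on the right are well-defined by Theorem \ref{thm: STF} applied inductively, noting that for the endoscopic groups $\bH$ appearing here the only non-trivial nested endoscopy is the one attached to $\bH_1$ inside $\GSpinb_5$, and no further recursion is needed.

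Second I would verify that the refinement by $\nu \in IC(\tGamma)$ and by Satake parameters $c \in FS(\tGamma)$ is preserved under the stabilisation. For $c$ this is a direct consequence of the compatibility of transfer with the Satake isomorphism via the dual embedding ${}^L \xi$ fixed in Section \ref{subsec: elliptic endoscopic for GL4 times GL1}, together with strong multiplicity one on the $\Gammab$ side (see Remark \ref{rem: Idisc for tGL}). For $\nu$ I would appeal to Arthur's multiplier trick, using that Archimedean transfer is compatible with the map on infinitesimal characters $IC(\bH) \to IC(\tGamma)$ induced by $\widehat{\bH} \to \widehat{\Gammab}$. Both refinements commute with the decomposition of the geometric side by elliptic endoscopic data, which yields the stated identity after summing only over $(\nu',c')$ mapping to $(\nu,c)$.

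Third I would attend to the bookkeeping around the constants $\iota(\frake)$: the factor $j(\tGamma)^{-1}$ that appears in \cite{SFTT2} has already been absorbed into our $I_{\disc,\nu,c}^{\tGamma}$ on the left, as explained in Remark \ref{rema: FS}(2). Because $\omega = 1$ the characters $\chi_{\bG,\bL}$ are trivial throughout, which eliminates many complications of the general setup. The main obstacle is therefore not mathematical but notational: \cite{SFTT2} operates at a level of generality (arbitrary $\omega$, general twisted space, non-trivial central characters) that requires careful specialisation to our situation without introducing spurious signs, measure factors, or misidentifications of transfer factors (we are committed to the normalisation $\Delta_D$ so that uniformisers correspond to \emph{geometric} Frobenii).
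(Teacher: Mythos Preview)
Your proposal is correct in spirit and essentially matches the paper's approach: the paper gives no proof at all for this theorem, simply citing \cite[X.8.1]{SFTT2} as a black box, with the setup (choice of representatives of elliptic endoscopic data and $L$-embeddings ${}^L\xi$) already carried out in the preceding subsections. Your planned verification of applicability---checking transfer, the fundamental lemma, stability of $S_{\disc}^{\bH}$, compatibility of the $(\nu,c)$-refinement, and the bookkeeping around $\iota(\frake)$ and $j(\tGamma)$---is sound but goes well beyond what the paper actually writes down; the authors evidently regard these reconciliations as implicit in the surrounding discussion (Theorem~\ref{thm: fundamental lemma}, Theorem~\ref{thm: STF}, Remark~\ref{rema: FS}(2), and the conventions fixed in \S\ref{subsec: endoscopic groups and transfer}) and do not spell them out separately.
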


\section{Restriction of automorphic representations}
\label{sec:restriction}

\subsection{Restriction for general groups}

Let us recall a consequence of \cite[\S 4]{HiragaSaito} that we will need. Since
in all cases considered in this paper the assumption of
\cite[Proposition 1 (iii)]{Chenevier_note_res_auto} will be satisfied, one can
use the more precise result of \cite{Chenevier_note_res_auto} (which can be
formally generalised from cuspidal to square-integrable forms) instead. Consider
an injective morphism $\bG \hookrightarrow \bG'$ between connected reductive
groups over a number field $F$ such that $\bG$ is normal in $\bG'$ and
$\bG'/\bG$ is a torus. Choose a maximal compact subgroup $K_{\infty}'$ of
$\bG'(F \otimes_{\Q} \R)$; then $K_{\infty} := \bG(F \otimes_{\Q} \R) \cap
K_{\infty}'$ is a maximal compact subgroup of $\bG(F \otimes_{\Q} \R)$. Note
that if $\pi'$ is an irreducible unitary admissible $(\fg', K_{\infty}') \times
\bG'(\A_{F,f})$-module then $\Res^{\bG'}_\bG(\pi')$ is a unitary admissible
$(\fg, K_{\infty}) \times \bG(\A_{F,f})$-module, but it has infinite length in
general. We have a $(\fg, K_{\infty}) \times \bG(\A_{F,f})$-equivariant map
\[ \res^{\bG'}_\bG : \CA^2( \mathfrak{A}_{\bG'} \bG'(F) \backslash \bG'(\A_F))
\rightarrow \CA^2( \mathfrak{A}_\bG \bG(F) \backslash \bG(\A_F) ) \]
obtained by restricting automorphic forms. The fact that $\res^{\bG'}_\bG$ takes
values in $\CA^2( \mathfrak{A}_\bG \bG(F) \backslash \bG(\A_F) )$ is a routine
verification, except for square-integrability which follows from the proof of
\cite[Lemma 4.19]{HiragaSaito} (see also Remark 4.20 \emph{op.\ cit.}). If $\pi'
\in \Pi_{\disc}(\bG')$ and $\iota : \pi' \hookrightarrow \CA^2(
\mathfrak{A}_{\bG'} \bG'(F) \backslash \bG'(\A_F))$, then
$\res^{\bG'}_\bG(\iota(\pi'))$ is naturally identified with a quotient of
$\Res^{\bG'}_\bG(\pi')$. This quotient can be proper and of infinite length, but
in any case it is \emph{non-zero}. In particular there exists an irreducible
constituent $\pi$ of $\Res^{\bG'}_\bG(\pi')$ such that $\pi \in
\Pi_{\disc}(\bG)$. In this situation we will say that $\pi$ is an
\emph{automorphic restriction} of $\pi'$. Unsurprisingly, this notion of
restriction is compatible with the Satake isomorphism at almost all places:

\begin{lem}[Satake] \label{lem: compat Satake restriction}
  Suppose that $\pi \simeq \otimes'_v \pi_v \in \Pi_{\disc}(\bG)$ is an
  automorphic restriction of $\pi' \simeq \otimes'_v \pi'_v \in
  \Pi_{\disc}(\bG')$, then for almost all places $v$ of $F$ the Satake parameter
  $c(\pi_v)$ of $\pi_v$ is the image of $c(\pi'_v)$ under the natural map
	\[ \left( \widehat{\bG'} \rtimes \Frob_v \right)^{\semis} /
		\widehat{\bG'}-\mathrm{conj} \longrightarrow \left( \Ghat \rtimes \Frob_v
	\right)^{\semis} / \Ghat-\mathrm{conj}. \]
\end{lem}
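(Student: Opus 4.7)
The lemma is local at each finite place $v$ where $\bG$, $\bG'$ are both unramified, compatible hyperspecial subgroups $K_v = K'_v \cap \bG(F_v)$ exist, and $\pi_v$, $\pi'_v$ are both unramified (a cofinite set of $v$). Since $\pi$ is an irreducible $\bG(\A_F)$-subquotient of $\Res^{\bG'}_\bG(\pi')$, the factorisation of $\pi$ and $\pi'$ as restricted tensor products implies that at each such $v$, $\pi_v$ is an irreducible $\bG(F_v)$-subquotient of the local restriction $\Res^{\bG'(F_v)}_{\bG(F_v)}(\pi'_v)$.

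To handle the local statement at $v$, I would realise $\pi'_v$ as the unique unramified constituent of an unramified principal series and then compute its restriction. Choose a Borel pair $\bT' \subset \bB' \subset \bG'$ over $F_v$, and set $\bT = \bT' \cap \bG$, $\bB = \bB' \cap \bG$, a maximal torus and Borel of $\bG$. Because $\bG'/\bG$ is a torus, the unipotent radicals of $\bB$ and $\bB'$ coincide, so the modulus characters agree on $\bT(F_v)$. Write $\pi'_v$ as the unramified constituent of $\Ind_{\bB'(F_v)}^{\bG'(F_v)}(\chi')$ for the unramified character $\chi'$ of $\bT'(F_v)$ corresponding to $c(\pi'_v)$. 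A direct check with the Iwasawa decomposition yields a natural isomorphism of $\bG(F_v)$-modules
\[ \Res^{\bG'(F_v)}_{\bG(F_v)}\,\Ind_{\bB'(F_v)}^{\bG'(F_v)}(\chi') \;\simeq\; \Ind_{\bB(F_v)}^{\bG(F_v)}(\chi'|_{\bT(F_v)}). \]
Thus $\pi_v$ is an unramified irreducible subquotient of the right-hand side. An unramified principal series admits a unique unramified irreducible constituent, whose Satake parameter is the Weyl-orbit of the inducing character; hence $c(\pi_v)$ is dual to $\chi'|_{\bT(F_v)}$. The restriction map $\chi' \mapsto \chi'|_{\bT(F_v)}$ is dual to the surjection $\widehat{\bT'} \twoheadrightarrow \widehat{\bT}$ induced by $\widehat{\bG'} \twoheadrightarrow \Ghat$, giving the claim.

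The main obstacle is the identification of the restricted principal series. The restriction map gives an obvious injection into $\Ind_{\bB(F_v)}^{\bG(F_v)}(\chi'|_{\bT(F_v)})$; surjectivity requires extending functions on $\bG(F_v)$ back to $\bG'(F_v)$ compatibly with the $\bB'$-transformation rule. This is a standard Mackey-type check, which simplifies greatly here because $\bG'(F_v)$ is generated by $\bG(F_v)$ and $\bT'(F_v)$ modulo a compact subgroup that is absorbed in the unramified level structure at~$v$; the bookkeeping with the modulus characters reduces to the coincidence of unipotent radicals noted above.
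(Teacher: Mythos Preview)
Your argument is correct but takes a different route from the paper. The paper's proof is a two-line citation: it observes that $\pi_v$ is the unique unramified summand of $\pi'_v|_{\bG(F_v)}$, then invokes Satake's own compatibility result (\S 7.2 of his paper) applied to the map $\bG \times \bT \to \bG'$ for a central torus $\bT \subset \bG'$ isogenous to $\bG'/\bG$, together with Borel's dual-group translation. You instead unwind everything through unramified principal series, which is more self-contained and conceptually transparent. One remark on your key identification: both injectivity and surjectivity of the restriction-of-functions map amount to the single equality $\bG'(F_v) = \bB'(F_v)\bG(F_v)$, and your justification (``generated modulo a compact subgroup absorbed in the unramified level structure'') is a little vague. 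A clean way to see it at an unramified $v$ is Iwasawa $\bG'(F_v) = \bB'(F_v)K'_v$ together with $K'_v = (\bB'(F_v)\cap K'_v)K_v$, the latter because $\bT'(\cO_v)$ surjects onto $(\bG'/\bG)(\cO_v)$ (Lang's theorem on the special fibre plus Hensel) while the image of $K'_v$ lands in $(\bG'/\bG)(\cO_v)$. With that in place your argument goes through without further subtlety.
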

\begin{proof}
  For almost all places $v$, $\pi_v$ is the unique unramified direct summand in
  $\Res^{\bG'(F_v)}_{\bG(F_v)}(\pi'_v)$. The result follows from \cite[\S
  7.2]{Satake} applied to $\bG \times \bT \rightarrow \bG'$, where $\bT$ is any
  central torus in $\bG$ isogenous to $\bG'/\bG$, and the translation in terms
  of dual groups \cite[Prop.\ 6.7]{MR546608}.
\end{proof}

Let us now formulate a direct consequence of \cite[Theorem 4.14]{HiragaSaito},
ignoring multiplicities.

\begin{thm}[Hiraga--Saito] \label{thm: surjectivity automorphic res}
  The map $\res^{\bG'}_\bG$ is surjective, and so any discrete automorphic
  representation for $\bG$ is an automorphic restriction of a discrete
  automorphic representation for $\bG'$. In other words, there exists a
  surjective map
  \[ \mathrm{ext}^{\bG'}_\bG : \Pi_{\disc}(\bG) \longrightarrow
    \Pi_{\disc}(\bG') / \left( \bG'(\A_F) / \bG(\A_F) \bG(F) \mathfrak{A}_{\bG'}
    \right)^{\vee}  \]
  such that for any $\pi' \in \mathrm{ext}^{\bG'}_\bG(\pi)$, $\pi$ is a
  subrepresentation of $\Res^{\bG'}_\bG(\pi')$.
\end{thm}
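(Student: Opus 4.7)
The plan is to reduce the theorem to the surjectivity of the restriction map $\res^{\bG'}_\bG$ on square-integrable automorphic forms. Given $\pi \in \Pi_{\disc}(\bG)$ realised inside $\CA^2(\mathfrak{A}_\bG \bG(F)\backslash \bG(\A_F))$, surjectivity produces preimages of every automorphic form in $\pi$, and closing the set of preimages under the $\bG'(\A_F)$-action yields a nonzero $\bG'(\A_F)$-invariant subspace of $\CA^2$ for $\bG'$. Any irreducible constituent $\pi'$ of this subspace whose restriction meets $\pi$ nontrivially lies in $\Pi_{\disc}(\bG')$ and has $\pi$ as a sub-$\bG(\A_F)$-module of $\Res^{\bG'}_\bG(\pi')$; the ambiguity of such $\pi'$ up to twist by characters of $\bG'(\A_F)/\bG(\A_F)\bG(F)\mathfrak{A}_{\bG'}$, which appears in the statement, matches precisely the ambiguity in the choice of extension.

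The structural setup is that because $\bG \trianglelefteq \bG'$ with $\bG'/\bG$ a torus, the subgroup $H := \bG(\A_F)\,\mathfrak{A}_{\bG'}\,\bG'(F)$ is normal in $\bG'(\A_F)$ with locally compact abelian quotient $A := \bG'(\A_F)/H$. Given $f \in \CA^2(\mathfrak{A}_\bG \bG(F)\backslash \bG(\A_F))$, I would first extend it to a function $\tilde f$ on $H$ by right-invariance under $\mathfrak{A}_{\bG'}\bG'(F)$, setting $\tilde f(g\gamma a) := f(g)$ for $g \in \bG(\A_F)$, $\gamma \in \bG'(F)$, $a \in \mathfrak{A}_{\bG'}$. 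Well-definedness uses $\bG(\A_F)\cap \bG'(F) = \bG(F)$ (a general fact for algebraic subgroups), possibly after an initial twist to align the central character of $\pi$ along $\mathfrak{A}_\bG \cap \mathfrak{A}_{\bG'}$ with the intended central character on $\mathfrak{A}_{\bG'}$. One then extends $\tilde f$ from $H$ to $\bG'(\A_F)$ by choosing a measurable section of the quotient map $\bG'(\A_F) \twoheadrightarrow A$ and transporting $\tilde f$ along it.

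The main obstacle is to verify that the resulting function is square-integrable modulo $\mathfrak{A}_{\bG'}\bG'(F)$. This is the substance of \cite[Lem.\ 4.19]{HiragaSaito}: the argument decomposes a fundamental domain for $\mathfrak{A}_{\bG'}\bG'(F) \backslash \bG'(\A_F)$ as the product of a fundamental domain for $\mathfrak{A}_\bG \bG(F) \backslash \bG(\A_F)$ with a fundamental domain for the automorphic quotient of the cocentre torus $\bG'/\bG$, and the latter has finite volume by reduction theory for tori. Because $\tilde f$ is $\mathfrak{A}_{\bG'}$-invariant by construction, the noncompact real direction of $A$ contributes only a finite-measure factor, so the $L^2$-norm transports cleanly from $\bG$ to $\bG'$. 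Alternatively, whenever the mild hypothesis of \cite{Chenevier_note_res_auto} is satisfied (which is the case in every application in the rest of the paper), one can invoke the sharper statement of \emph{loc.\ cit.}, which additionally yields multiplicity information and the compatibility with Satake parameters recorded in Lemma \ref{lem: compat Satake restriction}.
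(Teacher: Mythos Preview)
The paper does not supply its own proof of this theorem: it is stated as ``a direct consequence of \cite[Theorem 4.14]{HiragaSaito}, ignoring multiplicities,'' with no further argument. Your proposal goes further and sketches the Hiraga--Saito argument itself, invoking the same references (\cite[Lem.\ 4.19]{HiragaSaito} and \cite{Chenevier_note_res_auto}) that the paper already points to in the surrounding discussion. So there is no real discrepancy in approach; you are simply unpacking the citation.

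One point in your sketch deserves care. Extending $\tilde f$ from $H$ to $\bG'(\A_F)$ via a \emph{measurable} section of $\bG'(\A_F)\twoheadrightarrow A$ will produce a measurable $L^2$ function, not an automorphic form: smoothness and $K$-finiteness are lost. Since $\CA^2$ here denotes the space of square-integrable \emph{automorphic forms} (smooth, $K$-finite, $\mathfrak{z}$-finite), this is not yet a preimage under $\res^{\bG'}_\bG$. The fix is standard but should be named: either (i) work first in $L^2$, where your argument shows surjectivity onto $L^2_{\disc}$, and then pass to $K'_\infty$-finite vectors in each irreducible summand of the preimage to recover automorphic forms; or (ii) exploit that the quotient $A$ (once one restricts to $\bG'(\A_F)^1$) is compact, so one can extend smoothly via a smooth cutoff rather than a raw measurable section. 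Either route closes the gap and recovers the statement as formulated.
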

In general this map $\mathrm{ext}^{\bG'}_\bG$ is not uniquely determined.

We will mainly use this result for $\Spb_4 \hookrightarrow \GSpinb_5$. This will
be fruitful thanks to exterior square functoriality for $\GLb_4$
\cite{MR1937203} and the commutativity of the following commutative diagram of
dual groups:
\numequation \label{eqn: commutative diagram Sp4 restriction}
\begin{tikzcd}[column sep=5em]
	\widehat{\GSpinb_5} = \GSp_4 \arrow[r] \arrow[d, "{{}^L \xi}"] &
	\widehat{\Spb_4} = \SO_5 \arrow[d, "{\Std \oplus 1}"] \\
	\GL_4 \times \GL_1 \arrow[r, "f"] & \SL_6
\end{tikzcd}
\end{equation}
where $f := \bigwedge^2(\pr_1) \otimes \pr_2^{-1}$.

\section{Global Arthur--Langlands parameters for
	\texorpdfstring{$\GSpinb_5$}{GSpin(5)}}
\label{sec: global parameters}

\subsection{Classification of global parameters}

Let $\chi : \A_F^\times/F^\times \rightarrow \C^{\times}$ be a continuous
unitary character. Recall the set~$\Psi(\tGamma,\chi)$ of formal global
parameters defined in Section~\ref{subsec: global parameters}.

In this section we will denote the functorial transfer $\GLb_2 \times \GLb_2 \to
\GLb_4$ by $(\pi_1,\pi_2) \mapsto \pi_1 \boxtimes \pi_2$ (we will only need this
  in the weak sense, i.e.\ compatibility with Satake parameters at all but
finitely many places). This transfer exists for cuspidal representations by
~\cite{MR1792292}, and is easily extended to discrete representations:
\begin{itemize}
	\item if~$\pi_1=\eta[2]$ for some character~$\eta$ and $\pi_2$ is cuspidal,
		then $\pi_1 \boxtimes \pi_2 = \eta \otimes \pi_2[2]$.
	\item if~$\pi_1=\eta_1[2]$ and $\pi_2=\eta_2[2]$, then $\pi_1 \boxtimes \pi_2
		= \eta_1 \eta_2 \boxplus \eta_1 \eta_2[3]$.
\end{itemize}

Recall that in Section~\ref{subsec: elliptic endoscopic for GL4 times GL1} we
fixed a representative $(\bH, \CH, s, \xi)$ for each equivalence class of
elliptic endoscopic data for $\tGamma$, and in each case an $L$-embedding ${}^L
\xi : {}^L \bH \rightarrow {}^L \Gammab = \widehat{\Gammab} \times W_F$. We also
fixed, for each $\bH$ as above, a representative $(\bH', \CH', s', \xi')$ for
each equivalence class of elliptic endoscopic data for $\bH$, as well as an
$L$-embedding ${}^L \xi' : {}^L \bH' \rightarrow {}^L \bH$. We use this generic
notation in the following Proposition, which shows that we may associate a
parameter in the set~$\Psi(\tGamma,\chi)$ to each discrete automorphic
representation of~$\GSpinb_4$ or~$\GSpinb_5$ with central character~$\chi$; we
will refine this in Propositions~\ref{prop: parameter is discrete symplectic}
and~\ref{prop: parameter is discrete orthogonal} to show that these parameters
are in fact contained in the subsets~$\widetilde{\Psi}_{\disc}(\GSpinb_4,\chi)$,
${\Psi}_{\disc}(\GSpinb_5,\chi)$ respectively.

\begin{prop} \label{prop: STTF to associate parameters} \leavevmode
	\begin{enumerate}
		\item For $\bL$ a proper Levi subgroup of $\GSpinb_5$, any $c \in
			FS(\GSpinb_5)$ occurring in $I_{\disc}^{\GSpinb_5, \bL}$ such that
			$\muhat(c) = c(\chi)$ satisfies ${}^L \xi(c) \in \Psi(\tGamma, \chi)$ and
			is not discrete.
    \item Let $\bH = \left( \GLb_2 \times \GLb_2 \right) / \{ (zI_2, z^{-1} I_2
      \,|\, z \in \GLb_1 \} $ be the unique non-trivial elliptic endoscopic
      group for $\GSpinb_5$. Then any $c \in FS(\bH)$ occurring in
      $I_{\disc}^{\bH} = S_{\disc}^{\bH}$ and such that $\muhat(c) = c(\chi)$
      satisfies $({}^L \xi \circ {}^L \xi')(c) \in \Psi(\tGamma, \chi)$.
		\item Let $\bH'$ be a non-trivial elliptic endoscopic group for $\GSpinb_4$.
			Then any $c \in FS(\bH')$ occurring in $I_{\disc}^{\bH'} =
			S_{\disc}^{\bH'}$ and such that $\muhat(c) = c(\chi)$ satisfies $({}^L \xi
			\circ {}^L \xi')(c) \in \Psi(\tGamma, \chi)$.
    \item For $\bL$ a Levi subgroup of $\GSpinb_4$, any $c \in FS(\GSpinb_4)$
      occurring in $I_{\disc}^{\GSpinb_4, \bL}$ and such that $\muhat(c) =
      c(\chi)$ satisfies ${}^L \xi(c) \in \Psi(\tGamma, \chi)$. If $\bL \neq
      \GSpinb_4$ then ${}^L \xi(c)$ is not discrete.
    \item Any $c \in FS(\GSpinb_4)$ occurring in $S_{\disc}^{\GSpinb_4}$ and
      such that $\muhat(c) = c(\chi)$ satisfies ${}^L \xi(c) \in \Psi(\tGamma,
      \chi)$.
    \item Any $c \in FS(\GSpinb_5)$ occurring in $S_{\disc}^{\GSpinb_5}$ and
      such that $\muhat(c) = c(\chi)$ satisfies ${}^L \xi(c) \in \Psi(\tGamma,
      \chi)$.
    \item Any $c \in FS(\GSpinb_5)$ associated to a discrete automorphic
      representation for $\GSpinb_5$ with central character $\chi$ satisfies
      ${}^L \xi(c) \in \Psi(\tGamma, \chi)$.
	\end{enumerate}
\end{prop}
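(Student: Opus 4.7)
The plan is to reduce item~(7) to items~(1), (2) and~(6) by applying the stabilisation of the ordinary trace formula for $\GSpinb_5$. The only non-trivial elliptic endoscopic datum for $\GSpinb_5$ is $\bH_1$ with $\iota(\bH_1) = 1/4$, so rearranging~\eqref{equ:def stable dist TF} for $\bH = \GSpinb_5$ gives
\[ I_{\disc, \nu, c}^{\GSpinb_5}(f) = S_{\disc, \nu, c}^{\GSpinb_5}(f) + \tfrac{1}{4} \sum_{\substack{\nu' \mapsto \nu \\ c' \mapsto c}} S_{\disc, \nu', c'}^{\bH_1}(f^{\bH_1}) \]
for any $\nu \in IC(\GSpinb_5)$, $c \in FS(\GSpinb_5)$, and $f \in \widetilde{\cH}(\GSpinb_5)$, where the stability of each $S_{\disc}^{\bH_1}$-term is guaranteed by Theorem~\ref{thm: STF}.

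Let $\pi \in \Pi_{\disc}(\GSpinb_5)$ have central character $\chi$, and set $c := c(\pi)$ and $\nu := \nu(\pi_\infty)$. The compatibility of the Satake isomorphism with $\muhat$ at unramified places yields $\muhat(c) = c(\chi)$. I divide into two cases. If $c$ occurs in $I_{\disc}^{\GSpinb_5, \bL}$ for some proper Levi $\bL \subsetneq \GSpinb_5$, then item~(1) yields ${}^L\xi(c) \in \Psi(\tGamma, \chi)$ directly. Otherwise, for every $f$ we have $I_{\disc, \nu, c}^{\GSpinb_5}(f) = I_{\disc, \nu, c}^{\GSpinb_5, \GSpinb_5}(f)$, and it suffices to find an $f$ making this non-zero. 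The right-hand side is the \emph{finite} sum $\sum_{\pi'} m(\pi') \tr \pi'(f)$ over the discrete automorphic representations of $\GSpinb_5$ sharing both $c$ outside $S$ and the infinitesimal character $\nu$ with $\pi$; linear independence of the characters of these finitely many $\pi'$, combined with the positivity $m(\pi) \geq 1$, produces the desired $f$.

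With such an $f$, the displayed identity forces one of the two right-hand terms to be non-zero. If $S_{\disc, \nu, c}^{\GSpinb_5}(f) \neq 0$, then $c$ occurs in $S_{\disc}^{\GSpinb_5}$ and item~(6) gives ${}^L\xi(c) \in \Psi(\tGamma, \chi)$. Otherwise there exists $c' \in FS(\bH_1)$ with ${}^L\xi'(c') = c$ for which $c'$ occurs in $S_{\disc}^{\bH_1} = I_{\disc}^{\bH_1}$; the compatibility of $\muhat$ with the embedding ${}^L\xi'$ gives $\muhat(c') = \muhat(c) = c(\chi)$, so item~(2) applies and yields $({}^L\xi \circ {}^L\xi')(c') = {}^L\xi(c) \in \Psi(\tGamma, \chi)$.

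The main obstacle is the opening spectral-separation step. Since strong multiplicity one fails for $\GSpinb_5$, the finite sum defining $I_{\disc, \nu, c}^{\GSpinb_5, \GSpinb_5}$ may genuinely have several summands, and one has to justify that $\pi$'s contribution is not cancelled by the other constituents. This is routine via linear independence of characters on a fixed $K$-finite, infinitesimal-character-isotypic subspace, but requires a modicum of bookkeeping at the archimedean place and at the finite places where $\pi$ ramifies.
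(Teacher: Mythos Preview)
Your argument for item~(7) is correct and follows the same route as the paper: rearrange the stable trace formula for $\GSpinb_5$ so that a non-vanishing contribution from $\pi$ forces $c$ to occur either in $S_{\disc}^{\GSpinb_5}$ (handled by~(6)) or in $S_{\disc}^{\bH_1}=I_{\disc}^{\bH_1}$ (handled by~(2)). The paper's proof of~(7) is a one-line reference to~(2) and~(6) via the stable trace formula; your write-up is more explicit in two respects. First, you separate off the case where $c$ already occurs in some $I_{\disc}^{\GSpinb_5,\bL}$ for a proper Levi (invoking~(1)), which guards against the possibility that the Levi contributions cancel the $\bL=\GSpinb_5$ term in $I_{\disc}^{\GSpinb_5}$. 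Second, you spell out the linear-independence-of-characters step needed to guarantee $I_{\disc,\nu,c}^{\GSpinb_5,\GSpinb_5}\ne 0$; this is indeed routine (the $\pi'$ appearing are finitely many pairwise non-isomorphic irreducibles with positive multiplicities, hence their $S$-components are distinct and the combination of traces cannot vanish identically), and the paper leaves it implicit. Your concern about the archimedean and ramified places is not a genuine obstacle: finitely many irreducible admissible representations of $\GSpinb_5(F_S)$ are separated by $\CH(\GSpinb_5(F_S))$.
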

\begin{proof}
  We use repeatedly the description of~$I_{\disc}^{\widetilde{\bG}, \bL}$
  explained in Section~\ref{subsec: discrete part of spectral side of STTF},
  namely that if~$c\in FS(\widetilde{\bG})$ occurs in
  ~$I_{\disc}^{\widetilde{\bG}, \bL}$, then there is a regular element
  $\tilde{w} \in W(\bL, \widetilde{\bG})$, and~$\pi_{\bL} \in \Pi_{\disc}(\bL)$
  such that $\pi^{\tilde{w}} \simeq \pi$ and~$c(\pi_{\bL})$ maps to~$c$ via
  ${}^L \bL \to {}^L \bG$.
	\begin{enumerate}
		\item \label{item: levis of GSpin5}
			The possible proper Levi subgroups~ $\bL$ and the embeddings~${}^L \bL
			\rightarrow {}^L \GSpinb_5$ are listed in
			Section~\ref{subsec:Levi_parametrisation}. In the case at hand, the
			possibilities are
			\begin{enumerate}
				\item $\GLb_1 \times \GSpinb_3 \cong \GLb_1 \times \GLb_2$,
				\item $\GLb_2 \times \GSpinb_1 \cong \GLb_2 \times \GLb_1$, and
				\item $\GLb_1 \times \GLb_1 \times \GSpinb_1 \cong \GLb_1 \times \GLb_1
					\times \GLb_1$.
			\end{enumerate}
			In the first case we find that the corresponding parameter is of the form
			$\eta \boxplus \pi \boxplus \eta$, where~$\pi$ is a unitary discrete
			automorphic representation of~$\GLb_2(\A_F)$ with~$\omega_\pi=\chi$ and
			$\eta^2 = \chi$; in the second case, that the parameter is of the
			form~$\pi \boxplus \pi$, where ~$\pi$ is a unitary discrete automorphic
			representation of~$\GLb_2(\A_F)$ such that $\pi^{\vee} \otimes \chi \simeq
			\pi$; and in the third case that the parameter is of the
			form~$\eta_1 \boxplus \eta_2 \boxplus \eta_2 \boxplus \eta_1$ with
			$\eta_1^2 = \eta_2^2 = \chi$.

		\item \label{item: param for endoscopic of GSpin5}
			By the description of~$\bH$ as a quotient, $c$ corresponds to a pair
			$(\pi_1, \pi_2)$ with each $\pi_i$ either an element of
			$\Pi_{\disc}(\GLb_2)$ with $\omega_{\pi_i} = \chi$ or $\eta \boxplus \eta$,
			with $\eta^2 = \chi$. It is easy to check that $({}^L \xi \circ {}^L
			\xi')(c) = (c(\pi_1) \oplus c(\pi_2), c(\chi))$, so that the corresponding
			parameter is~$\pi_1\boxplus\pi_2$.

    \item This is similar to the previous two parts. Write~$\bH'=\bH_2^\alpha$
      as in Section~\ref{subsec: elliptic endoscopic for GL4 times GL1}, so that
      an element of $\Pi_{\disc}(\bH')$ is given by a pair of automorphic
      representations $\rho_1, \rho_2$ for the torus $\GSpinb_2^{\alpha} \simeq
      \Res_{E/F}(\GLb_1)$ (here $E = F(\sqrt{\alpha})$) whose restrictions to
      $\GLb_1$ are equal to $\chi$. Then via the natural embedding ${}^L
      \GSpinb_2^{\alpha} = \GSO_2 \rtimes \Gal(E/F)) \rightarrow \GL_2$, we have
      $({}^L \xi \circ {}^L \xi')(c) = (c(\pi_1) \oplus c(\pi_2), c(\chi))$
      where $\pi_1$ and $\pi_2$ are the cuspidal automorphic representations for
      $\GLb_2$ with central character $\chi$ automorphically induced (for $E /
      F$) from $\rho_1$ and $\rho_2$ seen as unitary characters of
      $\A_E^{\times} / E^{\times}$.

		\item \label{item: param for GSpin4 Levi}
			Recall that~$\GSpinb_4$ is isomorphic to the subgroup of elements
			of~$\GLb_2 \times \GLb_2$ such that the determinants of the two elements
			are equal. Accordingly, if $c$ is discrete automorphic, i.e.\ it
      occurs in $I_{\disc}^{\GSpinb_4, \GSpinb_4}$, then  by Theorem \ref{thm:
      surjectivity automorphic res} it comes from the automorphic restriction of
      some $(\pi_1, \pi_2) \in \Pi_{\disc}(\GLb_2 \times \GLb_2)$, with
      $c(\omega_{\pi_1}) c(\omega_{\pi_2}) = c(\chi)$ and so $\omega_{\pi_1}
      \omega_{\pi_2} = \chi$. Then ${}^L \xi(c) = (c(\pi_1) \otimes c(\pi_2),
      c(\chi))$, and the corresponding parameter is~$\pi_1\boxtimes\pi_2$.

      Otherwise $c$ occurs in $I_{\disc}^{\GSpinb_4, \bL}$ for some proper Levi
      subgroup. By the description given in
      Section~\ref{subsec:Levi_parametrisation}, we see that $\bL$ is isomorphic
      to $\GLb_2 \times \GSpinb_0^1 \cong \GLb_2 \times \GLb_1$ or to $\GLb_1
      \times \GLb_1 \times \GSpinb_0^1 \cong \GLb_1 \times \GLb_1 \times
      \GLb_1$. In either case we can compute explicitly as in ~(\ref{item: levis
      of GSpin5}), and we find that  we obtain parameters of the form~$\pi
      \boxplus \pi$, where ~$\pi$ is a discrete automorphic representation
      of~$\GLb_2(\A_F)$ such that $\pi^{\vee} \otimes \chi \simeq \pi$, and
      parameters of the form~$\eta_1 \boxplus \eta_2 \boxplus \eta_2 \boxplus
      \eta_1$ with $\eta_1^2 = \eta_2^2 = \chi$.
			
		\item \label{item: STF GSpin4}
      This follows immediately from  the stable trace formula \eqref{equ:def
      stable dist TF} for $\GSpinb_4$ and the two previous points.

		\item \label{item: STTF GL4}
      This follows from  the stable twisted trace formula of
      Theorem~\ref{thm:STTF}. Observe that we can associate an element of
      $\Psi(\tGamma, \chi)$ to any family of Satake parameters occurring in
      $S_{\disc}^{\GSpinb_4}$ or to $I_{\disc}^{\tGamma}$; in the former case
      this is the content of~(\ref{item: STF GSpin4}), and in the latter case it
      follows from Lemma \ref{lem: twisted GL gives parameters}.

    \item This follows as in~(\ref{item: STTF GL4}), this time using  the stable
      trace formula for $\GSpinb_5$, and applying parts~(\ref{item: param for
			endoscopic of GSpin5}) and~(\ref{item: STTF GL4}). \qedhere
	\end{enumerate}
\end{proof}

We can now prove the symplectic/orthogonal alternative for~$\GLb_4$. This is
well known, and can also be proved using the theta correspondence or converse
theorems; indeed, \cite[Thm.\ 4.26]{MR3227529} proves a corresponding result
for~$\GSpinb$ groups of arbitrary rank, showing that a  $\chi$-self dual
cuspidal automorphic representation~$\pi$ of~$\GLb_n$ arises as the transfer of
a globally generic representation of a~$\GSpinb$ group which is uniquely
determined by the data of which of the corresponding symmetric and alternating
square $L$-functions has a pole, together with  the central character of~$\pi$.

However, our emphasis here is slightly different (we wish to determine which
representations have Satake parameters which occur in the discrete spectrum
of~$\GSpinb_5$), and in any case we find it instructive to show how this follows
from the trace formula together with Kim's exterior square
transfer~\cite{MR1937203}.

The following remark will help us to distinguish parameters coming
from different endoscopic subgroups.

\begin{rem}\label{rem: getting alpha from parameter}The sets 
  \[ \left( {}^L \xi( FS(\GSpinb_5) ) \cup {}^L \xi( FS(\GSpinb_4) ) \right)
    \text{ and } \left( {}^L \xi( FS(\GSpinb_4^\alpha) ) \cup {}^L \xi(
		FS(\bR^\alpha) )\right) \]
	(where  $\alpha \in F^{\times} / F^{\times, 2}$ is non-trivial) are pairwise
	disjoint, because we can recover $\alpha$ as follows (by the definition
	of~${}^L \xi$): for $\bH=\GSpinb_4^\alpha$ or~$\bH=\bR^\alpha$, $c^S \in
	FS(\bH)$ and $(g^S, x^S) = {}^L \xi(c^S)$, for any $v \not\in S$, then $v$
	splits in $F(\sqrt{\alpha})$ if and only if $\det g_v = x_v^2$. On the other
	hand if~$\bH=\GSpinb_5$ or~$\bH=\GSpinb_4$ then we always have $\det g_v =
	x_v^2$.
\end{rem}

\begin{prop} \label{prop:sympl_orth_alt}
	Let $\pi$ be a $\chi$-self dual cuspidal automorphic representation for
  $\GLb_4$, and let $S$ be a finite set of places of $F$ containing all
  Archimedean places and all non-Archimedean places where $\pi$ is ramified.
	\begin{enumerate}
    \item If there are cuspidal automorphic representations $\pi_i$ for $\GLb_2$
      such that $\omega_{\pi_1} \omega_{\pi_2} = \chi$ and $\pi \simeq \pi_1
      \boxtimes \pi_2$, then $L^S(s, \Sym^2(\pi) \otimes \chi^{-1})$ has a pole
      at $s=1$, and there exists~$c' \in FS(\GSpinb_4)$ occurring in
      $S_{\disc}^{\GSpinb_4}$ and such that ${}^L \xi(c') = (c(\pi), c(\chi))$.
    \item If~$\omega_{\pi} \ne \chi^2$, then~$\pi$ is an Asai transfer from a
      cuspidal automorphic representation of~$\GLb_2/E$, for~$E =
      F(\sqrt{\alpha})$ the quadratic extension of~$F$ corresponding to the
      quadratic character $\chi^2 / \omega_{\pi}$, and $L^S(s, \Sym^2(\pi)
      \otimes \chi^{-1})$ has a pole at $s=1$. Furthermore there exists~$c' \in
      FS(\GSpinb_4^{\alpha})$ occurring in $S_{\disc}^{\GSpinb_4^\alpha}$ and
      such that ${}^L \xi(c') = (c(\pi), c(\chi))$.
    \item Otherwise \emph{(}i.e.\ if $\omega_{\pi} = \chi^2$ and $\pi$ does not
      come from a pair of automorphic cuspidal representations for $\GLb_2$ as
      in (1)\emph{)} $L^S(s, \bigwedge^2(\pi) \otimes \chi^{-1})$ has a pole at
      $s=1$, $c := {}^L \xi^{-1}(c(\pi), c(\chi)) \in FS(\GSpinb_5)$ occurs in
      $S_{\disc}^{\GSpinb_5}$, and for any large enough $S$ and any $\nu \in
      IC(\GSpinb_5)$
      \[ S_{\disc, \nu, c^S}^{\GSpinb_5} = I_{\disc, \nu, c^S}^{\GSpinb_5} =
        I_{\disc, \nu, c^S}^{\GSpinb_5, \GSpinb_5}. \]
	\end{enumerate}
\end{prop}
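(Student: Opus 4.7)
The plan is to apply the stabilisation of the twisted trace formula for $\tGamma$ (Theorem~\ref{thm:STTF}) to the family $c := (c(\pi), c(\chi)) \in FS(\tGamma)$, then identify which elliptic endoscopic group of $\tGamma$ contributes, using the $\alpha$-invariant of Remark~\ref{rem: getting alpha from parameter}. Non-vanishing of $I_{\disc, \nu, c^S}^{\tGamma}$ (which follows from cuspidality and $\chi$-self-duality of $\pi$, so that the Whittaker-normalised $\tilde{\pi} \otimes \chi$ is a discrete automorphic contribution by Remark~\ref{rem: Idisc for tGL}) forces some $S_{\disc, \nu', c^{\prime S}}^{\bH}(f^{\bH})$ to be non-zero with $\bH$ one of $\GSpinb_5, \GSpinb_4, \GSpinb_4^\alpha, \bR^\alpha$ and ${}^L\xi(c^{\prime S}) = c^S$. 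Since $\chi$-self-duality makes $\omega_\pi \chi^{-2}$ a (possibly trivial) quadratic character, Remark~\ref{rem: getting alpha from parameter} pins down $\bH \in \{\GSpinb_5, \GSpinb_4\}$ when $\omega_\pi = \chi^2$ and $\bH \in \{\GSpinb_4^\alpha, \bR^\alpha\}$ for $\alpha$ corresponding to $\omega_\pi\chi^{-2}$ otherwise.

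With this dichotomy, the three cases of the proposition are distinguished as follows. In Case~(1), the hypothesis $\pi = \pi_1 \boxtimes \pi_2$ lets us start from $\pi_1 \otimes \pi_2 \in \Pi_{\disc}(\GLb_2 \times \GLb_2)$ and apply Hiraga--Saito (Theorem~\ref{thm: surjectivity automorphic res}) to the inclusion $\GSpinb_4 \hookrightarrow \GLb_2 \times \GLb_2$, producing a discrete automorphic $\tau$ for $\GSpinb_4$ whose Satake family $c' := c(\tau)$ satisfies ${}^L\xi(c') = c$ by Lemma~\ref{lem: compat Satake restriction}; the condition $\omega_\pi = \chi^2$ together with Remark~\ref{rem: getting alpha from parameter} rules out any contribution from the non-trivial endoscopic groups $\bH_2^\alpha$ of $\GSpinb_4$, so $c'$ already appears in $S_{\disc}^{\GSpinb_4}$. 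In Case~(2) the torus-type group $\bR^\alpha$ is excluded because its parameters are isobaric of shape $\pi_1 \boxplus \pi_2$ and thus non-cuspidal on $\GLb_4$, so $\bH = \GSpinb_4^\alpha$, and the discrete spectrum of $\GSpinb_4^\alpha$ is built from cuspidal automorphic representations of $\GLb_2(\A_E)$ via the Asai/restriction-of-scalars description of its dual group, realising $\pi$ as the claimed Asai transfer. In Case~(3), Proposition~\ref{prop: STTF to associate parameters}(4) shows that the $\GSpinb_4$-side produces only parameters of the form $\pi_1 \boxtimes \pi_2$, contradicting the hypothesis; hence $\bH = \GSpinb_5$, and $c' := {}^L\xi^{-1}(c(\pi), c(\chi)) \in FS(\GSpinb_5)$ occurs in $S_{\disc}^{\GSpinb_5}$.

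The $L$-function statements follow from the cuspidal factorisation
\[ L^S(s, \pi \times \pi \otimes \chi^{-1}) = L^S(s, \bigwedge\nolimits^2 \pi \otimes \chi^{-1}) \, L^S(s, \Sym^2 \pi \otimes \chi^{-1}), \]
whose left side equals $L^S(s, \pi \times \pi^\vee)$ and so has a simple pole at $s=1$: exactly one factor on the right inherits this pole. In Case~(1) the standard decomposition $\bigwedge^2(V_1 \otimes V_2) \cong (\Sym^2 V_1 \otimes \bigwedge^2 V_2) \oplus (\bigwedge^2 V_1 \otimes \Sym^2 V_2)$ together with $\omega_{\pi_1}\omega_{\pi_2}=\chi$ yields
\[ \bigwedge\nolimits^2 \pi \otimes \chi^{-1} \cong (\Sym^2 \pi_1 \otimes \omega_{\pi_1}^{-1}) \boxplus (\Sym^2 \pi_2 \otimes \omega_{\pi_2}^{-1}), \]
and Theorem~\ref{thm: symplectic orthogonal GL2} applied to each $\pi_i$ (always $\omega_{\pi_i}$-self-dual, hence symplectic in the sense of that theorem) forces each $\Sym^2$ factor to be holomorphic, so the pole sits in $L^S(s, \Sym^2 \pi \otimes \chi^{-1})$; Case~(2) is analogous via the Asai-side factorisation. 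For Case~(3) the parameter of $\pi$ factors through $\GSp_4 = \widehat{\GSpinb_5}$, so using the commutative diagram \eqref{eqn: commutative diagram Sp4 restriction} the isobaric decomposition of $\bigwedge^2 \pi \otimes \chi^{-1}$ contains $\mathbf{1}$ as a direct summand, giving the desired pole.

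Finally, for the refinement $S_{\disc, \nu, c^S}^{\GSpinb_5} = I_{\disc, \nu, c^S}^{\GSpinb_5} = I_{\disc, \nu, c^S}^{\GSpinb_5, \GSpinb_5}$ in Case~(3): the stabilisation for $\GSpinb_5$ reads $S_{\disc}^{\GSpinb_5} = I_{\disc}^{\GSpinb_5} - \tfrac14 S_{\disc}^{\bH_1}((\cdot)^{\bH_1})$, and any $c'' \in FS(\bH_1)$ mapping to $c$ would force $\pi$ to be isobaric of the form $\pi_1 \boxplus \pi_2$, contradicting cuspidality --- so the endoscopic correction vanishes at $c$; the classification in Proposition~\ref{prop: STTF to associate parameters}(1) then shows that no proper Levi of $\GSpinb_5$ produces a parameter matching our discrete cuspidal $c$, concentrating the contribution in $I_{\disc}^{\GSpinb_5, \GSpinb_5}$. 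The main technical obstacle is exactly this passage from non-vanishing of $I_{\disc}^{\bH}$ (obtained by a restriction construction or trace-formula comparison) to non-vanishing of $S_{\disc}^{\bH}$, which depends critically on the invariant $\omega_\pi\chi^{-2}$ of Remark~\ref{rem: getting alpha from parameter} to separate contributions from different elliptic endoscopic groups, and on the explicit Levi classification in Proposition~\ref{prop: STTF to associate parameters}(1) to rule out Eisenstein parameters masquerading as the discrete cuspidal $c$.
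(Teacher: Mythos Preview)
Your overall architecture matches the paper's: use non-vanishing of $I_{\disc}^{\tGamma}$ at $(c(\pi),c(\chi))$, then Remark~\ref{rem: getting alpha from parameter} to isolate the contributing endoscopic group, then cuspidality of $\pi$ to rule out the ``small'' groups $\bR^\alpha$, $\bH_1$, and proper Levis. The arguments in Cases~(1) and~(2) are essentially the paper's, with two minor imprecisions: in Case~(1) the passage from $I_{\disc}^{\GSpinb_4}$ to $S_{\disc}^{\GSpinb_4}$ is not a consequence of Remark~\ref{rem: getting alpha from parameter} (which concerns endoscopic groups of $\tGamma$, not of $\GSpinb_4$) but rather of cuspidality of $\pi$, which forces any $c'' \in FS(\bH_2^\alpha)$ mapping to $c'$ to produce a non-cuspidal parameter $\pi_1' \boxplus \pi_2'$ on $\GLb_4$; and in Case~(2) you should also explicitly rule out the proper Levi and endoscopic subgroups of $\GSpinb_4^\alpha$ (again by cuspidality) before concluding that the contribution comes from a genuine discrete automorphic representation of $\GSpinb_4^\alpha$.

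There is, however, a genuine gap in your $L$-function argument for Case~(3). You assert that because the Satake parameters of $\pi$ factor through $\GSp_4$, the isobaric decomposition of $\bigwedge^2\pi\otimes\chi^{-1}$ contains $\mathbf{1}$. But knowing only that $1$ is an eigenvalue of the Satake parameter at almost every place does \emph{not} force $\mathbf{1}$ to appear as an isobaric summand: you would need to know that the remaining $5$-dimensional piece $(\rho_v)_v$ is itself the family of Satake parameters of an automorphic (isobaric) representation, and then invoke Jacquet--Shalika. Equivalently, the factorisation $L^S(s,\bigwedge^2\pi\otimes\chi^{-1}) = \zeta_F^S(s)\cdot R^S(s)$ gives a pole only if $R^S(1)\neq 0$, and you have not addressed this non-vanishing. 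The paper fills this gap by producing a discrete automorphic $\Pi$ for $\GSpinb_5$ with the right Satake parameters, taking an automorphic restriction $\Pi'$ to $\Spb_4$, and using Arthur's classification for $\Spb_4$ to attach a discrete parameter $\psi'\in\Psi_{\disc}(\Spb_4)$ with $\bigwedge^2 c(\pi)\otimes c(\chi)^{-1}=1\oplus c(\psi')$; then Kim's exterior square functoriality shows $1\oplus c(\psi')$ is an isobaric sum of unitary cuspidals, hence so is $\psi'$, and Jacquet--Shalika non-vanishing gives $L^S(1,\psi')\neq 0$. This detour through $\Spb_4$ is the substantive missing ingredient in your proof.
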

\begin{proof}
	\begin{enumerate}
		\item It suffices to note that
			\[ L^S(s, \bigwedge^2(\pi_1 \boxtimes \pi_2) \otimes (\omega_{\pi_1}
			\omega_{\pi_2})^{-1}) = L^S(s, \ad^0(\pi_1)) L^S(s, \ad^0(\pi_2)) \]
			is holomorphic at $s=1$ since for each $i = 1,2$ the automorphic
			representation $\ad^0(\pi_i)$ defined in \cite{MR533066} is either
			\begin{enumerate}
				\item a self-dual cuspidal automorphic representation for $\GLb_3$
					(\cite[Theorem 9.3]{MR533066}),
				\item $\eta \boxplus \sigma$ where $\eta$ is a character of order two
					and $\sigma$ is a self-dual cuspidal automorphic representation for
					$\GLb_2$ such that $\eta \otimes \sigma \simeq \sigma$ (\cite[Remark
					9.9]{MR533066} with $(\Omega/\Omega')^2 \neq 1$),
				\item $\eta_1 \boxplus \eta_2 \boxplus \eta_1 \eta_2$ where $\eta_1$ and
					$\eta_2$ are distinct characters of order two (\cite[Remark
					9.9]{MR533066} with $\Omega/\Omega'$ of order two).
			\end{enumerate}
			As in the proof of Proposition~\ref{prop: STTF to associate
			parameters}~(\ref{item: param for GSpin4 Levi}) we see that the element
			$c' \in FS(\GSpinb_4)$ which is the image of $(c(\pi_1), c(\pi_2))$ via
			either of the two tensor product morphisms $\GL_2 \times \GL_2 \rightarrow
      \GSO_4 = \widehat{\GSpinb_4}$ occurs in $I^{\GSpinb_4}_{\disc}$ and
      $S^{\GSpinb_4}_{\disc}$, and satisfies ${}^L \xi(c') = (c(\pi), c(\chi))$.

    \item By Remark \ref{rem: Idisc for tGL} (2) we know that $(c(\pi),
      c(\chi))$ does not occur in $I^{\tGamma, \bL}_{\disc}$ for any proper Levi
      subgroup $\bL$ of $\Gammab$. Since $(\pi, \chi)$ occurs with multiplicity
      one in the discrete automorphic spectrum for $\Gammab$, the automorphic
      extension $\widetilde{\pi}$ of $\pi$ to $\tGamma$ (provided by \eqref{equ:
      automorphic twisted action} for $\bL = \bG$, with $\tilde{w} = \theta$)
      has non-vanishing trace (see \cite[Proposition A.5]{Lemaire} for the
      $p$-adic case, the Archimedean case is proved similarly) and so $(c(\pi),
      c(\chi))$ occurs in $I_{\disc}^{\tGamma}$. In the stabilisation of the
      twisted trace formula (Theorem \ref{thm:STTF}) this contribution comes
      from at least one elliptic endoscopic datum, i.e.\ there is an elliptic
      endoscopic group $\bH$ and $c' \in FS(\bH)$ occurring in $S^{\bH}_{\disc}$
      such that ${}^L \xi(c') = (c(\pi), c(\chi))$.

			The character~$\omega_\pi/\chi^2$ corresponds to some quadratic
			extension~$E=F(\sqrt{\alpha})$, and by Remark~\ref{rem: getting alpha from
			parameter}, in the stabilisation of the twisted trace formula for
			$\tGamma$ this contribution must come from~$S_{\disc}^{\GSpinb_4^\alpha}$
			or~$S_{\disc}^{\bR^\alpha}$ (a priori non-exclusively). In the latter
			case, we see that~$\pi$ has the same Satake parameters
			as~$\pi_1\boxplus\pi_2$, where~$\pi_1$ is either a discrete automorphic
			representation for $\GLb_2$ with central character~$\chi$,
			or~$\pi_1=\eta\boxplus\eta$ with $\eta^2=\chi$, and~$\pi_2$ is a cuspidal
			$\chi$-self-dual automorphic representation for $\GLb_2$, corresponding to
			the extension~$E/F$; but either possibility contradicts \cite{MR623137}.

			Thus~$(c(\pi), c(\chi))$ comes from~$S_{\disc}^{\GSpinb_4^\alpha}$.
      \begin{itemize}
        \item If it comes from~$S_{\disc}^{\bH} = I_{\disc}^{\bH}$ for some
          elliptic endoscopic group $\bH \neq \GSpinb_4^{\alpha}$ for
          $\GSpinb_4^{\alpha}$ then
          \[ \bH \simeq \GSpinb_2^{\beta} \times \GSpinb_2^{\gamma} / \{
            (z,z^{-1}) | z \in \GLb_1 \}\]
          for some $\beta, \gamma \in F^{\times} / F^{\times, 2} \smallsetminus
          \{1\}$ satisfying $\beta \gamma = \alpha$. Recall that
          $\GSpinb_2^{\beta} \simeq \Res_{F(\sqrt{\beta})/F} \GLb_1$. Then we
          see that $\pi = \pi_1 \boxplus \pi_2$ where $\pi_1$ (resp.\ $\pi_2$)
          is the automorphic induction of a character of
          $\A_{F(\sqrt{\beta})}^{\times} / F(\sqrt{\beta})^{\times}$ (resp.\
          $\A_{F(\sqrt{\gamma})}^{\times} / F(\sqrt{\gamma})^{\times}$) and
          this contradicts the cuspidality of $\pi$.
        \item If $(c(\pi), c(\chi))$ comes from $I_{\disc}^{\GSpinb_4^{\alpha},
          \bL}$ for the proper Levi subgroup $\bL \simeq \GLb_1 \times
          \GSpinb_2^{\alpha}$ of $\GSpinb_4^{\alpha}$ then $\pi = \eta \boxplus
          \pi_1 \boxplus \eta$ where $\eta^2 = \chi$ and $\pi_1$ is the
          automorphic induction of a character of $\A_E^{\times} / E^{\times}$
          and we also get a contradiction with the cuspidality of $\pi$.
      \end{itemize}
      Therefore $(c(\pi), c(\chi))$ comes from a discrete automorphic
      representation for $\GSpinb_4^{\alpha}$. As explained in~\cite[\S
      2.2]{MR2767509} (i.e.\ using Theorem \ref{thm: surjectivity automorphic
      res} for $\GSpinb_4^{\alpha} \hookrightarrow \Res_{E/F} \GLb_2$), this is
      equivalent to~$\pi$ being the Asai transfer of a cuspidal automorphic
      representation~$\pi_E$ of~$\GLb_2(\A_E)$. Then $L^S(s,
      \bigwedge^2\pi\otimes \chi^{-1})=L^S(s,\Ind_E^F(\Sym^2\pi_E\otimes
      \omega_{\pi_E}' )\otimes \chi^{-1})$, where~$\omega_{\pi_E}'$ is the
      $\Gal(E/F)$-conjugate of~$\omega_{\pi_E}$.

      If~$\pi_E$ is not dihedral then $\bigwedge^2\pi$ is cuspidal
      by~\cite[Prop.\ 3.2]{MR2767509}, so it is enough to consider the case
      that~$\pi_E$ is dihedral, induced from a character~$\chi_{E'}$
      of~$\A_{E'}^\times/(E')^\times$, where $E'/E$ is a quadratic extension.
      Then~$\Sym^2\pi_E=\Ind_{E'}^E\chi^2_{E'}\boxplus\chi_{E'}|_{\A^\times_E}$,
      and it is easy to verify explicitly that the isobaric
      representation~$\Ind_E^F(\Sym^2\pi_E\otimes \omega_{\pi_E}' )\otimes
      \chi^{-1}$ cannot contain the trivial character.

    \item \label{item: sympl orth alt dim4 sympl case}
      As in the previous case, $(c(\pi), c(\chi))$ occurs in
      $I_{\disc}^{\tGamma}$. By Remark~\ref{rem: getting alpha from parameter},
      in the stabilisation of the twisted trace formula for $\tGamma$ this
      contribution comes from $\bH = \GSpinb_4$ or $\bH = \GSpinb_5$ on the
      right-hand side. In the former case, as $\GSpinb_4$ embeds into $\GLb_2
      \times \GLb_2$, we would be in the situation of part~(1); so it must
      occur in $S_{\disc}^{\GSpinb_5}$. Moreover, it cannot come from
      $S_{\disc}^{\bH_{1,1}}$ or $I_{\disc}^{\GSpinb_5, \bL}$ for a proper Levi
      $\bL$, as by (the proof of) Proposition~\ref{prop: STTF to associate
      parameters} this would contradict strong multiplicity one, so we can
      conclude that $I_{\disc, c}^{\GSpinb_5, \GSpinb_5} = S_{\disc,
      c}^{\GSpinb_5}$ is not identically zero.

			In particular there is a discrete automorphic representation $\Pi$ for
			$\GSpinb_5$ such that ${}^L \xi(c(\Pi)) = (c(\pi), c(\chi))$. Let $\Pi'$
      be an automorphic restriction (in the sense of Section
      \ref{sec:restriction}) of $\Pi$ to $\Spb_4$. Then $\Pi'$ is a discrete
      automorphic representation for $\Spb_4$, and Arthur associates a
			discrete parameter $\psi' \in \Psi_{\disc}(\Spb_4)$ to $\Pi'$ (see
      Theorem~\ref{thm: Arthur for chi a square}). Now $\bigwedge^2(c(\pi))
      \otimes c(\chi)^{-1} = 1 \oplus c(\psi')$ (see the commutative diagram
      \eqref{eqn: commutative diagram Sp4 restriction}) and so $L^S(s,
      \bigwedge^2(\pi) \otimes \chi^{-1}) = \zeta_F^S(s) L^S(s, \psi')$.
      Moreover by~\cite[Thm.\ 5.3.1]{MR1937203}, $1 \oplus c(\psi')$ is
      associated to a (unique) isobaric sum of \emph{unitary} cuspidal
      representations, and by~\cite[Thm.\ 4.4]{MR623137} the same holds for
      $\psi'$. This implies that $L^S(s, \psi')$ does not vanish on the line
      $\Re(s)=1$, by the main result of~\cite{MR0432596}.
			\qedhere
	\end{enumerate}
\end{proof}

\begin{rem} \label{rem: explicit list of parameters following Arthur}
  By Theorem~\ref{thm: symplectic orthogonal GL2} and
  Proposition~\ref{prop:sympl_orth_alt}, we see that $\Psi_{\disc}(\GSpinb_5)$
  is the subset of $\Psi(\tGamma,\chi)$ consisting of pairs $(\psi, \chi)$ with
  $\psi$ of the following kinds. (We have labelled them in the same way as
  in~\cite{MR2058604}. The groups~$\cS_\psi$ are easy to compute; for the values
  of~$\epsilon_\psi$, see~\cite[(1.5.6)]{MR3135650}.)
  \begin{enumerate}[(a)]
    \item cuspidal automorphic representations $\pi$ of $\GLb_4$ such that
      $\pi^{\vee} \otimes \chi \simeq \pi$ and $L^S(s, \chi^{-1} \otimes
      \bigwedge^2 \pi)$ has a pole at $s=1$. (General type, $\cS_\psi=1$,
      $\epsilon_\psi=1$.)
    \item $\pi_1 \boxplus \pi_2$ where $\pi_i$ are cuspidal automorphic
      representations of $\GLb_2$, $\omega_{\pi_1} = \omega_{\pi_2} = \chi$ and
      $\pi_1 \not\simeq \pi_2$. (Yoshida type, $\cS_\psi=\Z/2\Z$,
      $\epsilon_\psi=1$.)
    \item $\pi[2]$ for $\pi$ a cuspidal automorphic representation for $\GLb_2$
      such that $\omega_{\pi} / \chi$ has order $2$ (i.e.\ $(\pi, \chi)$ is of
      orthogonal type, which means that $\pi$ is automorphically induced from a
      character $\eta : \A_E^\times/E^\times \rightarrow \C^{\times}$ for the
      quadratic extension $E/F$ corresponding to $\omega_{\pi} / \chi$, such
      that $\eta^c \neq \eta$ and $\eta|_{\A_F^\times/F^\times} = \chi$).
      (Soudry type, $\cS_\psi=1$, $\epsilon_\psi=1$.)
    \item $\pi \boxplus \eta[2]$ with $\pi$ cuspidal for $\GLb_2$ and
      $\omega_{\pi} = \eta^2 = \chi$. (Saito--Kurokawa type, $\cS_\psi=\Z/2\Z$,
      $\epsilon_\psi=\mathrm{sgn}$ if $\epsilon(1/2,\pi\otimes\eta^{-1})=-1$,
      and $\epsilon_\psi=1$ otherwise.)
    \item $\eta_1[2] \boxplus \eta_2[2]$ with $\eta_1^2 = \eta_2^2 = \chi$ and
      $\eta_1 \neq \eta_2$. (Howe--Piatetski-Shapiro type, $\cS_\psi=\Z/2\Z$,
      $\epsilon_\psi=1$.)
    \item $\eta[4]$ with $\eta^2 = \chi$. (One dimensional type, $\cS_\psi=1$,
      $\epsilon_\psi=1$.)
  \end{enumerate}
\end{rem}

\begin{prop} \label{prop: parameter is discrete symplectic}
	For $c \in FS(\GSpinb_5)$ associated to a discrete automorphic representation
	$\Pi$ of $\GSpinb_5$ with central character $\chi$, the associated element of
  $\Psi(\tGamma,\chi)$ (by Proposition \ref{prop: STTF to associate parameters})
  belongs to the subset $\Psi_{\disc}(\GSpinb_5)$.
\end{prop}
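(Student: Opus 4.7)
The plan is to identify the parameter $\psi \in \Psi(\tGamma, \chi)$ associated to $c = c(\Pi)$ via Proposition \ref{prop: STTF to associate parameters}(7), then verify the discreteness and sign conditions defining $\Psi_{\disc}(\GSpinb_5)$ by restricting $\Pi$ to $\Spb_4$. Writing $\psi = \boxplus_i \pi_i[d_i]$, the two conditions to check are that the pairs $(\pi_i, d_i)$ are pairwise distinct, and that $\sign(\pi_i, \chi) = (-1)^{d_i}$ for each $i$ (using $\sign(\GSpinb_5) = -1$).

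I first observe that once discreteness of $\psi$ is established, the sign condition is automatic: since $c \in FS(\GSpinb_5)$ with $\muhat(c) = c(\chi)$, its image ${}^L\xi(c) \in FS(\tGamma)^{\widehat{\theta}}$ corresponds to a $4$-dimensional Satake family $\bigoplus_i c(\pi_i) \otimes \nu_{d_i}$ preserving a symplectic form with similitude $c(\chi)$; pairwise distinctness of the summands then forces each $c(\pi_i) \otimes \nu_{d_i}$ to individually preserve such a form, which (using that $\nu_{d_i}$ carries a bilinear form of sign $(-1)^{d_i-1}$) translates precisely to $\sign(\pi_i, \chi) = (-1)^{d_i}$.

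The heart of the argument is thus to establish discreteness of $\psi$. By Theorem \ref{thm: surjectivity automorphic res}, I choose an automorphic restriction $\Pi'$ of $\Pi$ to $\Spb_4$; this $\Pi'$ is a discrete automorphic representation of $\Spb_4$. By Theorem \ref{thm: Arthur for chi a square} (applicable because the multiplier on $\Spb_4 \times \GLb_1$ is trivial, hence a square), $\Pi'$ has a unique discrete Arthur parameter $\psi' \in \Psi_{\disc}(\Spb_4 \times \GLb_1)$. Combining Kim's exterior square functoriality \cite{MR1937203}, the commutative diagram \eqref{eqn: commutative diagram Sp4 restriction}, and Lemma \ref{lem: compat Satake restriction}, I obtain the identity
\[ c(\psi') \oplus \mathbf{1} = \bigwedge^2 c(\psi) \otimes c(\chi)^{-1} \]
as families of Satake parameters. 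Since $\psi'$ is discrete, no character can appear on the right-hand side with multiplicity exceeding that of the corresponding summand of $c(\psi') \oplus \mathbf{1}$. A routine exterior-square computation (using $\bigwedge^2(V \oplus V) = \Sym^2 V \oplus 3 \bigwedge^2 V$ and analogous identities for Speh-type summands $\eta[d]$) shows that any repeated summand in $\psi$ would produce such an excess multiplicity, ruling out all non-discrete shapes of $\psi$.

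I expect the main obstacle to be the systematic bookkeeping of the finitely many shapes of $\psi \in \Psi(\tGamma, \chi)$ with $\sum_i N_i d_i = 4$, and in particular the case $\psi = \pi[1]$ with $\pi$ cuspidal on $\GLb_4$: here discreteness is automatic but the sign condition must be verified directly. This is handled by Proposition \ref{prop:sympl_orth_alt}(3), since the exterior-square identity above forces $\bigwedge^2 \pi \otimes \chi^{-1}$ to contain $\mathbf{1}$, which by definition means $\pi$ is of symplectic type.
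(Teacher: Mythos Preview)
Your overall strategy---restrict $\Pi$ to $\Spb_4$, obtain the discrete parameter $\psi'$, and exploit the identity $c(\psi') \oplus 1 = \bigwedge^2 c(\psi) \otimes c(\chi)^{-1}$---is exactly the paper's. But your separation into ``first discreteness, then sign automatically'' introduces a genuine gap: the claim that discreteness of $\psi$ together with $c \in FS(\GSpinb_5)$ forces each summand $c(\pi_i) \otimes \nu_{d_i}$ to individually carry a $\chi$-symplectic form is \emph{false}.

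Here is a concrete counterexample to that implication. Take a quadratic extension $E/F$ with associated character $\epsilon$, and let $\pi_1, \pi_2$ be distinct cuspidal representations of $\GLb_2$ automorphically induced from characters $\mu_1 \ne \mu_2$ of $\A_E^\times/E^\times$ with $\mu_i|_{\A_F^\times} = \chi$, so that $\omega_{\pi_1} = \omega_{\pi_2} = \chi\epsilon \ne \chi$. Then $\psi = \pi_1 \boxplus \pi_2$ is discrete in $\Psi(\tGamma,\chi)$ and fails the sign condition. Nevertheless $(c(\psi),c(\chi))$ lands in $\GSp_4$ at \emph{every} unramified place: at split $v$ one has $\omega_{\pi_i,v} = \chi_v$ so each $c_v(\pi_i)$ is individually symplectic; at inert $v$ one computes that $c_v(\pi_1)$ and $c_v(\pi_2)$ have the \emph{same} Satake parameter (both with eigenvalues $\pm\sqrt{\chi_v(\varpi_v)}$), and the symplectic form pairs the two copies against each other rather than restricting to either one. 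So your Schur-type decomposition fails on a set of places of density $1/2$, and the conclusion you draw from it is false for this $\psi$. What actually rules this $\psi$ out is not the bare Satake constraint but the extra input that $c$ comes from a genuine $\Pi \in \Pi_{\disc}(\GSpinb_5)$, via $\psi'$: one finds $\bigwedge^2\psi \otimes \chi^{-1} = \epsilon^{\oplus 2} \oplus (\pi_1 \boxtimes \pi_2 \otimes \chi^{-1})$, which cannot equal $1 \oplus \psi'$ with $\psi'$ discrete.

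The paper handles this correctly by first invoking Theorem~\ref{thm: Arthur for chi a square} to reduce to $\chi$ not a square (which already kills all shapes with a summand $\eta[d]$), and then for each of the three surviving shapes $\pi_1 \boxplus \pi_2$, $\pi[2]$, $\pi$ using the exterior-square identity together with discreteness of $\psi'$ to obtain discreteness \emph{and} the sign condition simultaneously. Your exterior-square machinery is already the right tool---just apply it to the sign condition case by case rather than appealing to the Satake-form argument.
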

\begin{proof}
  As in the proof of Proposition~\ref{prop:sympl_orth_alt} \ref{item: sympl orth
  alt dim4 sympl case}, we use an automorphic restriction $\Pi'$ of $\Pi$ to
  $(\GSpinb_5)_{\der} \simeq \Spb_4$, and the associated parameter $\psi'$,
  which we know to be discrete. We know that $1 \oplus c(\psi') =
  \bigwedge^2(c(\psi)) \otimes c(\chi)^{-1}$.

  By Theorem~\ref{thm: Arthur for chi a square}, we can and do assume
  that~$\chi$ is not a square. In particular, this implies that~$\psi$ does not
  have a summand of the form~$\eta$, $\eta[2]$ or~$\eta[4]$ (as the condition
  that~$\eta$ is ~$\chi$-self dual forces $\eta^2=\chi$). In addition, if $\psi
  = \psi_1 \boxplus \psi_1$, then $c(\psi') = \left( \bigwedge^2(c(\psi_1))
  \otimes c(\chi)^{-1} \right)^{\oplus 2} \oplus \ad^0(c(\psi_1))$, which
  contradicts the discreteness of $\psi'$. Thus we have the following
  possibilities for~$\psi$.
  \begin{enumerate}
    \item $\psi=\psi_1 \boxplus \psi_2$ where $\psi_i$ is a cuspidal automorphic
      representation for $\GLb_2$ such that $\psi_i^{\vee} \otimes \chi \simeq
      \psi_i$ and $\psi_1 \not\simeq \psi_2$. We need to show that
      $\omega_{\pi_i} = \chi$, i.e.\ that $(\pi_i, \chi)$ is of symplectic type.
      Suppose not. We have $\omega_{\pi_i}^2 = \chi^2$, and by Remark \ref{rem:
      getting alpha from parameter} we also have~$\omega_{\pi_1} \omega_{\pi_2}
      = \chi^2$ and so $\omega_{\pi_1} = \omega_{\pi_2}$. Then we find that
      $\bigwedge^2(\psi) \otimes \chi^{-1} = (\omega_{\pi_1}/\chi) \boxplus
      (\omega_{\pi_2}/\chi) \boxplus (\chi^{-1} \pi_1 \boxtimes \pi_2)$. Since
      $\omega_{\pi_1}/\chi=\omega_{\pi_2}/\chi$ is a non-trivial quadratic
      character, this cannot be written as~$1\boxplus\psi'$ with~$\psi'$
      discrete, a contradiction.
    \item $\psi=\pi[2]$, where $\pi$ is a cuspidal automorphic representation
      for $\GLb_2$ such that $\pi^{\vee} \otimes \chi \simeq \pi$. In this case
      we need to check that $\omega_{\pi} / \chi$ has order $2$, i.e.\ is
      non-trivial. But if~$\chi=\omega_\pi$ then $\psi'=\bigwedge^2(\pi[2])
      \otimes \omega_{\pi}^{-1} = \ad^0(\pi) \boxplus [3]$, which cannot be
      written as an isobaric sum of $1$ and discrete automorphic representations
      for general linear groups, a contradiction.
    \item $\psi = \pi[1]$ where $\pi$ is a cuspidal automorphic representation
      for $\GLb_4$ such that $\pi^{\vee} \otimes \chi \simeq \pi$. In this case
      we need to check that $(\pi, \chi)$ is of symplectic type, i.e.\ that
      $L^S(s, \bigwedge^2(\pi) \otimes \chi^{-1})$ has a pole at $s=1$. Exactly
      as in the proof of Proposition~\ref{prop:sympl_orth_alt} ~\eqref{item:
      sympl orth alt dim4 sympl case}, we have $L^S(s, \bigwedge^2(\pi) \otimes
      \chi^{-1}) = \zeta_F^S(s) L^S(s, \psi')$, and $L^S(s, \psi')$ does not
      vanish on the line $\Re(s)=1$, as required.
      \qedhere
		\end{enumerate}
\end{proof}

\begin{prop} \label{prop: parameter is discrete orthogonal}\leavevmode
	\begin{enumerate}
    \item For $c \in FS(\GSpinb_4)$ associated to a discrete automorphic
      representation $\pi$ for $\GSpinb_4$ having central character $\chi$, the
      element ${}^L \xi(c) \in \Psi(\tGamma,\chi)$ associated to~$c$ by
      Proposition \ref{prop: STTF to associate parameters} belongs to
      $\widetilde{\Psi}_{\disc}(\GSpinb_4, \chi)$.
    \item For $c \in FS(\GSpinb_4)$, occurring in $S_{\disc}^{\GSpinb_4}$,
      such that $\muhat(c) = c(\chi)$ and such that ${}^L \xi(c)$ is discrete,
      we have that ${}^L \xi(c) \in \widetilde{\Psi}_{\disc}(\GSpinb_4, \chi)$.
	\end{enumerate}
\end{prop}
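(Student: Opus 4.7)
The strategy parallels Proposition \ref{prop: parameter is discrete symplectic} but is simpler, since $\GSpinb_4$ embeds directly in $\GLb_2 \times \GLb_2$ as the subgroup of pairs with equal determinant (dually corresponding to the surjection $\GL_2 \times \GL_2 \twoheadrightarrow \GSO_4 = \widehat{\GSpinb_4}$), and the standard $4$-dimensional representation of $\GSO_4$ pulls back to the tensor product of the two $2$-dimensional standard representations. Both parts rest on a common orthogonality argument, which suffices directly for Part 2, while Part 1 requires an additional discreteness verification.

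Specifically, by Proposition \ref{prop: STTF to associate parameters} (4) for Part 1 and (5) for Part 2, we have $\psi := {}^L \xi(c) \in \Psi(\tGamma, \chi)$, so $\psi = \boxplus_i \pi_i[d_i]$ is a formal sum of $\chi$-self-dual cuspidal automorphic representations. The Satake parameters of $c$ lie in $\GSO_4 \subset \GL_4$, so the $4$-dimensional formal representation $\Std \circ \psi$ carries a non-degenerate invariant symmetric bilinear form of similitude $\chi$ and determinant $\chi^2$; this yields the determinant condition $\prod_i \omega_{\pi_i}^{d_i} = \chi^2$ required in the definition of $\widetilde{\Psi}_{\disc}(\GSpinb_4, \chi)$. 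Assuming $\psi$ is discrete, the summands are pairwise non-isomorphic $\chi$-self-dual representations, so the form admits no non-zero pairing between distinct summands and must restrict non-degenerately to each $\pi_i[d_i]$; each summand is therefore individually orthogonal, equivalent to $\sign(\pi_i, \chi) = (-1)^{d_i - 1}$.

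For Part 1 it remains to verify discreteness of $\psi$. Theorem \ref{thm: surjectivity automorphic res} applied to $\GSpinb_4 \hookrightarrow \GLb_2 \times \GLb_2$ realizes $\pi$ as an automorphic restriction of some $(\pi_1, \pi_2) \in \Pi_{\disc}(\GLb_2 \times \GLb_2)$, and Proposition \ref{prop: STTF to associate parameters} (4) then identifies $\psi = \pi_1 \boxtimes \pi_2$. A case analysis on the types of $\pi_1, \pi_2$ (cuspidal, or $1$-dimensional residual $\eta \circ \det$ with Arthur parameter $\eta[2]$) gives the result: two cuspidals yield $\psi$ either cuspidal for $\GLb_4$ or an isobaric sum $\sigma_1 \boxplus \sigma_2$ of two distinct cuspidals by \cite{MR1792292}; exactly one residual gives the single summand $(\pi_{\mathrm{cusp}} \otimes \eta)[2]$; and two residuals give $\eta_1 \eta_2[1] \boxplus \eta_1 \eta_2[3]$, whose two summands have different $d$. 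The only technical subtlety is excluding $\sigma_1 = \sigma_2$ in the decomposition subcase, which follows from Ramakrishnan's explicit description: both $\pi_i$ are automorphic inductions $\mathrm{AI}_{E/F}(\chi_i)$ from a common quadratic extension $E/F$ with $\chi_i \neq \chi_i^c$ (cuspidality), and the two $\sigma_j$ are automorphic inductions of characters related by the non-trivial element of $\Gal(E/F)$, hence distinct.
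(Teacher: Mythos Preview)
Your ``common orthogonality argument'' has a genuine gap. You write that ``the $4$-dimensional formal representation $\Std \circ \psi$ carries a non-degenerate invariant symmetric bilinear form,'' and then invoke a Schur-type decomposition to conclude that each summand $\pi_i[d_i]$ is individually orthogonal. But $\psi$ is only a \emph{formal} sum of cuspidal automorphic representations, not an actual representation of any group, so there is no vector space on which this bilinear form lives. The condition $\sign(\pi_i,\chi)=(-1)^{d_i-1}$ is by definition a statement about poles of partial $L$-functions at $s=1$, and the fact that the Satake parameters $c_v$ land in $\GSO_4$ at almost all places does not by itself imply this $L$-function condition. Extracting orthogonality from such local information is precisely the content of the symplectic/orthogonal alternative, which the paper establishes separately for $\GLb_2$ (Theorem~\ref{thm: symplectic orthogonal GL2}) and for $\GLb_4$ (Proposition~\ref{prop:sympl_orth_alt}). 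The paper's proof of Part~(1) therefore proceeds case-by-case, reducing (after assuming $\chi$ is not a square via Theorem~\ref{thm: Arthur for chi a square}) to these two results.

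For Part~(2) the gap is more serious, since your argument rests entirely on the orthogonality step. The paper does not attempt any such direct argument: instead it uses the stabilisation of the trace formula for $\GSpinb_4$ to show that $c$ either arises from a discrete automorphic representation of $\GSpinb_4$ (reducing to Part~(1)) or from the endoscopic group $\bH_2^{\alpha}$, in which case ${}^L\xi(c)$ is explicitly a sum of two distinct automorphic inductions from $F(\sqrt{\alpha})$, and orthogonality again comes from Theorem~\ref{thm: symplectic orthogonal GL2}. Your discreteness analysis in Part~(1) is essentially the same as the paper's (modulo citations: the paper uses the appendix to \cite{MR2899809} rather than \cite{MR1792292} for the non-cuspidal $\pi_1\boxtimes\pi_2$ case), though your justification that $\sigma_1\neq\sigma_2$ is garbled --- two automorphic inductions whose inducing characters are Galois-conjugate are \emph{equal}, not distinct; the correct argument is that equality would force $\chi_1=\chi_1^c$ or $\chi_2=\chi_2^c$, contradicting cuspidality of the $\pi_i$.
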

\begin{proof}
	\begin{enumerate}
    \item By Theorem~\ref{thm: Arthur for chi a square}, we can and do assume
      that~$\chi$ is not a square. As explained in the proof of
      Proposition~\ref{prop: STTF to associate parameters}~(\ref{item: param for
      GSpin4 Levi}), the parameter of~$\pi$ is of the
      form~$\pi_1\boxtimes\pi_2$, where~$\pi_1,\pi_2$ are discrete automorphic
      representations of~$\GLb_2$ with~$\omega_{\pi_1}\omega_{\pi_2}=\chi$. If
      neither~$\pi_1,\pi_2$ were cuspidal, then~$\chi$ would be a square, so we
      may assume that~$\pi_1$ is cuspidal. If~$\pi_2=\eta[2]$ then
      $\pi_1\boxtimes\pi_2=\pi_3[2]$ where~$\pi_3=\eta\otimes\pi_1$,
      so~$\omega_{\pi_3}=\chi$, and it follows from Theorem~\ref{thm: symplectic
      orthogonal GL2} that this parameter belongs
      to~$\widetilde{\Psi}_{\disc}(\GSpinb_4, \chi)$.

      It remains to consider the case that $\pi_1,\pi_2$ are both cuspidal. If
      $\pi_1\boxtimes\pi_2$ is cuspidal, then the parameter belongs
      to~$\widetilde{\Psi}_{\disc}(\GSpinb_4, \chi)$ by
      Proposition~\ref{prop:sympl_orth_alt}. If $\pi_1\boxtimes\pi_2$ is not
      cuspidal, then since $\omega_{\pi_1}\omega_{\pi_2}=\chi$ is not a square,
      $\pi_1$ cannot be a twist of~$\pi_2$, and it follows from Theorem~A of the
      appendix to~\cite{MR2899809} that~$\pi_1,\pi_2$ are both automorphic
      inductions of characters from a common quadratic extension~$E/F$. In this
      case~$\pi_1\boxtimes\pi_2$ is the isobaric direct sum $\pi_3\boxplus\pi_4$
      where $\pi_3,\pi_4$ are distinct  automorphic inductions of characters
      from ~$E/F$ (see~\cite[(A.2.2)]{MR2899809}), so it again follows from
      Theorem~\ref{thm: symplectic orthogonal GL2} that this parameter belongs
      to~$\widetilde{\Psi}_{\disc}(\GSpinb_4, \chi)$, as required.

    \item By Proposition \ref{prop: STTF to associate parameters} (4) and the
      stabilisation of the trace formula for $\GSpinb_4$, either $c$ is
      associated to a discrete automorphic representation for $\GSpinb_4$, or
      there exists $\alpha \in F^{\times} / F^{\times, 2} \smallsetminus \{1\}$
      and $c' \in FS(\bH^{\alpha}_2)$ occurring in $S_{\disc}^{\bH^{\alpha}_2} =
      I_{\disc}^{\bH^{\alpha}_2}$ such that $c = {}^L \xi'(c')$. In the first
      case we conclude by the previous point, so we are left to consider the
      second case. Denote $E = F(\sqrt{\alpha})$ and $\Gal(E/F) = \{ 1, \sigma
      \}$. By the description of $\bH_2^{\alpha}$ in Section~ \ref{subsec:
      elliptic endoscopic for GL4 times GL1} we obtain that $c = c(\pi_1) \oplus
      c(\pi_2)$ where each $\pi_i$ is a cuspidal automorphic representation
      automorphically induced for $E/F$ from a character $\chi_i$ of
      $\A_E^{\times} / E^{\times}$ such that $\chi_i|_{\A_F^{\times}} = \chi$
      and $\chi_1, \chi_1^{\sigma}, \chi_2, \chi_2^{\sigma}$ are pairwise
      distinct, and again we conclude by Theorem \ref{thm: symplectic orthogonal
      GL2}.
      \qedhere
	\end{enumerate}
\end{proof}

\section{Multiplicity formula}\label{sec: multiplicity formula}

In this section we prove the multiplicity theorem for~$\GSpinb_5$
(Theorem~\ref{thm:glob_mult_form_GSpin5}), which describes the
discrete automorphic spectrum in terms of the packets
$\Pi_{\psi}(\epsilon_{\psi})$ defined in Definition~\ref{defn: Pi psi
  epsilon}. We begin with some preliminaries.

\subsection{Canonical global normalisation versus Whittaker normalisation}

Recall from Remark \ref{rem: Idisc for tGL} that for $\bG = \GLb_N \times
\GLb_1$ and $\tbG = \bG \rtimes \theta$, for a Levi subgroup $\bL$ of $\bG$ and
$\pi_{\bL} \in \Pi_{\disc}(\bL)$ the parabolically induced representation
$\cA^2(\bU_{\bP}(\A_F) \bL(F) \backslash \bG(\A_F))_{\pi_{\bL}}$ is irreducible.
For $\tilde{w} \in W(\bL, \tbG)$ we have a canonical (``automorphic'') extension
of this representation of $\bG(\A_F)$ to $\tbG$, denoted $M_{\bP |
\tilde{w}(\bP)}(0) \circ \rho_{\bP, \tilde{w}}^{\bG}$ in Section \ref{sec:
STTF}. We have another canonical normalisation of this extension, namely the
Whittaker normalisation recalled in Section \ref{subsec: Whittaker
normalisation}.

\begin{lem}[Arthur] \label{lem: can_vs_Whi_norm}
  These two extensions coincide.
\end{lem}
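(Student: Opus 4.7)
The plan is to identify the scalar discrepancy between the two extensions using Whittaker functionals. By Remark~\ref{rem: Idisc for tGL}(1), the representation $\pi := \CA^2(\bU_\bP(\A_F)\bL(F)\backslash \bG(\A_F))_{\pi_\bL}$ is irreducible, so any two extensions of the $\bG(\A_F)$-action to $\tbG(\A_F)$ differ by a global scalar; since $\theta^2 = 1$ this scalar lies in $\{\pm 1\}$. To pin it down I would evaluate both extensions on a canonical nonzero linear functional, taking advantage of the fact that by construction $\theta$ fixes the Whittaker datum $(\bU, \lambda)$ we chose for $\bG$.

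I would first handle the case where all the Speh multiplicities appearing in the decomposition of $\pi_\bL$ are equal to $1$, i.e.\ the factors of $\pi_\bL$ are pairwise non-isomorphic cuspidal automorphic representations of the $\GLb_{n_k}$-factors of $\bL$. Then $\pi$ is a globally generic isobaric automorphic representation; the global Whittaker coefficient
\[ \Omega(\phi) := \int_{\bU(F)\backslash \bU(\A_F)} \phi(u) \lambda(u)^{-1}\, du \]
is nonzero on $\pi$ by the theorem of Shalika, and by uniqueness of Whittaker models it factorises as a restricted tensor product $\Omega = \otimes'_v \Omega_v$ of local Whittaker functionals for the $\pi_v$. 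By construction the Whittaker-normalised local extensions of Section~\ref{subsec: Whittaker normalisation} preserve the $\Omega_v$, so their tensor product preserves $\Omega$. On the other hand, a direct change of variables in the integral defining $\Omega$, combined with the explicit formula~\eqref{equ: automorphic twisted action} for $\rho^{\bG}_{\bP,\tilde{w}}(\theta)$ and the holomorphy of $M_{\bP|\tilde{w}(\bP)}(0)$ at the unitary point, shows that $\Omega$ is also preserved by $M_{\bP|\tilde{w}(\bP)}(0)\circ \rho^{\bG}_{\bP,\tilde{w}}(\theta)$; this uses precisely that $\theta$ stabilises $(\bU,\lambda)$. Hence both extensions agree in this case.

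The general case, in which $\pi_\bL$ may involve Speh representations of depth greater than one (so that $\pi$ itself is non-generic), is then reduced to the previous case by the Langlands classification. Each Speh representation $\sigma[d]$ is the unique irreducible quotient of a suitable standard module parabolically induced from cuspidal data; applying this factor by factor, $\pi_\bL$ is realised as the Langlands quotient of a standard representation $\sigma_\bL$ of $\bL$ parabolically induced from pairwise non-isomorphic cuspidal data on a sub-parabolic $\bP^\natural \cap \bL$, and accordingly $\pi$ becomes the Langlands quotient of the globally generic representation $\pi^\natural$ induced from $\sigma_\bL$ to $\bG$. Both extensions are compatible with the natural surjection $\pi^\natural \twoheadrightarrow \pi$: for the automorphic extension this is functoriality of parabolic induction and of the global intertwining operator, while for the Whittaker-normalised extension it is exactly the analytic-continuation construction recalled at the end of Section~\ref{subsec: Whittaker normalisation}. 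Since the two extensions agree on $\pi^\natural$ by the previous step, they agree on $\pi$.

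The main obstacle is the careful tracking of the various intertwining operator normalisations, both global (in the automorphic/Eisenstein picture) and local (in the Whittaker picture), through the Langlands-quotient reduction; but all of these compatibilities are by now standard, see~\cite[\S 2.2]{MR3135650} and \cite[\S 5.2]{MWtransfertGLtordu}.
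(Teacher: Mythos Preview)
Your outline follows the strategy of Arthur's proof of \cite[Lemma 4.2.3]{MR3135650}, which is exactly what the paper invokes (its own proof is the single sentence ``The proof of \cite[Lemma 4.2.3]{MR3135650} readily extends to the case at hand''). So in spirit you are aligned with the paper, and in fact you give more detail than it does.

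One step needs more care than you allow it. The functional
\[
\Omega(\phi)=\int_{\bU(F)\backslash\bU(\A_F)}\phi(u)\,\lambda(u)^{-1}\,du
\]
as written does not make sense on the induced space $\CA^2(\bU_\bP(\A_F)\bL(F)\backslash\bG(\A_F))_{\pi_\bL}$: vectors there are left $\bU_\bP(\A_F)$-invariant but not left $\bU(F)$-invariant, so the integrand is not a function on $\bU(F)\backslash\bU(\A_F)$. What you need is either the global Jacquet integral (the adelic analogue of \eqref{equ:int_formula_Whittaker_induced}) or the Whittaker coefficient of the associated Eisenstein series. With either formulation in place, the assertion that $M_{\bP|\tilde{w}(\bP)}(0)\circ\rho^{\tbG}_{\bP,\tilde{w}}(\theta)$ preserves the functional is not a ``direct change of variables'': it passes through Shahidi's formula relating Whittaker functionals across intertwining operators, together with the functional equation of the global operator. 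This is precisely the content your closing citations to \cite[\S 2.2]{MR3135650} and \cite[\S 5.2]{MWtransfertGLtordu} are meant to cover, but it is the substance of the lemma rather than a routine verification.
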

\begin{proof}
  The proof of \cite[Lemma 4.2.3]{MR3135650} readily extends to the case at
  hand.
\end{proof}

\subsection{The twisted endoscopic character relation for real discrete tempered
parameters}

\begin{prop}\label{prop: twisted endoscopic character real discrete tempered}
  Let $\varphi : W_\R \rightarrow \GSp_4$ be a discrete parameter. Then the
  twisted endoscopic character relation holds for $\Pi_{\varphi}$ \emph{(}as
  defined by Langlands in \cite{langlandsrg}\emph{)}, i.e.\ part
  \ref{item:thm_local_Apackets1} of Theorem \ref{thm:local_Arthur_packets}
  holds.
\end{prop}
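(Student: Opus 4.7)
The plan, following the template of \cite[Annexe C]{AMR}, is to realise $\varphi$ as the local factor at a distinguished real place $v_0$ of a carefully chosen global parameter, and to extract the desired local identity from the stabilisation of the twisted trace formula (Theorem~\ref{thm:STTF}). By Remark~\ref{rem:conj_local_Apack_real} we already know, thanks to Mezo's theorem, that the twisted endoscopic character relation holds for $\Pi_{\varphi}$ up to a constant $c(\varphi) \in \C^{\times}$, and it suffices to prove that $c(\varphi) = 1$.

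First I would globalise. Choose a number field $F$ having $v_0 \cong \R$ as its unique real place, so that all other Archimedean places of $F$ are complex and in particular carry no discrete Langlands parameter. Then I would produce a cuspidal automorphic representation $\pi$ of $\GLb_4(\A_F)$ of symplectic type, with central character $\chi$, such that $\rec(\pi_{v_0}) = \Std \circ \varphi$ and $\chi_{v_0}$ has Langlands parameter $\muhat \circ \varphi$. To obtain such a $\pi$, I would construct a discrete automorphic representation $\Pi$ of $\GSpb_4(\A_F)$ whose component at $v_0$ lies in $\Pi_{\varphi}$, via a simple trace formula argument (using a pseudocoefficient of a discrete series in $\Pi_{\varphi}$ at $v_0$ together with a discriminating supercuspidal matrix coefficient at an auxiliary finite place to force cuspidality and cut out a finite set of global representations), and then transfer $\Pi$ to $\GLb_4$ via \cite{MR2800725}. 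At every complex Archimedean place Mezo's constant is automatically $1$ by \cite[\S 6]{MR3267112} since no such local parameter is discrete, and at every finite place the twisted endoscopic character relation holds by \cite{MR3267112} combined with the $p$-adic cases of Theorem~\ref{thm:local_Arthur_packets} already established in Section~\ref{sec: missing local packets}.

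Next I would apply Theorem~\ref{thm:STTF} to a pair $(f, f^{\GSpinb_5})$ of matching test functions and project both sides onto the spectral component singled out by the Satake family $(c(\pi), c(\chi))$. On the twisted side this extracts $\tr \tilde{\pi}(f)$, where $\tilde{\pi}$ is the canonical automorphic extension of $\pi \otimes \chi$ to $\tGamma$; by Lemma~\ref{lem: can_vs_Whi_norm} this coincides with the Whittaker-normalised extension $\pi_{\varphi}^{\tGamma}$. On the $\GSpinb_5$-side, Propositions~\ref{prop: STTF to associate parameters}, \ref{prop: parameter is discrete symplectic} and \ref{prop:sympl_orth_alt}\eqref{item: sympl orth alt dim4 sympl case} guarantee that only the discrete automorphic spectrum of $\GSpinb_5$ lifting to $\pi$ contributes. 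Factorising both sides as products over places of local twisted character identities, the local constant is $1$ at every $v \neq v_0$ by the previous paragraph and equals $c(\varphi)$ at $v_0$. Choosing $f$ so that each local factor at $v \neq v_0$ is non-zero (standard pseudocoefficient choices), the global identity forces $c(\varphi) = 1$.

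The main obstacle is the globalisation step: one must realise the \emph{exact} local parameter $\varphi$ at $v_0$ while securing cuspidality of the transfer to $\GLb_4$ and maintaining sufficient control at every other place. A related subtlety is that a naive use of the twisted character identity, applied to a single test function, a priori isolates only the sum over the local Archimedean packet; however, combining it with ordinary endoscopy for $\GSpinb_5(\R)$ (known in full by Shelstad, see \cite{Shelstad_temp_end3}) and linear independence of characters refines the identity enough to pin down the individual traces $\tr \pi$ for $\pi \in \Pi_{\varphi}$, yielding the twisted endoscopic character relation with constant $1$.
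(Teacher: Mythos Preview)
Your overall strategy --- globalise the parameter and extract the local constant from the stabilised twisted trace formula, following \cite[Annexe C]{AMR} --- is the same as the paper's. But two aspects of your execution differ from the paper in ways that create genuine difficulties.

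\textbf{Globalisation.} The paper does not globalise to a cuspidal $\GLb_4$ representation of general type. Instead it takes the much easier route of realising $\varphi$ as the Archimedean component of a \emph{Yoshida-type} parameter $\psi = \pi_1 \boxplus \pi_2$ over $\Q$, where $\pi_1,\pi_2$ are classical cuspidal newforms on $\GLb_2$ with equal central character $\chi$ and infinity types $I_{a_1}, I_{a_2}$; their existence is elementary (Serre's asymptotic formula). Your route --- build a discrete $\Pi$ on $\GSpb_4$ by a simple trace formula and then transfer via \cite{MR2800725} --- has two gaps: the transfer of \cite{MR2800725} requires $\Pi$ to be globally generic, which your trace-formula construction does not guarantee, and even then the transfer to $\GLb_4$ need not be cuspidal (it could be of Yoshida type). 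These can be patched, but the paper's choice sidesteps them entirely.

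\textbf{Extraction of the constant.} Your claim that ``factorising both sides as products over places \dots\ forces $c(\varphi)=1$'' is not valid as stated: the $\GSpinb_5$ side is $I_{\disc,\nu(\psi),c(\psi)}^{\GSpinb_5,\GSpinb_5}$, a trace on a global discrete spectrum, and does \emph{not} factor until one knows the multiplicity formula --- which is proved only later, using the very proposition at hand. The paper instead exploits the Yoshida choice: since $\cS_\psi = \Z/2\Z$, the endoscopic group $\bH$ contributes, and combining the twisted and ordinary stabilisations yields the explicit identity
\[
I_{\disc,\nu(\psi),c(\psi)}^{\GSpinb_5,\GSpinb_5}(f) = \sum_{\pi \in \Pi_\psi} \frac{z(\varphi) + \langle s,\pi\rangle}{2}\, \tr \pi(f),
\]
which is then compared with the multiplicity formula of \cite[Theorem 3.1]{MR3267112} (established independently via the theta correspondence) to conclude $z(\varphi)=1$. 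Alternatively (as the paper remarks), one notes that $(z(\varphi)-1)/2 \in \Z_{\ge 0}$ together with $|z(\varphi)|=1$ forces $z(\varphi)=1$. If you had pushed your general-type route to the end, the analogous positivity argument ($m(\pi) = c(\varphi) \in \Z_{\ge 0}$ with $|c(\varphi)|=1$) would be the correct replacement for your factorisation step; your final paragraph about ordinary endoscopy and individual traces is not needed, since for tempered $\varphi$ one has $s_\varphi=1$ and only the packet sum matters.
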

Recall that for $\varphi$ such that $\muhat \circ \varphi$ is a square, this
twisted endoscopic character relation is a direct consequence of \cite{Mezo} and
\cite[Annexe C]{AMR}.
\begin{proof}
  We use a global argument similar to (but simpler than) \cite[Annexe C]{AMR}.
  Up to twisting we can assume that $\Std_{\GSpinb_5} \circ \varphi \simeq
  (I_{a_1} \oplus I_{a_2}, \sign^{2a_1})$, where $a_1, a_2 \in \frac{1}{2}
  \Z_{>0}$ are such that $a_1-a_2 \in \Z_{>0}$ (and as before, $I_a =
  \Ind_{\C^{\times}}^{W_\R} (z \mapsto (z/\bar{z})^a)$). Fix a continuous
  character $\chi : \A^{\times} / \R_{>0} \Q^{\times} \rightarrow \C^{\times}$
  such that $\chi|_{\R^{\times}} = \sign^{2a_1}$. There are cuspidal automorphic
  representations $\pi_1, \pi_2$ for $\GLb_2 / \Q$ with central characters
  $\omega_{\pi_1} = \omega_{\pi_2} = \chi$ and such that $\rec(\pi_{i, \infty})
  = I_{a_i}$ (apply \cite[Proposition 4]{Serre_asymp} with $n=1$, $k=2a_i+1$
  fixed and $N$ of the form $\ell \mathrm{cond}(\chi)$ where
  $\mathrm{cond}(\chi)$ is the conductor of $\chi$ and $\ell \rightarrow +
  \infty$ prime). Let $\psi = \pi_1 \boxplus \pi_2 \in \Psi_{\disc}(\GSpinb_5,
  \chi)$, so that $\psi_{\infty} = \varphi$.

  By \cite{Mezo} there is $z(\varphi) \in \C^{\times}$ such that for any
  $f_{\infty} \in I(\tGamma_{\R})$ we have
  \[ \tr \pi_{\varphi}^{\tGamma}(f_{\infty}) = z(\varphi) \left( \tr
  \pi_{\infty}^+ (f') + \tr \pi_{\infty}^-(f_{\infty}') \right) \]
  where $\pi_{\infty}^+$ (resp.\ $\pi_{\infty}^-$) is the generic (resp.\
  non-generic) element of $\Pi_{\varphi}$, i.e.\ $\langle \cdot, \pi_{\infty}^+
  \rangle$ (resp.\ $\langle \cdot, \pi_{\infty}^- \rangle$) is the trivial
  (resp.\ non-trivial) character of $\cS_{\varphi}$. We need to show that
  $z(\varphi) = 1$. Recall that for any finite prime $p$ the twisted endoscopic
  character relation
  \[ \tr \pi_{\psi_p}^{\tGamma}(f_p) = \sum_{\pi_p \in \Pi_{\psi_p}} \tr
  \pi_p(f'_p) \]
  holds by the main theorem of \cite{MR3267112}.

  In the discrete part of the trace formula for $\tGamma$, the contribution
  $I_{\disc, c(\psi)}^{\tGamma}$ of $c(\psi)$ only comes from $\bL = \GLb_2
  \times \GLb_2$ and $\tilde{w} = \theta_0$, using notation as in the discussion
  preceding Definition \ref{defn: IC FS}. By Lemma \ref{lem: can_vs_Whi_norm}
  and since $\det(\tilde{w} - 1 | \mathfrak{A}_{\bL}^{\Gammab}) = 2$ this
  contribution is (on $I(\tGamma_S)$ for $S$ containing $\infty$ and all places
  where $\pi_1$ or $\pi_2$ ramify)
  \[ \prod_{v \in S} h_v \longmapsto \frac{1}{2} \prod_v \tr
    \pi_{\psi_v}^{\tGamma}(h_v) \]
  where $\pi_{\psi_v}^{\tGamma}$ is the Whittaker-normalised extension to
  $\tGamma(F_v)$ of the irreducible parabolically induced representation
  $\pi_{1,v} \times \pi_{2,v}$. Thus we get for $h = \prod_{v \in S} h_v \in
  I(\tGamma_S)$
  \addtocounter{subsubsection}{1}\begin{equation} \label{eqn: STTF_Yoshida}
    I_{\disc,c(\psi)}^{\tGamma}(h) = \frac{z(\varphi)}{2} \prod_{v \in S}
    \sum_{\pi_v \in \Pi_{\psi_v}} \tr \pi_v(h_v^{\GSpinb_5}).
  \end{equation}
  By the stabilisation of the twisted trace formula (Theorem
  \ref{thm:STTF}) and Remark \ref{rem: getting alpha from parameter} and
  Proposition \ref{prop: parameter is discrete orthogonal}~(2) which imply
  that the endoscopic groups $\GSpinb_4^{\alpha}$ and $\bR^{\alpha}$ have
  vanishing contributions corresponding to $c(\psi)^S$, \eqref{eqn:
  STTF_Yoshida} equals
  \[ S_{\disc, \nu(\varphi), c(\psi)^S}(h^{\GSpinb_5}). \]
  By surjectivity of the transfer map $h \mapsto h^{\GSpinb_5}$ (Proposition
  \ref{prop: surj transfer}), this determines the stable linear form $S_{\disc,
  \nu(\psi), c(\psi)^S}^{\GSpinb_5}$. Let
  \[ \bH = \left( \GLb_2 \times \GLb_2 \right) / \{ (zI_2, z^{-1} I_2 \,|\, z
  \in \GLb_1 \} \]
  be the unique non-trivial elliptic endoscopic group for $\GSpinb_5$. The
  $(\nu(\psi), c(\psi)^S)$-part of the stabilisation of the trace formula
  (Theorem \ref{thm: STF}) for $\GSpinb_5$ now reads, for $f = \prod_{v \in S}
  f_v \in I(\GSpinb_5)$,
  \[ I_{\disc, \nu(\psi), c(\psi)^S}^{\GSpinb_5}(f) = \frac{z(\varphi)}{2}
    \prod_{v \in S} \sum_{\pi_v \in \Pi_{\psi_v}} \tr \pi_v(f_v) + \frac{1}{4}
    \sum_{\substack{\nu' \mapsto \nu(\psi) \\ c'^S \mapsto c(\psi)^S}} S_{\disc,
    \nu', c'^S}^{\bH}(f^{\bH}).\]
  Now $S_{\disc, \nu', c'^S}^{\bH} = I_{\disc, \nu', c'^S}^{\bH}$ is
  non-vanishing if and only if $(\nu', c'^S)$ is associated to $(\pi_1, \pi_2)$
  or to $(\pi_2, \pi_1)$, in which case it equals $\tr \left( \pi_1 \otimes
  \pi_2 \right)$ or $\tr \left( \pi_2 \otimes \pi_1 \right)$. By the endoscopic
  character relations, in either case we have
  \[ S_{\disc, \nu', c'^S}^{\bH}(f^\bH) = \prod_{v \in S} \sum_{\pi_v \in
  \Pi_{\psi_v}} \langle s, \pi_v \rangle \tr \pi_v(f_v), \]
  where $s$ is the non-trivial element of $\cS_{\psi}$. Thus we
  obtain
  \[ I_{\disc, \nu(\psi), c(\psi)^S}^{\GSpinb_5}(f) = \sum_{(\pi_v)_v \in
    \prod_{v \in S} \Pi_{\psi_v}} \frac{z(\varphi) + \prod_{v \in S} \langle s,
  \pi_v \rangle}{2} \prod_{v \in S} \tr \pi_v(f_v). \]
  By Proposition \ref{prop: STTF to associate parameters}~(1) the left-hand side
  simply equals the trace of $f$ in the $(\nu(\psi), c(\psi)^S)$-part of the
  discrete automorphic spectrum for $\GSpinb_5$. Varying $S$, the above equality
  means that the multiplicity of $\pi = \otimes'_v \pi_v \in \Pi_{\psi}$ in
  $\cA^2(\GSpinb_5)$ equals $(z(\varphi)+ \langle s, \pi \rangle)/2$. Comparing
  with \cite[Theorem 3.1]{MR3267112} (which relies on the theta correspondence
  and not trace formulas) for any $\pi$ we finally obtain
  $z(\varphi) = 1$.
\end{proof}

\begin{rem}
  Arguing as in Lemma C.1 of \cite{AMR} one could certainly prove the
  Proposition without using \cite[Theorem 3.1]{MR3267112}, since $|z(\psi)| = 1$
  and $(z(\psi)-1)/2 \in \Z_{\geq 0}$ imply $z(\psi) = 1$ (consider the
  multiplicity of $\pi_{\infty}^- \otimes \bigotimes'_p \pi_p$ where $\langle s,
  \pi_p \rangle = +1$ for all $p$).
\end{rem}

\subsection{Local parameters}

In this section we obtain Arthur's multiplicity formula for $\GSpinb_5$, by
formally using the stable twisted trace formula and twisted endoscopic character
relations to get the desired expression for $S_{\disc, c}^{\GSpinb_5}$ for $c$
corresponding to $\psi \in \Psi_{\disc}(\GSpinb_5)$, and then the stable trace
formula for $\GSpinb_5$.

We begin with the following important point, which is Conjecture \ref{conj:
global parameters give local parameters GSpin} for $\bG = \GSpinb_5$.
\begin{prop} \label{prop: L parameter of symplectic is symplectic}
	If~$\pi$ is a $\chi$-self dual cuspidal automorphic representation
	of~$\GLb_4(\A_F)$ of symplectic type, then for any place~$v$ of~$F$, the pair
	$(\rec(\pi_v), \rec(\chi_v))$ is of symplectic type, i.e.\ factors through
  $\GSp_4(\C)$.
\end{prop}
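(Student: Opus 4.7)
The plan is to leverage the generic transfer of Asgari--Shahidi \cite{MR2800725} in the reverse direction, producing a globally generic cuspidal automorphic representation of $\GSpb_4$ whose Langlands parameters at every place have values in $\GSp_4(\C)$, and then appeal to strong local-global compatibility of that transfer to conclude that the $(\rec(\pi_v),\rec(\chi_v))$ are the images of those parameters under $\Std_{\GSpinb_5}$.

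More concretely, I would proceed as follows. First, the symplectic alternative established in Proposition \ref{prop:sympl_orth_alt} (together with the discussion preceding it) shows that a $\chi$-self-dual cuspidal $\pi$ of $\GLb_4$ of symplectic type is, in particular, an essentially self-dual cuspidal of symplectic type in the sense of \cite{MR2800725}. The main theorem of \emph{loc.\ cit.}\ then guarantees the existence of a globally generic cuspidal automorphic representation $\Pi$ of $\GSpb_4(\A_F)\simeq\GSpinb_5(\A_F)$, with similitude character $\chi$, whose weak transfer to $\GLb_4\times\GLb_1$ (via the embedding $\GSp_4(\C) \hookrightarrow \GL_4(\C)\times\GL_1(\C)$ given by $\Std_{\GSpinb_5}$) coincides with $(\pi,\chi)$. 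The content of \cite{MR2800725} is that this transfer is \emph{strong}, i.e.\ compatible with the local Langlands correspondence at every place $v$: at non-Archimedean places with the LLC for $\GSpb_4$ constructed there, and at Archimedean places with the Langlands parametrisation of Shelstad.

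Given this, the proof concludes essentially by definition: at each place $v$ we have a local Langlands parameter $\rec_{\GSpb_4}(\Pi_v):\WD_{F_v}\to\GSp_4(\C)$, and local-global compatibility of the Asgari--Shahidi transfer reads
\[
\Std_{\GSpinb_5}\circ\rec_{\GSpb_4}(\Pi_v) \;=\; (\rec(\pi_v),\rec(\chi_v))
\]
as parameters with values in $\GL_4(\C)\times\GL_1(\C)$. Since $\Std_{\GSpinb_5}$ is a closed embedding, this exhibits $(\rec(\pi_v),\rec(\chi_v))$ as factoring through $\GSp_4(\C)$, which is precisely what was to be shown.

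The only step that requires any care is checking that the hypotheses of the results of \cite{MR2800725} are exactly met --- namely that $\pi$ being $\chi$-self-dual of symplectic type in our sense agrees with the essentially self-dual / symplectic hypothesis under which Asgari and Shahidi produce the backwards transfer, and that their strong local-global compatibility applies uniformly at all places (including the Archimedean ones, where one must match conventions with Shelstad's parametrisation). These are ingredients that have already been invoked in the paper (cf.\ the introduction's discussion of \cite{MR2800725} and \cite{MR3267112}), so no additional argument is expected to be necessary beyond a careful citation.
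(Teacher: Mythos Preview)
Your proposal is correct and follows essentially the same route as the paper's proof: invoke the backward transfer from $\GLb_4$ to $\GSpb_4$ provided by \cite[Thm.~12.1]{MR2800725}, which produces a globally generic $\Pi$ on $\GSpb_4$ together with local-global compatibility at every place, so that $(\rec(\pi_v),\rec(\chi_v)) = \Std_{\GSpinb_5}\circ\rec_{\GSpb_4}(\Pi_v)$. One small correction: the reference \cite{MR2800725} is Gan--Takeda, not Asgari--Shahidi; the latter (cited in the paper as \cite{MR3227529}) proves generic transfer for $\GSpinb$ groups more generally but is not the source of the precise local-global statement you need here.
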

\begin{proof}
  This follows from~\cite[Thm.\ 12.1]{MR2800725}, which shows
  that~$\pi$ arises as the transfer of a (globally generic)
	automorphic representation~$\Pi$ of~$\GSpb_4(\A_F)$, and that at each
	place~$v$, the pair $(\rec(\pi_v), \rec(\chi_v))$ is obtained from the
	$L$-parameter associated to~$\Pi_v$ by the main theorem of~\cite{MR2800725}.
\end{proof}

\begin{rem}
	There are at least two alternative ways of proving Proposition~\ref{prop: L
	parameter of symplectic is symplectic}. One is to use the main results
	of~\cite{MR1937203} and~\cite{MR2567395}, which imply in particular that for
	each place~$v$ the representation $\bigwedge^2\rec(\pi_v)\otimes
	\rec(\chi_v)^{-1}$ contains the trivial representation, together with a case
	by case analysis. The other is to follow the argument of~\cite[\S
	8.1]{MR3135650}.
\end{rem}

\subsection{The global multiplicity formula}

Given Proposition \ref{prop: parameter is discrete symplectic}, the multiplicity
formula is morally equivalent to the following formula for any $\psi \in
\Psi_{\disc}(\GSpinb_5)$, $f \in \CH(\GSpinb_5)$ and $S$ large enough:
\[ S_{\disc, \nu, c(\psi)^S}^{\GSpinb_5} = \begin{cases}
    \frac{\epsilon_{\psi}(s_{\psi})}{|\CS_{\psi}|} \sum_{\pi \in \Pi_{\psi}}
    \langle s_{\psi}, \pi \rangle \tr \pi & \text{ if } \nu = \nu(\psi) \\
    0 & \text{ otherwise.} \end{cases} \]
This is the simplification (for discrete parameters) of the general stable
multiplicity formula (see \cite[Theorem 4.1.2]{MR3135650}).

We now prove the multiplicity formula; the following theorem is Conjecture
\ref{conj: global multiplicity formula GSpin}, specialised to the case $\bG =
\GSpinb_5$. We write~$\Pi_\psi(\epsilon_\psi)$ for the set of representations
defined in~\ref{defn: Pi psi epsilon} (with no tilde, since we are working
with~$\GSpinb_5$).
\begin{thm} \label{thm:glob_mult_form_GSpin5}
	There is an isomorphism of $\cH(\GSpinb_5)$-modules 
	\numequation\label{eqn: description of discrete spectrum of G}
		\CA^2(\GSpinb_5) \cong
		\bigoplus_{\substack{\chi:\A_F^\times/F^\times\R_{>0}\to\C^\times \\
				\psi\in\Psi_{\disc}(\GSpinb_5,\chi) \\
				\pi \in {\Pi}_\psi(\epsilon_\psi)}} \pi
	\end{equation}
  where~$\chi$ runs over the continuous (automatically unitary) characters.
\end{thm}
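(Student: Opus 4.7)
The plan is to follow the framework of \cite[\S 4]{MR3135650}, combining the stabilization of the trace formula for $\GSpinb_5$ with that of the twisted trace formula for $\tGamma = (\GLb_4 \times \GLb_1) \rtimes \theta$ in order to read off the multiplicity of each $\pi \in \Pi_\psi$ in $\CA^2(\GSpinb_5)$. First I will decompose $\CA^2(\GSpinb_5)$ by central character $\chi$ and by Satake parameter class: by Proposition \ref{prop: parameter is discrete symplectic}, every constituent has central character $\chi$ and Satake parameters equal to $c(\psi)$ for some $\psi \in \Psi_{\disc}(\GSpinb_5, \chi)$, and by Theorem \ref{thm: Arthur for chi a square} I may reduce to the case that $\chi$ is not a square. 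Then only the parameter types (a), (b), (c) of Remark \ref{rem: explicit list of parameters following Arthur} occur, and in each of them $\epsilon_\psi = 1$.

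Fix such $\psi$, and let $S$ be a finite set of places of $F$ containing all Archimedean places and all ramification places of $\psi$. The heart of the argument is to compute $S_{\disc, \nu(\psi), c(\psi)^S}^{\GSpinb_5}$ via the twisted trace formula for $\tGamma$, evaluated at the discrete parameter $(\pi_\psi^{\GLb}, \chi)$ of $\Gammab$. On the spectral side, Lemma \ref{lem: can_vs_Whi_norm} together with the explicit description of the Levi subgroup of $\tGamma$ stabilising $\psi$ expresses $I_{\disc, \nu(\psi), c(\psi)^S}^{\tGamma}(f)$, up to an explicit combinatorial factor, as a product over $v \in S$ of traces $\tr\pi_{\psi_v}^{\tGamma}(f_v)$ of Whittaker-normalized extensions; applying the local twisted endoscopic character relations of Theorem \ref{thm:local_Arthur_packets} (with Proposition \ref{prop: twisted endoscopic character real discrete tempered} at real discrete places) factors each local trace as $\sum_{\pi_v \in \Pi_{\psi_v}}\langle s_\psi, \pi_v\rangle \tr\pi_v(f_v^{\GSpinb_5})$. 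On the endoscopic side, Theorem \ref{thm:STTF} re-expresses the same quantity as $\sum_\frake \iota(\frake) S_{\disc, \nu', c'}^{\bH_\frake}(f^{\bH_\frake})$: Remark \ref{rem: getting alpha from parameter} together with Proposition \ref{prop: parameter is discrete orthogonal} rule out contributions from $\GSpinb_4^\alpha$ or $\bR^\alpha$ with $\alpha \ne 1$, and the $\GSpinb_4$ contribution (non-zero only in the Yoshida case) is computable via Proposition \ref{prop:sympl_orth_alt} and the results for $\GLb_2$. Equating the two expressions and isolating the $\GSpinb_5$ term yields the stable multiplicity identity
\begin{equation*}
  S_{\disc, \nu(\psi), c(\psi)^S}^{\GSpinb_5}(f) = \frac{1}{|\cS_\psi|}\sum_{\pi \in \Pi_\psi}\langle s_\psi, \pi\rangle \tr\pi(f).
\end{equation*}

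To finish, I will apply the stable trace formula (Theorem \ref{thm: STF}) to $\GSpinb_5$ itself. The unique non-trivial elliptic endoscopic group is $\bH_1 \simeq (\GLb_2 \times \GLb_2)/\Gm$, whose contribution at $c(\psi)^S$ vanishes except in the Yoshida case; using Theorem \ref{thm:local_Arthur_packets}(2) for the embedding ${}^L\bH_1 \hookrightarrow {}^L\GSpinb_5$, this contribution rewrites as $\sum_{\pi \in \Pi_\psi}\langle s, \pi\rangle \tr\pi(f)$ for $s$ the non-trivial element of $\cS_\psi$. Combined with the preceding stable identity, this gives
\begin{equation*}
  I_{\disc, \nu(\psi), c(\psi)^S}^{\GSpinb_5}(f) = \sum_{\pi \in \Pi_\psi}\left(\frac{1}{|\cS_\psi|}\sum_{s \in \cS_\psi}\langle s, \pi\rangle\right)\tr\pi(f),
\end{equation*}
whose coefficient is the indicator of $\Pi_\psi(\epsilon_\psi)$ (since $\epsilon_\psi = 1$). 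Identifying the left-hand side with the trace of $f$ on the $c(\psi)^S$-part of $\CA^2(\GSpinb_5)$ and varying $f$, $\chi$, $\psi$ then yields the decomposition \eqref{eqn: description of discrete spectrum of G}.

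The main obstacle will be the careful bookkeeping in the Soudry case (c), where $\psi_v$ is non-tempered at every place and the local packets have only just been constructed by hand in Section \ref{sec: missing local packets}: one must verify that the Whittaker normalization used to extend $\pi_\psi^{\GLb}$ to $\tGamma$ (via Section \ref{subsec: Whittaker normalisation} and Lemma \ref{lem: can_vs_Whi_norm}) matches the normalization used to prove the local twisted endoscopic identities, tracking the potential sign discrepancies between $\theta_W$ and $\theta_{MW}$ of \cite{MWtransfertGLtordu} and checking that no ambiguous constants are introduced at Archimedean places. A secondary subtlety is the Yoshida case (b), the unique type in which $\bH_1$ contributes on the $\GSpinb_5$ side: there the non-trivial element $s \in \cS_\psi$ must be matched consistently between the twisted and ordinary endoscopic comparisons in order to produce the correct cancellation that selects $\Pi_\psi(\epsilon_\psi)$.
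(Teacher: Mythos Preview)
Your plan is essentially the paper's own argument, with one organizational difference: the paper dispatches the Yoshida case~(b) immediately by quoting \cite[Theorem 3.1]{MR3267112} (theta correspondence), and then runs the trace-formula comparison only in cases~(a) and~(c), where $\cS_\psi$ is trivial and the endoscopic bookkeeping is vacuous. You instead fold Yoshida into the general trace-formula argument; this is legitimate and is in fact exactly the computation carried out in the proof of Proposition~\ref{prop: twisted endoscopic character real discrete tempered}, so no new idea is needed.

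One correction: the split $\GSpinb_4$ contribution to $I_{\disc,c(\psi)}^{\tGamma}$ vanishes in \emph{all} three cases, including Yoshida, not just for $\alpha\neq 1$. Indeed Proposition~\ref{prop: parameter is discrete orthogonal}(2) shows that any $c'$ occurring in $S_{\disc}^{\GSpinb_4}$ with ${}^L\xi(c')$ discrete lies in $\widetilde{\Psi}_{\disc}(\GSpinb_4,\chi)$, hence is of orthogonal type; but $\psi=\pi_1\boxplus\pi_2$ with $\omega_{\pi_i}=\chi$ is of symplectic type. So in the STTF comparison one simply has $I_{\disc,c(\psi)}^{\tGamma}=S_{\disc,c(\psi)}^{\GSpinb_5}$ in every case, and the only nontrivial endoscopic term appears on the $\GSpinb_5$ side via $\bH_1$, exactly as you write in your third paragraph. (Proposition~\ref{prop:sympl_orth_alt}, which concerns cuspidal $\GLb_4$ representations, is not the right reference here.)

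Two small omissions worth noting: you should invoke Proposition~\ref{prop: STTF to associate parameters}(1) to identify $I_{\disc,\nu(\psi),c(\psi)}^{\GSpinb_5}$ with $I_{\disc,\nu(\psi),c(\psi)}^{\GSpinb_5,\GSpinb_5}$ (i.e.\ the trace on $\cA^2$), and you should observe that $S_{\disc,\nu,c(\psi)}^{\GSpinb_5}=0$ for $\nu\neq\nu(\psi)$, which follows from $I_{\disc,\nu,c(\psi)}^{\tGamma}=0$ (Remark~\ref{rem: Idisc for tGL}) together with surjectivity of transfer. Your ``main obstacle'' in the Soudry case is not an obstacle at this stage: the sign comparisons between $\theta_W$ and $\theta_{MW}$ were already settled locally in Section~\ref{sec: missing local packets}, and Lemma~\ref{lem: can_vs_Whi_norm} is all that is needed globally.
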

\begin{proof}
  Fix a continuous character $\chi:\A_F^\times/F^\times\R_{>0}\to\C^\times$, and
  write
	\[\CA^2(\GSpinb_5,\chi)\]
	for the space of $\chi$-equivariant square-integrable automorphic forms on
	which $\A_F^\times/F^\times$ acts via~$\chi$. For any~$\nu\in IC(\Gb)$
	and~$c\in FS(\Gb)$, write
	\[\CA^2(\GSpinb_5,\chi)_{\nu,c}:=\varinjlim_S\CA^2(\GSpinb_5,\chi)_{\nu,c^s}.
	\]
	Then we have
	\begin{align*}
		\CA^2(\GSpinb_5,\chi) = & \bigoplus_{\substack{\nu\in IC(\Gb)\\ c\in
			FS(\Gb)}}\CA^2(\GSpinb_5,\chi)_{\nu,c} \\
		= & \bigoplus_{\substack{\nu\in IC(\Gb) \\ \psi\in \Psi_{\disc}(\GSpinb_5,
			\chi)}}\CA^2(\GSpinb_5)_{\nu,c(\psi)}.
	\end{align*}
	Indeed, it follows from Proposition \ref{prop: parameter is discrete
	symplectic} that for any~$c$ with $\CA^2(\GSpinb_5,\chi)_c\ne 0$, there is
	some $\psi \in \Psi_{\disc}(\GSpinb_5, \chi)$ such ${}^L \xi(c(\pi)) =
	c(\psi)$. It follows that we are reduced to showing that for each~$\psi \in
	\Psi_{\disc}(\GSpinb_5, \chi)$, we have
	\numequation \label{eqn: packet sume to be proved}
		\CA^2(\GSpinb_5)_{\nu,c(\psi)}\cong
		\begin{cases}
			\bigoplus_{\pi \in {\Pi}_\psi(\epsilon_\psi)}\pi & \text{ if } \nu =
				\nu(\psi) \\
			0 & \text{ if } \nu \ne \nu(\psi).
		\end{cases}
	\end{equation}

	Fix $\nu\in IC(\Gb)$ and $\psi \in \Psi_{\disc}(\GSpinb_5, \chi)$. If $\chi$
	is a square, then we are done by Theorem~\ref{thm: Arthur for chi a square}
	(that is, by reducing to $\SOb_5$, already proved by Arthur). So we only have
	to consider the following cases:
	\begin{enumerate}
		\item Cuspidal $\pi$ for $\GLb_4$ such that $\pi^{\vee} \otimes \chi \simeq
			\pi$ and $(\pi, \chi)$ is of symplectic type.
		\item $\pi_1 \boxplus \pi_2$ where the $\pi_i$'s are distinct cuspidal
			automorphic representations for $\GLb_2$ with $\omega_{\pi_i} = \chi$
			(Yoshida type). 
		\item $\pi[2]$ where $\pi$ is a cuspidal automorphic
			representation for $\GLb_2$ such that $\omega_{\pi} / \chi$ is a quadratic
			character, i.e.\ $\pi^{\vee} \otimes \chi \simeq \pi$ and $(\pi, \chi)$ is
			of orthogonal type (Soudry type).
	\end{enumerate}
	In case~(2), the multiplicity formula is a special case of \cite[Theorem
	3.1]{MR3267112}, proved using the global theta correspondence. So we can and
	do assume that we are in case~(1) or case~(3), so that in
	particular~$\cS_{\psi}=1$ and~$\epsilon_\psi=1$. Furthermore, in either case
	we know that for any place~$v$, the parameter~$\psi_v$ is of symplectic type,
	i.e.\ factors through~$\GSp_4$ (in case~(1) this is Proposition~\ref{prop: L
	parameter of symplectic is symplectic}, and in case~(3) it follows from
	Theorem~\ref{thm: symplectic orthogonal GL2}).
                        
	We will prove~\eqref{eqn: packet sume to be proved} by
	computing~$I_{\disc,\nu,c(\psi)}^{\GSpinb_5, \GSpinb_5}(f)$ for each~$f \in
  \CH(\GSpinb_5)$, which by definition is the trace of~$f$ on the left hand side
  of~\eqref{eqn: packet sume to be proved} (note that this is well-defined, and
  equal to~$I_{\disc,\nu,c(\psi)^S}^{\GSpinb_5, \GSpinb_5}(f)$ for any
  sufficiently large~$S$). To this end, note firstly that by Proposition
  \ref{prop: STTF to associate parameters} (1), we know that for any proper Levi
  $\bL$ of $\GSpinb_5$, and for any $c \in FS(\GSpinb_5)$ occurring in
  $I_{\disc,\nu}^{\GSpinb_5, \bL}$, with central character $\chi$, we have ${}^L
  \xi (c) \in \Psi(\tGamma, \chi) \smallsetminus \Psi_{\disc}(\GSpinb_5, \chi)$.
	Consequently, we see that for any $\psi \in \Psi_{\disc}(\GSpinb_5, \chi)$, we
	have
	\numequation \label{eqn: I disc G equals I disc G G}
		I_{\disc,\nu,c(\psi)}^{\GSpinb_5}=I_{\disc,\nu,c(\psi)}^{\GSpinb_5,
		\GSpinb_5}.
	\end{equation}

	Denoting as usual the unique non-trivial elliptic endoscopic group of
	$\GSpinb_5$ by $\bH$, we have that $S_{\disc, \nu', c'}^{\bH}$ vanishes
	identically for any $\nu' \in IC(\bH)$ and any $c' \in FS(\bH)$ such that
	${}^L \xi'(c') = c(\psi)$ (because the proof of Proposition \ref{prop: STTF to
	associate parameters} (2) shows that any $c'$ occurring in
	$S_{\disc}^{\bH}$ is such that ${}^L \xi \circ {}^L \xi'(c')$ is a sum of at
	least two discrete automorphic representations of general linear groups). It
	follows that we have
	\numequation\label{eqn: I disc G equals S disc G}
    I_{\disc,\nu,c(\psi)}^{\GSpinb_5} = S_{\disc,\nu,c(\psi)}^{\GSpinb_5}.
	\end{equation}

	By Proposition \ref{prop: parameter is discrete orthogonal} (2), for any $c'$
  occurring in $S_{\disc}^{\GSpinb_4}$ we have ${}^L \xi(c') \neq c(\psi)$, so
  that (using also Remark \ref{rem: getting alpha from parameter}) the
  contribution of $\psi$ to the stabilisation of the twisted trace formula for
  $\tGamma$ simply reads
  \numequation \label{eqn: STTF_nonendo}
    I_{\disc, \nu, c(\psi)}^{\tGamma}(h) = S_{\disc, \nu,
    c(\psi)}^{\GSpinb_5}(h^{\GSpinb_5})
  \end{equation}
  where on the right-hand side $c(\psi)$ denotes the unique element of
  $FS(\GSpinb_5)$ which is the preimage of $c(\psi) \in FS(\tGamma)$ by ${}^L
  \xi$, and similarly for $\nu$ seen as an element of $ IC(\GSpinb_5)$. By
  surjectivity of $h \mapsto h^{\GSpinb_5}$ (see Proposition \ref{prop: surj
  transfer}), and Remark \ref{rem: Idisc for tGL}, this implies that $S_{\disc,
  \nu, c(\psi)}^{\GSpinb_5}$ vanishes identically if $\nu \neq \nu(\psi)$. In
  the definition of $I_{\disc, \nu, c(\psi)}^{\tGamma}$ as a sum over Levi
  subgroups, the only non-vanishing summand corresponds to $\bL = \GLb_4$. By
  Lemma \ref{lem: can_vs_Whi_norm} we have for $h = \prod_v h_v \in I(\tGamma)$
  \[ I_{\disc, \nu(\psi), c(\psi)}^{\tGamma}(h) = \prod_v \tr
  \pi_{\psi_v}^{\tGamma}(h_v). \]
	Applying
	Theorem~\ref{thm:local_Arthur_packets}~\eqref{item:thm_local_Apackets1} (or
  rather its extension to parameters in $\Psi^+(\GSpinb_5)$ via parabolic
  induction; see \cite[\S 1.5]{MR3135650}) to the right-hand side of this
  equality and using \eqref{eqn: STTF_nonendo} we obtain
  \[ S_{\disc, \nu(\psi), c(\psi)}^{\GSpinb_5}(\prod_v f_v) = \prod_v
  \sum_{\pi_v \in \Pi_{\psi_v}} \tr \pi_v (f_v). \]
  Combining this with~\eqref{eqn: I disc G equals I disc G G} and~\eqref{eqn: I
  disc G equals S disc G}, we conclude that
	\[ I_{\disc, \nu, c(\psi)}^{\GSpinb_5,\GSpinb_5}(\prod_v f_v) =
		\begin{cases}
      \prod_v \sum_{\pi_v \in \Pi_{\psi_v}} \tr \pi_v (f_v) & \text{ if } \nu =
        \nu(\psi) \\
			0 & \text{ if } \nu \ne \nu(\psi)
		\end{cases}
	\]
  Recalling that~$\cS_\psi=1$ and~$\varepsilon_\psi=1$, this is equivalent
  to~\ref{eqn: packet sume to be proved}, so we are done.
\end{proof}

\begin{rem}
  A consequence of the multiplicity formula and \cite{MR3227529} is that for any
  discrete automorphic representation $\pi$ for $\GSpinb_5$ which is formally
  tempered (i.e.\ of general or Yoshida type), there exists a \emph{globally
  generic} discrete automorphic representation $\pi'$ for $\GSpinb_5$ such that
  for any place $v$ of $F$, $\pi_v$ and $\pi'_v$ have the same Langlands
  parameter. Indeed letting $\psi \in \Psi_{\disc}(\GSpinb_5, \chi)$ be the
  parameter of $\pi$ (well-defined by the multiplicity formula), Shahidi's
  conjecture (proved in \cite{MR2800725}) implies that there is a unique
  representation in $\Pi_{\psi}$ which is generic at each place. In fact the
  multiplicity formula asserts that it is automorphic with multiplicity one. By
  (the converse part of) \cite[Theorem 4.26]{MR3227529} there exists a globally
  generic discrete (even cuspidal) automorphic representation $\pi'$ for
  $\GSpinb_5$ such that $\pi'_v \simeq \pi_v$ for almost all $v$. In particular
  $\pi'$ has parameter $\psi$, and for any place $v$ of $F$, $\pi'_v$ is generic.

  Note that in the case $\chi = 1$, Arthur used the the analogue of
  \cite{MR3227529} in order to prove Shahidi's conjecture: see
  \cite[Proposition 8.3.2]{MR3135650}. More precisely, he used the
  descent theorem of Ginzburg, Rallis and Soudry (and thus indirectly
  the converse theorem of Cogdell, Kim, Piatestski-Shapiro and
  Shahidi).
\end{rem}

\begin{rem}
  Let $\bG$ be an inner form of $\GSpinb_5$ over a number field $F$. Using the
  stabilisation of the trace formula for $\bG$ qualitatively (i.e.\ only
  considering families of Satake parameters), we see that for any $\pi \in
  \Pi_{\disc}(\bG, \chi)$, there is a well-defined $\psi \in \Psi(\tGamma,
  \chi)$ such that $c(\pi) = (c(\psi), c(\chi))$. Moreover if $\psi$ is discrete
  then $\psi \in \Psi_{\disc}(\GSpinb_5, \chi)$. If $\psi \in
  \Psi_{\disc}(\GSpinb_5, \chi)$ is tempered (i.e.\ either of general type or of
  Yoshida type) then using the stabilisation of the trace formula quantitatively
  and the endoscopic character relations proved in \cite{MR3267112} for inner
  forms as well, one could certainly prove the multiplicity formula for the part
  of the discrete automorphic spectrum for $\bG$ corresponding to $(c(\psi),
  c(\chi)) \in FS(\bG)$. The proof would be similar to those of
  Proposition~\ref{prop: twisted endoscopic character real discrete tempered}
  and Theorem \ref{thm:glob_mult_form_GSpin5}. Note however that to even state
  the multiplicity formula, one has to fix a normalisation of local transfer
  factors satisfying a product formula. This normalisation was achieved in
  \cite{Kalgri} and used in \cite{TaiMult} to prove the multiplicity formula for
  certain inner forms of symplectic groups. It would thus be necessary to
  compare Kaletha's normalisation of local transfer factors for the non-split
  inner form of $\GSpb_4$ realised as a rigid inner twist with Chan--Gan's ad
  hoc normalisation \cite[\S 4.3]{MR3267112}.
\end{rem}

\section{Compatibility of the local Langlands correspondences for
\texorpdfstring{$\Spb_4$}{Sp(4)} and \texorpdfstring{$\GSpinb_5$}{GSpin(5)}}
	\label{sec: local Langlands for Sp4}

In this section, we study the compatibility of the local Langlands
correspondence with restriction from~$\GSpb_4(F) \simeq \GSpinb_5(F)$
to~$\Spb_4(F)$, where~$F$ is a $p$-adic field. We do not consider the case of
Archimedean places, which could certainly be done by a careful examination of
the Langlands--Shelstad correspondence.

\subsection{Compatibility with restriction}

Let~$F$ be a $p$-adic field. The proof of the existence of the local
Langlands correspondence for~$\GSpb_4(F) \simeq \GSpinb_5(F)$
in~\cite{MR2800725} used the theta correspondence, and its
compatibility with the correspondence stated in~\cite{MR2058604} (characterised
by (twisted) endoscopic character relations) was proved in~\cite{MR3267112}. In
the paper~\cite{MR2673717}, a local Langlands correspondence for~$\Spb_4(F)$ was
deduced from the correspondence for~$\GSpb_4(F)$ by restriction. This
correspondence is uniquely characterised by the commutativity of the diagram
\numequation\label{eqn: diagram for restriction local Langlands}
\begin{tikzcd}[column sep=5em]
	\Pi(\GSpinb_5) \arrow[r] \arrow[d] & \Phi(\GSpinb_5) \arrow[d, "{\pr}"] \\
	\Pi(\Spb_4) \arrow[r] & \Phi(\Spb_4)
\end{tikzcd}
\end{equation}
where $\Pi(\GSpinb_5)$ (resp.\ $\Pi(\Spb_4)$) is the set of equivalence classes
of irreducible admissible representations of~$\GSpinb_5(F)$ (resp.\
$\Spb_4(F)$), $\Phi(\GSpinb_5)$ (resp.\ $\Phi(\Spb_4)$) is the set of
equivalence classes of continuous semisimple representations of~$\WD_F$ valued
in~$\GSp_4(\C)$ (resp.\ $\SO_5(\C)$), the horizontal arrows are the local
Langlands correspondences, and $\pr$ is the projection $\GSp_4(\C) \to
\PGSp_4(\C) \cong \SO_5(\C)$. The left hand vertical arrow is not in fact a map
at all, but a correspondence, given by taking any restriction of an element
of~$\Pi(\GSpinb_5)$ to~$\Spb_4(F)$.

Of course, \cite{MR3135650} gives another definition of the local Langlands
correspondence for~$\Spb_4$, which is characterised by twisted endoscopy for
$(\GLb_5, g \mapsto {}^t g^{-1})$. It is not obvious that this correspondence
agrees with that of~\cite{MR2673717}; this amounts to proving the
commutativity of the diagram~(\ref{eqn: diagram for restriction local
Langlands}), where now the horizontal arrows are the correspondences
characterised by twisted endoscopy. Proving this is the main point of this
section; we will also prove a refinement, describing the constituents of the
restrictions of representations of~$\GSpinb_5(F)$ to~$\Spb_4(F)$ in terms of
the parameterisation of $L$-packets.

We begin by recalling some results about restriction of admissible
representations, most of which go back to~\cite{MR644669}, and are explained in
the context of~$\GSpb_{2n}$ in~\cite{MR2673717}. They are also proved
in~\cite{MR3568940}, which we refer to as a self-contained reference. If~$\tpi$
is an irreducible admissible representation of~$\GSpinb_5(F)$,
then~$\tpi|_{\Spb_4(F)}$ is a direct sum of finitely many irreducible
representations of~$\Spb_4(F)$ (\cite[Lem.\ 6.1]{MR3568940}), and these
representations are pairwise non-isomorphic (\cite[Thm.\ 1.4]{MR2254643}).
Furthermore if~$\pi$ is an irreducible admissible representation of~$\Spb_4(F)$,
then there exists an irreducible representation~$\tpi$ of~$\GSpinb_5(F)$ whose
restriction to~$\Spb_4(F)$ contains~$\pi$, and~$\tpi$ is unique up to twisting
by characters (\cite[Cor.\ 6.3, 6.4]{MR3568940}).
There is also an analogue of these statements for~$L$-parameters, which is that
$L$-parameters for~$\Spb_4$ may be lifted to~$\GSpinb_5$, and such lifts are
unique up to twist; see~\cite[Prop.\ 2.8]{MR2673717} (see also
\cite[Th\'eor\`eme 7.1]{Labesse_lift} for a more general lifting result).

In particular, it follows that if~$\pi \in \Pi(\Spb_4)$, and~$\tpi$ lifts~$\pi$,
with $L$-parameter ~$\varphi_{\tpi}$, then~$\pr\circ\varphi_{\tpi}$ depends
only on~$\pi$ (because~$\varphi_{\tpi}$ is well-defined up to twist, as~$\tpi$
itself is); we need to show that it is equal to the $L$-parameter of~$\pi$
defined by the local Langlands correspondence of~\cite{MR3135650}.

\begin{thm}
  \label{thm: local Langlands for Sp4 is well defined}
	The diagram~\emph{(\ref{eqn: diagram for restriction local Langlands})}
	commutes, where the horizontal arrows are the correspondences
	of~\cite{MR3135650,MR2058604} determined by twisted endoscopy; that is, the
	local Langlands correspondences for~$\Spb_4$ of~\emph{\cite{MR2673717}}
	and~\emph{\cite{MR3135650}} coincide.
\end{thm}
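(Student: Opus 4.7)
The plan is to reduce the comparison to the case of discrete Langlands parameters, which I would then handle via a global argument based on exterior square functoriality for $\GLb_4$. Both local Langlands correspondences in question (that of \cite{MR2673717} obtained by restriction from \cite{MR2800725}, and that of \cite{MR3135650} characterised by twisted endoscopy for $\GLb_5$) are compatible with parabolic induction. If $\varphi_{\tpi}$ factors through a proper Levi subgroup of $\GSp_4$, then so does $\pr\circ\varphi_{\tpi}$, and a direct case-by-case inspection of the Siegel, Klingen and Borel standard Levi subgroups reduces the desired commutativity to the analogous (known) statement for smaller general linear groups and their standard Levi subgroups; this disposes of the non-discrete case.

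For the discrete case, given $\pi\in\Pi(\Spb_4(F))$ and a lift $\tpi\in\Pi(\GSpinb_5(F))$ with discrete Langlands parameter, I would realise $\tpi$ as the local factor at some place $v_0$ of a cuspidal automorphic representation $\tPi$ of $\GSpinb_5(\A_{F'})$ over a totally real number field $F'$ with $F'_{v_0}\cong F$, arranging that the global parameter of $\tPi$ is of general type in the sense of Remark \ref{rem: explicit list of parameters following Arthur}. Concretely, this means producing a $\chi$-self dual cuspidal automorphic representation $\rho$ of $\GLb_4(\A_{F'})$ of symplectic type such that $(\rec(\rho_{v_0}),\rec(\chi_{v_0}))=(\Std_{\GSpinb_5}\circ\varphi_{\tpi})$. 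Such a globalisation is enabled by Theorem \ref{thm:glob_mult_form_GSpin5}: one imposes a supercuspidal (or Steinberg) condition at an auxiliary place $w_0$ to rule out every non-general-type global parameter appearing in Remark \ref{rem: explicit list of parameters following Arthur}, and a standard simple trace formula argument produces $\tPi$ with the required local component at $v_0$.

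The exterior square functoriality of Kim \cite{MR1937203}, completed in \cite{MR2567395}, then gives $\bigwedge^2\rho\otimes\chi^{-1}$ as an isobaric automorphic representation of $\GLb_6(\A_{F'})$. Using the commutative diagram \eqref{eqn: commutative diagram Sp4 restriction} and the symplectic nature of the parameter of $\rho$, this decomposes isobarically as $1\boxplus\sigma$ for an isobaric automorphic representation $\sigma$ of $\GLb_5(\A_{F'})$ (in fact $\sigma$ is automatically discrete and of orthogonal type, as in the proof of Proposition \ref{prop:sympl_orth_alt}~(3)). Now by Theorem \ref{thm: surjectivity automorphic res}, choose an automorphic restriction $\Pi'$ of $\tPi$ to $\Spb_4$ with $\Pi'_{v_0}\cong\pi$. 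By Lemma \ref{lem: compat Satake restriction} combined with \eqref{eqn: commutative diagram Sp4 restriction}, the Satake parameters of $\Pi'$ and $\sigma$ agree at almost all places, and therefore $\sigma$ is exactly the global Arthur parameter of $\Pi'$ in the sense of \cite{MR3135650}. Taking local factors at $v_0$ identifies $\rec(\sigma_{v_0})$ with $\bigl(\bigwedge^2\rec(\rho_{v_0})\otimes\chi_{v_0}^{-1}\bigr)\ominus 1=\Std_{\SO_5}\circ\pr\circ\varphi_{\tpi}$, which is the image under $\pr$ of the lifted parameter from \cite{MR2673717}; this is exactly the commutativity asserted by the theorem.

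The step I expect to be the main obstacle is the globalisation: pinning down $\tPi$ with the prescribed local component $\tpi$ at $v_0$ \emph{and} global parameter of general type, together with control at enough places to read off $\sigma$ unambiguously. I will also want to extend the conclusion to describe each irreducible constituent of $\tpi|_{\Spb_4(F)}$ in terms of the character of $\cS_{\psi}$ it produces; once the parameter comparison is settled, this refinement follows by tracking characters of $\cS_{\psi}$ through the restriction map using the multiplicity formulas for $\GSpinb_5$ and $\Spb_4$ in parallel.
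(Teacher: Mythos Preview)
Your strategy is the same as the paper's: a global argument via exterior square functoriality for the discrete case, and reduction by parabolic induction otherwise. The substantive difference is the order of operations. The paper globalises \emph{on the $\Spb_4$ side} first (using \cite{MR818353} and Krasner's lemma to find a totally real $K$ with $K_v\cong F$ and a discrete automorphic $\Pi$ of $\Spb_4(\A_K)$ with $\Pi_v\cong\pi$, discrete series at infinity, and at an auxiliary finite place $w$ a component whose $5$-dimensional parameter is irreducible), and \emph{then} lifts to $\GSpinb_5$ via Theorem~\ref{thm: surjectivity automorphic res}. The auxiliary condition at $w$ forces the $\Spb_4$ parameter $\psi'$ to be a single cuspidal for $\GLb_5$, and a short case analysis then shows the lifted $\GSpinb_5$ parameter $\tpsi$ is of general type. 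You instead lift locally first and then globalise on $\GSpinb_5$.

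The paper's order buys two things. First, the globalisation input is a clean citation for the semisimple group $\Spb_4$; your route needs a globalisation with prescribed local component for $\GSpinb_5$ (a group with nontrivial centre), which is certainly available in the literature but is exactly the ``main obstacle'' you flag, and is not something the paper has set up. Second, and more importantly, after restricting your globalised $\tPi$ to $\Spb_4$ you assert that one can choose an automorphic restriction $\Pi'$ with $\Pi'_{v_0}\cong\pi$. This is not guaranteed by Theorem~\ref{thm: surjectivity automorphic res}: the restriction $\tpi|_{\Spb_4(F)}$ may have several constituents and $\Pi'_{v_0}$ could be any one of them. The gap is repairable---all constituents of $\tpi|_{\Spb_4(F)}$ lie in a single $\GSpb_4(F)$-orbit, and since $\Spb_4$ has no outer automorphisms this action permutes members of each Arthur $L$-packet, so $\Pi'_{v_0}$ and $\pi$ have the same Arthur parameter regardless---but you should say so explicitly. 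The paper's order sidesteps the issue entirely: by globalising $\pi$ itself one has $\Pi_v=\pi$ on the nose, and the lift $\tPi_v$ is then automatically a representation whose restriction contains $\pi$.

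For the non-discrete reduction, note that the paper organises the case analysis from the $\Spb_4$ side (factor $\varphi_\pi$ through a minimal Levi of ${}^L\Spb_4$, then lift $\pi_{\bM}$ to $\widetilde{\pi_{\bM}}$ and induce), rather than from the $\GSpinb_5$ side as you sketch; the two are equivalent but the paper's phrasing makes the appeal to \cite[Lemma 6.6.2]{MR3135650} (for the $\Spb_2$ factor in the Klingen Levi) more transparent.
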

\begin{proof}
	By the preceding discussion, it suffices to show that for each irreducible
	admissible representation~$\pi$, there is some lift~$\tpi$ of~$\pi$ such that
	$\varphi_\pi=\pr\circ\varphi_{\tpi}$. 

  We begin with the case that~$\pi$ is a discrete series representation. Then
  by~\cite[Thm.\ 1B]{MR818353} and Krasner's lemma, we can find a totally real
  number field $K$, a finite place~$v$ of~$K$, and a discrete automorphic
  representation~$\Pi$ of~$\Spb_4(\A_K)$, such that:
  \begin{enumerate}
		\item $K_v \cong F$ (so we identify~$K_v$ with~$F$ from now on).
		\item $\Pi_v \simeq \pi$.
		\item at each infinite place~$w$ of~$K$, $\Pi_w$ is a discrete series
			representation.
    \item \label{item: globalise_Sp4_4}
      for some finite place $w$ of $K$, $\Pi_w$ is a discrete series
      representation whose parameter is irreducible when composed with
      $\Std_{\Spb_4} : \SO_5 \rightarrow \GL_5$ (for example the parameter which
      is trivial on $W_{K_w}$ and the ``principal $\SL_2$'' on $\SU(2)$).
  \end{enumerate}

  By Theorem~\ref{thm: surjectivity automorphic res}, there is a  discrete
  automorphic representation~$\tPi$ of~$\GSpinb_5(\A_K)$ such
  that~$\tPi|_{\Spb_4(\A_K)}$ contains~$\Pi$. We can and do assume that the
  infinitesimal character of~$\Pi$ is sufficiently regular, so that in
  particular the parameter $\psi$ of $\Pi$ is tempered. By \eqref{item:
  globalise_Sp4_4} above, $\psi$ is just a self-dual representation for $\GLb_5
  / K$ with trivial central character. Write~$\tpsi$ for the parameter
  of~$\tPi$.

  As in the proof of Proposition \ref{prop:sympl_orth_alt} \ref{item: sympl orth
  alt dim4 sympl case} (i.e.\ comparing at the unramified places
  using~(\ref{eqn: commutative diagram Sp4 restriction})), we see that
  $1 \boxplus \psi = \bigwedge^2(\tpsi) \otimes \omega_{\tpsi}^{-1}$. Given the
  possibilities in Remark \ref{rem: explicit list of parameters following
  Arthur} we see (using \cite{MR533066} to rule out the case $\tpsi = \pi[2]$,
  see the proof of Proposition \ref{prop:sympl_orth_alt} (1)) that $\tpsi$ is
  necessarily tempered. If $\tpsi = \pi_1 \boxplus \pi_2$ was of Yoshida type
  then we would have $\psi = 1 \boxplus (\pi_1 \boxtimes \pi_2^{\vee})$, a
  contradiction. Therefore $\tpsi$ is of general type, i.e.\ a $\chi$-self-dual
  cuspidal automorphic representation for $\GLb_4 / K$ of symplectic type for
  some character $\chi$ of $\A_K^{\times} / K^{\times}$. By the main results
  of~\cite{MR1937203} and~\cite{MR2567395}, the Langlands parameter of $1
  \boxplus \psi$ at $v$ equals $\bigwedge^2(\rec(\tpsi_v)) \otimes
  \rec(\omega_{\tpsi})^{-1}$, which implies that
  $\varphi_{\Pi_v}=\pr\circ\varphi_{\tPi_v}$. Taking~$\tpi=\tPi_v$, we are done
  in this case.

  We now treat the case that the parameter~$\varphi_\pi$ is not discrete, but is
  bounded modulo centre. Recall that a minimal Levi subgroup ${}^L \bM$ of ${}^L
  \Spb_4$ such that $\varphi_\pi(\WD_F) \subset {}^L \bM$ is unique up to
  conjugation by $\Cent(\varphi_\pi, \widehat{\Spb_4})$ \cite[Proposition
  3.6]{MR546608}. Then $\varphi_\pi$ factors through a well-defined discrete
  parameter~$\varphi_\Mb : \WD_F \to {}^L \Mb$. Since $\Spb_4$ is quasi-split we
  have a natural identification of ${}^L \bM$ with the $L$-group of a Levi
  subgroup $\bM$ of $\GSpb_4$ (well-defined up to conjugation by normalisers in
  $\Spb_4$, resp.\ $\widehat{\Spb_4}$). Since $\varphi_\pi$ is assumed to be
  non-discrete we have ${}^L \bM \neq {}^L \Spb_4$. It follows from the
  construction in~\cite{MR3135650} (see the proof of Proposition 2.4.3 loc.\
  cit., in particular (2.4.13)) that $\pi$ is a constituent of the parabolic
  induction~$\Ind_{\bP(F)}^{\bG(F)} \pi_\Mb$, where~$\Pb$ is any parabolic
  subgroup of $\Spb_4$ with Levi~$\Mb$, and~$\pi_\Mb$ is in the $L$-packet
  of~$\varphi_\Mb$. Recall that this $L$-packet is defined via the natural
  identification $\Mb$ with a product of copies of $\GLb$ groups with
  $\Spb_{2a}$ for some $0 \leq a < 2$, using $\rec$ for the $\GLb$ factors and
  Arthur's local Langlands correspondence for the $\Spb$ factor.

  Write~$\Mb = \Mbt \cap \Spb_4$ where~$\Mbt$ is a Levi subgroup of~$\GSpb_4$,
  and similarly $\Pb = \Pbt \cap \Spb_4$. Let~$\widetilde{\pi_\Mb}$ be an
  essentially discrete series representation of~$\Mbt(F)$ whose restriction
  to~$\Mb(F)$ contains~$\pi_\Mb$. Then there is an irreducible
  constituent~$\tpi$ of the (semisimple) parabolic
  induction~$\Ind_{\Pbt(F)}^{\GSpinb_5(F)} \widetilde{\pi_\Mb}$ such that~$\pi$
  is a restriction of~$\tpi$. We will prove that $\varphi_\pi = \pr \circ
  \varphi_{\tpi}$. Note that for non-discrete parameters, the local Langlands
  correspondence for~$\GSpinb_5(F)$ of~\cite{MR2800725} is also compatible with
  parabolic induction (see \cite[\S 6.6]{MR3267112} and \cite[Prop.\
  13.1]{MR2846304}), i.e.\ the parameter of $\widetilde{\pi}$ is
  $\varphi_{\widetilde{\pi_{\bM}}}$ (the Langlands parameter of
  $\widetilde{\pi_{\bM}}$) composed with ${}^L \tbM \subset {}^L \GSpinb_5$.
  Note that $\tbM$ is isomorphic to a product of $\GLb$ and for such a group the
  (bijective) local Langlands correspondence is well-defined, i.e.\ it does not
  depend on the choice of an isomorphism. This follows from compatibility of
  $\rec$ with twisting, central characters and duality. The same argument shows
  that any morphism with normal image between two such groups is also compatible
  with the local Langlands correspondence. We have a commutative diagram
  \[ \begin{tikzcd}[column sep=5em]
    {}^L \tbM  \arrow[r] \arrow[d, "\mathrm{pr}", two heads] & {}^L \GSpinb_5
    \arrow[d, "\mathrm{pr}", two heads] \\
    {}^L \bM \arrow[r] & {}^L \Spb_4
  \end{tikzcd} \]
  so that to conclude that $\varphi_\pi = \pr \circ \varphi_{\widetilde{\pi}}$
  it is enough to show that $\varphi_\bM = \pr \circ
  \varphi_{\widetilde{\pi_\bM}}$, which is simply a compatibility of local
  Langlands correspondences for $\bM$ and $\tbM$. There are three cases to
  consider. We write the standard parabolic subgroups of $\GSpinb_5$ and
  $\Spb_4$ as in Section \ref{subsec:Levi_parametrisation}. We do not justify
  the embedding $\bM \hookrightarrow \tbM$, as this is a simple but tedious
  exercise in root data.
  \begin{itemize}
    \item $\tbM = \GLb_1^2 \times \GSpinb_1$, $\bM = \GLb_1^2$, the embedding
      $\bM \hookrightarrow \tbM$ is $(x_1, x_2) \mapsto (x_1 x_2, x_1/x_2,
      x_1^{-1})$. This case is trivial.
    \item $\tbM = \GLb_2 \times \GSpinb_1$, $\bM = \Spb_2 \times \GLb_1$, the
      embedding $\bM \hookrightarrow \tbM$ is $(g, x_1) \mapsto (g x_1,
      x_1^{-1})$. This case is not formal as for the factor $\Spb_2$ the local
			Langlands correspondence that is used is Arthur's from \cite{MR3135650}
			and it is not obvious that it is compatible with $\rec$ for $\GLb_2$, in
			other words that Arthur's local Langlands correspondence for $\Spb_2
			\simeq \SLb_2$ (characterised by twisted endoscopy for $\GLb_3$) coincides
			with Labesse-Langlands \cite{LabLan}. Fortunately Arthur verified this
			compatibility in \cite[Lemma 6.6.2]{MR3135650}.
    \item $\tbM = \GLb_1 \times \GSpinb_3$, $\bM = \GLb_1 \times \GLb_2$, the
      embedding $\bM \hookrightarrow \tbM$ is $g \mapsto (\det g, g / \det g)$
      where we have identified $\GSpinb_3$ with $\GLb_2$. This case also follows
      from the above remark about the local Langlands correspondence for groups
      isomorphic to a product of $\GLb$.
  \end{itemize}

  Finally, we must treat the case that~$\varphi$ is not bounded modulo centre.
  The description of the $L$-packets in this case is again in terms of parabolic
  inductions from Levi subgroups (``Langlands classification''). This is
  well-known and completely general (see \cite{Silberger_Langlandsquot},
  \cite{SilbergerZink_Langlandsparam}). The argument is similar to the above
  reduction, except that $\bP$ and $\tbP$ are uniquely determined by a
  positivity condition and that $\pi$ and $\tpi$ are unique quotients of
  standard modules and not arbitrary constituents. We do not repeat the
  argument.
\end{proof}

We now examine the restriction from~$\GSpinb_5(F)$ to~$\Spb_4(F)$ more
closely, proving a slight refinement of the results of~\cite{MR2673717}.
In~\cite[App.\ A]{MR2673717}, a detailed qualitative description of the
constituents of~$\tpi|_{\Spb_4(F)}$ is given, which is obtained by examining the
local Langlands correspondence (see~\cite[\S 5, 6]{MR2673717} for the
corresponding calculations with $L$-parameters). However, since the local
Langlands correspondence of~\cite{MR2673717} is not characterised in terms of
twisted and ordinary endoscopic character relations, they cannot describe
precisely which elements of the $L$-packets for~$\Spb_4(F)$ arise as the
restrictions of given elements of the $L$-packets for~$\GSpinb_5(F)$.

Theorem~\ref{thm: constituents of restriction from GSp4 to Sp4 in terms of
characters} below answers this question.  In its proof, we need to make use of
the results of Section~\ref{sec:restriction} for $\SOb_4 \hookrightarrow \Hb$
where
\[ \Hb = \left( \GLb_2 \times \GLb_2 \right) / \{ (zI_2, z^{-1} I_2 \,|\, z
	\in \GLb_1 \} \]
is the non-trivial elliptic endoscopic group of $\GSpinb_5$. Here $\SOb_4$ is
identified with the subgroup of pairs~$(a,b)$ with $(\det a)(\det b)=1$. Indeed,
$\Hb$ may be identified with the subgroup~$\GSOb_4$ of~$\GOb_4$ given by the
elements for which~$\det=\nu^2$, where~$\nu$ is the similitude factor. 

Note that $\SOb_4$ is an elliptic endoscopic group for $\Spb_4$ and that we have
the following commutative diagram:
\numequation\label{eqn: commutative diagram for SO4 Sp4 restriction}
\begin{tikzcd}[column sep=5em]
	\widehat{\Hb}  \arrow[r] \arrow[d, "{{}^L \xi'}"] & \widehat{\SOb_4} = \SO_4
	\arrow[d, "{{}^L \xi'}"] \\
	\widehat{\GSpinb_5} = \GSp_4 \arrow[r] & \widehat{\Spb_4} = \SO_5
\end{tikzcd}
\end{equation}

\subsection{Multiplicity one}

In studying restriction from~$\Hb$ to~$\SOb_4$ we will make use of the following
variant of the results of~\cite{MR2254643}. In fact, we could prove the special
case that we need in a simpler but more ad-hoc fashion by using the description
of~$\Hb$ in terms of~$\GLb_2$, but it seems worthwhile to prove this more
general result.

\begin{prop} \label{prop: multiplicity one for restriction for GSO2n}
	Let $n \geq 1$, and let  $V$ be a vector space of dimension $2n$ over $F$
	endowed with a non-degenerate quadratic form $q$. Let~$\pi$ be an irreducible
	admissible representation of~$\GSO(V, q) = \GSOb(V, q)(F)$. Then the
	irreducible constituents of the restriction~$\pi|_{\SO(V,q)}$ are pairwise
	non-isomorphic.
\end{prop}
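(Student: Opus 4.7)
The plan is to apply Clifford theory to the normal inclusion $H := \SO(V, q) \triangleleft G := \GSO(V, q)$, whose quotient $G/H$ embeds into $F^{\times}$ via the similitude factor $\nu$ and is therefore abelian. For any irreducible admissible $\pi$ of $G$, a standard Mackey--Clifford argument produces a decomposition
\[ \pi|_H \cong m \cdot \bigoplus_{i \in I} \sigma_i, \]
where the $\sigma_i$ form a single orbit under the conjugation action of $G/H$ on $\operatorname{Irr}(H)$ and $m \geq 1$ is a common multiplicity; the proposition is equivalent to the assertion $m = 1$.

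Clifford theory further identifies $m$ with the dimension of any irreducible projective representation of the stabiliser $A := G_\sigma / H \subset G/H$ whose $2$-cocycle is the obstruction class $c \in H^2(A, \C^{\times})$ to lifting $\sigma$ to an honest representation of its preimage $G_\sigma$ in $G$, so $m = 1$ precisely when $c$ is trivial. Now $Z(G) \subset G_\sigma$ automatically, and the scalar similitudes $z \cdot \mathrm{id}_V \in Z(G)$ satisfy $\nu(z \cdot \mathrm{id}_V) = z^2$, so $Z(G)H/H$ surjects onto $(F^{\times})^2 \subset G/H$; in particular $\sigma$ extends tautologically over $Z(G)H$ via the central character of $\pi$, killing $c$ on $Z(G)H/H$. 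The residual obstruction therefore lies in $H^2(A', \C^{\times})$ for $A' := A/(Z(G)H/H)$, which is a finite elementary abelian $2$-group since it is a subquotient of $F^{\times}/(F^{\times})^2$.

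To trivialise the residual cocycle on $A'$ we would adapt the strategy employed in \cite{MR2254643} for the analogous restriction $\Spb_4(F) \subset \GSpinb_5(F)$. First, by compatibility of parabolic induction with restriction (via a Mackey double-coset computation using $G = H \cdot P$ for a $\nu$-surjective standard parabolic $P$), the proposition for $\pi$ properly parabolically induced from a Levi $\widetilde{\bM} \subset G$ reduces to the analogous statement for $\widetilde{\bM}$ restricting to $\widetilde{\bM} \cap H$, which involves only $\GL_n$-factors (where the requisite multiplicity one is Gelbart--Knapp) and a smaller even $\GSO$, handled by induction on $n$. One is thereby reduced to $\pi$ in the discrete series, where the task is to exhibit canonical intertwiners $T_g : \sigma \iso \sigma^g$ for $g$ ranging over lifts of $A'$ in $G_\sigma$, satisfying $T_g T_{g'} = T_{gg'}$. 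The main obstacle will be producing such canonical intertwiners when $\sigma$ is non-generic, where Whittaker-functional normalisations are unavailable; there we expect to invoke Bushnell--Kutzko types or, following the philosophy of Adler--Prasad, a descent argument to anisotropic tori. Because $A'$ is elementary abelian of exponent $2$, the alternating form on $A'$ representing $c$ is pairwise determined, which reduces the question to checking triviality on each two-element subgroup.
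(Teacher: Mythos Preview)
Your Clifford-theoretic reduction is a natural line of attack, but the proposal has a genuine gap: the discrete series case is not proved. After the parabolic reduction you are left with exactly the hard case, and there you write only that you ``expect to invoke Bushnell--Kutzko types or \dots a descent argument to anisotropic tori''. This is a hope, not an argument. Trivialising the residual cocycle on an elementary abelian $2$-group $A'$ by exhibiting compatible intertwiners $T_g$ for non-generic supercuspidals is precisely the difficult step, and nothing in the proposal indicates how to carry it out. (There is also a secondary issue with the parabolic reduction: not every non-discrete irreducible is a \emph{full} parabolic induction from a proper Levi, only a Langlands subquotient, so one must argue that restriction to $H$ of a multiplicity-free induced representation remains multiplicity-free in the subquotient sense; this can be handled but is not addressed.)

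The paper's proof avoids Clifford theory entirely. It appeals to the criterion of Adler--Prasad \cite[Theorem 2.3]{MR2254643}: multiplicity one for $\pi|_{\SO(V,q)}$ follows once one exhibits an algebraic anti-involution $\tau$ of $\GSOb(V,q)$ that preserves $\SOb(V,q)$ and fixes every $\SO(V,q)$-conjugacy class in $\GSO(V,q)$. The paper takes $\tau(g)=\nu(g)\,\delta^n g^{-1}\delta^{-n}$ for an involution $\delta\in\mathrm{O}(V,q)$ with $\det\delta=-1$, and reduces the conjugacy-class condition to a linear-algebraic lemma: every $g\in\GSO(V,q)$ factors as $g=xy$ with $x\in\mathrm{O}(V,q)$, $y\in\GO(V,q)$, $x^2=1$, $\det(x)=(-1)^n$, $y^2=\nu(y)$. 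That lemma is a refinement of a result of de la Cruz on products of involutions, proved by a Jordan-block analysis and Jacobson--Morozov. The virtue of this route is that the representation-theoretic content is packaged into a uniform anti-involution criterion, and what remains is pure linear algebra over $F$---no case division between generic and non-generic discrete series is needed.
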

\begin{proof}
	By~\cite[Theorem 2.3]{MR2254643}, it suffices to show that there is an
	algebraic anti-involution~$\tau$ of~$\GSOb(V,q)$ which preserves~$\SOb(V,q)$
	and takes each $\SO(V,q)$-conjugacy class in~$\GSO(V,q)$ to itself. To
	define~$\tau$, we set~$\tau(g)=\nu(g) \delta^n g^{-1} \delta^{-n}$ where
	$\delta \in \operatorname{O}(V,q)$ is an involution with $\det \delta = -1$.
	This obviously preserves~$\SOb(V,q)$, so we need only check that it also
	preserves $\SO(V,q)$-conjugacy classes in~$\GSO(V,q)$.

	To see this, we claim that it is enough to show that we can write~$g=xy$
	with~$x \in \operatorname{O}(V,q)$, $y \in \GO(V,q)$ (so $\nu(y)=\nu(g)$)
	satisfying $x^2=1$, $\det(x) = (-1)^n$, $y^2=\nu(y)$. Indeed, we then have
	\[\tau(g)=\nu(g) \delta^n g^{-1} \delta^{-n} = \delta^n \nu(y)y^{-1}x^{-1}
	\delta^{-n} = \delta^n yx \delta^{-n}
	=\delta^nx^{-1}(xy)x\delta^{-n}=(x\delta^{-n})^{-1}g(x\delta^{-n}), \]
	as required. The result then follows from Lemma~\ref{lem: product of
	involutions} below, which is a slight refinement of \cite[Thm.\
	A]{2016arXiv160706647R}.
\end{proof}

\begin{lem} \label{lem: product of involutions}
  Let $n \geq 0$, let $K$ be a field of characteristic $0$, and let $V$ be a
  vector space of dimension $2n$ over $K$ endowed with a non-degenerate
  quadratic form $q$. If $g \in \GSO(V,q)$ then we can write  $g=xy$
  with~$x\in\mathrm{O}(V,q)$, $y\in\GO(V,q)$ satisfying $x^2=1$,
  $\det(x)=(-1)^n$, $y^2 = \nu(y)$.
\end{lem}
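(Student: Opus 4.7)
My plan is a reduction to indecomposable blocks followed by an explicit case-by-case construction of $x$ and $y$. First I will observe that if $V = V_1 \perp V_2$ with each $V_i$ non-degenerate and $g$-stable, and if $(x_i, y_i)$ solves the conclusion on each $V_i$ with $n_i := \dim V_i / 2$, then $x = x_1 \oplus x_2$ and $y = y_1 \oplus y_2$ solve it on $V$: the determinants multiply to $(-1)^{n_1 + n_2} = (-1)^n$, and the conditions $x^2 = 1$ and $y^2 = \nu(y)$ are preserved by direct sum. Hence I may assume that $(V, g)$ admits no nontrivial $g$-stable orthogonal direct sum decomposition.

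For such indecomposable $(V, g)$, the identity $q(gv, gw) = \lambda\, q(v, w)$ (with $\lambda := \nu(g)$) forces the generalised eigenvalues of $g$ on $V \otimes \overline{K}$ to pair up as $\{\alpha, \lambda/\alpha\}$, with $V_\alpha$ and $V_{\lambda/\alpha}$ in perfect $q$-duality. Three cases emerge: \emph{(a)} $\alpha \in K^\times$ with $\alpha^2 \ne \lambda$, so $V = V_\alpha \oplus V_{\lambda/\alpha}$ splits over $K$ as a sum of complementary maximal isotropic subspaces; \emph{(b)} $\{\alpha, \lambda/\alpha\}$ is a Galois orbit over a quadratic extension $L/K$, descending from an $L$-version of (a); \emph{(c)} $\alpha = \pm\sqrt{\lambda} \in K$, so $\lambda$ is a square in $K$ and $g = \alpha\, u$ for some unipotent $u \in \SO(V, q)$. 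In cases (a) and (b) I will choose a hyperbolic basis in which $g = \diag(A, \lambda (A^t)^{-1})$, pick a symmetric invertible $P$ with $AP = PA^t$ (which exists since every matrix is similar to its transpose via a symmetric one; in case (b) an additional compatibility $P \bar{P} = \lambda$ with the Galois action is imposed to descend to $K$), and set
\[ y = \begin{pmatrix} 0 & P \\ \lambda P^{-1} & 0 \end{pmatrix}. \]
A direct computation then gives $y^2 = \lambda$, $\nu(y) = \lambda$, $\det(y) = (-1)^n \lambda^n$, and that $x := g y^{-1}$ is an involution in $\mathrm{O}(V, q)$ with $\det(x) = (-1)^n$.

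Case (c) will be the hard part. Writing $y = \alpha\, y_0$, the desired decomposition $g = xy$ translates to the assertion that the unipotent element $u \in \SO(V,q)$ can be written as a product of two involutions $x, y_0 \in \mathrm{O}(V, q)$, each of determinant $(-1)^n$. This is precisely Theorem~A of \cite{2016arXiv160706647R}---that every element of $\SO(V, q)$ is a product of two involutions in $\mathrm{O}(V, q)$---refined to control the determinants of both involutions. I expect this refinement to follow by tracking determinant parities through the Jordan-type case analysis in the proof of \cite{2016arXiv160706647R}, using the freedom to conjugate each involution by an element of $\mathrm{O}(V,q) \smallsetminus \SO(V,q)$ when the parity does not a priori come out right.
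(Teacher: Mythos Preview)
The overall strategy --- reduce to $q$-orthogonal $g$-indecomposable pieces and treat three eigenvalue cases --- is reasonable, and your explicit construction in case~(a) is correct and clean. Case~(c), however, has a genuine gap, in two respects.

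First, your orthogonal direct-sum reduction assumes each piece $V_i$ has even dimension (you set $n_i := \dim V_i / 2$). But in case~(c), where $g = \alpha u$ with $u \in \SO(V,q)$ unipotent, the indecomposable orthogonal $u$-modules include single Jordan blocks of \emph{odd} size $d$, each supported on a non-degenerate quadratic space of dimension $d$. These odd-dimensional pieces fall outside the hypothesis of the lemma, so your reduction does not apply to them as stated.

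Second, and more seriously, your proposed mechanism for adjusting $\det(x)$ --- conjugating by an element of $\mathrm{O}(V,q) \smallsetminus \SO(V,q)$ --- cannot work: conjugation never changes a determinant. The actual freedom one has is different: on an \emph{odd}-dimensional subspace, replacing an involution $x'$ by $-x'$ flips $\det(x')$. The paper exploits exactly this, but to access it one must work with the Jacobson--Morozov decomposition $V = \bigoplus_d U_d \otimes V_d$ of the unipotent, treating odd $d$ (where the sign trick applies on each $U_d \otimes (\text{line})$) and even $d$ (where one passes through $\GSp$ with similitude factor $-1$) separately. Having already collapsed to even-dimensional blocks, your argument has no handle on this odd-dimensional freedom. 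Case~(b) is also too sketchy: the simultaneous existence of a symmetric $P$ with $AP = PA^t$ and the Galois compatibility $P\bar P = \lambda$ is asserted but not argued.

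For comparison, the paper's route is rather different: it applies Theorem~A of \cite{2016arXiv160706647R} to all of $g$ at once, and when $\det(x)$ comes out wrong it deduces that $\nu(g)$ is a square and that $g$ has a $\pm 1$-eigenspace of dimension at least $2$, which allows an inductive reduction to the purely unipotent case, handled via the Jacobson--Morozov decomposition as just described.
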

\begin{proof}
	We argue by induction on $n$, the case $n=0$ being trivial. Suppose now that
	$n > 0$. By~\cite[Thm.\ A]{2016arXiv160706647R}, we can write $g=xy$
	with~$x\in\mathrm{O}(V,q)$, $y\in\GO(V,q)$ satisfying $x^2=1$,
	$y^2=\nu(y)=\nu(g)$. If $\det(x)=(-1)^n$ then we are done, so suppose that
	$\det(x)=(-1)^{n+1}$ and so~$\det(y)=(-1)^{n+1}\nu(y)^n$.

	Since~$y^2=\nu(y)$, any eigenvalue (in an extension of $K$) of~$y$ is a square
	root of~$\nu(y)$. Since~$\det(y)=(-1)^{n+1}\nu(y)^n$, we see that the two
	eigenspaces of~$y$ do not have equal dimension. It follows that~$\nu(y)$ is a
	square, as otherwise the characteristic polynomial of~$y$ would be a power
	of the irreducible polynomial~$X^2-\nu(y)$. So the eigenvalues of~$y$ are
	in $K$, and up to dividing $g$ and $y$ by one of these eigenvalues we can
	assume that $g \in \SO(V,q)$ and $y \in \mathrm{O}(V,q)$ with $\det(y) =
	(-1)^{n+1}$. Then~$y$ has an eigenspace (for an eigenvalue $\pm 1$) of
	dimension at least~$n+1$. The same analysis applies to~$x$, and it follows
	that there is a subspace~$W$ (the intersection of these eigenspaces for~$x$
	and~$y$) of dimension at least~$2$ of~$V$ on which $g$ acts by a scalar which
	is $\pm 1$. 

	Up to replacing $g$ by $-g$ and $y$ by $-y$, we can assume that $\ker(g-1)$
	has dimension at least~$2$. We have a canonical $g$-stable decomposition of
	$V$ as the direct sum of $\ker((g-1)^{2n})$ and its orthogonal complement, and
	they both have even dimension over $K$ since $g \in \SO(V,q)$ with $\dim_K V$
	even. If $g$ is not unipotent, we conclude using the induction hypothesis for
	the restriction of~$g$ to ~$\ker((g-1)^{2n})$ and to its orthogonal
	complement.

	Suppose for the rest of the proof that $g$ is unipotent. If $n=1$ the
	conclusion is trivial, so assume that $n > 1$, so that $\SOb(V,q)$ is
	semisimple. By Jacobson--Morozov (see for example~\cite[Ch.\ VIII \S
	11]{MR2109105}) there is an algebraic morphism $\SLb_2 \rightarrow \SOb(V,q)$
	mapping $\begin{pmatrix} 1 & 1 \\ 0 & 1 \end{pmatrix}$ to $g$, unique up to
	conjugation by the centraliser of $g$ in the subgroup
	$\Aut_e(\mathfrak{so}(V,q))$ of $\SO(V,q) / \{ \pm 1 \}$ where $\Aut_e$ is the
	subgroup of automorphisms of the Lie algebra generated by exponentials of
	nilpotent elements. For $d \geq 1$ fix an irreducible representation $U_d$ of
	$\SLb_2$ of dimension $d$ as well as a non-degenerate $(-1)^{d-1}$-symmetric
	$\SLb_2$-invariant pairing $B_d$ on $U_d$. We have a canonical decomposition
	\[ V = \bigoplus_{d \geq 1} U_d \otimes V_d \]
	where $V_d = (V \otimes_K U_d^*)^{\SLb_2}$. The quadratic form $q$ corresponds
	to an element of
	\[ (\Sym^2 V^*)^{\SLb_2} = \bigoplus_{d \geq 1 \text{ odd}} \Sym^2(V_d^*)
	\oplus \bigoplus_{d \geq 2 \text{ even}} \bigwedge^2 V_d^* \]
	and non-degeneracy of $q$ is equivalent to non-degeneracy of each factor.
	Writing each $V_d$ for $d$ odd (resp.\ even) as an orthogonal direct sum of
	quadratic lines (resp.\ planes endowed with a non-degenerate alternate form),
	we are left to prove a decomposition $g' = x'y'$ in the following cases.
	\begin{enumerate}
		\item $V'$ has odd dimension $2m+1$ and is endowed with a non-degenerate
			quadratic form $q'$ and a unipotent automorphism $g'$. Applying
			\cite[Thm.\ A]{2016arXiv160706647R} we obtain $g' = x'y'$ with $x',y'$
			involutions in $\mathrm{O}(V,q)$. Up to replacing $(x',y')$ by $(-x',-y')$
			we can assume that $\det(x')$ is $\pm 1$ as we may desire.
		\item $V' = U_{2m} \otimes V'''$ where $V'''$ is $2$-dimensional and endowed
			with a non-degenerate alternating form $B'''$, and $g' = g'' \otimes
			\Id_{V'''} \in \SO(V', q')$ for $q'$ the quadratic
			form corresponding to the symmetric bilinear form $B' = B_{2m} \otimes
			B'''$ and $g''$ a unipotent element of $\Sp(U_{2m}, B_{2m})$. Applying
			\cite[Thm.\ A]{2016arXiv160706647R} again we can write $g'' = x'' y''$
			where $x'', y''$ are involutions in $\GSp(U_{2m}, B_{2m})$ having
			similitude factor $-1$. Similarly write $\Id_{V'''} = x''' y'''$ where
			$x''', y'''$ are involutions in $\GSp(V''', B''')$ having similitude
			factor $-1$. Then $g' = (x'' \otimes x''') (y'' \otimes y''')$ is the
			desired decomposition as a product of involutions in $\SO(V', q')$.
		\qedhere
	\end{enumerate}
\end{proof}

\subsection{Restriction of local Arthur packets}

We now give our description of the restriction of representations
of~$\GSpinb_5(F)$. Recall that if~$\varphi:\WD_F\to \GSp_4$ is a bounded
parameter, then the corresponding component group~$\cS_\varphi$ is either
trivial or is~$\Z/2\Z = \{1,s\}$. In the former case, the
$L$-packet~$\Pi_\varphi$ associated to~$\varphi$ is a singleton, and in the
latter case it is a pair~$\{\pi^+,\pi^-\}$, where $\pi^{\pm}$ is characterised
by the fact that $\tr \pi^+-\tr\pi^-$ is the transfer to $\GSpinb_5(F)$ of $\tr
\pi_{\varphi_{\bH}}$ where $\varphi_{\bH} \in \Phi(\bH)$ is the parameter
mapping to $(\varphi, s)$ via ${}^L \xi'$. In either case, if we
write~$\varphi'=\pr\circ\varphi$, then by~\cite[Prop.\ 2.8]{MR2673717}, we have
\numequation \label{eqn: restriction of sums of L packets}
	\bigoplus_{\pi\in \Pi_\varphi} \pi|_{\Spb_4(F)} \cong
	\bigoplus_{\pi'\in\Pi_{\varphi'}} \pi'.
\end{equation}
(Indeed, this follows from Theorem~\ref{thm: local Langlands for Sp4 is well
defined}, the fact that lifts of representations of~$\Spb_4(F)$ to~$\GSpb_4(F)$
are unique up to twist, and the fact that the restrictions of representations
of~$\GSpb_4(F)$ to~$\Spb_4(F)$ are semisimple and multiplicity free.) The
following theorem improves on this result by giving a precise description of the
restrictions of the individual elements of~$\Pi_\varphi$.
\begin{thm}
	\label{thm: constituents of restriction from GSp4 to Sp4 in terms of
  characters}
	Let~$\varphi$ be a bounded $L$-parameter, and write~$\varphi' = \pr \circ
	\varphi$, so that $\cS_{\varphi}\into\cS_{\varphi'}$. Write~$\Pi_\varphi$
	and~$\Pi_{\varphi'}$ for the respective $L$-packets. If~$\cS_\varphi$ is
	trivial, and~$\Pi_\varphi=\{\pi\}$, then
	\[\pi|_{\Spb_4(F)}\cong\bigoplus_{\pi'\in\Pi_{\varphi'}}\pi'. \]
  If $\cS_\varphi=\Z/2\Z = \{1,s\}$, and~$\Pi_\varphi=\{\pi^+,\pi^-\}$ as above,
  then
	\[\pi^{\pm}|_{\Spb_4(F)} \cong \bigoplus_{\substack{\pi'\in\Pi_{\varphi'}\\
		\langle s,\pi'\rangle=\pm 1}} \pi'. \]
\end{thm}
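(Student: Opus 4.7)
The proof plan splits according to $|\cS_\varphi|$. When $\cS_\varphi$ is trivial, $\Pi_\varphi = \{\pi\}$ and the claimed decomposition of $\pi|_{\Spb_4(F)}$ reduces immediately to~\eqref{eqn: restriction of sums of L packets}. I therefore concentrate on case (b), $\cS_\varphi = \{1, s\}$. Here~\eqref{eqn: restriction of sums of L packets} together with the multiplicity-one property of restrictions recalled from~\cite{MR2254643} forces each $\pi' \in \Pi_{\varphi'}$ to appear in exactly one of $\pi^{\pm}|_{\Spb_4(F)}$, so there is a well-defined function $\epsilon : \Pi_{\varphi'} \to \{\pm 1\}$ with $\pi' \hookrightarrow \pi^{\epsilon(\pi')}|_{\Spb_4(F)}$, and the task becomes to identify $\epsilon(\pi')$ with $\langle s, \pi' \rangle$.

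The strategy is to derive this by comparing two endoscopic character identities on $\Spb_4(F)$. By part~\eqref{item:thm_local_Apackets2} of Theorem~\ref{thm:local_Arthur_packets}, the virtual character $\tr \pi^+ - \tr \pi^-$ on $\GSpinb_5(F)$ equals the transfer from $\bH$ of $\tr \pi_{\varphi_{\bH}}$, where $\varphi_{\bH} : \WD_F \to \widehat{\bH}$ is the parameter lifting $(\varphi, s)$. Analogously, the defining property of $\langle s, \cdot \rangle$ on $\Pi_{\varphi'}$ (coming from Arthur's local Langlands correspondence for $\Spb_4$ in~\cite{MR3135650}) is that $\sum_{\pi'} \langle s, \pi' \rangle \tr \pi'$ equals the transfer from $\SOb_4$ of $\tr \pi_{\varphi_{\SOb_4}}$, where $\varphi_{\SOb_4}$ is the image of $\varphi_{\bH}$ under the map $\widehat{\bH} \to \widehat{\SOb_4}$ in diagram~\eqref{eqn: commutative diagram for SO4 Sp4 restriction}. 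Hence it suffices to prove that the restriction to $\Spb_4(F)$ of the first identity reproduces the second.

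Two ingredients are needed. First, an analogue of Theorem~\ref{thm: local Langlands for Sp4 is well defined} for $\bH \supset \SOb_4$: using $\bH \simeq (\GLb_2 \times \GLb_2)/\Delta\GLb_1$, the local Langlands correspondence for $\GLb_2$, and Proposition~\ref{prop: multiplicity one for restriction for GSO2n}, the restriction $\pi_{\varphi_{\bH}}|_{\SOb_4(F)}$ is a multiplicity-free sum of members of $\Pi_{\varphi_{\SOb_4}}$. Second, and this is the main obstacle, one needs the compatibility of Kottwitz--Shelstad transfer with restriction through the square $(\bH, \GSpinb_5, \SOb_4, \Spb_4)$: the restriction to $\Spb_4(F)$ of the transfer from $\bH$ to $\GSpinb_5$ of $\tr \pi_{\varphi_{\bH}}$ should coincide with the transfer from $\SOb_4$ to $\Spb_4$ of $\tr \pi_{\varphi_{\bH}}|_{\SOb_4(F)}$. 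I expect this to be verified directly from the explicit normalizations of transfer factors recalled in Section~\ref{subsec: endoscopic groups and transfer}, by comparing the two sides on regular semisimple orbital integrals.

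If the transfer-factor comparison proves cumbersome, a global alternative is available in the spirit of the discrete case of Theorem~\ref{thm: local Langlands for Sp4 is well defined} and Proposition~\ref{prop: twisted endoscopic character real discrete tempered}: reduce to discrete $\varphi$ using parabolic induction, globalize to a Yoshida-type $\psi = \pi_1 \boxplus \pi_2$ with $\psi_v = \varphi$, and compare Theorem~\ref{thm:glob_mult_form_GSpin5} for $\GSpinb_5$ against Arthur's multiplicity formula for $\Spb_4$ applied to automorphic restrictions (Theorem~\ref{thm: surjectivity automorphic res}). The local identity $\epsilon(\pi') = \langle s, \pi' \rangle$ then emerges by matching the two multiplicity formulas on all global automorphic constituents whose local factor at $v$ is $\pi^{+}$ or $\pi^{-}$.
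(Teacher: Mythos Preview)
Your approach matches the paper's: dispose of the trivial-$\cS_\varphi$ case via~\eqref{eqn: restriction of sums of L packets}, and in the endoscopic case compare the character identities for $(\GSpinb_5, \bH)$ and $(\Spb_4, \SOb_4)$ through the restriction $\bH \supset \SOb_4$. The paper packages your first ingredient as a separate result, Theorem~\ref{thm: restriction from GSO4 to SO4}, asserting that $\pi_{\varphi_\bH}|_{\SOb_4(F)}$ is the \emph{full} packet $\Pi_{\varphi'_\bH}$; be aware that this is not a formal consequence of local Langlands for $\GLb_2$ together with Proposition~\ref{prop: multiplicity one for restriction for GSO2n}, since Arthur's correspondence for $\SOb_4$ is characterized via twisted endoscopy for $\GLb_4$, and accordingly the paper proves it by a global argument parallel to Theorem~\ref{thm: local Langlands for Sp4 is well defined} (globalizing the discrete case and invoking the $\GLb_2 \times \GLb_2 \to \GLb_4$ tensor-product transfer). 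The transfer-factor compatibility across the square~\eqref{eqn: commutative diagram for SO4 Sp4 restriction} that you flag as the main obstacle is left entirely implicit in the paper, and your global alternative for it is not needed.
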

\begin{proof}
	In the case that~$\cS_\varphi$ is trivial, this is~(\ref{eqn: restriction of
	sums of L packets}), so we may suppose that~$\cS_\varphi$ is non-trivial, so
	that~$\varphi$ is endoscopic. We can write~$\varphi=\varphi_1\oplus\varphi_2$
	where  $\varphi_1, \varphi_2 : \WD_F \rightarrow \GL_2$ are bounded with same
  determinant; that is, $\varphi  ={}^L \xi' \circ \varphi_{\bH}$,
  where~$\varphi_{\bH} = \varphi_1 \times \varphi_2 : \WD_F \times \SL_2(\C) \to
  \widehat{\Hb}$. Via ${}^L \xi'$ we can see $s$ as the non-trivial element of
  $Z(\widehat{\Hb}) / Z(\widehat{\GSpinb_5})$, i.e.\ the image of $(1,-1)
  \in \widehat{\Hb} \subset \GL_2 \times \GL_2$. Then by
  Conjecture~\ref{conj:local_Arthur_packets GSpin}~(2) for~$\GSpinb_5$ (i.e.\
  the main theorem of \cite{MR3267112}), we have an equality of traces

  \[  \tr	\pi^+(f)-\tr \pi^{-}(f) = \sum_{\pi_{\bH} \in \Pi_{\varphi_{\bH}}}
  \tr \pi_{\bH}(f^{\bH}). \]
	Applying Conjecture~\ref{conj:local_Arthur_packets GSpin}~(2) (or rather
	Theorem~\ref{thm: Arthur for chi a square}) for~$\Spb_4$, and
  writing~$\varphi'_{\bH}$ for the composite of~$\varphi_{\bH}$ and the natural
  map $\widehat{\Hb} \to \widehat{\SOb_4}$, we also have an equality of traces
  \[ \sum_{\substack{\pi' \in \Pi_{\varphi'}\\
      \langle s, \pi' \rangle=1}} \tr \pi'(f) -
      \sum_{\substack{\pi' \in \Pi_{\varphi'}\\
      \langle s, \pi' \rangle=-1}} \tr \pi'(f) =
      \sum_{\pi'_{\SOb_4} \in \Pi_{\varphi'_{\bH}}} \tr \pi'_{\SOb_4}(f'). \]
	The result now follows from~(\ref{eqn: restriction of sums of L packets}) and
	Theorem~\ref{thm: restriction from GSO4 to SO4} below.
\end{proof}

We end with a result on the restriction of representations from~$\Hb \simeq
\GSOb_4$ to~$\SOb_4$ that we used in the course of the proof of
Theorem~\ref{thm: constituents of restriction from GSp4 to Sp4 in terms of
characters}. The arguments are very similar to those for~$\GSpinb_5$, but are
rather simpler, as~$\Hb$ has no non-trivial elliptic endoscopic groups.
Since~$\Hb$ is isomorphic to the quotient of~$\GLb_2\times\GLb_2$ by a split
torus, the local Langlands correspondence for~$\Hb$, and the corresponding
endoscopic character identities, are easily deduced from those for~$\GLb_2$. The
correspondence and endoscopic character identities for~$\SOb_4$ are of course
proved in~\cite{MR3135650} (up to the outer automorphism $\delta$).

By Proposition~\ref{prop: multiplicity one for restriction for GSO2n}, if~$\pi$
is an irreducible admissible representation of~$\Hb(F)$, then $\pi|_{\SOb_4(F)}$
is a direct sum of representations occurring with multiplicity one. The proof
of~\cite[Lem.\ 2.6]{MR2673717} goes through unchanged and shows
that~$\pi_1|_{\SOb_4(F)}$, $\pi_2|_{\SOb_4(F)}$ have a common constituent if and
only if~$\pi_1, \pi_2$ differ by a twist by a character. By~\cite[Lem.\
2.7]{MR2673717}, the analogous statement is also true for~$L$-parameters: every
$L$-parameter $\varphi' : \WD_F \to \widehat{\SOb_4}(\C)$  arises from some
$\varphi : \WD_F \to \Hhat(\C)$, which is unique up to twist.

\begin{thm} 
  \label{thm: restriction from GSO4 to SO4}
	Let~$\varphi : \WD_F \to \Hhat(\C)$ be a bounded $L$-parameter, and
	let~$\varphi':\WD_F \to \widehat{\SOb_4}(\C)$ be the parameter obtained
	from~\emph{(\ref{eqn: commutative diagram for SO4 Sp4 restriction})}. Let
  $\pi$ be the tempered irreducible representation of $\bH$ associated to
  $\varphi$. Then
	\[ \pi|_{\widetilde{\cH}(\SOb_4(F))} \cong
		\bigoplus_{\pi'\in\Pi_{\varphi'}}\pi'.\]
\end{thm}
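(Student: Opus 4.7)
The proof will exploit the elementary presentations $\bH \cong (\GLb_2 \times \GLb_2)/\{(zI_2, z^{-1}I_2) \mid z \in \GLb_1\}$ and $\SOb_4 \cong (\SLb_2 \times \SLb_2)/\langle(-I_2,-I_2)\rangle$. Writing $\varphi = \varphi_1 \oplus \varphi_2$ with each $\varphi_i : \WD_F \to \GL_2(\C)$ bounded and $\det \varphi_1 = \det \varphi_2$, let $\pi_i$ denote the tempered representation of $\GLb_2(F)$ corresponding to $\varphi_i$ via $\rec$; then $\omega_{\pi_1} = \omega_{\pi_2}$, and $\pi$ is obtained from $\pi_1 \boxtimes \pi_2$ by descent to $\bH(F)$.

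The first step is to observe that $\pi|_{\SOb_4(F)}$ is the descent to $\SOb_4(F)$ of $(\pi_1|_{\SLb_2(F)}) \boxtimes (\pi_2|_{\SLb_2(F)})$; this is automatic since the central element $(-I_2, -I_2)$ acts trivially once the two central characters agree. By the classical multiplicity-one result for restriction from $\GLb_2$ to $\SLb_2$ (going back to \cite{LabLan}), each $\pi_i|_{\SLb_2(F)}$ decomposes as a multiplicity-free direct sum indexed by the Labesse--Langlands packet $\Pi_{\rho_i}$ attached to the projective parameter $\rho_i := \pr \circ \varphi_i : \WD_F \to \PGL_2(\C)$. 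Consequently
\[ \pi|_{\SOb_4(F)} \cong \bigoplus_{(\sigma_1,\sigma_2) \in \Pi_{\rho_1} \times \Pi_{\rho_2}} (\sigma_1 \boxtimes \sigma_2)|_{\SOb_4(F)}, \]
with the multiplicity-freeness of the right-hand side ensured by Proposition \ref{prop: multiplicity one for restriction for GSO2n}.

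The remaining task is to identify the set $\{\sigma_1 \boxtimes \sigma_2\}$, taken modulo the outer automorphism $\delta$ exchanging the two $\SLb_2$-factors, with Arthur's $L$-packet $\Pi_{\varphi'}$, which by Theorem \ref{thm: Arthur for chi a square} is characterised by twisted endoscopic character relations for $\GLb_4$. This is the $\SOb_4$-analog of Theorem \ref{thm: local Langlands for Sp4 is well defined}, and I would proceed along the same lines: reduce to the tempered case via Langlands classification and compatibility of the relevant local Langlands correspondences with parabolic induction, and in the tempered case globalise $\pi$ using Theorem \ref{thm: surjectivity automorphic res} and apply the unconditional multiplicity formula of \cite{MR3135650} for $\SOb_4^1$. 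The argument is in fact significantly simpler than for $\GSpb_4 \to \Spb_4$, since $\bH$ has no non-trivial elliptic endoscopy, so $\Pi_\varphi = \{\pi\}$ is a singleton and no packet-internal character combinations intervene on the $\bH$-side; moreover Arthur's $L$-packets for $\SLb_2$ coincide with those of Labesse--Langlands by \cite[Lemma 6.6.2]{MR3135650}, which takes care of the passage through the central isogeny $\SLb_2 \times \SLb_2 \to \SOb_4$.

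The main obstacle will be this final identification, i.e.\ showing that the $\GLb_4$ twisted endoscopic character identity defining $\Pi_{\varphi'}$ factorises as a product of the endoscopic identities defining the $\SLb_2$-packets $\Pi_{\rho_1}$ and $\Pi_{\rho_2}$. This should ultimately reduce to matching Whittaker normalisations on both sides (cf.\ Lemma \ref{lem: can_vs_Whi_norm}) together with the compatibility of transfer factors for the pair of parallel inclusions $\SLb_2 \times \SLb_2 \hookrightarrow \SOb_4$ and $\GLb_2 \times \GLb_2 \hookrightarrow \GLb_4$.
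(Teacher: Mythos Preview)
Your approach via the $\SLb_2\times\SLb_2$ presentation of $\SOb_4$ is a genuinely different route from the paper's. The first two paragraphs are correct and give a clean reduction: $\pi|_{\SOb_4(F)}$ is indeed the descent of the product of the two $\SLb_2$-restrictions, and Labesse--Langlands decomposes each factor into a multiplicity-free sum over $\Pi_{\rho_i}$. So far so good.

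The difficulty, as you recognise, is the final identification of $\{\sigma_1\boxtimes\sigma_2\}$ (modulo $\delta$) with Arthur's packet $\Pi_{\varphi'}$. Here your sketch is imprecise in two respects. First, you say ``globalise $\pi$'', but $\pi$ lives on $\bH$; what you need is to globalise an arbitrary member $\pi'$ of Arthur's $\SOb_4$-packet (via \cite[Thm.\ 1B]{MR818353}), lift the resulting global $\Pi'$ to $\bH(\A_K)$ using Theorem~\ref{thm: surjectivity automorphic res}, and then compare parameters at $v$. Second, and more importantly, you never name the global input that makes this comparison work: Ramakrishnan's tensor product functoriality $\GLb_2\times\GLb_2\to\GLb_4$ \cite{MR1792292}, which is what relates the $\bH$-parameter (a pair of $\GLb_2$-parameters) to the $\SOb_4$-parameter (embedded in $\GLb_4$ via the standard representation).

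The paper bypasses the $\SLb_2$ layer entirely. It reduces the theorem to the commutativity of the $\bH\to\SOb_4$ local Langlands diagram (exactly as for $\GSpb_4\to\Spb_4$), then in the discrete case globalises a given $\pi'\in\Pi_{\varphi'}$ to $\Pi'$ on $\SOb_4/K$, lifts to $\Pi$ on $\bH/K$, reads off the pair $(\pi_1,\pi_2)$ of cuspidal $\GLb_2$-representations, and uses the commutative square with $\GLb_2\times\GLb_2\to\GLb_4$ (the map $(g,h)\mapsto(\det g)^{-1}g\otimes h$) together with Ramakrishnan's transfer to conclude that $\varphi_{\Pi_v}$ lifts $\varphi'$. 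The non-discrete case is handled by parabolic induction, as you also propose. Your alternative local suggestion (matching Whittaker normalisations and transfer factors across $\SLb_2\times\SLb_2\hookrightarrow\SOb_4$ and $\GLb_2\times\GLb_2\hookrightarrow\GLb_4$) would amount to a direct verification of the twisted endoscopic character relation and is considerably harder to execute than the global argument.

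In short: your decomposition via $\SLb_2$ is correct but ultimately a detour; the substantive step still requires the same globalisation-plus-Ramakrishnan argument the paper uses, and the paper gets there more directly by working with $\bH$ and $\SOb_4$ from the start.
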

\begin{proof}
	By the preceding discussion, we need to show that for each bounded
  $L$-parameter $\varphi': \WD_F \to \widehat{\SOb_4}(\C)$ (up to outer
  conjugacy), and each $\pi'\in\Pi_{\varphi'}$, there is some~$\pi$
  lifting~$\pi'$ (or $\pi'^{\delta}$) whose $L$-parameter~$\varphi$
  lifts~$\varphi'$.

  Suppose firstly that~$\varphi'$ is discrete. As in the proof of
  Theorem~\ref{thm: local Langlands for Sp4 is well defined}, by Krasner's lemma
  and~\cite[Thm.\ 1B]{MR818353}, we can find a totally real number field $K$, a
  finite place~$v$ of~$K$, and a discrete automorphic representation~$\Pi'$
  of~$\SOb_4(\A_K)$, such that:
  \begin{itemize}
    \item $K_v\cong F$ (so we identify~$K_v$ with~$F$ from now on).
    \item $\Pi'_v=\pi'$.
    \item at each infinite place~$w$ of~$F$, $\Pi'_w$ is a discrete series
      representation.
  \end{itemize}

  By Theorem~\ref{thm: surjectivity automorphic res}, there is a  discrete
  automorphic representation~$\Pi$ of~$\Hb(\A_K)$ such
  that~$\Pi|_{\SOb_4(\A_K)}$ contains~$\Pi'$. Then~$\Pi$ corresponds to a
  pair~$\pi_1,\pi_2$ of discrete automorphic representations of~$\GLb_2(\A_K)$
  with equal central characters. The condition that $\Pi'_w$ is a discrete
  series representation at an infinite place $w$ of $K$ implies that $\pi_1$ and
  $\pi_2$ are cuspidal.

  We now consider the following commutative diagram of dual groups:
  \numequation\label{eqn: commutative diagram SO4 restriction}
  \begin{tikzcd}[column sep=5em]
    \widehat{\Hb} \arrow[r, two heads] \arrow[d, hook] &
    \widehat{\SOb_4} = \SO_4  \arrow[d, hook] \\
    \GL_2 \times \GL_2 \arrow[r] & \GL_4
  \end{tikzcd}
  \end{equation}
  where the vertical arrows are the natural inclusions, and the lower horizontal
  arrow is given by $(g,h)\mapsto (\det g)^{-1}g\otimes h$. Since the functorial
  transfer from~$\GLb_2 \times \GLb_2$ to~$\GLb_4$ exists (as we recalled at the
  beginning of Section~\ref{sec: global parameters}), we may compare at the
  unramified places and then use strong multiplicity one to compare at the
  ramified places, and we obtain that the composite
  $\WD_F\stackrel{\varphi'}{\to} \widehat{\Hb} \to\GL_2\times\GL_2 \to \GL_4$
  is given by $\varphi_{1,v} \otimes \varphi_{2,v}^\vee$, where $\varphi_{1,v}$,
  $\varphi_{2,v}$ are the $L$-parameters of~$\pi_{1,v}$ and~$\pi_{2,v}$
  respectively. Since the $L$-parameter of~$\Pi_v$ is~$\varphi_{1,v} \oplus
  \varphi_{2,v}$, we can take~$\pi=\Pi_v$, so we are done in the case
  that~$\varphi'$ is discrete.

  Suppose now that~$\varphi'$ is not discrete. Then one can argue as in the
  proof of \ref{thm: local Langlands for Sp4 is well defined}, since both local
  Langlands correspondences for $\bH$ and $\SOb_4$ are compatible with parabolic
  induction. In fact the proof is simpler since all proper Levi subgroups are
  simply products of $\GLb$, and we do not repeat the argument.
\end{proof}

\begin{rem}
  Theorem~\ref{thm: constituents of restriction from GSp4 to Sp4 in terms of
  characters} (or rather its straightforward extension from tempered to generic
  parameters) gives the complete spectral description of the automorphic
  restriction map of Section \ref{sec:restriction} for $\Spb_4 \subset
  \GSpinb_5$ \emph{for formally tempered global parameters}. This is the
  analogue of the results of Labesse--Langlands~\cite{LabLan} (ignoring inner
  forms) and the multiplicity one theorem of~Ramakrishnan
  for~$\SLb_2$~\cite{MR1792292}. It would perhaps be interesting to extend this
  to parameters which are not formally tempered, but in the interests of brevity
  we do not consider this question here.
\end{rem}

\emergencystretch=3em \bibliographystyle{amsalpha}
\bibliography{GSp4Arthur}

\end{document}